\documentclass[11pt,letterpaper, reqno]{amsart}
\usepackage{amssymb, amsfonts, stmaryrd, color,fancyhdr,amsmath,amsthm,amsbsy,amsfonts, titletoc,  mathrsfs, mathabx, verbatim,extarrows, wasysym, euscript, enumerate, latexsym, mathtools, epsfig, subfigure, mathabx, graphicx, color, wrapfig, hyperref,graphicx,float, bm}

\usepackage[margin=1in]{geometry}

\numberwithin{equation}{section}
\theoremstyle{plain}
\DeclareMathAlphabet{\mathpzc}{OT1}{pzc}{m}{it}

\newcommand{\h}{\hspace}
\newcommand{\normmm}[1]{{\left\vert\kern-0.25ex\left\vert\kern-0.25ex\left\vert #1 
    \right\vert\kern-0.25ex\right\vert\kern-0.25ex\right\vert}}

\newcommand{\D}{\hspace{1pt} \mathrm{d}}

\renewcommand{\div}{\operatorname{div}}
\newcommand{\cE}{\mathcal{E}}

\newcommand{\vH}{\mathbf{H}}
\newcommand{\vn}{\mathbf{n}}

\newtheorem{defn}{Definition}[section]
\newtheorem{thm}[defn]{Theorem}

\newtheorem{cor}[defn]{Corollary}
\newtheorem{prop}[defn]{Proposition}
\newtheorem{lem}[defn]{Lemma}

\title{Phase Transition with Rapini-Papoular Surface Anchoring}

\author{Shun Li\ \ \ }
\thanks{S. Li would like to express his deepest gratitude to his postgraduate supervisor Y. Yu for the patient guidance of this research work and for having involved him in the project funded by the Hong Kong RGC grant No. 14310925.}
\address{Department of Mathematics\\ The Chinese University of Hong Kong\\ Hong Kong }
\email{shunli@cuhk.edu.hk}

\author{\ \ Yong Yu}
\thanks{Y. Yu is partially supported by Hong Kong RGC grants No. 14306622 and 14310925.}
\address{Department of Mathematics\\ The Chinese University of Hong Kong\\ Hong Kong }
\email{yongyu@cuhk.edu.hk}

\date{}
\address{}
\email{}

\keywords{P-HAN transition, Simplified Ericksen-Leslie system, Suitable weak solution}
\subjclass[2020]{82D30, 35K10, 35K58, 35K61}

\begin{document}

\begin{abstract} 
    We analyze the dynamical (in)stability of nematic liquid crystals in the presence of external magnetic fields and Rapini-Papoular surface potential. The P-HAN transition is investigated using a simplified 3D Ericksen-Leslie system. We find the thickness threshold of the P-HAN transition. If the thickness of the nematic layer exceeds this threshold, there is a global-in-time suitable weak solution converging exponentially to a nontrivial equilibrium state as time tends to infinity. If the thickness is no more than the threshold, the global-in-time suitable weak solution has a trivial long-time asymptotic limit. Our results rigorously justify the P-HAN transition discussed in the physics literature.
\end{abstract}

\maketitle
\thispagestyle{empty}
\tableofcontents

\section{Introduction}
When a nematic liquid crystal cell is equipped with a homeotropic boundary condition at one of the substrates and a unidirectional planar (P) boundary condition at the other, it is possible to obtain a hybrid aligned nematic (HAN) cell if the cell thickness exceeds some threshold. The transition from the P cell to the HAN cell is called the P-HAN transition in the physics literature. See \cite{BarberoBarberi1983}, \cite{SparavignaKomitovLavrentovichStrigazzi1992} and the references therein. Generally speaking, if the thickness is no more than the threshold, an undeformed planar alignment is expected. If the thickness exceeds the threshold, the HAN cell is preferred. 

\subsection{Hydrodynamical flow of director angle}

Motivated by physics literature, the bulk domain is given by $\Omega := \mathbb T^2 \times (0, d\h{0.5pt})$, where $d > 0$ is the thickness of the liquid crystal cell. Specifically, we assume the liquid crystal material is periodic in the variables $x_1$ and $x_2$ with a wavelength of 1 along both directions. The substrates H and P are put at $\big\{ x_3 = 0\big\}$ and $\big\{ x_3 = d \h{0.5pt}\big\}$, respectively. 

In 1995, Lin-Liu \cite{LinLiu1995} introduced a simplified Ericksen-Leslie system for the director fields of nematic liquid crystals. To describe thin nematic films, the system is extended in \cite{lcakt2013} by Lin-Cummings-Archer-Kondic-Thiele to include a free boundary. Based on these arguments, we investigate the following simplified Ericksen-Leslie system with the external magnetic field $\vH_*$:
\begin{equation} \label{EL} \left\{ \begin{aligned}
     \partial_t u +  u \cdot \nabla  u - \Delta u &= - \nabla p - \nabla \cdot \big(\nabla \vn \odot \nabla \vn\big), \\[1mm]
    \operatorname{div} u &= 0, \\[1mm]
    \partial_t \vn + u \cdot \nabla  \vn - \Delta \vn &= |\h{0.5pt}\nabla \vn\h{0.5pt}|^2 \vn + \left(\vn \cdot \vH_* \right) \vH_* - \left(\vn \cdot \vH_*\right)^2 \vn.
\end{aligned} \right. \end{equation}
Here, $u$ denotes the velocity field of the fluid. $p$ is the pressure induced from the incompressibility condition of $u$. $\vn$ is the $\mathbb S^2$-valued director field. The dot product is the standard inner product on $\mathbb R^3$. In the first equation of \eqref{EL}, $\nabla \vn \odot \nabla \vn$ is the stress tensor with its entries given by
\begin{equation*}
    \big(\nabla \vn \odot \nabla \vn\big)_{ij} :=  \partial_i \vn \cdot \partial_j \vn, \h{20pt}\text{where $i, j = 1, 2, 3$.}
\end{equation*}  

We supply the unknowns $(u, \vn)$ in \eqref{EL} with suitable boundary conditions.  The velocity $u$ is imposed with the no-slip boundary condition: \begin{equation}\label{bdry of u}
u = 0 \h{20pt}\text{on $\text{H} \cup \text{P}$.}\end{equation}  
The director field $\vn$ is supposed to satisfy the unidirectional planar boundary condition: \begin{equation}\label{dry ofn} \vn = e_1 := (1, 0, 0)^\bot\h{20pt}\text{ on P.} \end{equation}
Moreover, it satisfies the Rapini-Papoular weak anchoring condition:
\begin{equation}\label{RP for n}   
    \partial_3 \vn = - L_\text{H} \h{0.8pt} \vn_3 \left(e_3 - \vn_3 \vn\right) \h{20pt} \text{on H}.\end{equation}
Here, $e_3 := (0, 0, 1)^\bot$. $\vn_j$ denotes the $j$-th component of $\vn$. $L_\text{H}$ is a positive constant describing the strength of the weak anchoring on H. 

Concerning the third equation in \eqref{EL} and the Rapini-Papoular boundary condition on H in \eqref{RP for n}, we introduce the following total free energy for the director field $\vn$:
\begin{equation} \label{OS}
    \frac{1}{2} \int_{\Omega} |\nabla \vn|^2 + |\vH_*|^2 - (\vn \cdot \vH_*)^2 - \frac{L_\text{H}}{2} \int_{\text{H}} (\vn \cdot \nu)^2.
\end{equation}In this energy, we use the one-constant approximation of the Oseen-Frank energy to measure the elastic energy. $\nu$ is the outer normal direction. The negative sign in front of the last integral in \eqref{OS} indicates that $\nu$ is the easy axis of the director field $\vn$ on H.

We simply put $\vH_*$ and $\vn$ under the ansatz:  \begin{equation}\label{ansatz}\vH_* = h \h{0.5pt}e_3 \h{20pt}\text{and}\h{20pt}\vn = \cos \phi \h{1.2pt} e_1 + \sin \phi \h{1.2pt}e_3.\end{equation} The positive constant $h$ describes the strength of the external magnetic field. The function $\phi$ is called the director angle of $\vn$. With \eqref{ansatz}, the system  \eqref{EL} can then be rewritten by 
\begin{equation} \left\{ \label{sEL} \begin{aligned}
    \partial_t u + u \cdot \nabla u - \Delta u &= - \nabla p -\nabla \cdot \big(\nabla \phi \odot \nabla \phi \big), \\[1mm]
    \operatorname{div} u &= 0, \\[1mm]
    \partial_t \phi + u \cdot \nabla \phi - \Delta \phi &=  h^2  \sin \phi \cos \phi.
\end{aligned} \right.  \end{equation}
The stress tensor $\nabla \phi \odot \nabla \phi$ is defined in terms of its entries by $$\big(\nabla \phi \odot \nabla \phi \big)_{ij} := \partial_i \phi \h{2pt} \partial_j \phi, \h{15pt}\text{where $i, j = 1, 2, 3$.}$$ The boundary conditions of $\vn$ in \eqref{dry ofn}-\eqref{RP for n} can be further rephrased as follows:
\begin{equation} \label{bc} \left\{ \begin{aligned}
    \phi &= 0 \quad\hspace{71pt} \text{on P}, \\[.5mm]
    \partial_\nu \phi &= L_{\text{H}} \sin \phi \cos \phi \quad \h{14pt} \text{on H}.
\end{aligned} \right. \end{equation}

So far, we have introduced our hydrodynamic system \eqref{sEL}. The boundary conditions of $(u, \phi)$ are given in \eqref{bdry of u} and \eqref{bc}. We are now in a position to discuss the initial conditions of the system \eqref{sEL}. Recalling \eqref{bdry of u} and the incompressibility condition in \eqref{sEL}, we define $H_{0, \mathrm{div}}^1(\Omega)$ to be the subspace of $H^1(\Omega; \mathbb R^3)$ in which all vector fields are divergence-free and equal to $0$ on the substrates $\text{H}$ and $ \text{P}$ in the sense of trace. In light of the first condition in \eqref{bc}, we define $H_\text{P}^1(\Omega)$ to be the subspace of $H^1(\Omega)$ in which all functions are equal to $0$ on P in the sense of trace. With these functional spaces, we set
\begin{equation}\label{ini cond}
    u  = u_0 \in H_{0, \mathrm{div}}^1(\Omega) \hspace{10pt} \text{ and } \hspace{10pt} \phi  = \phi_0 \in H_{\text{P}}^1(\Omega) \cap H^2(\Omega) \hspace{20pt} \text{at } t = 0.
\end{equation}
In the remainder of the article, the initial boundary value problem \eqref{sEL}, \eqref{bdry of u}, \eqref{bc}, and \eqref{ini cond} is referred to as IBVP. The P-HAN transition will be justified based on the solutions to this initial-boundary-value problem.  

\subsection{Suitable weak solutions} Fix a time $T > 0$ and suppose $(u, \phi)$ is a smooth solution to IBVP on $(0, T\h{0.5pt}]$. To derive a local energy identity of $(u, \phi)$, we choose a smooth test function on $\overline{\Omega} \times [0, T]$ vanishing near the initial time $t = 0$. This test function is denoted by $\varphi$ in the following arguments.

First, we take the inner product with $\varphi \h{0.5pt}u$ on both sides of the first equation in \eqref{sEL} and then integrate over $\Omega$. By using the no-slip boundary condition \eqref{bdry of u}, it turns out \begin{align}\label{energy of u}
\frac{\mathrm d}{\mathrm d \h{0.5pt}t} \int_\Omega \varphi \h{0.5pt} |u|^2 &+ 2 \int_\Omega \varphi\h{0.5pt} | \nabla u |^2 - 2 \int_\Omega \varphi \h{1.5pt} \nabla u : \big( \nabla \phi \odot \nabla \phi \big) \\[1.5mm]
&= \int_\Omega \left( u \cdot \nabla \varphi\right) \left( 2 p +  |u|^2 \right) + 2\int_\Omega \left(u \cdot \nabla \phi\right) \nabla \phi \cdot \nabla \varphi + \int_\Omega  |u|^2  \left( \partial_t \varphi + \Delta \varphi\right),   \nonumber
\end{align}where if we denote by $u^j$ the $j$-th component of $u$, then $$\nabla u : \big( \nabla \phi \odot \nabla \phi \big) = \partial_i u^j \h{1.5pt}\partial_i \phi \h{1.5pt} \partial_j \phi.$$   

Next, we act $\partial_j$ on the third equation in \eqref{sEL}. Multiplying $\varphi\h{1pt}\partial_j \phi$ on both sides of the resulting equation and integrating over $\Omega$, we obtain \begin{align}\label{phi energy}
    \frac{\mathrm d}{\mathrm d \h{0.5pt}t} \int_\Omega \varphi \h{0.5pt} |\nabla \phi|^2 &+ 2 \int_\Omega \varphi\h{0.5pt} | \nabla^2 \phi |^2 + 2 \int_\Omega \varphi \h{1.5pt} \nabla u : \big( \nabla \phi \odot \nabla \phi \big)  \\[1.5mm]
    &\h{7pt} =  \int_\Omega \nabla \cdot \left( \varphi \h{1.5pt}\nabla | \nabla \phi |^2 \right) - \int_\Omega \nabla \varphi \cdot \nabla | \nabla \phi |^2 \nonumber\\[1.5mm]
    &\h{7pt} + \int_\Omega \left( u \cdot \nabla \varphi\right) |\nabla \phi|^2 + 2 h^2 \int_\Omega \varphi \h{1.5pt}|\nabla \phi |^2 \h{1.5pt}\cos 2 \phi + \int_\Omega | \nabla \phi |^2 \h{1pt}\partial_t \varphi. \nonumber
\end{align}Here, we also sum over the index $j$ and use the no-slip boundary condition \eqref{bdry of u}. Applying the integration by parts induces \begin{align*}
    \int_\Omega \nabla \cdot \left( \varphi \h{1.5pt}\nabla | \nabla \phi |^2 \right) &- \int_\Omega \nabla \varphi \cdot \nabla | \nabla \phi |^2 \\[1.5mm]
    &= \int_\text{P}  \varphi \h{1.5pt}\partial_3 | \nabla \phi |^2   -   | \nabla \phi |^2 \h{1pt} \partial_3 \varphi - \int_\text{H}  \varphi \h{1.5pt}\partial_3 | \nabla \phi |^2   -   | \nabla \phi |^2 \h{1pt} \partial_3 \varphi + \int_\Omega | \nabla \phi |^2 \h{1pt}\Delta \varphi.   
\end{align*}Since $\phi \equiv 0$ on P, then $$\partial_t \phi = \partial_\tau \phi = \partial_{\tau\tau} \phi = 0 \h{20pt}\text{ on P, where $\tau = 1, 2$.}$$ By  \eqref{bdry of u} and the third equation in \eqref{sEL}, it holds $\partial_{33}\phi = 0$ on P. Therefore, \begin{equation*}
    \int_\text{P}  \varphi \h{1.5pt}\partial_3 | \nabla \phi |^2   -   | \nabla \phi |^2 \h{1pt} \partial_3 \varphi = - \int_\text{P}   \left(\partial_3  \phi \right)^2 \h{1pt} \partial_3 \varphi.
\end{equation*}By the second condition in \eqref{bc}, it turns out $$\partial_3 | \nabla \phi |^2 =  - 2 L_\text{H} \h{1pt} |\nabla' \phi|^2 \h{1pt}\cos 2 \phi  - L_\text{H} \h{1pt} \partial_{33}\phi \h{1pt}\sin 2\phi \h{20pt}\text{ on H, where $\nabla' = (\partial_1, \partial_2)$.}$$ We then get \begin{align*}
    - \int_\text{H}  \varphi \h{1.5pt}\partial_3 | \nabla \phi |^2   &-   | \nabla \phi |^2 \h{1pt} \partial_3 \varphi = L_{\mathrm{H}}\int_{\mathrm{H}}   \varphi \h{1pt} \partial_{33}\phi \sin 2\phi \\[1mm]
    & + 2 L_\text{H} \int_\text{H}
   \varphi \h{1pt}  |\nabla' \phi|^2 \cos 2 \phi   + \int_\text{H} | \nabla' \phi |^2 \h{1pt} \partial_3 \varphi + \frac{L_\text{H}^2}{4} \h{1pt}\partial_3 \varphi \h{1pt} \sin^2 2 \phi. \end{align*}Define \begin{align}\label{defn of R} R\left(\phi, \varphi\right)  :=    - \int_\text{P}   \left(\partial_3  \phi \right)^2 \h{1pt} \partial_3 \varphi +  \frac{L_\text{H}^2}{4} \int_{\mathrm H} \partial_3 \varphi \h{1pt} \sin^2 2 \phi   + \int_\text{H} | \nabla' \phi |^2 \h{1pt} \partial_3 \varphi + 2L_\text{H} \int_\text{H}
   \varphi \h{1pt}  |\nabla' \phi|^2 \cos 2 \phi. 
 \end{align} 
 The above calculations reduce \eqref{phi energy} to \begin{align*}
      \frac{\mathrm d}{\mathrm d \h{0.5pt}t} \int_\Omega \varphi \h{0.5pt} |\nabla \phi|^2 &+ 2 \int_\Omega \varphi\h{0.5pt} | \nabla^2 \phi |^2 + 2 \int_\Omega \varphi \h{1.5pt} \nabla u : \big( \nabla \phi \odot \nabla \phi \big) =   \int_\Omega \left( u \cdot \nabla \varphi\right) |\nabla \phi|^2 \\[2mm]
    &   + 2 h^2 \int_\Omega \varphi \h{1.5pt}|\nabla \phi |^2 \h{1.5pt}\cos 2 \phi + \int_\Omega | \nabla \phi |^2 \left(\partial_t \varphi + \Delta \varphi\right) + L_\text{H} \int_\text{H} \varphi \h{1pt} \partial_{33}\phi \sin 2\phi + R\left(\phi, \varphi\right).
 \end{align*} Summing this equation with \eqref{energy of u} and integrating the resulting equation from $0$ to $T$, we obtain \begin{align*}
      &\int_{\Omega \times \{\h{0.5pt}T\h{0.5pt}\}} \varphi \left( \h{1pt} |u|^2 + |\nabla \phi|^2\right)  + 2 \int_0^T\int_\Omega \varphi\left(\h{1pt} | \nabla u |^2 + |\nabla^2 \phi |^2 \right)  \nonumber\\[2mm]
 &  = \int_0^T\int_\Omega \left( u \cdot \nabla \varphi\right) \left( 2 p +  |u|^2 + |\nabla \phi|^2\right) + 2 \int_0^T\int_\Omega \left(u \cdot \nabla \phi\right) \nabla \phi \cdot \nabla \varphi + 2 h^2 \int_0^T\int_\Omega \varphi \h{1.5pt}|\nabla \phi |^2 \h{1.5pt}\cos 2 \phi \\[2mm]
&   + \int_0^T\int_\Omega  \left(\h{1pt}|u|^2 + |\nabla \phi |^2 \right)  \left( \partial_t \varphi + \Delta \varphi\right) + L_\text{H} \int_0^T\int_\text{H} \varphi \h{1pt} \partial_{33}\phi \sin 2\phi + \int_0^T R\left(\phi, \varphi\right).
 \end{align*} 
We now apply the equation of $\phi$ and no-slip boundary condition of $u$ to get \begin{align*}
    \int_0^T\int_\text{H} \varphi \h{1pt} \partial_{33}\phi \sin 2\phi = \int_0^T\int_\text{H} \varphi \h{1pt} \left( \partial_t \phi - \frac{h^2}{2} \sin 2 \phi - \Delta' \phi \right)\sin 2\phi.
\end{align*}Through integration by parts, we note that  
    \begin{align*}
    \int_0^{T} \int_\mathrm{H} \varphi \left(\partial_t \phi\right) \left(\sin 2 \phi\right) =    \int_{\mathrm{H} \times \{ T\}} \varphi  \sin^2 \phi -  \int_0^{T} \int_{\mathrm{H}}  \partial_t \varphi \sin^2 \phi  
\end{align*} and 
    \begin{align*}
    - \int_\mathrm{H} \left(\Delta' \phi\right) \left(\sin 2 \phi\right) \varphi   =   \int_{\mathrm{H}}   2 \left(\cos 2\phi\right) \left| \nabla' \phi\right|^2 \varphi  +   \left(\sin 2\phi\right)  \nabla' \phi \cdot \nabla' \varphi.
\end{align*} Therefore, \begin{align*}
    \int_0^T\int_\text{H} \varphi \h{1pt} \partial_{33}\phi \sin 2\phi  &=    \int_{\mathrm{H} \times \{ T\}} \varphi \sin^2 \phi -   \int_0^{T} \int_{\mathrm{H}} \partial_t \varphi \sin^2 \phi  \\[2mm]
    & + \int_0^T\int_{\mathrm{H}}   2 \left(\cos 2\phi\right) \left| \nabla' \phi\right|^2 \varphi  +   \left(\sin 2\phi\right)  \nabla' \phi \cdot \nabla' \varphi - \frac{h^2}{2} \int_0^T\int_{\mathrm H} \varphi \left(\sin 2 \phi \right)^2.
\end{align*}
Eventually, we arrive at our local energy identity:  
    \begin{align*}
      &\int_{\Omega \times \{\h{0.5pt}T\h{0.5pt}\}} \varphi \left( \h{1pt} |u|^2 + |\nabla \phi|^2\right)  + 2 \int_0^T\int_\Omega \varphi\left(\h{1pt} | \nabla u |^2 + |\nabla^2 \phi |^2 \right) + \frac{h^2 L_{\mathrm H}}{2} \int_0^T\int_{\mathrm H} \varphi \left(\sin 2 \phi \right)^2 \nonumber\\[2mm]
 &\h{20pt}  = \int_0^T\int_\Omega \left( u \cdot \nabla \varphi\right) \left( 2 p +  |u|^2 + |\nabla \phi|^2\right) + 2 \left(u \cdot \nabla \phi\right) \nabla \phi \cdot \nabla \varphi +  2 h^2   \varphi \h{1.5pt}|\nabla \phi |^2 \h{1.5pt}\cos 2 \phi \\[2mm]
& \h{20pt}   + \int_0^T\int_\Omega  \left(\h{1pt}|u|^2 + |\nabla \phi |^2 \right)  \left( \partial_t \varphi + \Delta \varphi\right)   + L_{\mathrm{H}} \int_{\mathrm{H} \times \{ T\}} \varphi \sin^2 \phi\\[2mm]
& \h{20pt}   + L_{\mathrm H}\int_0^T\int_{\mathrm{H}}   2 \left(\cos 2\phi\right) \left| \nabla' \phi\right|^2 \varphi  +   \left(\sin 2\phi\right)  \nabla' \phi \cdot \nabla' \varphi -    \partial_t \varphi \sin^2 \phi + \int_0^T R\left(\phi, \varphi\right).
 \end{align*}

 Generally, the above energy identity cannot be satisfied by weak solutions of IBVP. Similar to the work of Caffarelli-Kohn-Nirenberg \cite{CaffarelliKohnNirenberg1982} for the 3D Navier-Stokes equation, we introduce the global suitable weak solutions of IBVP as follows: 
 \begin{defn}\label{suitable wk sol}
The pair $(u, \phi)$ is a global suitable weak solution of $\mathrm{IBVP}$ if the followings hold:
\begin{itemize}
\item[$\mathrm{(1).}$] $(u, \phi)$ satisfies the integrability condition: \begin{align}\label{infty222} 
\sup_{t \h{0.5pt} \geq \h{0.5pt} 0} \int_{\Omega \times \{\h{0.5pt}t\h{0.5pt}\}}  |u|^2 + |\nabla \phi|^2 +   \int_0^\infty\int_\Omega  | \nabla u |^2 +   \Big|\h{1pt}  \Delta \phi  + \frac{h^2}{2} \sin 2 \phi \h{1pt} \Big|^2 < \infty.
\end{align} \vspace{0.4pc}

\item[$\mathrm{(2).}$] $(u, \phi)$ solves the $\mathrm{IBVP}$ weakly in $\Omega \times (0, \infty)$. \vspace{0.4pc}

\item[$\mathrm{(3).}$] For any $T > 0$, the angle $\phi$ satisfies the following energy equality:\begin{align}\label{energy eqa phi}
\int_{\Omega \times \{\h{0.5pt}T\h{0.5pt}\}} \phi^2 + 2 \int_0^T \int_\Omega | \nabla \phi |^2 = \int_\Omega \phi_0^2 + h^2 \int_0^T \int_\Omega \phi \sin 2 \phi + L_\mathrm{H} \int_0^T \int_{\mathrm{H}} \phi \sin 2 \phi.
\end{align}

\item[$\mathrm{(4).}$] For any $T > 0$ and any non-negative $\varphi \in C^\infty \big( \overline{\Omega} \times [\h{0.5pt}0, T\h{0.5pt}] \big)$ vanishing near $t=0$, we have
\begin{align} \label{energy ineq} 
      &\int_{\Omega \times \{\h{0.5pt}T\h{0.5pt}\}} \varphi \left( \h{1pt} |u|^2 + |\nabla \phi|^2\right)  + 2 \int_0^T\int_\Omega \varphi\left(\h{1pt} | \nabla u |^2 + |\nabla^2 \phi |^2 \right) + \frac{h^2 L_{\mathrm H}}{2} \int_0^T\int_{\mathrm H} \varphi \left(\sin 2 \phi \right)^2  \nonumber\\[2mm]
 & \h{20pt}  \leq \int_0^T\int_\Omega \left( u \cdot \nabla \varphi\right) \left( 2 p +  |u|^2 + |\nabla \phi|^2\right) + 2   \left(u \cdot \nabla \phi\right) \nabla \phi \cdot \nabla \varphi +  2 h^2   \varphi \h{1.5pt}|\nabla \phi |^2 \h{1.5pt}\cos 2 \phi \nonumber\\[2mm]
&\h{20pt}   + \int_0^T\int_\Omega  \left(\h{1pt}|u|^2 + |\nabla \phi |^2 \right)  \left( \partial_t \varphi + \Delta \varphi\right)   + L_{\mathrm{H}} \int_{\mathrm{H} \times \{ T\}} \varphi \sin^2 \phi\\[2mm]
&\h{20pt}   + L_{\mathrm H}\int_0^T\int_{\mathrm{H}}   2 \left(\cos 2\phi\right) \left| \nabla' \phi\right|^2 \varphi  +   \left(\sin 2\phi\right)  \nabla' \phi \cdot \nabla' \varphi -    \partial_t \varphi \sin^2 \phi + \int_0^T R\left(\phi, \varphi\right). \nonumber
\end{align}
\end{itemize}

\end{defn}
\noindent \eqref{energy ineq} is referred to as the generalized energy inequality of IBVP.
\subsection{Main results and organization of the article} The pair $(0, \phi)$ is an equilibrium solution of IBVP if $\phi$ solves the  boundary value problem: 
\begin{equation} \label{SG} \left\{ \begin{aligned}
    - \Delta \phi &=  h^2  \sin \phi \cos \phi \quad \hspace{29.5pt}\text{in } \Omega; \\[.5mm]
    \phi &= 0 \quad \hspace{82.5pt}\text{on P}; \\[.5mm]
    \partial_\nu \phi &= L_{\text{H}} \sin \phi \cos \phi \quad \h{25pt}\text{on H}.
\end{aligned} \right. \end{equation}
Solutions to \eqref{SG} are critical points of the following energy functional on $H_\text{P}^1(\Omega)$:
\begin{equation} \label{phiE}
    E[\phi] := \int_{\Omega} \frac{1}{2} |\nabla \phi|^2 + \frac{h^2}{4} (\cos 2 \phi + 1) + \frac{L_{\text{H}}}{4} \int_\text{H} (\cos 2 \phi + 1).
\end{equation}Our first result is about the thickness threshold for the existence of multiple solutions to \eqref{SG}. \begin{thm}\label{stationary solutions}
Define the critical thickness:
\begin{equation} \label{dc}
    d_c := \frac{1}{h} \tan^{-1} \frac{h}{L_{\mathrm{H}}}.
\end{equation}Then the followings hold for the least-energy solution of \eqref{SG}: \begin{itemize}
    \item[$\mathrm{(1).}$] If $d \leq d_c$, then $0$ is the unique critical point of the energy $E$.\vspace{0.4pc}
        \item[$\mathrm{(2).}$] If $d > d_c$, then there is a unique positive least-energy solution of \eqref{SG}. \vspace{0.4pc}
        \item[$\mathrm{(3).}$] If $d > d_c$, then the least-energy solution obtained in $\mathrm{(2)}$ depends only on the variable $x_3$.
\end{itemize}
\end{thm}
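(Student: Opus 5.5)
The plan is to reduce everything to the linearization at $\phi=0$ and to one-dimensional ODE analysis. Write $\lambda_1(d)$ for the first eigenvalue of the quadratic form
\[
Q[\psi]:=\int_\Omega |\nabla\psi|^2 - h^2\int_\Omega \psi^2 - L_{\mathrm H}\int_{\mathrm H}\psi^2,\qquad \psi\in H^1_{\mathrm P}(\Omega),
\]
relative to $\int_\Omega\psi^2$. This is the second variation of $E$ at $0$, up to a factor. Separating variables on $\mathbb T^2\times(0,d)$, tangential Fourier modes only increase $Q$, so the minimizer depends on $x_3$ alone. The relevant 1D problem is $-\psi''=(h^2+\mu)\psi$ on $(0,d)$ with $\psi(d)=0$ and $-\psi'(0)=L_{\mathrm H}\psi(0)$, since $\nu=-e_3$ on H. With $\mu=0$, the candidate $\psi=\sin(h(d-x_3))$ satisfies the Robin condition exactly when $h\cot(hd)=L_{\mathrm H}$, i.e. $d=d_c$. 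Monotonicity in $d$ (domain extension) then gives $\lambda_1(d)>0$ for $d<d_c$, $\lambda_1(d_c)=0$, and $\lambda_1(d)<0$ for $d>d_c$.

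\textbf{Part (1).} Let $\phi$ be a critical point with $d\le d_c$. Test the weak form of \eqref{SG} with $\phi$ to get
\[
\int_\Omega|\nabla\phi|^2=\frac{h^2}{2}\int_\Omega \phi\sin2\phi+\frac{L_{\mathrm H}}{2}\int_{\mathrm H}\phi\sin2\phi .
\]
Since $\tfrac12\phi\sin2\phi\le\phi^2$, with equality only at $\phi=0$, this yields $Q[\phi]\le 0$. Strict inequality holds unless $\phi\equiv0$ a.e. in $\Omega$, because $\tfrac12 s\sin 2s<s^2$ for $s\neq0$. This contradicts $Q\ge0$ with equality only on the eigenspace when $d=d_c$. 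At $d=d_c$ we therefore need strictness from the bulk term: equality in $Q[\phi]\le 0$ would force $\phi\sin 2\phi=2\phi^2$ a.e., hence $\phi=0$.

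\textbf{Part (2).} For existence I would minimize $E$ over $H^1_{\mathrm P}$. The integrand terms are bounded and coercivity comes from $\int|\nabla\phi|^2$ together with Poincaré via the P boundary condition. The trace term is compact, so the direct method applies. Because $E[\phi]$ is invariant under $\phi\mapsto|\phi|$ (cosine is even), a minimizer can be taken nonnegative. By the strong maximum principle and Hopf's lemma applied to the equation $-\Delta\phi=c(x)\phi$ with bounded $c$, it is then positive in $\Omega\cup\mathrm H$. Nontriviality follows from $E[\varepsilon\psi_1]<E[0]$ for the first eigenfunction when $\lambda_1<0$. I would also truncate to $0\le\phi\le\pi/2$: replacing $\phi$ by $\min(\phi,\pi/2)$ does not increase $E$, since $\cos2\phi+1$ is minimized at $\pi/2$. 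Uniqueness of the positive least-energy solution is the main obstacle. I would use the Brezis--Oswald/hidden convexity approach: the ratio $\sin2\phi/(2\phi)$ is decreasing on $(0,\pi/2)$, so for two positive solutions $\phi,\tilde\phi$ one tests with $(\phi^2-\tilde\phi^2)/\phi$ and $(\tilde\phi^2-\phi^2)/\tilde\phi$. The Picone identity makes the gradient part nonnegative, and the boundary terms have the right sign by the same monotonicity of $\sin 2s/(2s)$. Hence $\phi=\tilde\phi$. The needed boundedness of $\tilde\phi/\phi$ up to P follows from Hopf's lemma at P.

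\textbf{Part (3).} Once uniqueness is established, translation invariance in $x_1,x_2$ on $\mathbb T^2$ shows that $\phi(\cdot+a,\cdot)$ is again a positive least-energy solution. It therefore equals $\phi$, so $\phi=\phi(x_3)$. Alternatively, one can produce the 1D minimizer directly by solving the pendulum ODE $\phi''+\tfrac{h^2}{2}\sin2\phi=0$ with the shooting condition at $x_3=0$, and this gives an explicit profile as a cross-check.
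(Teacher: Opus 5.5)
Your proposal is correct, but it reaches the threshold and the uniqueness by routes different from the paper's.

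\textbf{Threshold.} The paper normalizes by $h^2\int_\Omega\phi^2+L_{\mathrm H}\int_{\mathrm H}\phi^2$ and reduces to 1D by averaging the eigenfunction over $\mathbb T^2$. It then derives $\lambda_1\tan(h\lambda_1 d)=h/L_{\mathrm H}$ for every $d$ and gets monotonicity by implicit differentiation. You use the first eigenvalue of $Q$ relative to $\int_\Omega\psi^2$ (same sign, hence same threshold), Fourier separation, one explicit zero mode at $d=d_c$, and strict monotonicity by zero extension past P. This is lighter; the paper's version buys the full curve $d\mapsto\lambda_1(d)$. You should add one line here. Since $hd_c<\pi/2$, the mode $\sin(h(d_c-x_3))$ is positive on $[0,d_c)$, so it is the ground state and $\lambda_1(d_c)=0$. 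The other roots $hd=\tan^{-1}(h/L_{\mathrm H})+k\pi$ only make $0$ a higher eigenvalue.

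\textbf{Uniqueness.} The paper runs Sattinger's monotone iteration from the supersolution $\pi/2$ to produce a maximal solution $v_\infty\ge v$. It then concludes with Green's identity and the monotonicity of $\sin x/x$. Your Brezis--Oswald/Picone test functions are shorter and avoid the comparison principle for the nonlinear Robin problem. The cost is that you need positivity on $\mathrm H$ and bounded ratios up to P via Hopf, which you supply; the paper needs only the ordering $v\le v_\infty$.

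\textbf{Existence.} You also prove existence of a minimizer, which the paper leaves implicit.

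\textbf{One fix.} Use the truncation in its strict form, as the paper does. For a minimizer $\phi\ge0$ one has $E[\min(\phi,\pi/2)]=E[\phi]$. This forces $\nabla\phi=0$ a.e. on $\{\phi>\pi/2\}$, so that set is empty by continuity, because $\phi=0$ on P. This is what guarantees that \emph{every} positive least-energy solution takes values in $(0,\pi/2]$, not just some minimizer. Your Picone step needs $\sin 2s/s$ to be decreasing on the range of both solutions, and it is not decreasing on all of $(0,\pi)$.
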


Theorem \ref{stationary solutions} is proved in Section 2 for general dimensions. In Lemma \ref{thrshold by eigvalu}, the linear (in)stability of the $0$ solution is characterized by the first Steklov-Dirichlet eigenvalue. We then prove in Lemma \ref{dim red} that this eigenvalue is independent of the dimension, using a dimension-reduction argument. In Section 2.2, we characterize the linear (in)stability of the $0$ solution in terms of the thickness $d$. More properties are shown in Section 2.3 for the least-energy solution. Lemmas \ref{sign of min}, \ref{sign for all}, and \ref{unique of positive bd solu} establish the uniqueness of the positive least-energy solution of \eqref{SG}. Moreover, in Lemma \ref{sym of minimizer}, the least-energy solution is shown to depend only on the normal variable and is strictly decreasing on the interval $[0, d\h{0.5pt}]$. Section 2.4 is devoted to studying the strong stability of the least-energy solution when $d \neq d_c$. See Proposition \ref{strong stab of gmin}. In the end, Proposition \ref{non-unif-unstable} shows that solution of \eqref{SG} must be strongly unstable if it also depends on some tangential variables.

We prove in the next that there is an asymptotic limit of the global suitable weak solution to IBVP while the time tends to infinity. The limit of the director angle must be a solution to \eqref{SG}. More precisely, we have \begin{thm} \label{thm1.1}
Given an arbitrary thickness $d$ and a global suitable weak solution $(u, \phi)$ to $\mathrm{IBVP}$, there exists a large time $T_0$ such that
\begin{itemize}
    \item[$\mathrm{(1).}$] The solution $(u, \phi)$ is regular on $\overline{\Omega} \times [\h{0.5pt}T_0, \infty)$. \vspace{0.4pc}
    
    \item[$\mathrm{(2).}$] There exists a smooth solution $\phi_{\infty}$ to \eqref{SG} and a constant $\theta \in \big(0, \frac{1}{2}\big)$ such that 
    \begin{equation} \label{DecayEst}
        \|\hspace{.5pt} u(t) \hspace{.5pt}\|_{H^1} + \|\hspace{.5pt} \phi(t) - \phi_{\infty} \hspace{.5pt}\|_{H^2} \hspace{1.5pt}\lesssim\hspace{1.5pt}  (1 + t)^{- \frac{\theta}{1 - 2 \theta}}, \h{20pt}\text{for any  $t > T_0$.}
    \end{equation}
\end{itemize}

The constant $\theta$ is given by the \L{}ojasiewicz-Simon inequality.
\end{thm}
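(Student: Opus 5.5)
The plan has three stages: obtain global energy dissipation, upgrade to regularity for large times by an $\varepsilon$-regularity argument, and then prove convergence with a rate by a \L{}ojasiewicz--Simon argument for the energy $\mathcal E(t):=\frac12\int_\Omega |u|^2 + E[\phi(t)]$.

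\emph{Step 1: dissipation.} Formally, testing the $u$-equation with $u$ and the $\phi$-equation with $-(\Delta\phi + \frac{h^2}{2}\sin 2\phi)$ gives
\begin{equation*}
\frac{\mathrm d}{\mathrm d t}\mathcal E + \int_\Omega |\nabla u|^2 + \Big|\Delta\phi + \frac{h^2}{2}\sin 2\phi\Big|^2 = 0 .
\end{equation*}
Here the Rapini--Papoular condition \eqref{bc} exactly produces the boundary term of $E$, and the coupling terms cancel. For suitable weak solutions I would use the integrability condition \eqref{infty222} to conclude that the dissipation is integrable on $(0,\infty)$. Hence for any $\varepsilon>0$ there is $T_1$ with $\int_{T_1}^\infty\int_\Omega |\nabla u|^2 + |\Delta\phi + \frac{h^2}{2}\sin 2\phi|^2 < \varepsilon$. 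Boundedness of $\phi$ in $L^2$ for all time follows from \eqref{energy eqa phi} together with the Poincar\'e inequality on $H^1_{\rm P}$.

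\emph{Step 2: regularity for large times.} I would apply a Caffarelli--Kohn--Nirenberg type $\varepsilon$-regularity criterion, in interior and boundary versions near H and P, built from the generalized energy inequality \eqref{energy ineq}. The criterion says that smallness of the scaled quantities $r^{-2}\int_{P_r}|u|^3+|\nabla\phi|^3+|p|^{3/2}$ implies regularity in $P_{r/2}$. The point is to show these scaled quantities become uniformly small after large time. Using the smallness of the dissipation in Step 1, the Gagliardo--Nirenberg inequality, and an elliptic estimate for $\phi$ with the oblique boundary condition, I would control $\|\nabla\phi\|_{L^3}$, $\|u\|_{L^3}$ and the pressure (via the Stokes system with right-hand side $\nabla\cdot(\nabla\phi\odot\nabla\phi)$) on time slabs $[t,t+1]$, $t\ge T_1$. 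An alternative, probably cleaner, route avoids local $\varepsilon$-regularity. Choose a time $t_0\ge T_1$ with $\|\nabla u(t_0)\|_{L^2}+\|\Delta\phi(t_0)+\frac{h^2}{2}\sin 2\phi(t_0)\|_{L^2}$ small, which is possible by integrability. Solve the strong problem from $t_0$, and use a higher-order energy estimate for $\|\nabla u\|^2+\|\Delta\phi + \frac{h^2}{2}\sin 2\phi\|^2$ that closes when this quantity is small, giving global strong existence. Weak--strong uniqueness, valid because the suitable weak solution satisfies the energy inequality, then identifies the two solutions. Bootstrapping gives smoothness on $\overline\Omega\times[T_0,\infty)$, which is item (1). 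I expect Step 2 to be the main obstacle. The boundary terms from \eqref{RP for n} and the nonlinear coupling $\nabla\cdot(\nabla\phi\odot\nabla\phi)$ in three dimensions must be handled with care, particularly near H where the term $R(\phi,\varphi)$ appears.

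\emph{Step 3: convergence.} With uniform $H^2$ bounds and precompactness of $\{\phi(t)\}_{t\ge T_0}$ in $H^1$, the $\omega$-limit set is nonempty and consists of solutions of \eqref{SG}. It is also connected, and $\mathcal E$ is constant on it, say equal to $\mathcal E_\infty$. The energy $E$ is analytic on $H^1_{\rm P}\cap H^2$ because $\sin$ and $\cos$ are entire and the boundary term is analytic as well. Hence the \L{}ojasiewicz--Simon inequality holds near each $\phi_\infty$ in the limit set:
\begin{equation*}
|E[\psi]-E[\phi_\infty]|^{1-\theta}\lesssim \|\mathcal M(\psi)\|_{L^2},
\end{equation*}
where $\mathcal M$ is the Euler--Lagrange operator with the natural boundary condition, and $\theta\in(0,\frac12)$ (one may shrink $\theta$). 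I would prove this via the Lyapunov--Schmidt reduction for the Fredholm linearization with Robin-type boundary condition. Set $H(t):=(\mathcal E(t)-\mathcal E_\infty)^\theta$. The velocity contributes $\frac12\|u\|^2\lesssim\|\nabla u\|^2$, so
\begin{equation*}
-\frac{\mathrm d}{\mathrm d t}H \gtrsim \|\nabla u\|+\|\partial_t\phi\|_{L^2}.
\end{equation*}
This yields $\partial_t\phi\in L^1(T_0,\infty;L^2)$, and therefore convergence of the whole trajectory to a single $\phi_\infty$. The differential inequality $\frac{\mathrm d}{\mathrm d t}(\mathcal E-\mathcal E_\infty)\lesssim-(\mathcal E-\mathcal E_\infty)^{2(1-\theta)}$ gives $\mathcal E-\mathcal E_\infty\lesssim(1+t)^{-1/(1-2\theta)}$. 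Integrating the bound on $-H'$ then gives the $L^2$ rate $(1+t)^{-\theta/(1-2\theta)}$. Finally, parabolic smoothing and interpolation with the uniform higher bounds from Step 2 upgrade this to the $H^1$ rate for $u$ and the $H^2$ rate for $\phi-\phi_\infty$, which is \eqref{DecayEst}.
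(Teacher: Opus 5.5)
Your Steps 1 and 3 follow the paper: Chill's \L{}ojasiewicz--Simon inequality, the energy identity and the higher-order estimate, the $\omega$-limit argument, and the rate. Your first route for Step 2 is the paper's Section 5, which you leave unproved. There the decisive points are the blow-up Lemma~\ref{decay lem}, whose smallness functional includes the oscillation term $r^{-5}\int_{P_r}|\phi-(\phi)_{z_0,r}|^3$, and the null structure near H that lets $\sin^2\phi$ in the boundary integrals be replaced by $\sin^2\phi-C$. A minor point in Step 3: interpolating the $L^2$ rate against uniform higher bounds loses part of the exponent. This is harmless only because you allow $\theta$ to shrink; the paper keeps the exponent via $G$ and $A'\le c_1A$ in Proposition~\ref{convergence rate}.

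Your preferred second route (Leray-type eventual regularity) is genuinely different, but as written its key step fails. Set $w:=\Delta\phi+\frac{h^2}{2}\sin 2\phi$ and $A(t_0)=\|\nabla u(t_0)\|_{L^2}^2+\|w(t_0)\|_{L^2}^2$; the higher-order estimate does not propagate smallness of $A(t_0)$.
\begin{itemize}
\item In Lemma~\ref{high ord est} the right-hand side $A\,\mathcal{Q}(\mathcal{E}+A)$ contains terms linear in $A$ with non-small coefficients, namely the contribution of $I_4$ and the boundary term in $I_2$.
\item This is structural. Since $w=0$ on P, the $u$-independent part of $\frac12\frac{\mathrm d}{\mathrm dt}\|w\|_{L^2}^2$ is exactly $-\langle E''[\phi]w,w\rangle_{\mathscr{V}'\times\mathscr{V}}$.
\item $E''[\phi]$ has a negative eigenvalue when $\phi$ is near a strongly unstable critical point, e.g.\ $\phi\equiv 0$ when $d>d_c$. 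So $\|w\|$ grows at the linear level.
\end{itemize}
Hence small $A(t_0)$ does not keep $A$ small, the continuation argument for the strong solution started at $t_0$ does not close, and in three dimensions nothing else excludes blow-up.

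What closes the argument is smallness of $\int_{t_0}^\infty A$, and that is known only for the weak solution. So the order must be reversed:
\begin{itemize}
\item take the local strong solution on $[t_0,T^*)$;
\item identify it there with the suitable weak solution;
\item use $\int_{t_0}^{T^*}A\le\int_{T_1}^\infty A<\varepsilon$ with a uniform Gronwall/continuation argument, as in Lemma~\ref{limit and unif bound}, to exclude $T^*<\infty$.
\end{itemize}

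Two further ingredients are assumed rather than proved.
\begin{itemize}
\item Weak--strong uniqueness does not hold merely ``because the suitable weak solution satisfies the energy inequality''. Definition~\ref{suitable wk sol} contains no inequality for $\mathcal{E}$; item (1) is only a bound. You must derive from item (4), with $\varphi=\varphi(t)$, an energy inequality valid from almost every initial time. It sits at the level of $\|u\|_{L^2}^2+\|\nabla\phi\|_{L^2}^2$ and carries the Rapini--Papoular boundary terms and $R(\phi,\varphi)$. You then have to run the relative-energy argument with those terms.
\item You also need local strong well-posedness for the nonlinear Robin problem.
\end{itemize}
Once these are supplied, your route is shorter than Section 5. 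It avoids the blow-up lemma and the Solonnikov/Seregin pressure estimates, and it reuses Lemma~\ref{high ord est} directly. The paper's route needs neither a well-posedness theory nor weak--strong uniqueness. It also yields, as by-products, a boundary $\varepsilon$-regularity criterion and the decay $\|u(t)\|_{L^\infty}\to 0$.
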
 

Hereinafter, given two quantities $A$ and $B$, the notation $A \lesssim B$ means that there is a constant $C > 0$ such that $A \leq C B$. The constant $C$ might depend on $h$, $L_\mathrm{H}$, $\Omega$, and the initial data in \eqref{ini cond}. If $C$ depends on some specific constants $c_1, ..., c_j$, we also use the notation $A \lesssim_{c_1, ..., c_j} B$.  

The proof of this theorem relies on the topics discussed in Sections 3, 4, and 5. In Theorem \ref{LS general}, we prove a \L{}ojasiewicz-Simon inequality for a critical point of the $E$-energy, using the result of Chill \cite{Chill2003}. This inequality is applied to the classic solution of IBVP and infers the decay estimate \eqref{DecayEst}. See the item (1) in Proposition \ref{convergence rate}. The regularity result in the item (1) of Theorem \ref{thm1.1} is shown in Section 5 by a small-energy regularity result. We point out that for the Navier-Stokes equation, the small-energy regularity result was first proved by Caffarelli-Kohn-Nirenberg in \cite{CaffarelliKohnNirenberg1982}. Different proofs were established by Lin in \cite{Lin1998} and by Ladyzhenskaya-Seregin in \cite{LadyzhenskayaSeregin1999}. The approach of Ladyzhenskaya-Seregin was later used in \cite{Seregin2002} by Seregin to study the regularity near the flat boundary. For the 3D simplified Ericksen-Leslie equation, its small-energy regularity on the interior points is obtained in \cite{LinLiu1996}. In the 2D case, Lin-Lin-Wang \cite{LinLinWang2010} prove both the interior and boundary regularities for the simplified Ericksen-Leslie system with strong achoring condition. In our current work, we are forced to study the boundary partial regularity of the suitable weak solution $(u, \phi)$ with the weak anchoring condition for the angle variable $\phi$. This boundary condition brings the boundary integrals in \eqref{energy ineq}. We emphasize that there is a null structure hidden in these boundary integrals. In fact, we observe that for any constant $C$, it holds that \begin{align*}
    \int_{\mathrm H \times \{\h{0.5pt} T \h{0.5pt}\}} \varphi\h{1pt} \sin^2 \phi &- \int_{0}^{T} \int_{\mathrm H} \partial_t \varphi  \h{1pt}\sin^2 \phi  =  \int_{\mathrm H \times \{\h{0.5pt} T\h{0.5pt}  \}} \varphi \Bigl(\sin^2 \phi - C \h{1pt}\Bigr) -  \int_{0}^{T} \int_{\mathrm H} \partial_t \varphi \Bigl(\sin^2 \phi - C \h{1pt} \Bigr).
\end{align*}This structure is crucial in our proof of boundary partial regularity, particularly the blow-up argument in the proof of Lemma \ref{decay lem}. \vspace{0.2pc}

After investigating the general asymptotic behavior of the global suitable weak solution in Theorem \ref{thm1.1}, we now rigorously justify the P-HAN transition induced by the thickness $d$.

\begin{thm} \label{thm1.2}

Suppose the same assumption as in Theorem \ref{thm1.1}. 

\begin{itemize}
    \item[$\mathrm{(1).}$] If $0 < d \leq d_c$, then the estimate \eqref{DecayEst} holds with $\phi_{\infty} \equiv 0$. Moreover, if $d < d_c$, then 
    \begin{equation}\label{small conv}
        \|\hspace{.5pt} u(t) \hspace{.5pt}\|_{H^1} + \|\hspace{.5pt} \phi(t) \hspace{.5pt}\|_{H^2} \hspace{1.5pt}\lesssim\hspace{1.5pt} e^{- \kappa \h{0.5pt}t}, \h{20pt}\text{for any $t > T_0$.}
    \end{equation}Here, $T_0$ is a large time. $\kappa > 0$ is a constant depending on $h$, $L_{\mathrm{H}}$, $\Omega$, and the initial data. \vspace{0.4pc}

    \item[$\mathrm{(2).}$] Assume the initial director angle $\phi_0$ satisfies
    \begin{equation*}
        \phi_0 \not\equiv 0 \hspace{15pt}\text{and}\h{15pt} 0 \leq \phi_0 \leq \pi \h{20pt}\text{in $\Omega$}.
    \end{equation*}If $d > d_c$, then \eqref{DecayEst} holds with $\phi_{\infty} = \phi_*$, where 
    $\phi_*$ is the unique positive least-energy solution of \eqref{SG}. Moreover, 
    \begin{equation}\label{large conv}
        \|\hspace{.5pt} u(t) \hspace{.5pt}\|_{H^1} + \|\hspace{.5pt} \phi(t) - \phi_* \hspace{.5pt}\|_{H^2} \hspace{1.5pt}\lesssim\hspace{1.5pt} e^{- \kappa \h{0.5pt} t}, \h{20pt}\text{for any $t > T_0$}.
    \end{equation}Here, $T_0$ is a large time. $\kappa > 0$ is a constant depending on $h$, $L_{\mathrm{H}}$, $\Omega$, and the initial data.

\end{itemize}
\end{thm}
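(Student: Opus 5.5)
Theorem \ref{thm1.2} follows from Theorem \ref{thm1.1} together with the identification of the limit $\phi_\infty$ via Theorem \ref{stationary solutions}. The proof then upgrades the \L{}ojasiewicz rate to an exponential one by showing that the relevant critical point is nondegenerate (strongly stable). The plan has three steps.

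\textbf{Step 1: the case $d\le d_c$.} By item (1) of Theorem \ref{stationary solutions}, $0$ is the only critical point of $E$, and hence the only solution of \eqref{SG} in $H^1_{\mathrm P}$. Theorem \ref{thm1.1} therefore forces $\phi_\infty\equiv 0$, and \eqref{DecayEst} holds. For $d<d_c$ we use Lemma \ref{thrshold by eigvalu} and the eigenvalue computation of Section 2.2. Together they say that the second variation
\[
E''(0)[\psi,\psi]=\int_\Omega |\nabla\psi|^2-h^2\psi^2-L_{\mathrm H}\int_{\mathrm H}\psi^2
\]
is coercive on $H^1_{\mathrm P}$, with some constant $\lambda>0$. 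This is exactly the strong stability of Proposition \ref{strong stab of gmin}. With this coercivity, the \L{}ojasiewicz--Simon inequality of Theorem \ref{LS general} holds near $0$ with the optimal exponent $\theta=\tfrac12$:
\[
|E[\phi]-E[0]|^{1/2}\lesssim \|E'(\phi)\|.
\]
We then rerun the argument of Proposition \ref{convergence rate} on $[T_0,\infty)$, where the solution is regular by Theorem \ref{thm1.1}(1). Set $\mathcal E(t)=\tfrac12\|u\|_{L^2}^2+E[\phi(t)]-E[0]$. The energy dissipation gives
\[
\mathcal E'\lesssim -\bigl(\|\nabla u\|^2+\|E'(\phi)\|^2\bigr)\lesssim -\mathcal E.
\]
Hence $\mathcal E(t)$ decays exponentially. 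Integrating the dissipation gives exponential decay of $\|\phi-0\|_{L^2}$, and parabolic regularity on unit time intervals upgrades this to $H^1\times H^2$, which is \eqref{small conv}.

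\textbf{Step 2: the case $d>d_c$, identifying the limit.} We must show $\phi_\infty=\phi_*$, where $\phi_*$ is the least-energy solution from items (2)--(3) of Theorem \ref{stationary solutions}. First, the maximum principle should preserve $0\le\phi\le\pi$. Both $0$ and $\pi$ solve the $\phi$-equation together with the boundary condition on H, since $\sin\phi\cos\phi=0$ there. On P we have $\phi=0$, and the transport term is harmless because $u$ is divergence-free. Applying the comparison with Stampacchia truncation, tested against $(\phi-\pi)_+$ and $\phi_-$, at the level of the energy equality \eqref{energy eqa phi} gives $0\le\phi_\infty\le\pi$. Next, $\phi_\infty\not\equiv 0$: since $d>d_c$, $0$ is linearly unstable with first eigenfunction $e_1>0$, a function of $x_3$ only (Lemma \ref{dim red}). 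Testing the $\phi$-equation against $e_1$ shows that $\int\phi\, e_1$ grows while it is small. The strong maximum principle gives $\phi(t)>0$ for $t>0$, so $\int\phi\,e_1$ cannot tend to $0$.

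\textbf{Step 3: uniqueness and exponential rate.} A nonnegative nontrivial solution of \eqref{SG} with values in $[0,\pi]$ is positive by the strong maximum principle. The uniqueness results, Lemmas \ref{sign of min}, \ref{sign for all} and \ref{unique of positive bd solu}, should give that it equals $\phi_*$; this requires checking that those lemmas cover all positive solutions with range in $(0,\pi)$, and not merely minimizers. If they do not, we would use Proposition \ref{non-unif-unstable} to exclude $x'$-dependent limits and then carry out an ODE phase-plane uniqueness argument in $x_3$. Finally, Proposition \ref{strong stab of gmin} gives $E''(\phi_*)\ge\lambda>0$, and the argument of Step 1 yields \eqref{large conv}.

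\textbf{Main obstacle.} The hardest point is Step 2: excluding convergence to $0$, or to some other nonminimal positive equilibrium, for suitable weak solutions with the nonlinear transport term. If the instability argument is delicate at the weak-solution level, I would instead work on $[T_0,\infty)$, where the solution is regular, and use the positivity propagated from $\phi_0\not\equiv0$.
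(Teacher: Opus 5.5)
Step 1 is correct and follows the paper. The gaps are in Steps 2--3, which is where the real content of part (2) lies.

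\textbf{Non-vanishing of $\phi_\infty$: the convection term.} Testing the $\phi$-equation against your eigenfunction $e_1$ (the paper's $\phi_1$) gives
\[
\frac{\mathrm d}{\mathrm dt}\int_\Omega\phi\, e_1=h^2\int_\Omega\bigl(\tfrac12\sin2\phi-\lambda_1^2\phi\bigr)e_1+L_{\mathrm H}\int_{\mathrm H}\bigl(\tfrac12\sin2\phi-\lambda_1^2\phi\bigr)e_1+\int_\Omega\phi\,u\cdot\nabla e_1 ,
\]
and you never control the last term. Since $e_1$ vanishes on P while $\partial_3e_1\neq0$ there, this term is not bounded by $\|u\|_{L^\infty}\int_\Omega\phi\, e_1$. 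So ``$\int\phi e_1$ grows while it is small'' does not follow. Rescuing the weight $e_1$ would require $\|\nabla u(t)\|_{L^\infty}\to0$, which Theorem~\ref{thm1.1} does not provide.

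The paper instead tests against $\phi_1^{1+\epsilon}$. Its Laplacian contains the positive term $\epsilon(1+\epsilon)\phi_1^{\epsilon-1}|\nabla\phi_1|^2$, which absorbs $\|u\|_{L^\infty}|\nabla\phi_1^{1+\epsilon}|$ by Young's inequality. Even this needs $\|u(t)\|_{L^\infty}\to0$. That is \eqref{decay of sup u}, obtained from the boundary partial-regularity analysis of Theorem~\ref{partial regularity}; it does not follow from the $H^1$ decay of $u$ in three dimensions.

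\textbf{Non-vanishing of $\phi$ at the regularity time.} Your claim that ``the strong maximum principle gives $\phi(t)>0$ for $t>0$'' is not available for a suitable weak solution on the possibly singular interval $(0,T_0)$. Your fallback, ``positivity propagated from $\phi_0\not\equiv0$'', is exactly what has to be proved. If $\phi(T_0)\equiv0$, then $\phi\equiv0$ for $t\ge T_0$, and $\phi_\infty=0$ is a legitimate outcome. Stampacchia truncation gives $0\le\phi\le\pi$ but not non-triviality.

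The paper closes this in Section 7. It approximates by the mollified-drift solutions $\psi^{\delta_k}$ of \eqref{eq of psi_delta} and observes that $e^{h^2t}\psi^{\delta_k}\ge0$ is a supersolution. It then applies the Ignatova--Kukavica--Ryzhik weak Harnack inequality uniformly in $k$, iterated backward in time. This shows that $\phi(T_1)\equiv0$ would force $\phi_0\equiv0$.

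\textbf{Identifying the limit.} In Step 3, Lemma~\ref{unique of positive bd solu} only covers solutions with values in $(0,\tfrac\pi2)$, so you need $\phi_\infty\le\tfrac\pi2$. Your fallback through Proposition~\ref{non-unif-unstable} does not supply this. Strong instability of non-uniform equilibria does not prevent a particular trajectory from converging to one.

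The missing step is a short maximum-principle argument (Step 2 of Proposition~\ref{P-Han along classical}):
\begin{itemize}
\item At an interior maximum with $\tfrac\pi2<\phi_\infty<\pi$, one has $\Delta\phi_\infty=-\tfrac{h^2}{2}\sin2\phi_\infty>0$, which is impossible.
\item At a maximum on H, $\phi_\infty$ is subharmonic nearby. Hopf's lemma then gives $-\partial_3\phi_\infty>0$, contradicting $-\partial_3\phi_\infty=\tfrac{L_{\mathrm H}}{2}\sin2\phi_\infty\le0$.
\end{itemize}
Strictness of the bound follows as in Lemma~\ref{sign of min}, and Lemma~\ref{unique of positive bd solu} then identifies $\phi_\infty=\phi_*$.
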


The exponential convergence rates in \eqref{small conv} and \eqref{large conv} are obtained by Corollary \ref{LS exp}, based on the strong stability of the least-energy solution obtained in Proposition \ref{strong stab of gmin}. In (1) of Theorem \ref{thm1.2}, we have $\phi_\infty \equiv 0$. This is a result of (1) in Theorem \ref{stationary solutions}. The result that $\phi_\infty = \phi_*$ in (2) of Theorem \ref{thm1.2} is discussed in Section 6 for classic global solutions of IBVP, and in Section 7 for the global suitable weak solutions of IBVP. Note that for a suitable weak solution, we do not have strong maximum principle for the director angle in general. The director angle might be identically $0$ at some finite time $T_*$ if the solution is not regular on the time interval $(0, T_*\h{0.5pt}]$. However, this situation cannot occur. In Section 7, we utilize an approximation argument and the Harnack inequality due to Ignatov-Kukavica-Ryzhik (see Lemma 3.1 in \cite{IgnatovaKukavicaRyzhik2016}), with which we show that $\phi\left(t, \cdot\right)$ cannot be identically $0$ at any large $t $ if the initial director angle is not identically $0$ in $\Omega$.

\subsection{A short literature review} To complete this introduction, we note that some research works on the long-time asymptotic behaviors of the hydrodynamical flow of liquid crystals are available in the literature. In \cite{ChenKimYu2018}, Fr\'{e}edericksz transition is considered for the same equation \eqref{sEL}. The results in \cite{ChenKimYu2018} are in 2D and with a strong unidirectional planar boundary condition. The anisotropic case is discussed in Kim-Pan \cite{kimPan2020} and Kim \cite{kim2021}. In 2023, the Fr\'{e}edericksz transition were considered with the applied inhomogeneous electric field. See the work by Sadovskii-Sadovskaya in \cite{SS2023}. We also refer readers to the work of Wu \cite{Wu2010} for the Ginzburg-Landau approximation of the Ericksen-Leslie model, and to Kim-Pan \cite{kimpan2021} for the smectic liquid crystals. In all these works, the director angle is supplied with a strong anchoring condition. Our current work focuses on the Rapini-Papoular weak anchoring condition. This boundary condition was first introduced in \cite{RapiniPapoular1969}. Its validity in the theory of nematic liquid crystals was later justified by Barbero-Durand in \cite{BarberoDurand1986}. We also note that our results apply to the global suitable weak solutions. The P-HAN transition is preserved even along the suitable weak flow of the 3D simplified Ericksen-Leslie system.

\section{Least-energy solution of the sine-Gordon equation}

In this section we take $\Omega = \mathbb{T}^{n-1} \times [\hspace{0.5pt} 0, d \hspace{0.5pt}]$, where $n$ is a natural number. We use $x_n$ to denote the normal variable that lies on the interval $[\hspace{0.5pt}0, d\hspace{0.5pt}]$. The remaining spatial variables are called tangential variables. This section is mainly concerned with the least-energy solution of the boundary value problem \eqref{SG}. The strong stability of the least-energy solution is also obtained.
\subsection{A generalized Steklov-Dirichlet eigenvalue problem}

Consider the Rayleigh quotient: 
\begin{align}\label{rayquo}
    \mathrm{R}[\phi] := \dfrac{\displaystyle\int_{\Omega} \big|\hspace{.5pt} \nabla \phi \hspace{.5pt}\big|^2}{\displaystyle h^2 \int_{\Omega} \phi^2 + L_\text{H} \int_{\text{H}} \phi^2}, \hspace{20pt}\text{where $\phi \in H_\text{P}^1(\Omega)$ and $\phi \not \equiv 0$.}
\end{align} 
If $n = 1$, the integral of a function on $\text{H}$ is known as the evaluation of this function at $0$. Define $\mathrm{R}^{n\mathrm{D}}$ to be the infimum of $\mathrm{R}[\cdot]$ over all functions in $H_\text{P}^1(\Omega)$ that are not identically $0$. Then 
\begin{align}\label{pos of evalue} 
    \mathrm{R}^{n\mathrm{D}} = \lambda_1^2 \hspace{1pt}>\hspace{1pt} 0.
\end{align}
Since $W^{1, 2}(\Omega)$ is compactly embedded into $L^2(\Omega)$ and $ L^2(\partial \hspace{0.5pt}\Omega)$, the infimum $\mathrm{R}^{n\mathrm{D}}$ can be attained by a non-negative and non-trivial function $\phi_1$ on $\Omega$. Moreover, $\phi_1$ satisfies the following generalized Steklov-Dirichlet eigenvalue problem: 
\begin{align} \label{Seignprob} 
    - \Delta \phi_1 = \left(h \lambda_1 \right)^2 \phi_1 \hspace{10pt} \text{in } \Omega; \hspace{20pt}
    \phi_1 = 0 \hspace{10pt} \text{on } \text{P}; \hspace{20pt}
    - \partial_n \phi_1 = L_\text{H} \lambda_1^2 \hspace{1pt} \phi_1 \hspace{10pt} \text{on } \text{H}.
 \end{align} 
 
The linear (in)stability of the trivial solution $0$ can be determined by $\mathrm{R}^{n\mathrm{D}}$.
\begin{lem}\label{thrshold by eigvalu}
The following two statements hold:
\begin{itemize} 
    \item[$\mathrm{(1).}$] If $\mathrm{R}^{n\mathrm{D}} \geq 1$, then $0$ is the unique critical point of the energy $E$.\vspace{0.4pc}
    \item[$\mathrm{(2).}$] If $\mathrm{R}^{n\mathrm{D}} < 1$, then $0$ is linearly unstable. It is not a local minimizer of the energy $E$.
\end{itemize}
\end{lem}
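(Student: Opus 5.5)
For (1), I would test the Euler--Lagrange system \eqref{SG} against the critical point itself and compare with the Rayleigh quotient. Let $\phi \in H^1_\mathrm{P}(\Omega)$ be any critical point of $E$, so $\phi$ is a weak solution of \eqref{SG}. Taking $\phi$ as the test function gives
\begin{equation*}
\int_\Omega |\nabla \phi|^2 = \frac{h^2}{2}\int_\Omega \phi \sin 2\phi + \frac{L_\mathrm{H}}{2}\int_\mathrm{H} \phi\sin 2\phi .
\end{equation*}
The elementary inequality $|\sin s| \le |s|$, with equality only at $s=0$, yields $\frac12 \phi\sin 2\phi \le \phi^2$ pointwise, with equality exactly where $\phi = 0$. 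Hence
\begin{equation*}
\int_\Omega |\nabla\phi|^2 \le h^2\int_\Omega \phi^2 + L_\mathrm{H}\int_\mathrm{H}\phi^2 .
\end{equation*}
If $\phi \not\equiv 0$, then $\int_\Omega \phi^2>0$, and the inequality in the bulk is strict on the positive-measure set where $\phi \neq 0$. This gives $\mathrm{R}[\phi] < 1$. On the other hand, $\mathrm{R}[\phi]\ge \mathrm{R}^{n\mathrm D} \ge 1$, which is a contradiction. So $\phi\equiv 0$.

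The one point needing care is strictness. It holds because $\phi\ne0$ on a set of positive Lebesgue measure in $\Omega$ and $\phi^2 - \tfrac12\phi\sin2\phi>0$ there. The boundary term only contributes a non-strict inequality, so I rely solely on the bulk term for strictness. No regularity beyond $H^1$ is needed, since $\phi\sin 2\phi$ is integrable on $\Omega$ and on H by the trace theorem.

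For (2), I would use the minimizer $\phi_1$ from \eqref{Seignprob} as a direction of instability. Expanding $E$ at $0$ and using $\cos 2s = 1 - 2s^2 + O(s^4)$ gives
\begin{equation*}
E[\varepsilon\phi_1]-E[0] = \frac{\varepsilon^2}{2}\Big(\int_\Omega|\nabla\phi_1|^2 - h^2\int_\Omega\phi_1^2 - L_\mathrm{H}\int_\mathrm{H}\phi_1^2\Big) + O(\varepsilon^4).
\end{equation*}
The bracket is the second variation $\mathrm{d}^2E[0](\phi_1,\phi_1)$, which equals $(\mathrm{R}^{n\mathrm D}-1)\big(h^2\int_\Omega\phi_1^2+L_\mathrm{H}\int_\mathrm{H}\phi_1^2\big)<0$. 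This is exactly linear instability: the linearized operator, with its Robin-type boundary condition, has a negative eigenvalue, and $\phi_1$ is a corresponding descent direction.

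To control the remainder rigorously, I would use $|\cos 2s - 1 + 2s^2|\le \tfrac23 s^4$. This bounds the remainder by $C\varepsilon^4\big(\int_\Omega\phi_1^4+\int_\mathrm{H}\phi_1^4\big)$. That quantity is finite because $\phi_1$ solves a linear elliptic problem and is therefore smooth up to the boundary. Alternatively one can truncate $\phi_1$, since $\mathrm{R}$ is continuous in $H^1$ and the truncation keeps the quotient below $1$. Hence $E[\varepsilon\phi_1]<E[0]$ for all small $\varepsilon$, and $\varepsilon\phi_1\to0$ in $H^1_\mathrm{P}$, so $0$ is not a local minimizer.

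I expect the main obstacle to be this integrability of the quartic remainder, especially the boundary $L^4$ norm when $n$ is large. Elliptic regularity for \eqref{Seignprob} resolves it.
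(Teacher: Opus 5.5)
Your proposal is correct and follows the paper's proof. In (1) you test \eqref{SG} with the critical point itself and compare against $\mathrm{R}^{n\mathrm{D}}\ge 1$ via $\tfrac12\phi\sin 2\phi\le\phi^2$, and in (2) you expand $E[\varepsilon\phi_1]-E[0]$ to second order along the Steklov--Dirichlet eigenfunction. Your version is slightly more careful than the paper's: your tested identity has the correct sign on the $\mathrm{H}$-term (the paper's display has a harmless slip), and you justify the remainder, which in fact needs no regularity of $\phi_1$ because $0\le\cos 2s-1+2s^2\le 2s^2$ with quotient by $s^2$ tending to $0$, so dominated convergence gives $o(\varepsilon^2)$ using only $\phi_1\in L^2(\Omega)\cap L^2(\mathrm{H})$.
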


\begin{proof}[\bf Proof] Suppose $\phi \in H_\text{P}^1(\Omega)$ is a critical point of $E$. It solves the boundary value problem \eqref{SG}. Multiply the first equation in \eqref{SG} by $\phi$ and integrate over $\Omega$. Applying the divergence theorem, we obtain
\begin{equation*}
    \frac{L_\text{H}}{2} \int_\text{H} \phi \sin 2 \phi + \int_{\Omega} \big|\nabla \phi\big|^2 = \frac{h^2}{2} \int_{\Omega} \phi \sin 2 \phi.
\end{equation*}

\noindent If $\mathrm{R}^{n\mathrm{D}} \geq 1$, then the last equality infers that
\begin{equation*}
    h^2 \int_{\Omega} \phi^2 - \phi \hspace{1pt} \frac{\sin 2 \phi}{2} + L_\text{H} \int_{\text{H}} \phi^2 + \phi \hspace{1pt} \frac{\sin 2 \phi}{2} \leq 0.
\end{equation*}

\noindent Therefore,
\begin{equation*}
    \phi^2 - \phi \hspace{1pt}\frac{\sin 2 \phi}{2} \equiv 0 \hspace{10pt} \text{in } \Omega, 
\end{equation*}

\noindent which implies that $\phi \equiv 0$ in $\Omega$.\vspace{0.3pc}

Using the non-trivial eigenfunction $\phi_1$ in \eqref{Seignprob}, we calculate 
\begin{equation*}
    E \big[\hspace{0.5pt}t \hspace{0.5pt}\phi_1\hspace{0.5pt}\big] - E\big[\hspace{0.5pt}0\hspace{0.5pt}\big] = \frac{t^2}{2} \left[ \hspace{1pt}\int_{\Omega} \big|\nabla \phi_1 \big|^2 - h^2 \int_{\Omega} \phi_1^2 - L_\text{H} \int_{\text{H}} \phi_1^2 \right] + \mathrm{O}\big(t^4\big).
\end{equation*}
If $\mathrm{R}^{n\mathrm{D}} < 1$, then the coefficient of $t^2$ on the right-hand side above is strictly negative. Hence, $E [\hspace{0.5pt}t \hspace{0.5pt}\phi_1\hspace{0.5pt}] < E [\hspace{0.5pt}0\hspace{0.5pt}]$, provided that $t^2$ is suitably small. The zero solution is not a local minimizer of the energy $E$.
\end{proof}

Using a dimension-reduction argument, we show that $\mathrm{R}^{n\mathrm{D}}$ is independent of the dimension $n$.  \begin{lem}\label{dim red}
    For any natural number $n$, it holds $\mathrm{R}^{n\mathrm{D}} = \mathrm{R}^{1\mathrm{D}}$.
\end{lem}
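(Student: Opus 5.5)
We prove the two inequalities $\mathrm{R}^{n\mathrm{D}} \leq \mathrm{R}^{1\mathrm{D}}$ and $\mathrm{R}^{n\mathrm{D}} \geq \mathrm{R}^{1\mathrm{D}}$ separately, with $\Omega = \mathbb{T}^{n-1} \times (0, d)$.

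\emph{Upper bound.} Let $\psi \in H^1(0,d)$ satisfy $\psi(d) = 0$ and $\psi \not\equiv 0$. Its trivial extension $\phi(x', x_n) := \psi(x_n)$ lies in $H^1_{\mathrm P}(\Omega)$. Since $|\mathbb{T}^{n-1}| = 1$, the three integrals in \eqref{rayquo} reduce exactly to their one-dimensional counterparts: $\int_\Omega |\nabla\phi|^2 = \int_0^d (\psi')^2$, $\int_\Omega \phi^2 = \int_0^d \psi^2$, and $\int_{\mathrm H}\phi^2 = \psi(0)^2$. Hence $\mathrm{R}[\phi]$ equals the one-dimensional Rayleigh quotient of $\psi$. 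Taking the infimum over $\psi$ gives $\mathrm{R}^{n\mathrm{D}} \leq \mathrm{R}^{1\mathrm{D}}$.

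\emph{Lower bound.} The definition of $\mathrm{R}^{1\mathrm{D}}$ gives, for every $\psi \in H^1(0,d)$ with $\psi(d)=0$,
\begin{equation*}
\int_0^d (\psi')^2 \;\geq\; \mathrm{R}^{1\mathrm{D}} \Bigl( h^2 \int_0^d \psi^2 + L_{\mathrm H}\, \psi(0)^2 \Bigr).
\end{equation*}
Take $\phi \in H^1_{\mathrm P}(\Omega)$, first assumed smooth up to the boundary and vanishing on P; the general case follows by density, since all three terms in \eqref{rayquo} are continuous in $H^1$, the boundary term by the trace theorem. For a.e.\ $x' \in \mathbb{T}^{n-1}$ (for smooth $\phi$, every $x'$), apply the inequality to $\psi = \phi(x', \cdot)$. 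Integrating over $x'$ via Fubini yields
\begin{equation*}
\int_\Omega (\partial_n \phi)^2 \;\geq\; \mathrm{R}^{1\mathrm{D}} \Bigl( h^2 \int_\Omega \phi^2 + L_{\mathrm H} \int_{\mathrm H}\phi^2 \Bigr).
\end{equation*}
The tangential derivatives only increase the left side, because $\int_\Omega |\nabla \phi|^2 \geq \int_\Omega (\partial_n\phi)^2$. Therefore $\mathrm{R}[\phi] \geq \mathrm{R}^{1\mathrm{D}}$, and so $\mathrm{R}^{n\mathrm{D}} \geq \mathrm{R}^{1\mathrm{D}}$.

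An alternative route is a Fourier expansion in $x'$, where the nonzero modes only add $4\pi^2|k|^2\int|\hat\phi_k|^2$ to the numerator. The slicing argument is simpler, however.

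The only technical point is justifying the slicewise application. We need the restriction of $\phi$ to a.e.\ line to lie in $H^1(0,d)$, vanish at $x_n = d$, and have value at $x_n = 0$ agreeing with the trace of $\phi$ on H. Working with smooth approximants vanishing near P (which are dense in $H^1_{\mathrm P}(\Omega)$) avoids this issue entirely, and the continuity of the quotient's ingredients in $H^1$ then passes the inequality to the limit. Note that $h^2\int_\Omega\phi^2 + L_{\mathrm H}\int_{\mathrm H}\phi^2 > 0$ for $\phi \not\equiv 0$, so the quotients are well defined along the approximation.
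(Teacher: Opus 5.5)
Your proof is correct. The upper bound is the same as the paper's, but your lower bound takes a genuinely different route.

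The paper works with the nonnegative minimizer $\phi_1$ and its Euler--Lagrange system \eqref{Seignprob}. Averaging over $\mathbb{T}^{n-1}$ removes the tangential Laplacian by periodicity, so $\eta_1(x_n)=\int_{\mathbb{T}^{n-1}}\phi_1(x',x_n)\,\mathrm{d}x'$ solves the one-dimensional eigenvalue problem with the same $\lambda_1$. Testing with $\eta_1$ then shows that its one-dimensional Rayleigh quotient equals $\lambda_1^2$. That argument depends on three things:
\begin{itemize}
\item the existence of a minimizer;
\item enough regularity to integrate the equation and the Steklov condition in $x'$;
\item the positivity $\phi_1\ge 0$, $\phi_1\not\equiv 0$, which is essential because it guarantees $\eta_1\not\equiv 0$ (a sign-changing function could have zero tangential mean).
\end{itemize}
Its advantage is economy: it reuses the spectral setup the paper needs anyway, and it identifies the tangential average of the $n$-dimensional eigenfunction as a one-dimensional eigenfunction.

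Your argument applies the one-dimensional inequality on each fiber $x'=\text{const}$, integrates by Fubini, and discards $|\nabla'\phi|^2$. It needs none of the spectral machinery, only the density of smooth functions vanishing near P and the continuity of the trace. It proves $\mathrm{R}[\phi]\ge\mathrm{R}^{1\mathrm{D}}$ directly for every admissible $\phi$.

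It also gives something extra. If $\phi$ attains $\mathrm{R}^{n\mathrm{D}}=\mathrm{R}^{1\mathrm{D}}$, your fiberwise inequality forces $\int_\Omega|\nabla'\phi|^2=0$. So every minimizer of \eqref{rayquo} depends on $x_n$ alone, which the paper's averaging argument does not show.
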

\begin{proof}[\bf Proof] 
We have $\mathrm{R}^{n\mathrm{D}} \leq \mathrm{R}^{1\mathrm{D}}$ since $\Big\{ \phi \in H^1\big[0, d\big] : \phi(d) = 0 \hspace{1pt}\Big\} \subset H_\text{P}^1(\Omega)$. It remains to prove \begin{align}\label{ano dir}
\mathrm{R}^{n\mathrm{D}} \geq \mathrm{R}^{1\mathrm{D}}.
\end{align}
Define a non-trivial single variable function as follows:
\begin{equation*}
    \eta_1(x_n) := \int_{\mathbb{T}^{n-1}} \phi_1(x', x_n) \hspace{2pt}\mathrm{d} x'.
\end{equation*}
Integrate the first $n-1$ variables on both sides of \eqref{Seignprob}. It turns out 
\begin{align*}  
    - \dfrac{\mathrm{d}^2 \eta_1}{\mathrm{d} \hspace{0.5pt} x_n^2} = \left(h \lambda_1 \right)^2 \eta_1 \hspace{10pt} \text{in } (0, d); \hspace{20pt}\eta_1 = 0 \hspace{10pt} \text{at } x_n = d; \hspace{20pt}
    - \dfrac{\mathrm{d} \hspace{0.5pt}\eta_1}{\mathrm{d} \hspace{0.5pt} x_n} = L_\text{H} \lambda_1^2 \hspace{1pt} \psi_1 \hspace{10pt} \text{at } x_n = 0.
\end{align*}
Multiply $\eta_1$ on the both sides of the first equation above and integrate over $(0, d)$. Through integration by parts and using the boundary conditions satisfied by $\eta_1$, we obtain 
\begin{align*}
    \int_0^d \big( \eta'_1 \big)^2 \hspace{1pt} \mathrm{d} x_n = \big( h\lambda_1 \big)^2 \int_0^d \eta_1^2 \hspace{1.5pt} \mathrm{d} x_n + L_\text{H} \h{0.5pt} \lambda_1^2 \hspace{1.5pt}\eta_1^2(0).
\end{align*} 
Here $'$ denotes the derivative with respect to the $x_n$-variable. Therefore,
\begin{align*}
    \mathrm{R}^{1\mathrm{D}} \leq \dfrac{\displaystyle \int_0^d \big( \eta'_1\big)^2 \hspace{1pt}\mathrm{d} x_n}{\displaystyle h^2 \int_0^d \eta_1^2 \hspace{1.5pt}\mathrm{d} x_n + L_\text{H}  \hspace{1pt}\eta_1^2(0)} = \lambda_1^2 = \mathrm{R}^{n\mathrm{D}}.
\end{align*}
\eqref{ano dir} is obtained and the proof is completed.
\end{proof}

\subsection{Critical thickness of the film}

With Lemma \ref{dim red} we characterize  the relation between $\lambda_1$ and $d$ in the following lemma. 
 
\begin{lem} Recall $\lambda_1$ in \eqref{pos of evalue} and $d$ the thickness of $\Omega$. Then it holds
\begin{align}\label{lambda(d)}
    \lambda_1 \tan\left(h \hspace{0.5pt}\lambda_1 \hspace{0.5pt}d\right) = \frac{h}{L_\mathrm{H}}.
\end{align}
\end{lem}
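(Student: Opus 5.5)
By Lemma \ref{dim red} we have $\mathrm{R}^{n\mathrm{D}} = \mathrm{R}^{1\mathrm{D}}$, so the plan is to reduce everything to the one-dimensional problem on $[0,d\h{0.5pt}]$ and solve the Euler--Lagrange equation explicitly. In 1D the minimizer $\phi_1$ of the Rayleigh quotient is a nonnegative, nontrivial function satisfying
\begin{equation*}
    -\phi_1'' = (h\lambda_1)^2 \phi_1 \ \text{ in } (0,d), \qquad \phi_1(d) = 0, \qquad -\phi_1'(0) = L_{\mathrm H}\lambda_1^2 \phi_1(0).
\end{equation*}
Since $\lambda_1 > 0$, the general solution vanishing at $x_n = d$ is $\phi_1(x_n) = A \sin\big(h\lambda_1 (d - x_n)\big)$ with $A \neq 0$.

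Inserting this into the Robin condition at $0$, we have $\phi_1'(0) = -A h\lambda_1 \cos(h\lambda_1 d)$. The condition becomes $A h\lambda_1\cos(h\lambda_1 d) = L_{\mathrm H}\lambda_1^2 A \sin(h\lambda_1 d)$. Dividing by $A\lambda_1 L_{\mathrm H}$ gives $\lambda_1 \tan(h\lambda_1 d) = h/L_{\mathrm H}$, provided $\cos(h\lambda_1 d) \neq 0$. We can justify that proviso directly: if the cosine vanished, the sine would be $\pm 1$, and the equation would force $L_{\mathrm H}\lambda_1^2 = 0$, which is impossible. Hence \eqref{lambda(d)} holds for $\lambda_1$.

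The main subtlety is that the transcendental equation has infinitely many roots, one for each higher Steklov--Dirichlet eigenvalue, so we must confirm that $\lambda_1$ is the smallest positive root. The relevant root lies in $(0, \pi/(2hd))$. This follows from the nonnegativity of the minimizer: $\phi_1 \ge 0$ and $\phi_1 \not\equiv 0$ force $\sin(h\lambda_1 s)$ to keep a fixed sign for $s \in (0,d)$, which gives $h\lambda_1 d \le \pi$. Combined with $\tan(h\lambda_1 d) > 0$, this places $h\lambda_1 d$ in $(0,\pi/2)$. On that interval $\lambda \mapsto \lambda\tan(h\lambda d)$ increases strictly from $0$ to $\infty$, so the root is unique. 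This root is what the subsequent critical-thickness analysis uses: via Lemma \ref{thrshold by eigvalu}, comparing $\lambda_1$ with $1$ gives $d$ against $d_c$.
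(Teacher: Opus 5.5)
Your argument is correct and is essentially the paper's: reduce to $n=1$ via Lemma \ref{dim red}, solve the ODE explicitly, impose the Robin condition at $x_n=0$, and rule out $\cos(h\lambda_1 d)=0$. Your ansatz $A\sin\big(h\lambda_1(d-x_n)\big)$ builds in the Dirichlet condition, which the paper instead imposes through a $2\times 2$ determinant; your final paragraph locating $h\lambda_1 d$ in $(0,\pi/2)$ via the sign of the nonnegative minimizer goes beyond this lemma, and the paper obtains that localization in Lemma \ref{mon of lambda1} differently, by inserting the explicit function $\psi_*$ built from the root in $(0,\pi/(2hd))$ into the Rayleigh quotient to get $\lambda_1\le\lambda$.
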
 

\begin{proof}[\bf Proof] By Lemma \ref{dim red}, we may consider the eigenvalue problem in \eqref{Seignprob} with $n = 1$. The first equation in \eqref{Seignprob} is now a second-order ODE. The general representation of $\phi_1$ reads as follows: 
\begin{align*}
    \phi_1(x_n) = A \sin \big(h \lambda_1 x_n \big) + B \cos\big(h \lambda_1 x_n \big).
\end{align*}
Here $A$ and $B$ are two constants. In light of the boundary conditions in \eqref{Seignprob} and the positivity of $\lambda_1$ in \eqref{pos of evalue}, $(A, B)$ is a non-trivial solution of the linear system: 
\begin{align}\label{AB eqn} \left\{ \begin{array}{lcl}
\hspace{43.5pt} h \hspace{0.5pt} A + \left( L_\mathrm{H} \lambda_1\right) B &= 0,\\[2mm]
    \sin\left(h \hspace{0.5pt}\lambda_1 \hspace{0.5pt} d\right) A  + \cos\left(h \hspace{0.5pt} \lambda_1 \hspace{0.5pt} d\right) B &= 0.
\end{array}\right.
\end{align}
Note that $\cos\left(h \hspace{0.5pt} \lambda_1 \hspace{0.5pt} d\right) \neq 0$. Otherwise, $A = B = 0$.  We then obtain \eqref{lambda(d)} since the coefficient matrix in \eqref{AB eqn} must have zero determinant.  
\end{proof} 
Furthermore, we have 
\begin{lem}\label{mon of lambda1}
For any fixed $d \in (0, \infty)$, there is a unique solution in $\left(0, \dfrac{\pi}{2 \hspace{0.5pt} h \hspace{0.5pt} d}\right)$ to the following equation of $x$: \begin{align}\label{alg eqn} 
    x \tan\left(h \hspace{0.5pt}x \hspace{0.5pt}d\right) = \frac{h}{L_\mathrm{H}}. 
\end{align} This solution is equal to $\lambda_1$. If we regard $\lambda_1 = \lambda_1(d)$ as a function of $d$, then $\lambda_1$ is strictly decreasing with respect to $d$.
\end{lem}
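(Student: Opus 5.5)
Consider $f(x) := x \tan(h x d)$ on the interval $I := \bigl(0, \frac{\pi}{2hd}\bigr)$. Existence and uniqueness of a root of \eqref{alg eqn} in $I$ will come from monotonicity. On $I$ both factors $x$ and $\tan(hxd)$ are positive and strictly increasing, so $f$ is continuous and strictly increasing there. Moreover, $f(x) \to 0$ as $x \to 0^+$ and $f(x) \to +\infty$ as $x \to \frac{\pi}{2hd}^-$. Since $h/L_{\mathrm H} > 0$, the intermediate value theorem gives exactly one $x_* \in I$ with $f(x_*) = h/L_{\mathrm H}$.

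Next, to show $x_* = \lambda_1$, I will work in the one-dimensional setting allowed by Lemma \ref{dim red}. The proof of \eqref{lambda(d)} shows that $\lambda_1 > 0$ solves \eqref{alg eqn}, so it suffices to show $h\lambda_1 d < \frac{\pi}{2}$. I will use that the minimizer $\phi_1$ of the Rayleigh quotient can be taken non-negative; replacing it by $|\phi_1|$ if needed keeps it a minimizer and a solution of \eqref{Seignprob}. Write
\[
\phi_1 = A\sin(h\lambda_1 x) + B\cos(h\lambda_1 x).
\]
The boundary condition at $x=0$ gives $hA = -L_{\mathrm H}\lambda_1 B$; if $B = 0$ then $A = 0$, so $B \neq 0$. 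Normalize $B > 0$. Then $\phi_1$ is a multiple of $\cos(h\lambda_1 x + \alpha)$ with $\alpha = \tan^{-1}(L_{\mathrm H}\lambda_1/h) \in (0, \pi/2)$. Its first zero in $x > 0$ is at $h\lambda_1 x = \frac{\pi}{2} - \alpha$. Because $\phi_1 \ge 0$ on $[0,d]$ and $\phi_1(d) = 0$, the point $d$ must be the first zero; a later zero would force a sign change, since the zeros of a sinusoid are simple. Hence
\[
h\lambda_1 d = \frac{\pi}{2} - \alpha < \frac{\pi}{2},
\]
which gives $\lambda_1 \in I$ and therefore $\lambda_1 = x_*$.

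The main obstacle is this sign and first-zero argument. An alternative is to argue that any root $x$ of \eqref{alg eqn} yields an eigenfunction, so $\lambda_1^2 \le x^2$ for every root. Since $f$ is positive only on intervals of the form $\bigl(k\pi/(hd), (k+\frac12)\pi/(hd)\bigr)$, $k \ge 0$, the smallest positive root lies in $I$, and $\lambda_1$, being the smallest admissible value, must be that root. For this route I will check that every root $x$ gives a nontrivial $(A,B)$ solving \eqref{AB eqn}, hence a genuine eigenfunction with eigenvalue parameter $x$. Multiplying that eigenfunction equation by the eigenfunction shows its Rayleigh quotient equals $x^2$, so indeed $\lambda_1^2 \le x^2$. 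I would present this second route as the main argument, since it avoids the positivity discussion.

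Finally, for monotonicity in $d$, let $d_1 < d_2$ and write $f_d(x) = x\tan(hxd)$. For a fixed $x$ in the smaller interval $I_{d_2} \subset I_{d_1}$, we have $f_{d_2}(x) > f_{d_1}(x)$, because $\tan$ is strictly increasing on $(0,\pi/2)$. Suppose $\lambda_1(d_2) \ge \lambda_1(d_1)$. Then $\lambda_1(d_1) \in I_{d_2}$, and
\[
\frac{h}{L_{\mathrm H}} = f_{d_2}(\lambda_1(d_2)) \ge f_{d_2}(\lambda_1(d_1)) > f_{d_1}(\lambda_1(d_1)) = \frac{h}{L_{\mathrm H}},
\]
a contradiction. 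Hence $\lambda_1(d_2) < \lambda_1(d_1)$. Equivalently, one can differentiate the relation \eqref{alg eqn} implicitly: the partial derivatives of $f_d(x)$ in $x$ and in $d$ are both positive, so $\lambda_1'(d) < 0$.
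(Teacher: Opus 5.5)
Your proposal is correct and follows essentially the paper's argument: the paper also writes down an explicit eigenfunction $\psi_*$ for the root $\lambda\in\bigl(0,\frac{\pi}{2hd}\bigr)$, uses $\lambda_1^2\le \mathrm{R}[\psi_*]=\lambda^2$, and then concludes $\lambda_1=\lambda$ from \eqref{lambda(d)} and uniqueness of the root in that interval, exactly as in your second route. The differences are minor. You justify existence and uniqueness of the root explicitly, whereas the paper takes it for granted. Your main monotonicity argument is a derivative-free comparison, whereas the paper differentiates \eqref{lambda(d)} implicitly to get $\dot\lambda_1\bigl(h\lambda_1 d+\tfrac12\sin(2h\lambda_1 d)\bigr)=-h\lambda_1^2$; that tacitly uses differentiability of $\lambda_1(d)$, which your comparison avoids.
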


\begin{proof}[\bf Proof] Suppose $\lambda = \lambda(d)$ is the unique solution of \eqref{alg eqn} in $\left(0, \dfrac{\pi}{2 \hspace{0.5pt} h \hspace{0.5pt} d}\right)$. Then 
$$\psi_*(x_n) := - L_\text{H} \lambda \sin \big(h \lambda x_n \big) + h \cos\big(h \lambda x_n \big)$$ 
is a non-trivial solution to the problem: \begin{align*}
    - \dfrac{\mathrm{d}^2 \psi_*}{\mathrm{d} \hspace{0.5pt} x_n^2}  = \left(h \lambda \right)^2 \psi_* \hspace{10pt} \text{in } (0, d); \hspace{20pt}\psi_* = 0 \hspace{10pt} \text{at } x_n = d; \hspace{20pt}
    - \dfrac{\mathrm{d} \hspace{0.5pt}\psi_*}{\mathrm{d} \hspace{0.5pt} x_n} = L_\text{H} \lambda^2 \hspace{1pt} \psi_* \hspace{10pt} \text{at } x_n = 0.
\end{align*}
Note that $\mathrm{R}^{n\mathrm{D}} = \lambda_1^2$ is the minimum of  \eqref{rayquo} over all functions in $H_\text{P}^1(\Omega)$ that are not identically $0$. It yields \begin{align*}
    \lambda_1^2 \leq \dfrac{\displaystyle \int_0^d \big( \psi'_*\big)^2 \hspace{1pt}\mathrm{d} x_n}{\displaystyle h^2 \int_0^d \psi_*^2 \hspace{1.5pt}\mathrm{d} x_n + L_\text{H}  \hspace{.5pt}\psi_*^2(0)} = \lambda^2,
\end{align*} which together with \eqref{lambda(d)} infer that $\lambda_1$ is also a solution of \eqref{alg eqn} in $\left(0, \dfrac{\pi}{2 \hspace{0.5pt} h \hspace{0.5pt} d}\right)$. Hence, $\lambda_1(d) = \lambda(d)$.\vspace{0.4pc}

Differentiate the equation \eqref{lambda(d)} with respect to  $d$ and rearrange the resulting equation. We obtain \begin{align*}
    \dot{\lambda}_1 \left( h \hspace{0.5pt}\lambda_1 d + \dfrac{1}{2} \sin \big(2 \hspace{0.5pt}h\hspace{0.5pt} \lambda_1 d \big) \right) = - h \hspace{0.5pt}\lambda_1^2.
\end{align*}
Here $\dot{\lambda}_1$ is the derivative of $\lambda_1$ with respect to $d$. It holds $\dot{\lambda}_1 < 0$ from the last equality. $\lambda_1$ is therefore a  strictly decreasing function of $d$.
\end{proof}

Let $d_c$ be the critical thickness satisfying $\lambda_1(d_c) = 1$. Equivalently, 
\begin{align}\label{d_c}
    d_c = \dfrac{1}{h} \tan^{-1} \dfrac{h}{L_\text{H}}.
\end{align}
Since $\lambda_1(d)$ is strictly decreasing with $d$, Lemma \ref{thrshold by eigvalu} can now be rephrased in terms of $d$ as follows.  
\begin{prop}\label{threshold of thickness}
    The following two statements hold:
    \begin{itemize} 
        \item[$\mathrm{(1).}$] If $d \leq d_c$, then $0$ is the unique critical point of the energy $E$.\vspace{0.4pc}
        \item[$\mathrm{(2).}$] If $d > d_c$, then $0$ is linearly unstable. It is not a local minimizer of the energy $E$.
    \end{itemize}
\end{prop}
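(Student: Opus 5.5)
The plan is to translate the threshold $\mathrm{R}^{n\mathrm{D}} = \lambda_1^2 \gtrless 1$ of Lemma \ref{thrshold by eigvalu} into a threshold in $d$, using the strict monotonicity of $\lambda_1(d)$ from Lemma \ref{mon of lambda1}. Concretely, I would show that $d \le d_c$ if and only if $\lambda_1(d) \ge 1$. Then (1) follows from part (1) of Lemma \ref{thrshold by eigvalu}, and (2) follows from part (2) of Lemma \ref{thrshold by eigvalu}.

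The first step is to verify that $\lambda_1(d_c) = 1$ with $d_c$ given by \eqref{d_c}. Set $x = 1$ in \eqref{alg eqn}. The equation becomes $\tan(h d_c) = h/L_{\mathrm H}$, which holds by definition of $d_c$. I must also check the admissibility condition $1 \in (0, \pi/(2 h d_c))$, that is, $h d_c < \pi/2$. This holds since $h d_c = \tan^{-1}(h/L_{\mathrm H}) \in (0, \pi/2)$, because $h, L_{\mathrm H} > 0$. By the uniqueness part of Lemma \ref{mon of lambda1}, it follows that $\lambda_1(d_c) = 1$.

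The second step uses that $d \mapsto \lambda_1(d)$ is strictly decreasing (Lemma \ref{mon of lambda1}). Hence $d \le d_c$ gives $\lambda_1(d) \ge \lambda_1(d_c) = 1$, so $\mathrm{R}^{n\mathrm{D}} = \lambda_1^2 \ge 1$. Likewise $d > d_c$ gives $\lambda_1(d) < 1$, so $\mathrm{R}^{n\mathrm{D}} < 1$, since $\lambda_1 > 0$. Here Lemma \ref{dim red} guarantees that the same $\lambda_1$ governs every dimension $n$, so no dimension-dependent issue arises. Applying Lemma \ref{thrshold by eigvalu} in each case then completes the proof.

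The only delicate point is that monotonicity was derived by differentiating \eqref{lambda(d)} in $d$, which presumes $\lambda_1$ is differentiable in $d$. If needed, I would justify this by the implicit function theorem applied to $F(x,d) = x\tan(hxd) - h/L_{\mathrm H}$. On the admissible region $0 < hxd < \pi/2$ we have $\partial_x F = \tan(hxd) + hxd\sec^2(hxd) > 0$. Consequently the unique root is a $C^1$ function of $d$. Alternatively, one can avoid derivatives entirely: $F$ is increasing in both $x$ and $d$ on this region, so increasing $d$ forces the root $x$ to decrease. I expect no genuine obstacle beyond this bookkeeping.
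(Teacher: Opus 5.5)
Your proposal is correct and follows the paper's argument: the paper likewise defines $d_c$ by $\lambda_1(d_c)=1$, uses the strict monotonicity of $d\mapsto\lambda_1(d)$ from Lemma~\ref{mon of lambda1}, and reads off both cases from Lemma~\ref{thrshold by eigvalu}. Your extra checks (that $h d_c<\pi/2$ makes $x=1$ the admissible root, and that monotonicity can be justified without assuming differentiability via the comparison argument on $F(x,d)=x\tan(hxd)-h/L_{\mathrm H}$) are valid refinements of steps the paper leaves implicit.
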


\subsection{The least-energy solution when \texorpdfstring{$d > d_c$}{TEXT}}\vspace{0.2pc}

We study the least-energy solution of \eqref{SG}. In view of part (1) in Proposition \ref{threshold of thickness}, we assume $d > d_c$ for the rest of the section. 

\begin{lem}\label{sign of min}
    If $\phi$ is a non-negative global minimizer of $E$ in $H_\mathrm{P}^1(\Omega)$, then $0 < \phi < \dfrac{\pi}{2}$ on $\Omega \cup \text{H}$. 
\end{lem}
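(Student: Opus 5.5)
Since $\phi\ge 0$ is a global minimizer of $E$ on $H_\mathrm{P}^1(\Omega)$, it is a critical point and hence a weak solution of \eqref{SG}. The plan is to prove the upper bound $\phi\le \pi/2$ first, then upgrade both bounds to strict inequalities.

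\textbf{Step 1: truncation gives $\phi\le\pi/2$.} Set $\tilde\phi:=\min\{\phi,\pi-\phi\}$ wherever $\phi\le\pi$, and more generally fold $\phi$ into $[0,\pi/2]$. The energy density $\frac{h^2}{4}(\cos2\phi+1)$ and the boundary density $\frac{L_\mathrm H}{4}(\cos 2\phi+1)$ are invariant under $\phi\mapsto\pi-\phi$ and are $\pi$-periodic. Hence folding preserves both potential terms and the Dirichlet integral. Concretely, let $g:\mathbb R\to[0,\pi/2]$ be the $1$-Lipschitz, even, $\pi$-periodic "tent" map with $g(s)=s$ on $[0,\pi/2]$. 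Then $g\circ\phi\in H^1_\mathrm P(\Omega)$, $g(0)=0$, $E[g\circ\phi]=E[\phi]$, and $|\nabla (g\circ\phi)|=|\nabla\phi|$ a.e. So $g\circ\phi$ is again a non-negative global minimizer, taking values in $[0,\pi/2]$.

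To get the bound for $\phi$ itself, I would compare $\phi$ and $g\circ\phi$. Both solve \eqref{SG}, and they coincide on the open set $\{\phi<\pi/2\}$, which is non-empty since $\phi=0$ on P. By elliptic regularity, both are smooth up to $\Omega\cup$H, because the nonlinearities are smooth and bounded. Unique continuation for the semilinear equation, or the fact that $\phi-g\circ\phi$ satisfies a linear equation with bounded coefficient and vanishes on an open set, then gives $\phi\equiv g\circ\phi$, hence $0\le\phi\le\pi/2$.

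\textbf{Step 2: strict positivity.} Since $d>d_c$, Proposition \ref{threshold of thickness}(2) shows $0$ is not a minimizer, so $\phi\not\equiv0$. Write the equation as $-\Delta\phi=c(x)\phi$ with $c=h^2\frac{\sin\phi\cos\phi}{\phi}\ge 0$ bounded. The strong maximum principle for non-negative supersolutions gives $\phi>0$ in $\Omega$. On H we have $\partial_\nu\phi=L_\mathrm H\sin\phi\cos\phi\ge0$. If $\phi(x_0)=0$ at some $x_0\in$H, Hopf's lemma would force $\partial_\nu\phi(x_0)<0$, while the boundary condition gives $\partial_\nu\phi(x_0)=0$, a contradiction.

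\textbf{Step 3: strict upper bound.} Let $w=\pi/2-\phi\ge0$. Then $-\Delta w=-h^2\sin\phi\cos\phi=-\tfrac{h^2}{2}\sin 2w$, which can be written as $\Delta w = \tilde c\,w$ with $\tilde c\ge0$ bounded. We have $w\not\equiv0$ because $w=\pi/2$ on P. The strong maximum principle gives $w>0$ in $\Omega$. At a point of H where $w=0$, Hopf would give $\partial_\nu w<0$. But $\partial_\nu w=-L_\mathrm H\sin\phi\cos\phi=0$ there, a contradiction.

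\textbf{Main obstacle.} The delicate step is Step 1: passing from "the folded function is a minimizer" to "$\phi$ itself is bounded by $\pi/2$." This requires regularity up to H with the nonlinear Robin condition, together with a unique continuation argument. An alternative, if that is awkward, is to note that $g\circ\phi$ being $C^1$ forces $\nabla\phi=0$ on $\{\phi=\pi/2\}$. Near such a point, $\pi/2-\phi$ and $\phi-\pi/2$ would both be solutions, so the Hopf and strong maximum principle argument of Step 3 applied to the minimizer $g\circ\phi$ already excludes touching $\pi/2$. The set $\{\phi\ge\pi/2\}$ is then empty by connectedness.
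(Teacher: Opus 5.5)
Your proof is correct. Steps 2 and 3 are essentially the paper's argument verbatim: the strong maximum principle and Hopf's lemma at $\mathrm H$, first for $\phi$ and then for $\pi/2-\phi$. The difference lies entirely in Step 1.

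The paper truncates instead of folding. For $\phi_\flat=\min\{\phi,\pi/2\}$, the gradient and both potential densities $\frac{h^2}{4}(\cos2\phi+1)$ and $\frac{L_{\mathrm H}}{4}(\cos 2\phi+1)$ of $\phi_\flat$ vanish on $\{\phi\ge\pi/2\}$, because $\pi/2$ is a zero of the potential. So $E[\phi]-E[\phi_\flat]$ equals the non-negative energy of $\phi$ on that set, and this is strictly positive when $\max\phi>\pi/2$. That contradicts minimality at once and uses nothing beyond continuity of $\phi$.

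Your tent map is energy-neutral, so it cannot give a contradiction by itself. You therefore need an extra rigidity step, namely unique continuation or interior analyticity for $\phi-g\circ\phi$, which in turn needs regularity of both minimizers.

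Your fallback is cleaner than you present it. Steps 2 and 3 apply verbatim to the minimizer $g\circ\phi$. It takes values in $[0,\pi/2]$ and is not identically $0$, since $E[g\circ\phi]=E[\phi]<E[0]$. This gives $0<g\circ\phi<\pi/2$ on $\Omega\cup\mathrm H$, so $\phi$ never takes the value $\pi/2$ there. Connectedness of $\Omega\cup\mathrm H$ together with $\phi=0$ on $\mathrm P$ then yields $\phi<\pi/2$, with no $C^1$ or unique continuation argument.

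The one thing the fold buys is generality of sign. Because $g$ is even, the same reasoning works for a minimizer of arbitrary sign, which then avoids $\frac{\pi}{2}\mathbb Z$ on $\Omega\cup\mathrm H$. So the fold gives Lemma \ref{sign for all} and $|\phi|<\pi/2$ in one stroke. The paper instead obtains the sign statement separately, by applying the present lemma to $|\phi|$.
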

\begin{proof}[\bf Proof]
Suppose $\phi$ is a non-negative global minimizer of $E$ in $H_\text{P}^1(\Omega)$. It is a solution to the boundary value problem \eqref{SG}. By iteratively applying Theorem 2.3.3.2 in \cite{Grisvard1985} and trace theorem, $\phi$ is a classic solution in $\Omega$. All derivatives of $\phi$ are continuous up to the boundary. \vspace{0.3pc}

Assuming $\displaystyle \max_{\overline{\Omega}} \phi > \frac{\pi}{2}$, then we define the truncation of $\phi$, denoted by $\phi_{\hspace{0.5pt}\flat}$, such that $\phi_{\hspace{0.5pt}\flat} = \dfrac{\pi}{2}$ if $\phi \geq \dfrac{\pi}{2}$. $\phi_{\hspace{0.5pt}\flat} = \phi $ at the points where $\phi$ is less than $\dfrac{\pi}{2}$. It turns out
\begin{align*}
    E[\phi] - E[\phi_{\hspace{0.5pt}\flat}] = \int_{\Omega \hspace{1pt}\cap \hspace{1pt}\big\{ \phi \hspace{1pt}\geq \hspace{1pt}\frac{\pi}{2}\big\}} \dfrac{1}{2} \hspace{1pt}\big|\nabla \phi\big|^2 + \frac{h^2}{4} \big(\cos 2\phi + 1\hspace{0.5pt}\big) + \dfrac{L_\text{H}}{4}\int_{\text{H} \hspace{1pt}\cap \hspace{1pt}\big\{ \phi \hspace{1pt}\geq \hspace{1pt}\frac{\pi}{2}\big\}}  \big(\cos 2 \phi + 1\hspace{0.5pt}\big) > 0.
\end{align*}
This contradicts the fact that $\phi$ is a global minimizer of $E$ in $H_\text{P}^1(\Omega)$. Therefore, $\displaystyle \max_{\overline{\Omega}} \phi \leq \frac{\pi}{2}$. Moreover, by \eqref{SG}, $\phi$ satisfies 
\begin{align}\label{ineq of min}
    (1). \hspace{5pt} \Delta \phi \leq 0  \hspace{15pt} \text{in } \Omega; \hspace{25pt}(2). \hspace{5pt}\phi \equiv 0 \hspace{15pt} \text{on P} ; \hspace{25pt}
    (3). \hspace{5pt}\partial_n \phi \leq 0 \hspace{15pt} \text{on } \text{H}.
\end{align}

We first show that $\phi > 0$ on $\Omega \cup \text{H}$. If $\phi$ is a constant function, then $\phi \equiv 0$ by (2) in \eqref{ineq of min}. This case was excluded by (2) in Proposition \ref{threshold of thickness}. Applying the strong maximum principle, we obtain $\phi > 0$ in $\Omega$. Suppose there is $x_0 \in \text{H}$ such that $\phi(x_0) = 0$. Then Hopf lemma induces $\partial_n \phi(x_0) > 0$. This contradicts (3) in \eqref{ineq of min}. We obtain $\phi > 0$ on $\Omega \cup \text{H}$.  \vspace{0.3pc}

To prove $\phi < \dfrac{\pi}{2}$ on $\Omega \cup \text{H}$, we change the variable by  $\psi := \dfrac{\pi}{2} - \phi$. The previous results imply that $0 \leq \psi < \dfrac{\pi}{2}$ on $\Omega \cup \text{H}$. Recall \eqref{SG}. The function $\psi $ satisfies
\begin{align}\label{ineq of min psi} 
    (1). \hspace{5pt}\Delta \psi - h^2 \hspace{0.5pt}\psi   \leq 0 \hspace{15pt} \text{in } \Omega; \hspace{20pt}(2). \hspace{5pt}\psi \equiv \dfrac{\pi}{2} \hspace{15pt} \text{on P}; \hspace{20pt}(3). \hspace{5pt}\partial_n \psi =  \dfrac{L_\text{H}}{2} \sin 2 \psi \hspace{15pt} \text{on H}.
\end{align} 
The function $\psi$ is not constant, otherwise $\phi \equiv 0$ in $\Omega$. Assume that $0$ is the minimum value of $\psi$ on $\overline{\Omega}$. From (1) in \eqref{ineq of min psi} and the strong maximum principle for the elliptic operator $\Delta - h^2$, it holds $\psi > 0$ in $\Omega$. Given (2) in \eqref{ineq of min psi}, the value $0$ can only be taken by $\psi$ at some $x_1 \in \text{H}$. Furthermore, it follows from Hopf lemma that $\partial_n \psi (x_1) > 0$. This is a contradiction, because according to (3) in \eqref{ineq of min psi}, $\partial_n  \psi (x_1) = 0$. Therefore, the minimum of $\psi$ over $\overline{\Omega}$ is not $0$. Consequently, $\phi < \frac{\pi}{2}$ on $\overline{\Omega}$.
\end{proof}

One application of Lemma \ref{sign of min} is to show 
\begin{lem}\label{sign for all}
    Any global minimizer of $E$ in $H_\mathrm{P}^1(\Omega)$ keeps the sign strictly in $\Omega \cup \text{H}$.
\end{lem}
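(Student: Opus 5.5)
The plan is to reduce to Lemma \ref{sign of min} by replacing a minimizer $\phi$ with $|\phi|$. The energy $E$ in \eqref{phiE} is even in $\phi$, since $\cos 2\phi$ is even. Also, for $\phi \in H_\mathrm{P}^1(\Omega)$ we have $|\phi| \in H_\mathrm{P}^1(\Omega)$ with $|\nabla |\phi|| = |\nabla \phi|$ a.e., and the trace of $|\phi|$ on H is $|\phi|$ restricted to H. Hence $E[|\phi|] = E[\phi]$, so $|\phi|$ is again a global minimizer of $E$ in $H_\mathrm{P}^1(\Omega)$, and it is non-negative.

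By Lemma \ref{sign of min} we then get $0 < |\phi| < \frac{\pi}{2}$ on $\Omega \cup \text{H}$. In particular $\phi$ never vanishes on $\Omega \cup \text{H}$.

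Next we need regularity of $\phi$ itself. As a global minimizer, $\phi$ is a critical point of $E$, hence solves \eqref{SG}. The regularity argument at the start of the proof of Lemma \ref{sign of min} (Grisvard's Theorem 2.3.3.2 iterated, together with the trace theorem) applies verbatim, so $\phi$ is continuous on $\overline{\Omega}$. The set $\Omega \cup \text{H} = \mathbb{T}^{n-1} \times [0, d)$ is connected. A continuous nonvanishing function on a connected set has constant sign, so either $\phi = |\phi| > 0$ or $\phi = -|\phi| < 0$ on $\Omega \cup \text{H}$.

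The only point needing care is the use of Lemma \ref{sign of min} for $|\phi|$, which requires $|\phi|$ to be a genuine global minimizer. This holds by the energy identity above. Lemma \ref{sign of min} itself relies on the standing assumption $d > d_c$ of this subsection, which excludes the zero minimizer via Proposition \ref{threshold of thickness}. I expect no serious obstacle here; the argument is short.
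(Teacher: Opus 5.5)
Your proposal is correct and follows the paper's proof. Both replace $\phi$ by $|\phi|$, which has the same energy and so is a non-negative global minimizer, apply Lemma~\ref{sign of min} to conclude $\phi$ cannot vanish on $\Omega \cup \mathrm{H}$, and use continuity to exclude a sign change. Your explicit continuity-plus-connectedness step is a slightly cleaner rendering of the paper's contradiction argument, and it also covers the case of a one-signed minimizer with a zero, which the paper's ``changes sign'' phrasing leaves implicit.
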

\begin{proof}[\bf Proof] Suppose $\phi$ is a global minimizer of $E$ in $H_\text{P}^1(\Omega)$. If $\phi$ changes sign in $\Omega \cup \text{H}$, then $\phi$ vanishes at some $x_0$ in $\Omega \cup \text{H}$. $|\hspace{0.5pt} \phi \hspace{0.5pt}|$ is also a global minimizer of $E$ in $H_\text{P}^1(\Omega)$. Recall Lemma \ref{sign of min}. It holds $|\hspace{0.5pt}\phi(x_0)\hspace{.5pt}| > 0$. This is a contradiction to the fact that $\phi(x_0) = 0$.
\end{proof}

In the following we investigate the uniqueness of the positive solutions of \eqref{SG} which are bounded from above by $\frac{\pi}{2}$. We use the monotone iteration method introduced in \cite{Sattinger1972}. In contrast to \cite{Sattinger1972}, where the semilinear elliptic equations are supplied with Robin  boundary conditions, our problem \eqref{SG} involves nonlinear boundary condition on H. The monotone iteration method is also used in \cite{ChenDingHuNiZhou2003} for the sine-Gordon equation with the Dirichlet boundary condition.

\begin{lem}\label{unique of positive bd solu}
There is only one solution of \eqref{SG} with the values in $(0, \frac{\pi}{2})$ on $\Omega \cup \mathrm{H}$. 
\end{lem}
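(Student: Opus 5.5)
We argue by contradiction and use a Brezis–Oswald/Sattinger-type comparison in the spirit of the monotone iteration of \cite{Sattinger1972}. Suppose $\phi$ and $\psi$ are two solutions of \eqref{SG} taking values in $(0,\frac{\pi}{2})$ on $\Omega\cup\mathrm{H}$. As in the proof of Lemma \ref{sign of min}, both are classical, with all derivatives continuous up to the boundary. The relevant structure is that the nonlinearities $f(s)=\frac{h^2}{2}\sin 2s$ in $\Omega$ and $g(s)=\frac{L_{\mathrm H}}{2}\sin 2s$ on H both satisfy: $s\mapsto f(s)/s$ and $s\mapsto g(s)/s$ are strictly decreasing on $(0,\frac{\pi}{2})$. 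Indeed, $\sin 2s/s$ is strictly decreasing on $(0,\pi/2)$, since $\frac{d}{ds}\frac{\sin 2s}{s}=\frac{2s\cos 2s-\sin 2s}{s^2}<0$ because $\tan 2s>2s$ where $\cos 2s>0$, and the numerator is trivially negative where $\cos 2s\le 0$.

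\textbf{Step 1 (Picone-type identity).} Multiply the equation of $\phi$ by $\frac{\phi^2-\psi^2}{\phi}$ and the equation of $\psi$ by $\frac{\psi^2-\phi^2}{\psi}$, then integrate over $\Omega$ and add. The boundary terms on P vanish, but care is needed because $\phi,\psi$ vanish on P and the quotients are singular there. I would handle this by first showing $\phi/\psi$ is bounded above and below near P. Both functions vanish on P, are positive inside, and Hopf's lemma (applicable since $\Delta\phi\le 0$) gives $\partial_n\phi,\partial_n\psi<0$ on P. Hence $\phi/\psi$ extends continuously and positively to $\overline\Omega$, so the test functions lie in $H^1_{\mathrm P}(\Omega)$. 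Integration by parts, using the Robin-type condition on H, yields
\begin{align*}
\int_\Omega \Big|\nabla\phi-\tfrac{\phi}{\psi}\nabla\psi\Big|^2+\Big|\nabla\psi-\tfrac{\psi}{\phi}\nabla\phi\Big|^2
=\int_\Omega\Big(\tfrac{f(\phi)}{\phi}-\tfrac{f(\psi)}{\psi}\Big)(\phi^2-\psi^2)+\int_{\mathrm H}\Big(\tfrac{g(\phi)}{\phi}-\tfrac{g(\psi)}{\psi}\Big)(\phi^2-\psi^2).
\end{align*}

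\textbf{Step 2 (conclusion).} The left side is nonnegative. By the strict monotonicity of $f(s)/s$ and $g(s)/s$, each integrand on the right is $\le 0$ and vanishes only where $\phi=\psi$. Therefore $\phi\equiv\psi$ in $\Omega$.

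\textbf{Alternative via monotone iteration.} If one prefers to follow the paper's announced route, the same monotonicity drives a sub/supersolution argument. Take the supersolution $\frac{\pi}{2}$ and a small multiple $\varepsilon\phi_1$ of the principal eigenfunction \eqref{Seignprob} as subsolution; this works since $\lambda_1<1$ when $d>d_c$, but it needs $\phi_1>0$ on H, which again follows from Hopf. Monotone iteration with a shift constant $M\ge h^2$ in $\Omega$ and $M'\ge L_{\mathrm H}$ on H, making $s\mapsto f(s)+Ms$ increasing, produces a maximal solution $\bar\phi$ and a minimal solution above $\varepsilon\phi_1$. Any positive solution $\phi$ lies above $\varepsilon\phi_1$ for small $\varepsilon$, by the boundary comparison near P. Hence $\phi\le\bar\phi$, and the scaling argument $t^*=\sup\{t:t\bar\phi\le\phi\}$ combined with $f(s)/s$ decreasing forces $t^*=1$.

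\textbf{Main obstacle.} The main difficulty in either route is the boundary behaviour at P: one must justify that ratios like $\phi/\psi$ are bounded, via Hopf's lemma on P for $\phi$ and $\psi$ together with smoothness up to the boundary. The other delicate point is the strong maximum principle/Hopf argument on H with the nonlinear Robin condition, used to exclude touching there; that part follows the pattern of Lemma \ref{sign of min}.
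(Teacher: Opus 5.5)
Your main argument is correct, and it takes a genuinely different route from the paper.

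\textbf{The paper's route.} The paper never compares two arbitrary solutions directly. It runs Sattinger's monotone iteration $v_{k+1}=\mathscr L v_k$ starting from the supersolution $v_0\equiv\frac{\pi}{2}$. Two ingredients make $\mathscr L$ order-preserving: $g(u)=u+\frac12\sin 2u$ is nondecreasing on $[0,\frac{\pi}{2}]$, and a maximum-principle/Hopf argument handles the linear problem with the Robin-type condition on H. The iteration produces a maximal solution $v_\infty$ with $v\le v_\infty$ for every solution $v$ taking values in $(0,\frac{\pi}{2})$. Once this ordering is available, the plain Green identity closes the proof: test the equation of $v$ with $v_\infty$ and vice versa. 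This gives
\[
h^2\int_\Omega v\,v_\infty\Big(\tfrac{\sin 2v}{2v}-\tfrac{\sin 2v_\infty}{2v_\infty}\Big)+L_{\mathrm H}\int_{\mathrm H}v\,v_\infty\Big(\tfrac{\sin 2v}{2v}-\tfrac{\sin 2v_\infty}{2v_\infty}\Big)=0,
\]
whose integrand has a definite sign, hence $v=v_\infty$.

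\textbf{Your route.} Your Brezis--Oswald/Picone identity replaces the whole iteration. The quadratic test functions $(\phi^2-\psi^2)/\phi$ make the right-hand side sign-definite without any prior ordering. The only price is justifying these singular test functions, and you do this correctly:
\begin{itemize}
\item Hopf's lemma at P, together with $C^1$ regularity up to P, bounds $\phi/\psi$ above and below near P.
\item On H both solutions are strictly positive by hypothesis, so nothing degenerates there.
\item Consequently $\psi^2/\phi$ is Lipschitz on $\overline\Omega$ and vanishes on P, so $(\phi^2-\psi^2)/\phi\in H^1_{\mathrm P}(\Omega)$.
\end{itemize}

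\textbf{Comparison.} Your proof is shorter and skips the iteration's regularity and compactness step (uniform $H^p$ bounds and Arzel\`{a}--Ascoli). The paper's proof never divides by a solution, and as a by-product it constructs the maximal solution. Both hinge on the same structural fact: $s\mapsto \sin 2s/s$ is strictly decreasing on $(0,\frac{\pi}{2})$, for the bulk and the boundary nonlinearity alike.

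\textbf{On your alternative sketch.} It is closer to the paper, but the subsolution $\varepsilon\phi_1$ and the $t^*$-scaling are unnecessary. The ordering $\phi\le\bar\phi$ follows from $\phi\le\frac{\pi}{2}$ and the monotonicity of the iteration alone, not from $\phi\ge\varepsilon\phi_1$ as your ``hence'' suggests. The Green identity above then finishes the argument directly.
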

\begin{proof}[\bf Proof]
Given a smooth function $u$ on $\overline{\Omega}$ and let $g(u) := u + \frac{1}{2} \sin 2 u$, we denote by $
\mathscr{L} u$ the unique solution of the following boundary value problem:
\begin{align} \label{MonosG}
    \left(h^{-2} \Delta - 1\right) v = - g(u) \quad \text{in } \Omega; \hspace{20pt}v = 0 \quad \text{on P} ; \hspace{20pt}\left(- L_\text{H}^{-1}\hspace{0.5pt} \partial_n  +  1\right)v  = g(u) \quad \text{on H} .
\end{align}
If $u_1, u_2$ are smooth functions on $\overline{\Omega}$ and $0 \leq u_1 \leq u_2 \leq \frac{\pi}{2}$ on $\Omega$, then $w := \mathscr{L}u_1 - \mathscr{L} u_2$ satisfies
\begin{align}\label{ineq of w}
    &(1).\hspace{5pt} \left(h^{-2} \Delta - 1\right) w  \geq 0 \quad \text{in } \Omega; \hspace{20pt} \nonumber\\[1mm] &(2). \hspace{10pt}w = 0 \quad \text{on P} ; \h{70pt}(3). \hspace{5pt}
    \left(- L_\text{H}^{-1} \hspace{0.5pt}\partial_n + 1\right) w  \leq 0 \quad \text{on H} .        
\end{align}
According to (2) in \eqref{ineq of w}, the maximum value of $w$ over $\overline{\Omega}$ is non-negative. Moreover, if $w$ is constant, then $w \equiv 0$ on $\Omega$. Assume $w$ is not constant. By the strong maximum principle, the non-negative maximum value of $w$ on $\overline{\Omega}$ can be only attained by some point $x_0$ on $\partial \hspace{0.5pt} \Omega$. If $x_0 \in \text{H}$, then Hopf lemma infers $\partial_n  w(x_0) < 0$. This is impossible by (3) in \eqref{ineq of w}. Therefore, if $w$ is not constant, then the maximum point of $w$ must be on $\text{P}$. To summarize, we obtain
\begin{align}\label{monotonicity}
    \mathscr{L}u_1 \leq \mathscr{L} u_2 \hspace{8pt}\text{on $\overline{\Omega}$, \hspace{5pt} if $u_1, u_2$ are smooth on $\overline{\Omega}$ \hspace{0.5pt}and\hspace{0.5pt} $0 \leq u_1 \leq u_2 \leq \dfrac{\pi}{2}$ on $\Omega$.}
\end{align}

Let $v_0 \equiv \frac{\pi}{2}$ and define $v_1 := \mathscr{L} v_0$. It follows 
\begin{align*}
    \left(h^{-2} \Delta - 1\right) (v_1 - v_0) =  0 \quad \text{in } \Omega; \hspace{10pt}v_1 - v_0 = - \dfrac{\pi}{2} \quad \text{on P} ; \hspace{10pt}
    \left(- L_\text{H}^{-1} \hspace{0.5pt} \partial_n + 1 \right)(v_1 - v_0) =  0 \quad \text{on H} .
\end{align*}
If the maximum value of $v_1 - v_0$ over $\overline{\Omega}$ is non-negative, then by the boundary condition on P above, $v_1 - v_0$ is not constant. The maximum value of $v_1 - v_0$ cannot be attained on P. Apply the strong maximum principle. $v_1 - v_0$ takes its maximum value on H. Hopf Lemma infers $\partial_n (v_1 - v_0) < 0$ at the maximum point of $v_1 - v_0$ on H. This is a contradiction to the boundary condition of $v_1 - v_0$ on H. Therefore, it holds $v_1 < v_0$ on $\overline{\Omega}$. Inductively, we define $v_{k+1} := \mathscr{L}v_k$ for all $k \in \mathbb{N}$. Since $0 < v_0 \equiv \frac{\pi}{2}$, it follows by \eqref{monotonicity} that  $$0 = \mathscr L 0 \leq \mathscr L v_0 = v_1 \h{15pt} \text{on $\overline{\Omega}$.}$$ Therefore, $0 \leq v_1 \leq v_0 \equiv \frac{\pi}{2}$ on $\overline{\Omega}$. Still using \eqref{monotonicity} induces $$0 \leq v_2 = \mathscr L v_1 \leq \mathscr L v_0 = v_1 \h{15pt} \text{on $\overline{\Omega}$.}$$ Repeatedly applying the same arguments then yields $v_{k+1} \leq v_k$ on $\overline{\Omega}$ for any $k \in \mathbb{N}$. \vspace{0.3pc}

Assume $v$ is a positive solution of \eqref{SG} bounded from above by $\frac{\pi}{2}$ on $\Omega \cup \text{H}$. It turns out $v$ is a fixed point of the operator $\mathscr{L}$. Then, $v = \mathscr{L} v \leq \mathscr{L} v_0 = v_1$  by \eqref{monotonicity}. Inductively, we obtain $v \leq v_k$ for any $k \in \mathbb{N}$.\vspace{0.3pc} 

In summary, our arguments induce
\begin{align}\label{order ineq}
    v \leq \cdots \leq v_{k+1} \leq v_k \leq \cdots \leq v_1 < v_0 = \frac{\pi}{2} \h{15pt}\text{on $\overline{\Omega}$.}
\end{align}
Iteratively applying Theorem 2.3.3.2 in \cite{Grisvard1985}, we know that $\{v_k\}$ is uniformly bounded in $H^p(\Omega)$ for any $p \in (1, \infty)$. By Morrey's inequality and Arzel\`{a}-Ascoli theorem, $\{v_k\}$ converges uniformly to a limit function, denoted by $v_\infty$, in $C^1(\overline{\Omega})$. The inequalities in \eqref{order ineq} then imply
\begin{align}\label{comp phi and v}
   0 < v \leq v_\infty   < \frac{\pi}{2} \quad \text{on } \Omega \cup \mathrm H.
\end{align}
Meanwhile, $v_\infty$ also satisfies the boundary value problem in \eqref{SG}. Through integrations by parts, \begin{align*}
    \int_\Omega - v_\infty \Delta v + v \Delta v_\infty = \frac{h^2}{2} \int_\Omega v_\infty \sin 2v - v \sin 2 v_\infty = \frac{L_\text{H}}{2} \int_\text{H} v \sin 2 v_\infty - v_\infty \sin 2v,
\end{align*}
which gives us 
\begin{align*}
h^2 \int_\Omega v \hspace{0.5pt}v_\infty \left( \frac{\sin 2 v}{2v} - \frac{\sin 2v_\infty}{2v_\infty} \right)  +  L_\text{H} \int_\text{H} v \hspace{0.5pt}v_\infty \left(\frac{\sin 2 v}{2v} - \frac{\sin 2v_\infty}{2v_\infty}\right) = 0.
\end{align*}
Thus, $v = v_\infty$ on $\Omega \cup \text{H}$ due to \eqref{comp phi and v}, the above equality, and the monotonicity of $\frac{\sin x}{x}$ on $(0, \pi)$. The proof ends with the arbitrariness of $v$. 
\end{proof}

With Lemmas \ref{sign of min}-\ref{unique of positive bd solu}, we have 
\begin{lem}\label{sym of minimizer}
    The unique positive global minimizer of $E$ in $H_\mathrm{P}^1(\Omega)$ depends only on the normal variable. It is a strictly decreasing function on $[\hspace{0.5pt}0, d\hspace{0.5pt}]$.
\end{lem}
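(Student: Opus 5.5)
Let $\phi_*$ be the unique positive global minimizer of $E$ in $H_\mathrm{P}^1(\Omega)$. It exists by the direct method; positivity comes from Lemma \ref{sign for all}, after replacing a minimizer by its absolute value if needed. Lemma \ref{sign of min} gives $0<\phi_*<\frac{\pi}{2}$ on $\Omega\cup\mathrm H$, and Lemma \ref{unique of positive bd solu} says it is the only solution of \eqref{SG} with values in this range. For tangential dependence I will use uniqueness together with the translation invariance of the problem in the tangential variables. For any $a\in\mathbb T^{n-1}$ the shifted function $\phi_*(\cdot+a,\cdot)$ still solves \eqref{SG}: the equation, the Dirichlet condition on P and the nonlinear Robin condition on H are all invariant under tangential translations. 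It also has the same energy and the same range $(0,\frac{\pi}{2})$ on $\Omega\cup\mathrm H$. Hence $\phi_*(x'+a,x_n)=\phi_*(x',x_n)$ for every $a$, so $\phi_*=\phi_*(x_n)$.

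Now $\phi_*$ solves the ODE $-\phi_*''=\frac{h^2}{2}\sin2\phi_*$ on $(0,d)$ with $\phi_*(d)=0$ and $-\phi_*'(0)=\frac{L_\mathrm H}{2}\sin2\phi_*(0)$; here $\partial_\nu=-\partial_n$ on H. I will establish strict monotonicity in four steps.

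\textbf{Step 1 (concavity).} Since $0<\phi_*<\frac{\pi}{2}$ on $[0,d)$, we have $\sin2\phi_*>0$ there, so $\phi_*''<0$ and $\phi_*$ is strictly concave on $[0,d]$.

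\textbf{Step 2 (left endpoint).} At $x_n=0$, $\phi_*'(0)=-\frac{L_\mathrm H}{2}\sin 2\phi_*(0)<0$, because $\phi_*(0)\in(0,\frac{\pi}{2})$.

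\textbf{Step 3 (derivative on the interval).} Strict concavity means $\phi_*'$ is strictly decreasing. Therefore $\phi_*'(x_n)<\phi_*'(0)<0$ for all $x_n\in(0,d]$.

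\textbf{Step 4 (conclusion).} So $\phi_*$ is strictly decreasing on $[0,d]$.

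Regularity up to the boundary, which justifies the pointwise ODE and boundary evaluations, comes from the elliptic regularity already invoked in the proof of Lemma \ref{sign of min}.

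The only delicate point is making sure that the translated function falls within the scope of Lemma \ref{unique of positive bd solu}, that is, that it takes values in $(0,\frac{\pi}{2})$ on $\Omega\cup\mathrm H$. This holds because translation preserves pointwise values and maps $\Omega\cup\mathrm H$ to itself. No comparison or moving-plane argument is needed.
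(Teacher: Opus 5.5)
Your proposal is correct and follows essentially the same route as the paper: tangential translation invariance plus the uniqueness in Lemma \ref{unique of positive bd solu} removes the tangential dependence, and then $\phi''<0$ (from $0<\phi<\frac{\pi}{2}$) together with $\phi'(0)<0$ (from the boundary condition on H) gives strict monotonicity. The only difference is cosmetic: the paper differentiates $\phi(\cdot+t\,l_i)=\phi$ in $t$ to obtain $\partial_i\phi=0$, whereas you read off invariance under every tangential shift directly.
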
 
\begin{proof}[\bf Proof] The energy $E$ is translation invariant along the tangential direction. Suppose $\phi$ is the positive global minimizer of $E$. Then for each $t \in \mathbb{R}$ and $i = 1, ..., n - 1$, $\phi(\cdot + t \hspace{1pt} l_i)$ is also a positive global minimizer of $E$. Here, $l_i$ is the unit vector in $\mathbb{R}^n$ whose $j$-th component is equal to $\delta_{ij}$. By the uniqueness result in Lemma \ref{unique of positive bd solu}, it follows $\phi(\cdot) = \phi(\cdot + t \hspace{1pt} l_i)$ on $\Omega$. Taking the partial derivative with respect to $t$ induces $\partial_i \phi = 0$ on $\Omega$ for each $i = 1, ..., n-1$. Hence, $\phi$ depends only on the normal variable. \vspace{0.3pc}

According to the first equation in \eqref{SG} and the fact that $0 < \phi < \frac{\pi}{2}$ on $\Omega \cup \text{H}$, we have $\phi'' < 0$ on $[\hspace{0.5pt}0, d\hspace{0.5pt})$.  Since $\phi'(0) < 0$, hence, $\phi' < 0$ on $[\hspace{0.5pt}0, d\hspace{0.5pt}]$. $\phi$ is strictly decreasing on $[\hspace{0.5pt}0, d\hspace{0.5pt}]$.
\end{proof}

\subsection{Strong stability of the least-energy solution}\vspace{0.2pc}

Denote by $\mathscr{V}$ the Hilbert spaces $H_\text{P}^1(\Omega)$. Its inner product is given by 
\begin{align}\label{inner prod on V}
    \big<g_1, g_2\big>_\mathscr{V}  := \int_\Omega \nabla g_1 \cdot \nabla g_2, \hspace{20pt} \text{for any $g_1, g_2 \in \mathscr{V}$.}
\end{align}
Let $\big<\cdot, \cdot \big>_{\mathscr{V}'\times \mathscr{V}}$ be the duality between $\mathscr{V}$ and its dual $\mathscr{V}'$. The first-order derivative of $E$ is read  as  
\begin{align}\label{1st deriv of E}
    \big< \hspace{0.5pt}E'[\hspace{0.5pt}\phi\hspace{0.5pt}], \varphi \hspace{0.5pt}\big>_{\mathscr{V}'\times \mathscr{V}} = \int_{\Omega} \nabla \varphi  \cdot \nabla \phi  - \frac{h^2}{2} \int_{\Omega} \varphi \sin 2 \phi - \frac{L_\text{H}}{2} \int_\text{H} \varphi \sin 2 \phi.
\end{align}
Here $E' \in C^1 \left( \mathscr{V}, \mathscr{V}' \right)$. We keep differentiating $E'$. The second-order derivative of $E$ is read as 
\begin{align}\label{2nd deri of E}
    \big<\hspace{0.5pt} E''[\hspace{0.5pt}\phi\hspace{0.5pt}] \hspace{0.5pt} \psi, \varphi \hspace{0.5pt}\big>_{\mathscr{V}' \times \mathscr{V}} := \int_{\Omega} \nabla \varphi \cdot \nabla \psi - h^2 \int_{\Omega} \varphi \hspace{0.5pt}\psi \cos 2 \phi - L_\text{H} \int_\text{H} \varphi \hspace{0.5pt} \psi \cos 2 \phi.
\end{align}
Given $\phi \in \mathscr{V}$, the linear operator $E''[\hspace{0.5pt}\phi\hspace{0.5pt}]$ is a bounded operator from $\mathscr{V}$ to $\mathscr{V}'$. Let $\phi$ be a critical point of the energy $E$, we define the principal eigenvalue of the linearized operator $E''[\hspace{0.5pt}\phi\hspace{0.5pt}]$ as follows: \begin{align}\label{prin eigen}
    \mu_1 := \inf_{\psi \hspace{1pt}\in\hspace{1pt}\mathscr{V}, \hspace{1.5pt}\psi \not \equiv 0} \hspace{3pt}\dfrac{\big<\hspace{0.5pt} E''[\hspace{0.5pt}\phi\hspace{0.5pt}] \hspace{0.5pt} \psi, \psi \hspace{0.5pt}\big>_{\mathscr{V}' \times \mathscr{V}}}{h^2\hspace{.5pt}\|\hspace{0.5pt}\psi\hspace{0.5pt}\|^2_{L^2(\Omega)} + L_\text{H} \hspace{.5pt} \|\hspace{0.5pt}\psi\hspace{0.5pt}\|^2_{L^2(\text{H})}}.
\end{align}It turns out that $\mu_1$ can be attained by a non-negative eigenfunction $\psi_1$. Moreover, $\psi_1> 0$ in $\Omega$ due to Serrin's maximum principle. The eigenspace associated with $\mu_1$ is simple. \vspace{0.3pc}

\begin{defn}\label{strong stab}
The critical point $\phi$ of $E$ is called strongly unstable if $\mu_1 < 0$. It is called strongly stable if $\mu_1 > 0$.
\end{defn}
The main result in this section is 
\begin{prop}\label{strong stab of gmin}
Recall the critical thickness $d_c$ in \eqref{d_c}. If $d \neq d_c$, then the global minimizer of $E$ is strongly stable in the sense of Definition \ref{strong stab}. \end{prop}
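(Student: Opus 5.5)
We split into the two cases $d < d_c$ and $d > d_c$.

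\emph{Case $d<d_c$.} By Proposition \ref{threshold of thickness}, the only critical point is $0$, so the global minimizer is $\phi\equiv 0$ and $\cos 2\phi \equiv 1$. The quotient in \eqref{prin eigen} then equals $\mathrm R[\psi]-1$, so $\mu_1 = \mathrm{R}^{n\mathrm D}-1 = \lambda_1(d)^2-1$. Lemma \ref{mon of lambda1} says $\lambda_1$ is strictly decreasing in $d$ with $\lambda_1(d_c)=1$. Hence $\lambda_1(d)>1$ and $\mu_1>0$.

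\emph{Case $d>d_c$.} By Lemmas \ref{sign for all}--\ref{sym of minimizer}, the global minimizer is $\pm\phi_*$. Since $\cos 2\phi$ is even, it suffices to treat $\phi_*$. It depends only on $x_n$, satisfies $0<\phi_*<\frac{\pi}{2}$ on $\Omega\cup\mathrm H$, and is strictly decreasing. Because $\phi_*$ minimizes $E$, the second variation is nonnegative, so $\mu_1\ge 0$. We must rule out $\mu_1=0$.

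Suppose $\mu_1=0$, and let $\psi_1>0$ be the principal eigenfunction:
\[
-\Delta\psi_1=h^2\psi_1\cos2\phi_* \ \text{in }\Omega,\qquad \psi_1=0 \text{ on P},\qquad \partial_\nu\psi_1=L_{\mathrm H}\psi_1\cos 2\phi_* \text{ on H}.
\]
Averaging in the tangential variables, as in Lemma \ref{dim red}, we may assume $\psi_1=\psi_1(x_n)$. This works because the coefficients depend only on $x_n$, so the average solves the same problem and is positive. Compare $\psi_1$ with $\phi_*$, which satisfies the equation with the weight $\frac{\sin 2\phi_*}{2\phi_*}$ in place of $\cos2\phi_*$:
\[
-\Delta\phi_* = h^2\phi_*\,\frac{\sin 2\phi_*}{2\phi_*},\qquad \partial_\nu\phi_* = L_{\mathrm H}\phi_*\,\frac{\sin2\phi_*}{2\phi_*}.
\]
Multiply the $\psi_1$ equation by $\phi_*$ and the $\phi_*$ equation by $\psi_1$, subtract, and integrate by parts. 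The Dirichlet parts on P vanish, and the Robin terms on H produce matching boundary integrals. This gives
\[
h^2\int_\Omega \phi_*\psi_1\Big(\frac{\sin 2\phi_*}{2\phi_*}-\cos2\phi_*\Big)+L_{\mathrm H}\int_{\mathrm H}\phi_*\psi_1\Big(\frac{\sin 2\phi_*}{2\phi_*}-\cos2\phi_*\Big)=0.
\]
For $0<s<\frac{\pi}{2}$ we have $\tan 2s>2s$ when $2s<\frac{\pi}{2}$, and $\cos2s\le0<\frac{\sin 2s}{2s}$ otherwise. So $\frac{\sin 2s}{2s}>\cos 2s$ throughout, i.e. $\sin 2s > 2s\cos 2s$, which follows from $(\sin x - x\cos x)'=x\sin x>0$ on $(0,\pi)$. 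Since $\phi_*,\psi_1>0$ in $\Omega$, the left-hand side is strictly positive, a contradiction. Hence $\mu_1>0$.

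The main obstacle is justifying the integration by parts and the sign of the resulting integral. This needs $\psi_1$ regular enough and strictly positive, which we take from the remark after \eqref{prin eigen}. The boundary terms must be checked carefully, since $\psi_1$ vanishes on P. Note that the argument never uses that $\mu_1$ is exactly $0$ beyond producing a positive solution of the linearized problem. It therefore also shows directly that $\mu_1\neq0$, and the contradiction is robust.
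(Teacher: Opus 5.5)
Your proof is correct and follows the paper's argument. For $d<d_c$ you use $\mu_1=\mathrm R^{n\mathrm D}-1>0$; for $d>d_c$ you use the Green identity pairing $\psi_1$ with $\phi_*$, together with the positivity of $\sin 2\phi_*-2\phi_*\cos 2\phi_*$ on $(0,\frac{\pi}{2})$. The differences are cosmetic: the paper keeps $\mu_1$ general in that identity, getting $\mu_1\big(h^2\int_\Omega\phi\psi_1+L_{\mathrm H}\int_{\mathrm H}\phi\psi_1\big)>0$ directly, so it needs neither the a priori bound $\mu_1\ge 0$ from minimality nor your tangential averaging step, which is superfluous.
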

\begin{proof}[\bf Proof] Let $\phi$ in \eqref{prin eigen} be the global minimizer of $E$. $\psi_1$ is the eigenfunction that achieves $\mu_1$. In addition, we assume $\psi_1 > 0$ in $\Omega$. It can be shown that $\psi_1$ satisfies the following boundary value problem:\begin{align}\label{eq of psi1}
    - \Delta \psi_1 - h^2 \hspace{.5pt} \psi_1 \cos 2 \phi = \mu_1 h^2 \hspace{0.5pt}\psi_1 \hspace{10pt} \text{in $\Omega$;} \hspace{20pt}- \partial_n \psi_1 - L_\text{H} \hspace{0.5pt} \psi_1 \cos 2\phi = \mu_1 L_\text{H} \hspace{0.5pt} \psi_1 \hspace{10pt} \text{on H.}
\end{align}

If $0 < d < d_c$, by (1) in Proposition \ref{threshold of thickness}, it holds $\phi \equiv 0$ in $\Omega$. In light of \eqref{prin eigen}, it turns out
    \begin{align*}
        \mu_1 \left( h^2 \int_\Omega \psi_1^2 + L_\text{H} \int_\text{H} \psi_1^2\right) =  \int_\Omega \big|\hspace{1pt} \nabla \psi_1 \big|^2 - h^2 \int_\Omega \psi_1^2 - L_\text{H} \int_\text{H} \psi_1^2.
    \end{align*}
When $d \in \big(0, d_c\big)$, we have $\mathrm{R}^{n\mathrm{D}} > 1$. The right-hand side is positive, and thus $\mu_1 > 0$ in this case.
    
In the following, we assume $d > d_c$. $\phi$ is the global minimizer of $E$. Due to Lemma \ref{sign for all}, we may assume that $\phi$ is  strictly positive on $\Omega \cup \text{H}$. Now we multiply the first equation in \eqref{eq of psi1} by $\phi$ and integrate by parts. Using the boundary condition in \eqref{eq of psi1} and the fact that $\phi = 0$ on P, we get 
    \begin{align*}
        \mu_1 \left( h^2 \int_\Omega \phi \hspace{1pt}\psi_1 + L_\text{H} \int_\text{H} \phi \hspace{1pt} \psi_1  \right) = \int_\Omega \nabla \phi \cdot \nabla \psi_1 - h^2 \int_\Omega \psi_1 \phi \cos 2 \phi - L_\text{H} \int_\text{H} \psi_1 \phi \cos 2 \phi.
    \end{align*}
Note that $\phi$ satisfies the boundary value problem \eqref{SG}. We then multiply $\psi_1$ on both sides of the first equation in \eqref{SG} and integrate by parts. It then turns out \begin{align*}
    \int_\Omega \nabla \phi \cdot \nabla \psi_1 = \frac{h^2}{2} \int_\Omega \psi_1 \sin 2 \phi + \frac{L_\text{H}}{2} \int_\text{H} \psi_1 \sin 2 \phi.
\end{align*}Combining the last two equalities, we induce that \begin{align*}
    \mu_1 \left( h^2 \int_\Omega \phi \hspace{1pt}\psi_1 + L_\text{H} \int_\text{H} \phi \hspace{1pt} \psi_1  \right) = \frac{h^2}{2} \int_\Omega \psi_1 \big( \sin 2 \phi - 2 \phi \cos 2 \phi \big) + \frac{L_\text{H}}{2} \int_\text{H} \psi_1 \big( \sin 2 \phi - 2  \phi \cos 2 \phi \big).
\end{align*}According to Lemma \ref{sign of min}, it satisfies $\sin 2 \phi - 2 \phi \cos 2 \phi > 0$ on $\Omega \cup \text{H}$. The right-hand side above is therefore strictly positive, since $\psi_1$ is also strictly positive on $\Omega$. We conclude from the last equality that $\mu_1 > 0$. The proof is completed.             
\end{proof}

We call solutions of \eqref{SG} uniform if they depend only on the normal variable. If a solution is not uniform, it is called a non-uniform solution. From Lemma \ref{sym of minimizer} and Proposition \ref{strong stab of gmin}, the global minimizer of $E$ is a uniform solution of \eqref{SG}. It is strongly stable if $d \neq d_c$. We would like to point out that the strong stability of a critical point of $E$ is sufficient to imply that the critical point is a uniform solution of \eqref{SG}. In fact, we have

\begin{prop}\label{non-unif-unstable}The non-uniform solutions of \eqref{SG} are strongly unstable.
\end{prop}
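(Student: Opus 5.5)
The plan is to use the tangential derivatives of a non-uniform solution as test directions for the linearized operator. Let $\phi$ be a non-uniform solution of \eqref{SG}; by elliptic regularity (as in Lemma \ref{sign of min}) it is smooth up to the boundary. Since $\phi$ is non-uniform, there is a tangential index $i \in \{1, \dots, n-1\}$ such that $w := \partial_i \phi \not\equiv 0$.

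Differentiating \eqref{SG} in $x_i$ shows that $w$ lies in the kernel of $E''[\phi]$:
\begin{align*}
-\Delta w = h^2 w \cos 2\phi \ \text{in } \Omega; \qquad w = 0 \ \text{on P}; \qquad -\partial_n w = L_\text{H}\, w \cos 2\phi \ \text{on H}.
\end{align*}
The boundary conditions survive because P and H are flat and $\partial_i$ is tangential to both. Hence $w \in \mathscr V$ and $\langle E''[\phi] w, \varphi \rangle = 0$ for all $\varphi \in \mathscr V$, so $\mu_1 \le 0$. It remains to exclude $\mu_1 = 0$.

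Suppose $\mu_1 = 0$. Then $w$ attains the infimum in \eqref{prin eigen}. The same holds for $|w|$, because $\big|\nabla |w|\big| = |\nabla w|$ almost everywhere. So $|w|$ is a non-negative minimizer, hence a weak eigenfunction for $\mu_1 = 0$. By the strong maximum principle (the Serrin-type argument the paper uses to get $\psi_1 > 0$), $|w| > 0$ in $\Omega$. Equivalently, by simplicity of the principal eigenspace, $w = c\,\psi_1$ with $\psi_1 > 0$, so $w$ has a strict sign in $\Omega$.

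This contradicts periodicity. Integrating $w = \partial_i \phi$ over a period in $x_i$ gives
\begin{align*}
\int_{\mathbb T^{n-1}} \partial_i \phi(x', x_n)\, \mathrm d x' = 0 \qquad \text{for every } x_n,
\end{align*}
which is impossible for a continuous function that is strictly positive (or strictly negative) in $\Omega$ and not identically zero. Therefore $\mu_1 < 0$, and $\phi$ is strongly unstable in the sense of Definition \ref{strong stab}.

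The main obstacle is the step ``a sign-changing kernel element forces $\mu_1 < 0$''. This needs two facts: that the principal eigenvalue has a positive eigenfunction and a simple eigenspace, and that any function achieving the minimum in \eqref{prin eigen} does not change sign. The paper asserts these just before Definition \ref{strong stab}. If I must justify them, I will argue directly: $|w|$ is a minimizer, so it solves the Euler–Lagrange equation with the Robin-type condition on H. The Hopf lemma at zeros on H and the strong maximum principle in the interior (after adding a large multiple of $|w|$ to absorb the $\cos 2\phi$ potential) then give $|w| > 0$ on $\Omega \cup \mathrm H$. This contradicts the fact that $w$ is continuous, changes sign, and must therefore vanish somewhere in $\Omega$.
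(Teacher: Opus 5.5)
Your proposal is correct and follows the paper's proof. Both arguments show that a nonzero tangential derivative $\partial_i\phi$ lies in $\mathrm{Ker}\, E''[\phi]$, giving $\mu_1 \le 0$, and both rule out $\mu_1 = 0$ because $\partial_i\phi$ would then be a multiple of the positive principal eigenfunction $\psi_1$, which is incompatible with periodicity in $x_i$. Your zero-mean identity $\int_{\mathbb T^{n-1}}\partial_i\phi\,\mathrm d x' = 0$ spells out the periodicity contradiction, and your $|w|$-minimizer argument supplies the positivity and simplicity facts that the paper only asserts.
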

\begin{proof}[\bf Proof] Assume that $\phi$ is a non-uniform solution of $\eqref{SG}$. The variables $x_1, ..., x_{n-1}$ are tangential variables. Taking $\partial_j$, with $j = 1, ..., n-1$, on both sides of the equation and the conditions in \eqref{SG}, we obtain \begin{align}\label{pro of tan}
    - \Delta \hspace{1pt}\partial_{j} \phi = h^2 \left(\cos 2 \phi \right) \partial_{j} \phi \quad \text{in $\Omega$}; \hspace{20pt} \partial_j \phi = 0 \quad \text{on P}; \hspace{20pt} - \partial_n \partial_j \phi = L_\text{H} \left( \cos 2 \phi \right) \partial_j \phi \quad \text{on H.}
\end{align}Multiply $\partial_j \phi$ on both sides of the first equation above and integrate by parts. It follows \begin{align*}
    \big<\hspace{0.5pt} E''[\hspace{0.5pt}\phi\hspace{0.5pt}] \hspace{0.5pt} \partial_j \phi, \partial_j \phi \hspace{0.5pt}\big>_{\mathscr{V}' \times \mathscr{V}} = 0, \hspace{20pt}j = 1, ..., n-1.
\end{align*}Here we also use the boundary conditions in \eqref{pro of tan}. Since $\phi$ is non-uniform, one of the tangential derivatives of $\phi$ must not be identically equal to zero. We assume $\partial_1 \phi \not \equiv 0$ on $\Omega$. Recall $\mu_1$ defined in \eqref{prin eigen}. If $\mu_1 \geq 0$, then $\mu_1 = 0$ and $\partial_1 \phi$ is an eigenfunction corresponding to $\mu_1$. Therefore, $\partial_1 \phi = c \hspace{0.8pt}\psi_1$ on $\Omega$, where $c$ is a non-zero constant. $\psi_1$ is an eigenfunction corresponding to $\mu_1$ that is positive on $\Omega$. We imply that  $\partial_1 \phi$ retains the sign on $\Omega$. This is a contradiction because $\phi$ is periodic along the direction $x_1$.   
\end{proof}

\section{\L ojasiewicz-Simon inequality}

The \L ojasiewicz-Simon inequality is proved in \cite{Chill2003} for functions on Banach spaces. In this section, we apply the results from \cite{Chill2003} and prove the \L ojasiewicz-Simon inequality for our energy $E$ defined on $H_\text{P}^1(\Omega)$. Note that the results in this section are valid for all dimensions.\vspace{0.3pc}

\begin{lem}\label{finite dim of ker}
    Suppose  $\phi$ is a critical point of $E$ that satisfies $E'[\hspace{0.5pt}\phi\hspace{0.5pt}] = 0$. Then $\mathrm{Ker} \hspace{0.5pt}E''[\hspace{0.5pt}\phi\hspace{0.5pt}]$ is of finite dimension. In addition, the functions in $\mathrm{Ker} \hspace{0.5pt}E''[\hspace{0.5pt}\phi\hspace{0.5pt}]$ are smooth on $\Omega$ with all their derivatives continuous up to $\partial \hspace{0.5pt} \Omega$.
\end{lem}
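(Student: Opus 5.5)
The plan is to show that $E''[\phi]:\mathscr{V}\to\mathscr{V}'$ is a compact perturbation of the Riesz isomorphism, so that it is Fredholm of index zero. The kernel is then finite-dimensional, and elliptic regularity for the linearized boundary value problem gives the smoothness.

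First, regularity of $\phi$ itself. Since $E'[\phi]=0$, the function $\phi$ is a weak solution of \eqref{SG}. Exactly as in the proof of Lemma \ref{sign of min}, we iterate Theorem 2.3.3.2 of \cite{Grisvard1985} together with the trace theorem. This applies because the right-hand sides $\frac{h^2}{2}\sin 2\phi$ in $\Omega$ and $\frac{L_\mathrm{H}}{2}\sin 2\phi$ on H are bounded and gain regularity along with $\phi$. The conclusion is that $\phi$ is smooth on $\overline{\Omega}$. In particular $\cos 2\phi$ is a smooth bounded coefficient, with all derivatives continuous up to $\partial\Omega$.

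Next, the Fredholm structure. By \eqref{inner prod on V} and \eqref{2nd deri of E} we can write
\begin{equation*}
E''[\phi]=J-K,
\end{equation*}
where $J:\mathscr{V}\to\mathscr{V}'$ is the Riesz isomorphism $\langle J\psi,\varphi\rangle=\int_\Omega\nabla\psi\cdot\nabla\varphi$. The remaining operator is
\begin{equation*}
\langle K\psi,\varphi\rangle=h^2\int_\Omega\varphi\psi\cos 2\phi+L_\mathrm{H}\int_{\mathrm H}\varphi\psi\cos 2\phi .
\end{equation*}
The operator $K$ factors through the embeddings $\mathscr{V}\hookrightarrow L^2(\Omega)$ and $\mathscr{V}\hookrightarrow L^2(\mathrm H)$, both of which are compact, as used already after \eqref{pos of evalue}. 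Hence $K$ is compact. Then $J^{-1}E''[\phi]=I-J^{-1}K$ is the identity minus a compact operator on the Hilbert space $\mathscr{V}$. By the Riesz--Schauder theory its kernel, which equals $\mathrm{Ker}\,E''[\phi]$, is finite-dimensional. Alternatively, one can argue directly: the unit ball of the kernel satisfies $\psi=J^{-1}K\psi$, so it is compact, and therefore the kernel is finite-dimensional.

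Finally, the regularity of kernel elements, which is the only step needing care. Let $\psi\in\mathrm{Ker}\,E''[\phi]$. Then $\psi$ is a weak solution of the linear problem
\begin{align*}
-\Delta\psi&=h^2\cos(2\phi)\,\psi \quad\text{in }\Omega,\\
\psi&=0 \quad\text{on P},\\
\partial_\nu\psi&=L_\mathrm{H}\cos(2\phi)\,\psi \quad\text{on H}.
\end{align*}
We bootstrap as follows. If $\psi\in H^k(\Omega)$, then the interior right-hand side lies in $H^k(\Omega)$ and the Neumann datum lies in $H^{k-1/2}(\mathrm H)$, since $\cos 2\phi$ is smooth. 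The mixed problem has no corner issue, because P and H are disjoint components of $\partial\Omega$ in the flat periodic geometry. So the regularity theorem of \cite{Grisvard1985} gives $\psi\in H^{k+1}(\Omega)$. Alternatively, we can use local flat-boundary estimates near each of P and H, together with tangential difference quotients, which are legitimate by periodicity. Iterating in $k$ and applying Sobolev embedding shows that $\psi$ is smooth with all derivatives continuous up to $\partial\Omega$.

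I expect no genuine obstacle. The only points to check are that the smoothness of $\phi$ must be established before the bootstrap for $\psi$, and that the boundary regularity theorem applies to the Dirichlet condition on P and the Robin-type condition on H simultaneously. Both are handled by the separation of the two boundary components.
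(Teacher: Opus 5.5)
Your proposal is correct and essentially matches the paper's proof. Your operator $J^{-1}K$ is exactly the solution operator $K$ that the paper introduces. Finite-dimensionality then comes from the Fredholm alternative for $I-K$, and smoothness from iterating Theorem 2.3.3.2 of \cite{Grisvard1985}, first for $\phi$ and then for kernel elements. The one minor difference is the compactness step. You obtain it directly from the compact maps $\mathscr{V}\hookrightarrow L^2(\Omega)$ and $\mathscr{V}\to L^2(\mathrm H)$, which needs only $|\cos 2\phi|\le 1$. The paper instead appeals to elliptic regularity together with the compact embedding $H^1\hookrightarrow L^2$.
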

\begin{proof}[\bf Proof]
Assume $\psi \in \mathrm{Ker} \hspace{0.5pt}E''[\hspace{0.5pt}\phi\hspace{0.5pt}] \subseteq \mathscr{V}$. In view of \eqref{2nd deri of E}, $\psi$ solves the boundary value problem:    \begin{align}\label{ker equ}
    - \Delta \psi = h^2 \left( \cos 2 \phi \right) \psi \quad \text{in $\Omega$;} \hspace{20pt} \psi = 0\quad \text{on P}; \hspace{20pt} - \partial_n \psi = L_\text{H} \left(\cos 2\phi\right) \psi \quad\text{on H}.
\end{align}If $\phi$ is a critical point of $E$ and $\psi$ solves \eqref{ker equ}, then $\phi$ and $\psi$ are smooth on $\Omega$ by applying Theorem 2.3.3.2 in \cite{Grisvard1985} iteratively. All their derivatives are continuous up to the boundary $\partial \hspace{0.5pt}\Omega$.\vspace{0.3pc}

Fixing a function $f \in \mathscr{V}$, we define $K f$ to be the unique solution of the problem: \begin{align*} - \Delta \psi = h^2 \left( \cos 2 \phi \right) f \quad \text{in $\Omega$;} \hspace{20pt} \psi = 0\quad \text{on P}; \hspace{20pt} - \partial_n \psi = L_\text{H} \left(\cos 2\phi\right) f \quad\text{on H}.\end{align*}Therefore, $\psi \in \mathrm{Ker} \hspace{0.5pt}E''[\hspace{0.5pt}\phi\hspace{0.5pt}]$ if and only if $\psi \in N (I - K)$. Here, $N(I - K)$ denotes the null space of $I - K$. According to Theorem 2.3.3.2 in \cite{Grisvard1985} and the compactness of the embedding $H^1(\Omega) \hookrightarrow L^2(\Omega)$, $K$ is a compact operator from $\mathscr{V}$ to $\mathscr{V}$. Fredholm alternative infers that $N(I - K)$ is of finite dimension.
\end{proof}

Assume $\phi$ is a critical point of $E$ and define $\mathscr{V}_0 := \mathrm{Ker} \hspace{0.5pt}E''[\hspace{0.5pt}\phi\hspace{0.5pt}]$. Let $P_0$ be the orthogonal projection from $\mathscr{V}$ to $\mathscr{V}_0$. Then, $\mathscr{V} = \mathscr{V}_0 \oplus \mathscr{V}_1$, where $\mathscr{V}_1 = \mathrm{Ker}\hspace{0.5pt} P_0$. Note that $P_0$ is defined with respect to the inner product \eqref{inner prod on V}. Recall the operator $K$ in the proof of Lemma \ref{finite dim of ker}. It holds \begin{align}\label{self adj}
\int_{\Omega} \nabla \varphi \cdot \nabla K \psi = h^2 \int_{\Omega} \varphi \hspace{0.5pt}\psi \cos 2 \phi + L_\text{H} \int_\text{H} \varphi \hspace{0.5pt} \psi \cos 2 \phi    \hspace{20pt}\text{for any $\psi, \varphi \in H_\text{P}^1(\Omega)$}.
\end{align}Plugging the above equality into \eqref{2nd deri of E} yields \begin{align*}
     \big<\hspace{0.5pt} E''[\hspace{0.5pt}\phi\hspace{0.5pt}] \hspace{0.5pt} \psi, \varphi \hspace{0.5pt}\big>_{\mathscr{V}' \times \mathscr{V}} = \int_{\Omega} \nabla \varphi \cdot \nabla \big( \left( I - K \right) \psi\big).
\end{align*}This representation of $E''[\hspace{0.5pt}\phi\hspace{0.5pt}]$ and Fredholm alternative infer that $$\mathrm{Rg} \hspace{1.5pt}E''[\hspace{0.5pt}\phi\hspace{0.5pt}] = \mathrm{Rg} \hspace{1.5pt}\big( I - K \big) = N\left( I - K^*\right)^{\bot}.$$ Here, $\mathrm{Rg} \hspace{1.5pt}T$ denotes the range of an operator $T$. The operator $K^*$ is the adjoint of $K$. Given $f \in \mathscr{V}$, it is identified with the linear operator $T_f \in \mathscr{V}'$ via the relation: $$\big<f, g\big>_{\mathscr{V}} = \big<\hspace{0.5pt} T_f, g \hspace{0.5pt}\big>_{\mathscr{V}' \times \mathscr{V}} \hspace{20pt} \text{for all $g \in H_\text{P}^1(\Omega)$.}$$Note that $K$ is self-adjoint since by \eqref{self adj},  \begin{align*}
    \big< K^*f, g\big>_{\mathscr{V}} = \big<f, K g\big>_{\mathscr{V}} = \big<K f, g\big>_{\mathscr{V}} \hspace{20pt}\text{for any $f, g \in H_\text{P}^1(\Omega)$.}
\end{align*}We then get $\mathrm{Rg} \hspace{1.5pt}E''[\hspace{0.5pt}\phi\hspace{0.5pt}] = N\left( I - K\right)^{\bot}$. Denote the adjoint of $P_0$ by $P_0'$. It turns out \begin{align*}
    \big<\hspace{0.5pt} P_0'\hspace{1.2pt}T_f, g \hspace{0.5pt}\big>_{\mathscr{V}' \times \mathscr{V}} = \big<\hspace{0.5pt} T_f, P_0\hspace{1pt}g \hspace{0.5pt}\big>_{\mathscr{V}' \times \mathscr{V}} = \big< f, P_0\hspace{1pt}g\big>_{\mathscr{V}} = \big< P_0 \h{1pt}f, g\big>_{\mathscr{V}} \hspace{20pt}\text{for any $f, g \in H_\text{P}^1(\Omega)$.}
\end{align*}Therefore, $T_f \in \mathrm{Ker} \hspace{0.5pt}P_0'$ is equivalent to $f \in \mathrm{Ker} \hspace{0.5pt}P_0 = N\left( I - K\right)^{\bot}$. To summarize, we have \begin{align}\label{rg linear}
\mathrm{Rg} \hspace{1.5pt}E''[\hspace{0.5pt}\phi\hspace{0.5pt}] =  \mathrm{Ker} \hspace{0.5pt}P_0'.
\end{align}\

Consider the subspaces $X = H_\text{P}^1(\Omega) \cap H^n (\Omega) \hookrightarrow \mathscr{V}$ and $Y = \big\{ T_f : f \in X\big\} \hookrightarrow \mathscr{V}'$. $X$ and $Y$ are invariant under the projections $P_0$ and $P_0'$, respectively. This is due to the smoothness of the eigenfunctions in $\mathrm{Ker} \hspace{0.5pt}E''[\hspace{0.5pt}\phi\hspace{0.5pt}]$. See Lemma \ref{finite dim of ker}. By Morrey's inequality, $H^n(\Omega)$ is embedded into $L^{\infty}(\Omega)$ continuously. In view of \eqref{1st deriv of E} and the analyticity of the sine function, the restriction of $E'$ on $X$ is analytic in a neighborhood of $\phi$. We now show \begin{align}\label{rg of linear on X}
\mathrm{Rg} \hspace{1.5pt} E''[\hspace{0.5pt}\phi\hspace{0.5pt}] \hspace{1.5pt}\Big|_X =  \mathrm{Ker} \hspace{0.5pt}P_0' \cap Y.
\end{align}Recall \eqref{2nd deri of E}. $E''[\hspace{0.5pt}\phi\hspace{0.5pt}] \psi = T_f$ is equivalent to \begin{align*}
    \int_{\Omega} \nabla \varphi \cdot \nabla \psi - h^2 \int_{\Omega} \varphi \hspace{0.5pt}\psi \cos 2 \phi - L_\text{H} \int_\text{H} \varphi \hspace{0.5pt} \psi \cos 2 \phi = \int_\Omega \nabla \varphi \cdot \nabla f \hspace{20pt}\text{for any $\varphi \in H_\text{P}^1(\Omega)$}.
\end{align*}Theorem 2.3.3.2 in \cite{Grisvard1985} shows that $\psi \in H^n(\Omega)$ if $f \in H^n(\Omega)$, and vice versa. \eqref{rg of linear on X} follows by \eqref{rg linear}.\vspace{0.3pc}

With the above arguments, we apply Corollary 3.11 in \cite{Chill2003} and obtain

\begin{thm} \label{LS general}

Let $\phi$ be a critical point of $E$. Then there are $\rho > 0$, $\gamma > 0$ and $\theta \in \left(\hspace{0.5pt}0, \frac{1}{2}\hspace{0.5pt}\right]$ so that 
\begin{equation*}
    \big\|\hspace{1pt} E'[\psi] \hspace{1.5pt}\big\|_{\mathscr{V}'} \geq \gamma \hspace{1pt}\big|\hspace{1pt} E[\psi] - E[\phi] \hspace{1.5pt}\big|^{1 - \theta} \hspace{20pt}\text{for any $\psi \in \mathscr{V}$ with $\|\hspace{1pt} \psi - \phi \hspace{1pt}\|_{H^1} \leq \rho$. }
\end{equation*}
Here the constants $\rho, \gamma$, $\theta$ depend on $h$, $L_\mathrm{H}$, $\Omega$ and $\phi$. The notation $\mathscr{V}$ still denotes the space $H_{\mathrm{P}}^1(\Omega)$.

\end{thm}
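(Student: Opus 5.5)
The plan is to verify the hypotheses of Corollary 3.11 in \cite{Chill2003}, using the Banach-space setup already assembled above: $\mathscr{V} = H_\text{P}^1(\Omega)$ with dual $\mathscr{V}'$, and the dense subspaces $X = H_\text{P}^1(\Omega)\cap H^n(\Omega)\hookrightarrow \mathscr{V}$ and $Y = \{T_f : f\in X\}\hookrightarrow \mathscr{V}'$. Chill's abstract result requires five things near the critical point $\phi$.
\begin{enumerate}[(i)]
\item $E\in C^2$ near $\phi$ in $\mathscr{V}$, i.e.\ $E'\in C^1(\mathscr{V},\mathscr{V}')$.
\item $\mathrm{Ker}\, E''[\phi]$ is finite dimensional and admits a projection $P_0$ whose adjoint $P_0'$ leaves $Y$ invariant, and $X$ is invariant under $P_0$.
\item $\mathrm{Rg}\, E''[\phi] = \mathrm{Ker}\, P_0'$.
\item The same range identity holds on $X$: $\mathrm{Rg}\, E''[\phi]|_X = \mathrm{Ker}\, P_0'\cap Y$.
\item The restriction of $E'$ to a neighbourhood of $\phi$ in $X$ is analytic with values in $Y$.
\end{enumerate}
Given these, the corollary yields the gradient inequality with the $\mathscr{V}'$-norm of $E'$ on an $X$-neighbourhood. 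The remaining work is to pass to an $H^1$-neighbourhood, as stated.

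Steps (ii)–(iv) are already done in the text. Lemma \ref{finite dim of ker} gives the finite-dimensional smooth kernel and hence the invariance of $X$ and $Y$. \eqref{rg linear} gives (iii), and \eqref{rg of linear on X} gives (iv), both via the self-adjoint compact operator $K$ and the Grisvard regularity Theorem 2.3.3.2. For (i), I would differentiate \eqref{1st deriv of E} directly. The terms $\int_\Omega \varphi\sin 2\psi$ and $\int_{\text{H}}\varphi \sin 2\psi$ are $C^1$ on $H^1$ because $\sin$ is globally Lipschitz with bounded derivatives. Continuity of $\psi\mapsto E''[\psi]$ in operator norm then follows from the trace embedding $H^1(\Omega)\to L^2(\text{H})$ combined with dominated convergence arguments. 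To get the required uniformity, I would use the Sobolev/trace embeddings into $L^4$ when $n\le 3$, or handle it generally via Nemytskii operator theory on $L^2$ products. For (v), on $X\subset L^\infty$ (Morrey, since $H^n$ embeds in $C(\overline{\Omega})$), the map $\psi\mapsto \sin 2\psi$ is given by a convergent power series in the Banach algebra $H^n(\Omega)$, so it is analytic. One must then check that $E'[\psi]$ lands in $Y$, i.e.\ that $E'[\psi] = T_f$ with $f\in H^n$. Indeed $f$ solves $-\Delta f = -\Delta\psi - \tfrac{h^2}{2}\sin 2\psi$ with boundary data matching the Neumann term, which has $H^n$ regularity by Grisvard.

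The main obstacle is the passage from Chill's conclusion, valid for $\psi$ in a small $X$-ball (or stated in the weaker $\mathscr V$-norm under extra hypotheses), to all $\psi\in\mathscr{V}$ with $\|\psi-\phi\|_{H^1}\le\rho$. My first attempt is to invoke the version of Corollary 3.11 that gives the inequality on a $\mathscr{V}$-neighbourhood. That version needs $E'$ to map $\mathscr{V}$ into $\mathscr{V}'$ in a $C^1$ way with a Fredholm linearization, and this is provided by (i) together with the structure $E''[\phi] = I-K$, $K$ compact. If only the $X$-version is available, I would instead use a Lyapunov–Schmidt reduction on $\mathscr{V}=\mathscr{V}_0\oplus\mathscr{V}_1$. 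The implicit function theorem on $\mathscr{V}_1$ reduces $E$ to a function $\Gamma(\xi) = E[\phi+\xi+g(\xi)]$ on the finite-dimensional $\mathscr{V}_0$. Because $g$ takes values in smooth functions via elliptic regularity, $\Gamma$ is real analytic, so the classical Łojasiewicz inequality applies. Finally I would combine this with the coercivity of $E''[\phi]$ on $\mathscr{V}_1$ to bound $|E[\psi]-E[\phi]|^{1-\theta}$ by $\|E'[\psi]\|_{\mathscr{V}'}$, with $\theta\le\tfrac12$ coming from the quadratic $\mathscr{V}_1$ part.
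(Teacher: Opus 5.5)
Your plan is the paper's proof. With $X=H^1_{\mathrm P}(\Omega)\cap H^n(\Omega)$ and $Y=\{T_f: f\in X\}$, the facts $E'\in C^1(\mathscr V,\mathscr V')$, Lemma~\ref{finite dim of ker}, \eqref{rg linear}, \eqref{rg of linear on X} and the analyticity of $E'|_X$ with values in $Y$ are exactly the hypotheses of Corollary 3.11 of \cite{Chill2003}. That corollary already gives the inequality on an $H^1$-ball with the $\mathscr V'$-norm, so your ``main obstacle'' does not arise and the Lyapunov--Schmidt fallback is just Chill's own proof.

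If you do write the fallback out, two points need fixing:
\begin{itemize}
\item Replace ``coercivity of $E''[\phi]$ on $\mathscr V_1$'' by the fact that $E''[\phi]|_{\mathscr V_1}\colon\mathscr V_1\to\mathrm{Ker}\,P_0'$ is an isomorphism. Coercivity can fail because $\phi$ may be a saddle (cf.\ Proposition~\ref{non-unif-unstable}).
\item Obtain analyticity of $\Gamma$ from the analytic implicit function theorem in $X$, combined with uniqueness of the implicit function in $\mathscr V$. Smoothness of the values of $g$ alone does not give it.
\end{itemize}
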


In certain cases, the \L{}ojasiewicz-Simon inequality applies with the optimal exponent $\theta = \frac{1}{2}$, which leads to an exponential convergence rate in some gradient flows. For our current problem, if $\phi$ is the global minimizer of $E$ and $d \neq d_c$, we have strong stability of $\phi$ as shown in Proposition \ref{strong stab of gmin}. Therefore, $\mathrm{Ker} \hspace{0.5pt}E''[\hspace{0.5pt}\phi\hspace{0.5pt}] = 0$, which infers that $\mathrm{Ker} P_0 = \mathscr{V}$. Due to \eqref{rg linear}, the linearized operator $E''[\hspace{0.5pt}\phi\hspace{0.5pt}]$ is invertible from $\mathscr{V}$ onto $\mathscr{V}'$. Applying Corollary 3.13 in \cite{Chill2003}, we obtain

\begin{cor}\label{LS exp}
Theorem \ref{LS general} applies with $\theta = \frac{1}{2}$ if $\phi$ is the global minimizer of $E$ and $d \neq d_c$. 
\end{cor}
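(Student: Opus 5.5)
The plan is to verify the hypotheses of Corollary 3.13 in \cite{Chill2003}, which upgrades the \L{}ojasiewicz--Simon exponent to $\theta = \frac12$ when the linearization at the critical point is an isomorphism. The key input is Proposition \ref{strong stab of gmin}. For $d \neq d_c$, the global minimizer $\phi$ satisfies $\mu_1 > 0$. Since $H^1_{\mathrm P}(\Omega)$ carries the norm in \eqref{inner prod on V}, we then have
\begin{equation*}
\big<E''[\phi]\psi,\psi\big>_{\mathscr V'\times\mathscr V} \geq \mu_1\Big(h^2\|\psi\|_{L^2(\Omega)}^2 + L_{\mathrm H}\|\psi\|^2_{L^2(\mathrm H)}\Big).
\end{equation*}
From this we want a genuine coercivity bound $\langle E''[\phi]\psi,\psi\rangle \geq c\|\psi\|_{\mathscr V}^2$.

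To get it, we interpolate. Write $Q(\psi) = h^2\|\psi\|^2_{L^2(\Omega)} + L_{\mathrm H}\|\psi\|_{L^2(\mathrm H)}^2$. Since $|\cos 2\phi| \leq 1$, we have $\langle E''[\phi]\psi,\psi\rangle \geq \|\psi\|_{\mathscr V}^2 - Q(\psi)$. For $\varepsilon\in(0,1)$, take a convex combination of this bound with the bound $\geq \mu_1 Q(\psi)$:
\begin{equation*}
\big<E''[\phi]\psi,\psi\big> \geq \varepsilon\|\psi\|_{\mathscr V}^2 + \big((1-\varepsilon)\mu_1 - \varepsilon\big)Q(\psi).
\end{equation*}
Choosing $\varepsilon = \mu_1/(1+\mu_1)$ makes the second coefficient vanish, which gives coercivity with $c = \mu_1/(1+\mu_1)$. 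Lax--Milgram then shows that $E''[\phi]:\mathscr V\to\mathscr V'$ is an isomorphism. In particular $\mathrm{Ker}\,E''[\phi] = \{0\}$, so $P_0 = 0$ and $\mathrm{Ker}\,P_0' = \mathscr V'$, consistent with \eqref{rg linear}.

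Next we check the restricted setting on $X = H^1_{\mathrm P}\cap H^n$ and $Y$. By \eqref{rg of linear on X}, $E''[\phi]|_X$ maps onto $Y$. It is injective because the full operator is. Elliptic regularity (Theorem 2.3.3.2 in \cite{Grisvard1985}) shows that the inverse maps $Y$ into $X$, and by the closed graph theorem it is bounded. As established before Theorem \ref{LS general}, $E'$ is analytic on $X$ near $\phi$. Thus every hypothesis of Chill's framework holds with trivial kernel, and Corollary 3.13 of \cite{Chill2003} yields the inequality of Theorem \ref{LS general} with $\theta = \frac12$.

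The main obstacle is making sure that the isomorphism is needed in exactly the form Chill's corollary requires, namely on the pair $(X,Y)$ and not just on $(\mathscr V,\mathscr V')$. That is why the boundedness of the inverse via elliptic regularity has to be stated explicitly.

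If one prefers to avoid the abstract corollary, there is a direct route through a Taylor expansion. First, $E[\psi]-E[\phi] = \frac12\langle E''[\phi](\psi-\phi),\psi-\phi\rangle + o(\|\psi-\phi\|^2_{\mathscr V})$; the error terms are controlled because $\sin$ and $\cos$ are Lipschitz and the trace embedding $H^1\hookrightarrow L^2(\mathrm H)$ is compact. Second, $\|E'[\psi]\|_{\mathscr V'} \geq \|E''[\phi](\psi-\phi)\| - o(\|\psi-\phi\|) \geq c'\|\psi-\phi\|_{\mathscr V}$. Combining the two gives $|E[\psi]-E[\phi]|^{1/2} \lesssim \|\psi-\phi\|_{\mathscr V} \lesssim \|E'[\psi]\|_{\mathscr V'}$ for small $\rho$.

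The delicate point in this direct route is the $o(\cdot)$ remainder in $H^1$ without $L^\infty$ control. In dimension $3$, $H^1\subset L^6$ makes the quadratic remainder bounded by $\|\psi-\phi\|_{L^3}^3$ in the bulk, and a similar estimate holds on $\mathrm H$ via $H^{1/2}(\mathrm H)\subset L^4$. Both are superquadratic, so the remainder is genuinely $o(\|\psi-\phi\|_{\mathscr V}^2)$.
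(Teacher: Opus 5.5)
Your proposal is correct, and its main line is the paper's: Proposition \ref{strong stab of gmin} gives $\mu_1>0$, so $E''[\phi]\colon\mathscr V\to\mathscr V'$ is an isomorphism, and Corollary 3.13 of \cite{Chill2003} then gives $\theta=\frac12$.

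The difference is in how you obtain the isomorphism. The paper notes $\mathrm{Ker}\,E''[\phi]=\{0\}$ and gets surjectivity from \eqref{rg linear}, which rests on the Fredholm structure $E''[\phi]\psi=T_{(I-K)\psi}$ with $K$ compact and self-adjoint. Your convex combination instead gives an explicit coercivity constant $\mu_1/(1+\mu_1)$, and Lax--Milgram then gives invertibility with no compactness needed. Your extra check on the pair $(X,Y)$ is harmless and makes explicit what the paper leaves implicit.

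Your second route is genuinely different and more elementary. It bypasses Chill's framework and shows that in the nondegenerate case no analyticity is needed. Second-order Taylor control on $\mathscr V$ together with coercivity suffices.

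Two remarks on that route. First, the $o(\cdot)$ remainders do not come from Lipschitz continuity of $\sin,\cos$ or from compactness of the trace. They come from the superquadratic bounds in your last paragraph, together with the analogous $O(\|\psi-\phi\|_{\mathscr V}^2)$ bound for $E'[\psi]-E''[\phi](\psi-\phi)$ in $\mathscr V'$. That bound follows by duality from $H^1\subset L^6$ and $H^{1/2}(\mathrm H)\subset L^4$, and you should state it, since it is what gives $\|E'[\psi]\|_{\mathscr V'}\gtrsim\|\psi-\phi\|_{\mathscr V}$. Second, those bounds are written for $n=3$, while Section 3 is claimed in every dimension. In general, bound the remainders by $C|\psi-\phi|^{2+s}$ and $C|\psi-\phi|^{1+s}$, with $s>0$ small relative to the Sobolev and trace exponents.
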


\section{Convergence along the classical hydrodynamic flow}

Suppose $(u, \phi)$ is a global classical solution of IBVP. The spatial dimension $n$ is set to be 3. In this section, we apply the \L ojasiewicz-Simon inequality in Theorem \ref{LS general} (see also Corollary \ref{LS exp}) to study the convergence of $(u, \phi)$ as $t$ tends to $\infty$. 

\subsection{Basic energy estimates}

With the energy $E$ in \eqref{phiE}, we define the total energy: 
\begin{align*}
    \mathcal{E}(t) := E\hspace{0.5pt}\big[\phi(t)\big] + \frac{1}{2} 
 \int_{\Omega} |\hspace{0.5pt}u(t)\hspace{0.5pt}|^2.
\end{align*}
\begin{lem}\label{basic energy est}
    If $(u, \phi)$ is a global classical solution of $\mathrm {IBVP}$, then, for any $t > T_0 \geq 0$, we have
    \begin{equation} \label{energy}
        \cE(t) + \int_{T_0}^t \int_{\Omega} \big|\hspace{0.5pt}\nabla u\hspace{0.5pt}\big|^2 + \big|\hspace{0.5pt}\Delta \phi + \frac{h^2}{2} \sin 2 \phi \hspace{0.5pt}\big|^2 = \cE(T_0).
    \end{equation}
\end{lem}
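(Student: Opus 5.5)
We prove \eqref{energy} by computing $\frac{\mathrm d}{\mathrm dt}\cE(t)$ for the classical solution and showing that it equals $-\int_\Omega |\nabla u|^2 + |\Delta \phi + \frac{h^2}{2}\sin 2\phi|^2$. Integrating this identity from $T_0$ to $t$ then gives the claim. The computation has two parts: the kinetic energy (velocity equation) and the director energy $E[\phi(t)]$ (angle equation). The coupling terms must cancel, just as in the derivation of \eqref{energy of u}.

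\textbf{Velocity part.} Take the inner product of the first equation in \eqref{sEL} with $u$ and integrate over $\Omega$. The no-slip condition \eqref{bdry of u} and $\div u = 0$ kill the convection term and the pressure term, and the viscous term becomes $\int_\Omega|\nabla u|^2$. For the stress term we use $\nabla\cdot(\nabla\phi\odot\nabla\phi) = \Delta\phi\,\nabla\phi + \frac12\nabla|\nabla\phi|^2$. The gradient piece integrates to zero against $u$ because $\div u=0$ and $u=0$ on $\mathrm H\cup\mathrm P$. Hence
\[
\frac{\mathrm d}{\mathrm dt}\frac12\int_\Omega|u|^2 + \int_\Omega|\nabla u|^2 = -\int_\Omega (u\cdot\nabla\phi)\,\Delta\phi .
\]

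\textbf{Director part.} By \eqref{1st deriv of E},
\[
\frac{\mathrm d}{\mathrm dt}E[\phi] = \int_\Omega \nabla\phi\cdot\nabla\partial_t\phi - \frac{h^2}{2}\int_\Omega \partial_t\phi\sin2\phi - \frac{L_{\mathrm H}}{2}\int_{\mathrm H}\partial_t\phi\sin 2\phi .
\]
Integrate the gradient term by parts, with two boundary facts:
\begin{itemize}
\item[(a).] On P we have $\partial_t\phi=0$, since $\phi\equiv0$ there.
\item[(b).] On H the Robin condition in \eqref{bc} gives $\partial_\nu\phi = \frac{L_{\mathrm H}}{2}\sin2\phi$.
\end{itemize}
By (b), the boundary integral on H produced by integrating by parts cancels exactly the $L_{\mathrm H}$ term. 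What remains is
\[
\frac{\mathrm d}{\mathrm dt}E[\phi] = -\int_\Omega \partial_t\phi\Big(\Delta\phi+\frac{h^2}{2}\sin2\phi\Big).
\]
Now insert the angle equation, $\partial_t\phi = \Delta\phi + \frac{h^2}{2}\sin 2\phi - u\cdot\nabla\phi$. This yields
\[
\frac{\mathrm d}{\mathrm dt}E[\phi] = -\int_\Omega\Big|\Delta\phi+\frac{h^2}{2}\sin2\phi\Big|^2 + \int_\Omega (u\cdot\nabla\phi)\Big(\Delta\phi+\frac{h^2}{2}\sin 2\phi\Big).
\]

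\textbf{Cancellation of the coupling.} The term $\int_\Omega (u\cdot\nabla\phi)\frac{h^2}{2}\sin 2\phi$ equals $-\frac{h^2}{4}\int_\Omega u\cdot\nabla\cos2\phi$. It vanishes after integration by parts, again by $\div u=0$ and $u=0$ on the boundary. The remaining coupling term $\int_\Omega (u\cdot\nabla\phi)\Delta\phi$ cancels exactly with the right-hand side of the velocity identity. Adding the velocity and director identities therefore gives $\frac{\mathrm d}{\mathrm dt}\cE = -\int_\Omega|\nabla u|^2+|\Delta\phi+\frac{h^2}{2}\sin2\phi|^2$, and integrating in time yields \eqref{energy}.

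The only delicate point is bookkeeping the boundary terms: the H-boundary contribution must match the Rapini–Papoular term in $E$, and $\partial_t\phi$ must vanish on P. Both are immediate for classical solutions, whose derivatives are continuous up to the boundary. The torus directions contribute no boundary terms by periodicity.
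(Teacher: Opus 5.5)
Your proof is correct and follows essentially the same route as the paper: test the velocity equation with $u$, test the angle equation with $\partial_t\phi$ (which you do by computing $\frac{\mathrm d}{\mathrm dt}E[\phi]$), and use the Dirichlet condition on P and the Rapini--Papoular condition on H to dispose of the boundary terms. The only difference is bookkeeping. The paper substitutes the angle equation into the velocity identity and completes the square to obtain $|\partial_t\phi + u\cdot\nabla\phi|^2$. You substitute it into the director identity and cancel the cross term directly.
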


\begin{proof}[\bf Proof]
Take the inner product with $u$ on the first equation in \eqref{sEL} and integrate by parts. Then
\begin{equation*}
    \dfrac{1}{2}\frac{\mathrm{d}}{\mathrm{d} \hspace{0.7pt}t} \int_{\Omega}  |\hspace{0.5pt}u\hspace{0.5pt}|^2 + \int_{\Omega} \big|\hspace{0.5pt}\nabla u\hspace{0.5pt}\big|^2 = - \int_{\Omega} \left(\Delta \phi + \frac{h^2}{2} \sin 2 \phi\right) u \cdot \nabla \phi = - \int_{\Omega} \big|\hspace{0.5pt}u \cdot \nabla \phi \hspace{0.5pt}\big|^2 + \left(u \cdot \nabla \phi\right)\partial_t \phi.
\end{equation*}The first equality above uses the incompressibility condition and the boundary condition of $u$. The second equality results from the third equation in \eqref{sEL}. In the next step, we multiply the third equation in \eqref{sEL} by $ \partial_t \phi$ and integrate by parts. Hence,
\begin{align*}
    \int_{\Omega} \big(\partial_t \phi\big)^2 + \frac{\mathrm{d}}{\mathrm{d}\hspace{0.7pt}t} \int_{\Omega} \frac{1}{2} \hspace{0.5pt} \big|\hspace{0.5pt}\nabla \phi\hspace{0.5pt}\big|^2 + \frac{h^2}{4} \big(\cos 2\phi + 1\big) + \frac{\mathrm{d}}{\mathrm{d} \hspace{0.7pt}t} \int_\text{H} \frac{L_\text{H}}{4} \big(\cos 2 \phi + 1\big) = - \int_{\Omega} \left(u \cdot \nabla \phi\right)  \partial_t \phi. 
\end{align*}The boundary conditions in \eqref{bc} are used to derive the last equality. Now we add the two equalities above. It follows from the definition of $\cE(t)$ that
\begin{equation*}
    \frac{\mathrm{d} \hspace{0.7pt} \cE}{\mathrm{d} \hspace{0.7pt}t}  = -\int_{\Omega} \big|\hspace{0.5pt}\nabla u\hspace{0.5pt}\big|^2 + \big|\hspace{0.5pt} \partial_t \phi + u \cdot \nabla \phi \hspace{0.5pt}\big|^2 = - \int_{\Omega} \big|\hspace{0.5pt}\nabla u\hspace{0.5pt}\big|^2 + \big|\hspace{0.5pt}\Delta \phi + \frac{h^2}{2} \sin 2 \phi \hspace{0.5pt}\big|^2.
\end{equation*}We obtain the proof of \eqref{energy} by integrating the above equality from $T_0$ to $t$.
\end{proof}

In the rest of this section, we study the higher-order energy estimate for the global classical solution of IBVP. The main result is based on a Stokes-type estimate for the velocity field. We summarize it in the following lemma.

\begin{lem}\label{Stokes est}
  Suppose $u \in H_{0, \mathrm{div}}^1(\Omega)$ is a weak solution of \begin{align}\label{stokes} - \Delta u + \nabla q = f \hspace{20pt}\text{in $\Omega$,} \h{15pt}\text{where $f \in L^2(\Omega; \mathbb R^3)$.}
    \end{align}Then it holds  $\| \hspace{0.5pt}u \hspace{0.5pt}\|_{H^2} \lesssim \| \hspace{0.5pt}f \hspace{0.5pt}\|_{L^2}$.
\end{lem}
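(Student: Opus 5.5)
The plan is to exploit the product structure $\Omega = \mathbb T^2 \times (0,d)$ and reduce the estimate to regularity for the Stokes system in a flat layer. Since the domain is periodic in $x_1, x_2$ and flat at $x_3 = 0$ and $x_3 = d$, the tangential derivatives $\partial_\tau u$, $\tau = 1,2$, still lie in $H^1_{0,\mathrm{div}}(\Omega)$ and satisfy the same equation with data $\partial_\tau f$. I will control them by a difference-quotient argument. The remaining normal derivatives will then be recovered from the equation itself.

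First I test the weak formulation of \eqref{stokes} with $u$. The pressure term vanishes because $u$ is divergence free with zero trace. Poincaré's inequality (valid since $u = 0$ on P) then gives $\|u\|_{H^1} \lesssim \|f\|_{L^2}$.

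Next, for tangential regularity, I test with $-D_\tau^{-k} D_\tau^{k} u$, where $D^k_\tau$ is the difference quotient in direction $x_\tau$. This test function is admissible: it is divergence free and vanishes on H and P, because translations along $\mathbb T^2$ preserve $H^1_{0,\mathrm{div}}(\Omega)$. This yields
\[
\|\nabla D_\tau^k u\|_{L^2}^2 \le \|f\|_{L^2}\, \|D_\tau^{-k} D_\tau^k u\|_{L^2} \lesssim \|f\|_{L^2}\, \|\nabla D_\tau^k u\|_{L^2},
\]
so $\nabla \partial_\tau u \in L^2$ with norm $\lesssim \|f\|_{L^2}$. This controls every second derivative except $\partial_{33} u$.

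The main step is to recover the pressure and the normal derivatives. I first obtain $q \in L^2$, unique up to a constant, by de Rham / Nečas: $\nabla q = f + \Delta u$ lies in $H^{-1}$, so $\|q - \bar q\|_{L^2} \lesssim \|f\|_{L^2}$. Then:
\begin{itemize}
\item From $\operatorname{div} u = 0$ we get $\partial_{33} u^3 = -\partial_3(\partial_1 u^1 + \partial_2 u^2)$, which is already controlled.
\item The third component of the equation gives $\partial_3 q = f^3 + \Delta' u^3 + \partial_{33} u^3 \in L^2$.
\item The tangential components give $\partial_\tau q = f^\tau + \Delta u^\tau$ in the distributional sense. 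To bound $\partial_\tau q$ in $L^2$, I note that $\partial_\tau q$ lies in $H^{-1}$ and its gradient $\nabla \partial_\tau q = \partial_\tau \nabla q = \partial_\tau f + \Delta \partial_\tau u$ also lies in $H^{-1}$, since $\partial_\tau u \in H^1$. By Nečas' lemma, $\partial_\tau q \in L^2$ with $\|\partial_\tau q\|_{L^2} \lesssim \|f\|_{L^2} + \|\nabla \partial_\tau u\|_{L^2}$.
\item Finally $\partial_{33} u^\tau = \partial_\tau q - f^\tau - \Delta' u^\tau \in L^2$.
\end{itemize}

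Collecting these bounds gives $\|u\|_{H^2} \lesssim \|f\|_{L^2}$.

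I expect the pressure step to be the main obstacle, since Nečas' inequality requires care on a domain with the torus factor. If that becomes awkward, I would instead cite the standard Cattabriga / Agmon–Douglis–Nirenberg estimate for the Stokes system in the flat layer, periodically extended.
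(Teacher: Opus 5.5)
Your proof is correct, but it does not follow the paper's route; the paper's proof is essentially your fallback.

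\textbf{The paper's route.} After the same energy bound and Poincar\'e inequality, the paper localizes in $x'$. It extends $u$ periodically and takes a cutoff $\eta(x')$ supported in $B_4'$. Then the pair $\bigl(\eta u,\eta(q-c_q)\bigr)$ solves a Stokes system on $B_4'\times(0,d)$ with forcing $f\eta-u\Delta\eta-2\nabla\eta\cdot\nabla u+(q-c_q)\nabla\eta$ and divergence $u\cdot\nabla\eta$. The paper then quotes the Cattabriga-type $H^2$ estimate from Temam (Ch.~1, Prop.~2.2 and Rem.~2.6). What remains is $\|q-c_q\|_{L^2}\lesssim\|\nabla q\|_{H^{-1}}\lesssim\|f\|_{L^2}$, obtained from Ne\v{c}as' inequality and duality.

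\textbf{Your route.} You run Nirenberg's argument by hand. Tangential difference quotients are admissible test functions precisely because the layer is flat and periodic, and they yield $\nabla\partial_\tau u$. The missing $\partial_{33}u$ is then read off from $\operatorname{div}u=0$ for $u^3$, and from the tangential equations for $u^\tau$ once $\partial_\tau q\in L^2$.

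\textbf{Comparison.} Your version is self-contained: it uses only the weak formulation and Ne\v{c}as' inequality. It also avoids a small wrinkle in the paper's version. The cylinder $B_4'\times(0,d)$ has edges, so the boundary-smoothness hypothesis of the cited theorem holds only after rounding them off; this is harmless, since $\eta u$ vanishes near the lateral boundary. The paper's version is shorter and rests on a general-purpose regularity theorem rather than on the flatness of H and P.

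\textbf{One refinement.} Applying Ne\v{c}as to $\partial_\tau q$, which is a priori only in $H^{-1}$, needs the Lions lemma (Amrouche--Girault), not just the inequality. You can stay with the standard inequality by applying it to $D_\tau^k q$ instead. This function lies in $L^2$ and has zero mean by periodicity, so you get $\|D_\tau^k q\|_{L^2}\lesssim\|D_\tau^k f\|_{H^{-1}}+\|\nabla D_\tau^k u\|_{L^2}\lesssim\|f\|_{L^2}$ uniformly in $k$.
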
\begin{proof}[\bf Proof] Note that $u$ is the unique minimizer of the variational problem: \begin{align*}
    \min \hspace{1pt} \left\{\int_\Omega   \big|\hspace{0.5pt} \nabla v \hspace{0.5pt}\big|^2 - 2 \hspace{0.5pt}f \cdot v : v \hspace{1pt} \in \hspace{1pt} H_{0, \mathrm{div}}^1(\Omega) \right\}. 
\end{align*} Comparing the energy between $u$ and $0$ induces \begin{align*}
     \int_\Omega   \big|\hspace{0.5pt} \nabla u \hspace{0.5pt}\big|^2 - 2 \hspace{0.5pt} f \cdot u \leq 0.
\end{align*}Applying H\"{o}lder's inequality, we obtain from the last estimate that \begin{align}\label{nabla est 1}
     \int_\Omega   \big|\hspace{0.5pt} \nabla u \hspace{0.5pt}\big|^2  \leq  2 \int_\Omega f \cdot u \hspace{1.5pt}\lesssim\hspace{1.5pt} \| f \|_{L^2} \| \hspace{0.5pt}u \hspace{0.5pt} \|_{L^2}.
\end{align}

We estimate the $H^1$-norm of $u$. By the fundamental theorem of calculus, \begin{align*}
    u(x', w) = u(x', d ) - \int_w^d \partial_3 u (x', x_3) \hspace{1pt} \mathrm{d} x_3 = - \int_w^d \partial_3 u (x', x_3) \hspace{1pt} \mathrm{d} x_3 \hspace{20pt}\text{for any $w \in [\hspace{0.5pt}0, d\hspace{0.5pt}]$.}  
\end{align*} It then turns out \begin{align*}
    \left|\hspace{2pt}\int_{\mathbb{T}^{2}} u(x', w) \hspace{1pt}\mathrm{d} x' \hspace{1pt}\right| = \left|\hspace{2pt} \int_{\mathbb{T}^{2}}\int_w^d \partial_3 u (x', x_3) \hspace{1pt} \mathrm{d} x_3 \hspace{1pt}\mathrm{d} x' \hspace{1pt}\right| \hspace{1.5pt}\lesssim\hspace{1.5pt}\big\|\hspace{0.5pt}\partial_3 u \hspace{0.5pt}\big\|_{L^2} \hspace{20pt}\text{for any $w \in [\hspace{0.5pt}0, d\hspace{0.5pt}]$.} 
\end{align*}Using this estimate and the Poincar\'{e}'s inequality on torus, we obtain, for any $w \in [\hspace{0.5pt}0, d\hspace{0.5pt}]$, that \begin{align*}\big\|\hspace{1pt}u(\cdot, w)\hspace{1pt}\big\|_{L^2(\mathbb{T}^{2})} &\leq \left\|\hspace{1.5pt} u(\cdot, w) - \int_{\mathbb{T}^{2}} u(x', w) \hspace{1pt}\mathrm{d} x' \hspace{1pt}\right\|_{L^2(\mathbb{T}^{2})} + \hspace{2pt}\left|\hspace{2pt}\int_{\mathbb{T}^{2}} u(x', w) \hspace{1pt}\mathrm{d} x' \hspace{1pt}\right|\\[2mm]
&\lesssim \big\| \hspace{1pt} \nabla' u\hspace{0.5pt}(\cdot, w) \hspace{1pt}\big\|_{L^2(\mathbb{T}^{2})} + \big\|\hspace{0.5pt}\partial_3 u \hspace{0.5pt}\big\|_{L^2}.
\end{align*} Take square on both sides of the above estimate and integrate the variable $w$ from $0$ to $d$. It follows \begin{align}\label{Poincare ineq}\|\hspace{0.5pt}u \hspace{0.5pt}\|_{L^2} \lesssim \|\hspace{0.5pt} \nabla u \hspace{0.5pt}\|_{L^2}.\end{align} Applying this estimate to the right-hand side of \eqref{nabla est 1}, we get
\begin{align}\label{nabla est 2} \| \nabla u \|_{L^2} \hspace{1.5pt}\lesssim\hspace{1.5pt} \| f \|_{L^2}.
\end{align}

Now, we estimate the $L^2$-norm of $\nabla^2 u$. Denote by $B'_r$ the open ball in $\mathbb{R}^{2}$ with center $0$ and radius $r$. Let $\eta$ be a smooth cut-off function that is  compactly supported on $B_4'$. Furthermore, $\eta$ is equivalently equal to $1$ on $B_2'$. Multiplying $\eta$ on both sides of \eqref{stokes} yields \begin{align*}
    - \Delta \big( \eta \hspace{0.5pt} u \big) + \nabla \hspace{0.5pt} \big( \eta  \left( q - c_q\right)\big) = f \hspace{0.3pt}
 \eta - u \hspace{0.3pt} \Delta \eta - 2 \hspace{0.3pt} \nabla \eta \cdot \nabla u + \left( q - c_q \right)\nabla \eta \hspace{20pt}\text{in $\Omega' := B'_4 \times (\hspace{0.5pt}0, d \hspace{0.5pt})$.} 
\end{align*}Here, $c_q$ is the average of $q$ over $\Omega'$. Note that $\mathrm{div} \big( \eta \hspace{0.5pt} u \big) = u \cdot \nabla \eta$.  Proposition 2.2 and Remark 2.6 in Chapter 1 of \cite{Temam1984} then induce  \begin{align*}
    \big\| \hspace{0.5pt}\nabla^2 u \hspace{0.5pt}\big\|_{L^2} \hspace{1.5pt}\lesssim\hspace{1.5pt}\big\| \hspace{0.5pt} \eta u \hspace{0.5pt}\big\|_{H^2(\Omega')} \hspace{1.5pt}\lesssim \hspace{1.5pt} \big\| \hspace{0.5pt} f \hspace{0.3pt}
 \eta - u \hspace{0.3pt} \Delta \eta - 2 \hspace{0.3pt} \nabla \eta \cdot \nabla u + \left( q - c_q \right)\nabla \eta \hspace{0.8pt}\big\|_{L^2(\Omega')} + \big\| \hspace{0.5pt}u \cdot \nabla \eta \hspace{0.8pt}\|_{H^1(\Omega')}.
\end{align*}Taking into account \eqref{Poincare ineq}-\eqref{nabla est 2}, we can keep estimating the right-hand side above and get \begin{align}\label{nabla sqr est 1}
    \big\| \hspace{0.5pt}\nabla^2 u \hspace{0.5pt}\big\|_{L^2} \hspace{1.5pt}\lesssim\hspace{1.5pt} \| \hspace{0.6pt}f \hspace{0.5pt}\|_{L^2} + \big\| \hspace{0.5pt}q - c_q  \hspace{0.5pt}\big\|_{L^2(\Omega')} \hspace{1.5pt}\lesssim\hspace{1.5pt} \| \hspace{0.6pt}f \hspace{0.5pt}\|_{L^2} + \big\| \hspace{0.5pt}\nabla q \hspace{0.5pt}\big\|_{H^{-1}(\Omega')}.
\end{align}The second estimate above uses Proposition 1.2 in Chapter 1 of \cite{Temam1984}. See also \cite{necas1965}. Suppose $\varphi$ is a smooth $3$-vector field that is compactly supported on $\Omega'$. We take inner product with $\varphi$ on both sides of \eqref{stokes} and integrate over $\Omega'$. Through integration by parts, it follows \begin{align*}
    \int_{\Omega'} \varphi \cdot \nabla q = \int_{\Omega'} f \cdot \varphi - \nabla u : \nabla \varphi \hspace{1.5pt}\lesssim \hspace{1.5pt} \| \hspace{0.6pt}f \hspace{0.5pt}\|_{L^2} \| \hspace{0.6pt} \varphi \hspace{0.5pt}\|_{L^2(\Omega')} + \| \hspace{0.6pt} \nabla u \hspace{0.5pt}\|_{L^2} \| \hspace{0.6pt} \nabla \varphi \hspace{0.5pt}\|_{L^2(\Omega')} \hspace{1.5pt}\lesssim \hspace{1.5pt} \| \hspace{0.6pt}f \hspace{0.5pt}\|_{L^2} \| \hspace{0.6pt}\varphi \hspace{0.5pt}\|_{H^1(\Omega')}.
\end{align*} The last estimate in the above uses \eqref{nabla est 2}. Therefore, we get $\big\| \hspace{0.5pt}\nabla q \hspace{0.5pt}\big\|_{H^{-1}(\Omega')}  \lesssim \| \hspace{0.6pt}f \hspace{0.5pt}\|_{L^2}$. Applying this estimate to the right-hand side of \eqref{nabla sqr est 1} then completes the proof.
\end{proof}

We also need to control the Hessian of $\phi$. \begin{lem}\label{est nabla 2 phi} Assume $\phi \in H^2(\Omega)$ and satisfies \eqref{bc}. Then the following estimate holds: \begin{align*}
    \int_{\Omega} \big|\hspace{0.5pt}\nabla^2 \phi \hspace{0.5pt}\big|^2 \hspace{1.5pt}\lesssim\hspace{1.5pt}\int_{\Omega} \big(\Delta \phi\big)^2 + \int_\Omega \big|\hspace{0.5pt}\nabla \phi \hspace{0.5pt}\big|^2.
\end{align*}
\end{lem}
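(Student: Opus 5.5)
The plan is to prove the estimate by integrating by parts twice on the periodic slab $\Omega = \mathbb T^2 \times (0,d)$. This reduces $\int_\Omega |\nabla^2\phi|^2$ to $\int_\Omega (\Delta\phi)^2$ plus boundary terms on H and P. By density, it suffices to take $\phi$ smooth up to the boundary satisfying \eqref{bc}; the estimate then extends to $H^2$ because both sides are continuous in $H^2$ and the boundary conditions are preserved under the approximation.

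For smooth $\phi$, we write
\begin{align*}
\int_\Omega |\nabla^2\phi|^2 = \int_\Omega (\Delta\phi)^2 + \int_{\partial\Omega} \big( \partial_i\phi\,\partial_{ij}\phi\,\nu_j - \partial_j\phi\,\partial_{ii}\phi\,\nu_j \big).
\end{align*}
Only the boundary term has to be controlled. Since the torus directions are periodic, the boundary reduces to $\text{P} \cup \text{H}$ with normals $\pm e_3$. The integrand becomes $\partial_\tau\phi\,\partial_{\tau 3}\phi - \partial_3\phi\,\Delta'\phi$, summed over $\tau=1,2$, with a sign depending on the face. Integrating by parts tangentially on each face gives $2\int \nabla'\phi\cdot\nabla'\partial_3\phi$.

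On P we have $\phi = 0$, so $\nabla'\phi = 0$ and $\Delta'\phi = 0$, and the P contribution vanishes. On H, the boundary condition gives $\partial_3\phi = -\tfrac{L_{\mathrm H}}{2}\sin 2\phi$. Differentiating tangentially, $\nabla'\partial_3\phi = -L_{\mathrm H}\cos 2\phi\,\nabla'\phi$. The H contribution is therefore bounded by $C L_{\mathrm H}\int_{\mathrm H}|\nabla'\phi|^2$.

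This trace term is the main obstacle. It is a boundary integral of first derivatives, so it is not directly controlled by $\|\nabla\phi\|_{L^2(\Omega)}$. To handle it I would use the interpolated trace inequality
\begin{align*}
\|g\|_{L^2(\mathrm H)}^2 \lesssim \|g\|_{L^2(\Omega)}\|g\|_{H^1(\Omega)} \le \epsilon\|\nabla g\|_{L^2}^2 + C_\epsilon\|g\|_{L^2}^2,
\end{align*}
applied to $g = \partial_\tau\phi$, which gives
\begin{align*}
\int_{\mathrm H}|\nabla'\phi|^2 \le \epsilon\int_\Omega|\nabla^2\phi|^2 + C_\epsilon\int_\Omega|\nabla\phi|^2.
\end{align*}
The trace inequality follows from the one-dimensional identity $g(0)^2 = g(s)^2 - \int_0^s \partial_3 (g^2)$, averaged over $s\in(0,d)$.

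Choosing $\epsilon$ small (depending on $L_{\mathrm H}$) lets us absorb $\epsilon\int_\Omega|\nabla^2\phi|^2$ into the left-hand side, which yields the stated bound. For the $H^2$ case, if we prefer to avoid the density argument, the same computation can instead be carried out with difference quotients in the tangential directions, which are compatible with the periodic structure.
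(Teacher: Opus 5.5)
Your proposal is correct and follows essentially the same route as the paper. Integrate by parts to reach $\int_\Omega|\nabla^2\phi|^2=\int_\Omega(\Delta\phi)^2+2L_{\mathrm H}\int_{\mathrm H}\cos 2\phi\,|\nabla'\phi|^2$, bound the trace term $\int_{\mathrm H}|\nabla'\phi|^2$ by the fundamental theorem of calculus in $x_3$, and absorb it with Young's inequality. The one cosmetic difference is that the paper integrates $\partial_3|\nabla'\phi|^2$ directly from P, using $\nabla'\phi=0$ there, rather than averaging over $s$; like you, it treats the reduction to smooth $\phi$ as a routine approximation.
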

\begin{proof}[\bf Proof] According to an approximation argument, we assume without loss of generality that $\phi$ is smooth throughout $\overline{\Omega}$. Applying integration by parts, we get
\begin{align*}
    \int_{\Omega} \big|\hspace{0.5pt}\nabla^2 \phi \hspace{0.5pt}\big|^2 = \int_{\text{P}} \nabla \phi \cdot \nabla \partial_3 \phi - \int_{\text{P}} \partial_3 \phi \hspace{1pt}\Delta \phi   - \int_{\text{H}} \nabla \phi \cdot \nabla \partial_{3} \phi   + \int_{\text{H}} \partial_3 \phi \hspace{1pt}\Delta \phi + \int_{\Omega} \big(\Delta \phi\big)^2.
\end{align*}
Note that \begin{align*}\nabla \phi \cdot \nabla \partial_3 \phi - \partial_3 \phi \hspace{1pt}\Delta \phi = \nabla' \phi \cdot \nabla' \partial_3 \phi - \partial_3 \phi \hspace{1.5pt}\Delta' \phi, \h{15pt}\text{where $\Delta' = \partial_{11} + \partial_{22}$.}\end{align*} Using the boundary conditions of $\phi$ on $\mathrm H \cup \mathrm P$, we then obtain \begin{align*}
    \int_{\text{P}} \nabla \phi \cdot \nabla \partial_3 \phi - \int_{\text{P}} \partial_3 \phi \hspace{0.8pt}\Delta \phi   &- \int_{\text{H}} \nabla \phi \cdot \nabla \partial_{3} \phi   + \int_{\text{H}} \partial_3 \phi \hspace{0.8pt}\Delta \phi \\[1mm]
    &= L_\text{H} \int_\text{H} \cos 2 \phi \hspace{1pt} \big| \nabla' \phi \big|^2 - \frac{L_\text{H}}{2} \int_\text{H} \sin 2 \phi \hspace{1pt} \Delta' \phi  = 2 L_\text{H} \int_\text{H} \cos 2 \phi \hspace{1pt} \big| \nabla' \phi \big|^2.
\end{align*}The last equality above uses integration by parts with respect to the tangential variables and the periodic boundary condition of $\phi$. Therefore, 
\begin{align}\label{Hessian delta id}
    \int_{\Omega} \big|\hspace{0.5pt}\nabla^2 \phi \hspace{0.5pt}\big|^2  = \int_{\Omega} \big(\Delta \phi\big)^2 + 2 L_\text{H} \int_\text{H} \cos 2 \phi \hspace{1pt} \big| \nabla' \phi \big|^2 \hspace{1.5pt}\leq\hspace{1.5pt}\int_{\Omega} \big(\Delta \phi\big)^2 + 2 L_\text{H} \int_\text{H} \big| \nabla' \phi \big|^2.
\end{align}Using integration by parts with respect to the normal variable, we obtain \begin{align*}
    \int_\text{H} \big| \nabla' \phi \big|^2 = - \int_\Omega \partial_3 \big| \nabla' \phi \big|^2 = - 2 \int_\Omega \nabla' \phi \cdot \partial_3 \nabla' \phi.
\end{align*}It then turns out \begin{align}\label{hes, del}
    \int_{\Omega} \big|\hspace{0.5pt}\nabla^2 \phi \hspace{0.5pt}\big|^2 \hspace{1.5pt}\leq\hspace{1.5pt}\int_{\Omega} \big(\Delta \phi\big)^2 - 4 L_\text{H} \int_\Omega \nabla' \phi \cdot \partial_3 \nabla' \phi \hspace{1.5pt}\leq\hspace{1.5pt}\int_{\Omega} \big(\Delta \phi\big)^2 + \frac{1}{2} \int_{\Omega} \big|\hspace{0.5pt}\nabla^2 \phi \hspace{0.5pt}\big|^2 + c_* \int_\Omega \big|\hspace{0.5pt}\nabla' \phi \hspace{0.5pt}\big|^2.
\end{align} The constant $c_*$ depends only on $L_{\mathrm H}$. The proof is completed.
\end{proof}We now discuss the higher-order energy estimate of the global classical solution of IBVP. \begin{lem}\label{high ord est}
Suppose $(u, \phi)$ is a global classical solution of $\mathrm{IBVP}$. $A(t)$ is defined by \begin{equation*}
    A(t) := \int_{\Omega} \big|\hspace{0.5pt}\nabla u \hspace{0.5pt}\big|^2 + \big|\hspace{0.5pt}\Delta \phi + \frac{h^2}{2} \sin 2 \phi \hspace{0.5pt}\big|^2.
\end{equation*}
Then we have
\begin{align} \label{HigherE}
    \frac{\mathrm{d} \hspace{0.2pt}A}{\mathrm{d}
    \hspace{0.7pt}t} + \int_\Omega \big|\hspace{0.5pt} S\hspace{0.3pt}u\hspace{0.5pt}\big|^2 + \big| \hspace{0.8pt}\nabla \big( \Delta \phi + \frac{h^2}{2} \sin 2 \phi \big) \hspace{0.5pt}\big|^2 \hspace{1.5pt}\leq\hspace{1.5pt}A\hspace{0.5pt}\mathcal{Q}\big(\mathcal{E} + A\big), \h{15pt}\text{where $S u := - \Delta u + \nabla p$.}
\end{align} The single-variable function $\mathcal{Q}$ is a non-constant polynomial with non-negative coefficients.
\end{lem}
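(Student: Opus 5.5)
We write $w := \Delta\phi + \frac{h^2}{2}\sin 2\phi$ and $\dot\phi := \partial_t\phi + u\cdot\nabla\phi$. The equation for $\phi$ reads $\dot\phi = w$, and the boundary conditions give $w=0$ on P. The plan is to differentiate the two pieces of $A$ in time separately, in the standard Lin--Liu fashion, and then absorb every higher-order term into the dissipation $\int|Su|^2+|\nabla w|^2$. Lemmas \ref{Stokes est} and \ref{est nabla 2 phi} supply the control of $\|u\|_{H^2}$ and $\|\phi\|_{H^2}$ by $\mathcal{E}+A$.

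\textbf{Velocity part.} Test the momentum equation with $Su=-\Delta u+\nabla p$, which equals $-\partial_t u - u\cdot\nabla u - \nabla\cdot(\nabla\phi\odot\nabla\phi)$. Since $u=0$ on $\mathrm H\cup\mathrm P$, we have $\partial_t u = 0$ on the boundary and $\operatorname{div}\partial_t u=0$. Hence $\int \partial_t u\cdot Su = \frac12\frac{\mathrm d}{\mathrm dt}\int|\nabla u|^2$, with the pressure term vanishing. This gives
\[
\frac12\frac{\mathrm d}{\mathrm dt}\int|\nabla u|^2+\int|Su|^2=-\int (u\cdot\nabla u)\cdot Su-\int \nabla\cdot(\nabla\phi\odot\nabla\phi)\cdot Su .
\]
Write $\nabla\cdot(\nabla\phi\odot\nabla\phi)=\Delta\phi\nabla\phi+\frac12\nabla|\nabla\phi|^2$. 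The gradient part is dropped after pairing it with the $L^2$ divergence-free projection; that is, we work with the Helmholtz projection of the forcing. The remaining term is bounded by $\|\Delta\phi\|_{L^3}\|\nabla\phi\|_{L^6}\|Su\|_{L^2}$. Lemma \ref{Stokes est} gives $\|u\|_{H^2}\lesssim\|Su\|_{L^2}$. For $\phi$ we use $\|\nabla^2\phi\|_{L^2}^2\lesssim \|\Delta\phi\|^2+\|\nabla\phi\|^2\lesssim A+\mathcal E$ from Lemma \ref{est nabla 2 phi}. For the third derivatives we use elliptic regularity for the Robin-type problem: $\|\phi\|_{H^3}\lesssim \|\nabla w\|_{L^2}+\text{lower order}$, obtained by applying Lemma \ref{est nabla 2 phi} to tangential derivatives and using the equation for the normal direction. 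Gagliardo--Nirenberg interpolation and Young's inequality then bound every term by $\frac14\int|Su|^2+|\nabla w|^2$ plus $A\,\mathcal Q(\mathcal E+A)$. For example, $\|u\|_{L^\infty}\|\nabla u\|\|Su\|\lesssim \|\nabla u\|^{3/2}\|Su\|^{3/2}\le \epsilon\|Su\|^2+C A^3$.

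\textbf{Director part.} Compute $\frac12\frac{\mathrm d}{\mathrm dt}\int w^2=\int w\,(\Delta\partial_t\phi+h^2\cos2\phi\,\partial_t\phi)$ and integrate by parts. On P, $w=0$. On H, the time derivative of the Robin condition gives $\partial_\nu\partial_t\phi=L_{\mathrm H}\cos2\phi\,\partial_t\phi$, so the boundary term is $L_{\mathrm H}\int_{\mathrm H} w\cos2\phi\,\partial_t\phi$. Then substitute $\partial_t\phi=w-u\cdot\nabla\phi$. The leading piece is $-\int|\nabla w|^2$. The piece $\int h^2\cos 2\phi\, w^2\le h^2A$ is harmless. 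The boundary term $L_{\mathrm H}\int_{\mathrm H}w^2$ is controlled by the trace inequality $\|w\|_{L^2(\mathrm H)}^2\le\epsilon\|\nabla w\|^2+C_\epsilon\|w\|^2$; the term $u\cdot\nabla\phi$ vanishes on H since $u=0$ there. Finally, $\int\nabla w\cdot\nabla(u\cdot\nabla\phi)$ contains $\nabla u\,\nabla\phi$ and $u\,\nabla^2\phi$. We bound it by $\|\nabla w\|\big(\|\nabla u\|_{L^3}\|\nabla\phi\|_{L^6}+\|u\|_{L^\infty}\|\nabla^2\phi\|\big)$ and absorb it with the same interpolations as before.

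\textbf{Main obstacle.} The main obstacle is the elliptic estimate $\|\nabla^3\phi\|_{L^2}\lesssim\|\nabla w\|_{L^2}+\mathcal Q(\mathcal E+A)$ under the nonlinear Robin condition on H. We need it because the stress term produces $\|\Delta\phi\|_{L^3}$ and we must interpolate between $H^2$ and $H^3$. We would attempt it by differentiating tangentially, repeating the integration by parts of Lemma \ref{est nabla 2 phi} for $\partial_\tau\phi$, and recovering $\partial_{333}\phi$ from the equation. Collecting all terms, every bad term carries at least one factor of $A$ (since $\|\nabla u\|^2\le A$ and $\|\nabla^2\phi\|^2\lesssim A+\mathcal E$). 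The result is \eqref{HigherE} with a polynomial $\mathcal Q$ with nonnegative coefficients.
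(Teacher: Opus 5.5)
There is a genuine gap in your velocity estimate. It sits at the structural point of the lemma: the right-hand side of \eqref{HigherE} must vanish when $A=0$. Lemma~\ref{limit and unif bound} uses exactly this, together with $\int_0^\infty A<\infty$, to conclude $A(t)\to0$.

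After discarding only $\frac12\nabla|\nabla\phi|^2$, you keep the forcing $\Delta\phi\,\nabla\phi$ and bound its pairing with $Su$ by $\|\Delta\phi\|_{L^3}\|\nabla\phi\|_{L^6}\|Su\|_{L^2}$. Since $\Delta\phi=w-\frac{h^2}{2}\sin2\phi$, Young's inequality leaves $C\|\Delta\phi\|_{L^3}^2\|\nabla\phi\|_{L^6}^2$. This is of order one at the stationary classical solution $(0,\phi_*)$ for $d>d_c$, where $A\equiv0$. Your closing justification does not repair this: $\|\nabla^2\phi\|^2\lesssim1+\mathcal{E}+A$ contains no factor of $A$, and in every term that factor has to come from $\|\nabla u\|$ or $\|w\|$.

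The same equilibrium shows that the choice of pressure is forced, not cosmetic. There the left side of \eqref{HigherE} reduces to $\|\nabla p\|_{L^2}^2$ while the right side is $0$. For the original pressure, $\nabla p=h^2\sin2\phi_*\,\phi_*'\,e_3\neq0$. For the partially modified pressure, $\nabla\big(p+\frac12|\nabla\phi|^2\big)=\frac{h^2}{2}\sin2\phi_*\,\phi_*'\,e_3\neq0$. Only $p+\frac12|\nabla\phi|^2+\frac{h^2}{4}\cos2\phi$ has vanishing gradient there. Note also that ``dropping'' a gradient is not an identity for a fixed pressure: $\int_\Omega\nabla g\cdot Su\neq0$ in general, because $Su$ is neither solenoidal nor tangential on $\mathrm{H}\cup\mathrm{P}$.

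The repair is to write
\[
\nabla\cdot(\nabla\phi\odot\nabla\phi)=w\nabla\phi+\nabla\big(\tfrac12|\nabla\phi|^2+\tfrac{h^2}{4}\cos2\phi\big)
\]
and put the gradient into the pressure, so that $\partial_tu+u\cdot\nabla u+Su=-w\nabla\phi$. This is the form the paper uses in \eqref{eq of I1}. Then
\[
\Big|\int_\Omega w\nabla\phi\cdot Su\Big|\le\|w\|_{L^3}\|\nabla\phi\|_{L^6}\|Su\|_{L^2},\qquad \|w\|_{L^3}^2\lesssim\|w\|_{L^2}\|\nabla w\|_{L^2},
\]
by Gagliardo--Nirenberg and Poincar\'e, since $w=0$ on P. Young's inequality turns this into $\frac18\|Su\|^2+\frac18\|\nabla w\|^2+CA(1+\mathcal{E}+A)^2$. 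In particular, the $H^3$ elliptic estimate you single out as the main obstacle is unnecessary. It only arose because you interpolated $\Delta\phi$ rather than $w$; the paper interpolates $w$ in \eqref{gn del phi}.

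With this change the rest of your plan works, and it differs usefully from the paper.
\begin{itemize}
\item You integrate $\int_\Omega w\,\Delta\partial_t\phi$ by parts only once and use $\partial_t\phi=w$ on H. This avoids the boundary term $-\int_{\mathrm H}w\,\partial_3u\cdot\nabla\phi$ that the paper must convert in \eqref{Exp of 2nd}.
\item You bound $\int_\Omega\nabla w\cdot\nabla(u\cdot\nabla\phi)$ directly. This replaces the paper's cancellation of $\int_\Omega w\,Su\cdot\nabla\phi$ between $I_1$ and $I_3$.
\item You did not list the term $-h^2\int_\Omega\cos2\phi\,w\,u\cdot\nabla\phi$, but it is harmless.
\end{itemize}
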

\begin{proof}[\bf Proof] 
Since $(u, \phi)$ is a global classical solution of IBVP, direct calculations show that\begin{align*}
    \frac{1}{2} \hspace{0.5pt} \frac{\mathrm{d} \hspace{0.2pt}A}{\mathrm{d}
    \hspace{0.7pt}t} = - \int_\Omega \Delta u \cdot \partial_t u + \int_\Omega \big( \Delta \phi + \frac{h^2}{2} \sin 2 \phi \big) \big( \Delta \hspace{0.5pt}\partial_t \phi + h^2 \cos 2 \phi \hspace{1.5pt} \partial_t \phi \big).
\end{align*}Here, we apply the no-slip boundary condition \eqref{bdry of u}. Using the equation of $\phi$ in \eqref{sEL}, we can rewrite this identity as follows: \begin{align}\label{high ord energy iden}
    \frac{1}{2} \hspace{0.5pt} \frac{\mathrm{d} \hspace{0.2pt}A}{\mathrm{d}
    \hspace{0.7pt}t} = I_1 + I_2 + I_3 + I_4, 
\end{align}where the four terms on the right-hand side above are given by \begin{align*}
    &I_1 := - \int_\Omega \Delta u \cdot \partial_t u, \\[1mm] &I_2 := \int_\Omega \big( \Delta \phi + \frac{h^2}{2} \sin 2 \phi \big) \hspace{1pt} \Delta \hspace{0.5pt}\big( \Delta \phi + \frac{h^2}{2} \sin 2 \phi \big),\\[1mm] &I_3 := - \int_\Omega \big( \Delta \phi + \frac{h^2}{2} \sin 2 \phi \big) \hspace{1pt} \Delta \hspace{0.5pt} \big( u \cdot \nabla \phi \big), \\[1mm] &I_4 := \int_\Omega h^2 \cos 2 \phi 
 \hspace{1pt}\big( \Delta \phi + \frac{h^2}{2} \sin 2 \phi \big)\hspace{1pt} \big( \Delta \phi + \frac{h^2}{2} \sin 2 \phi - u \cdot \nabla \phi \big).
\end{align*}

We now estimate these four terms successively.\vspace{0.3pc}

\noindent\textbf{Estimate for $I_1$.} It follows from $\div u = 0$ in $\Omega$ and $u = 0$ on $\partial \hspace{0.5pt} \Omega$ that
\begin{equation*}
    I_1 =   \int_\Omega S \hspace{0.3pt} u \cdot \partial_t u.
\end{equation*}In light of the equation for $u$ in \eqref{sEL}, it then turns out \begin{align}\label{eq of I1}
    I_1 = - \int_\Omega \big|\hspace{0.5pt} S\hspace{0.3pt}u\hspace{0.5pt}\big|^2 - \int_\Omega \big(S\hspace{0.3pt}u\big) \cdot \big( u \cdot \nabla u\big) - \int_\Omega \big( \Delta \phi + \frac{h^2}{2} \sin 2 \phi \big) \hspace{0.5pt} \big(S\hspace{0.3pt}u \big) \cdot \nabla \phi.
\end{align}According to  Gagliardo-Nirenberg inequality, $\nabla u$ can be estimated by \begin{align}\label{gn for nabla u}
\| \hspace{0.3pt}\nabla u\hspace{0.3pt}\|_{L^4} \hspace{1.5pt}\lesssim\hspace{1.5pt}\| \hspace{0.3pt}u\hspace{0.3pt}\|_{L^2} + \| \hspace{0.3pt}u\hspace{0.3pt}\|^{\frac{1}{4}}_{H^1} \big\| \hspace{0.3pt}\nabla^2 u\hspace{0.3pt}\big\|^{\frac{3}{4}}_{L^2}.
\end{align}
By  H\"{o}lder, Sobolev and the last estimate, it satisfies \begin{align*}
    \left| \hspace{2pt}\int_\Omega \big(S\hspace{0.3pt}u\big) \cdot \big( u \cdot \nabla u\big) \hspace{1pt}\right| \hspace{1.5pt}\lesssim\hspace{1.5pt} \| \hspace{1pt}S\hspace{0.7pt} u \hspace{1pt}\|_{L^2} \| \hspace{0.3pt}u\hspace{0.3pt}\|_{L^4} \| \hspace{0.3pt}\nabla u\hspace{0.3pt}\|_{L^4} \hspace{1.5pt}\lesssim\hspace{1.5pt} \| \hspace{1pt}S\hspace{0.7pt} u \hspace{1pt}\|_{L^2} \| \hspace{0.3pt}u\hspace{0.3pt}\|_{H^1} \left[\hspace{1pt}\| \hspace{0.3pt}u\hspace{0.3pt}\|_{L^2} + \| \hspace{0.3pt}u\hspace{0.3pt}\|^{\frac{1}{4}}_{H^1} \big\| \hspace{0.3pt}\nabla^2 u\hspace{0.3pt}\big\|^{\frac{3}{4}}_{L^2}\right].
\end{align*} With Poincar\'{e} inequality in \eqref{Poincare ineq}, the last estimate leads to \begin{align*}
    \left| \hspace{2pt}\int_\Omega \big(S\hspace{0.3pt}u\big) \cdot \big( u \cdot \nabla u\big) \hspace{1pt}\right| \hspace{1.5pt}\lesssim\hspace{1.5pt}  A\hspace{1pt}\big\| \hspace{1pt}S\hspace{0.7pt} u \hspace{1pt}\big\|_{L^2}  + A^{\frac{5}{8}} \hspace{1pt} \big\| \hspace{1pt}S\hspace{0.7pt} u \hspace{1pt}\big\|_{L^2}  \big\| \hspace{0.3pt}\nabla^2 u\hspace{0.3pt}\big\|^{\frac{3}{4}}_{L^2}.
\end{align*} Applying Lemma \ref{Stokes est} to the right-hand side above gives us \begin{align*}
    \left| \hspace{2pt}\int_\Omega \big(S\hspace{0.3pt}u\big) \cdot \big( u \cdot \nabla u\big) \hspace{1pt}\right| \hspace{1.5pt}\lesssim\hspace{1.5pt} A\hspace{1pt}\big\| \hspace{1pt}S\hspace{0.7pt} u \hspace{1pt}\big\|_{L^2}  + A^{\frac{5}{8}} \hspace{1pt} \big\| \hspace{1pt}S\hspace{0.7pt} u \hspace{1pt}\big\|^{\frac{7}{4}}_{L^2}.
\end{align*}By Young's inequality, it then turns out \begin{align*}
    \left| \hspace{2pt}\int_\Omega \big(S\hspace{0.3pt}u\big) \cdot \big( u \cdot \nabla u\big) \hspace{1pt}\right| \hspace{1.5pt}\leq\hspace{1.5pt} \frac{1}{12} \hspace{1pt} \| \hspace{1pt}S\hspace{0.7pt} u \hspace{1pt}\|^2_{L^2} + c_* \big(A^2 + A^5\big). 
\end{align*}
Plugging the last estimate into \eqref{eq of I1}, we get
\begin{equation}\label{Inq I.1.}
    I_1 \leq - \frac{11}{12} \int_\Omega \big|\hspace{0.5pt} S\hspace{0.3pt}u\hspace{0.5pt}\big|^2  + c_* \big(A^2 + A^5\big) - \int_\Omega \big( \Delta \phi + \frac{h^2}{2} \sin 2 \phi \big) \hspace{0.5pt} \big(S\hspace{0.3pt}u \big) \cdot \nabla \phi.
\end{equation}\vspace{0.2pc}

\noindent\textbf{Estimate for $I_2$.} Using the equation of $\phi$ in \eqref{sEL} and the homogeneous Dirichlet boundary conditions of  $(u, \phi)$ on P, we can apply integration by parts and rewrite $I_2$ as follows:
\begin{align}\label{basic dep of I2}
    I_2 = - \int_\Omega \big| \hspace{0.8pt}\nabla \big( \Delta \phi + \frac{h^2}{2} \sin 2 \phi \big) \hspace{0.5pt}\big|^2 - \int_\text{H} \big( \Delta \phi + \frac{h^2}{2} \sin 2 \phi \big) \hspace{0.5pt} \partial_3 \big( \partial_t \phi + u \cdot \nabla \phi \big). 
\end{align}
The weak anchoring condition of $\phi$ induces 
\begin{align*}
    - \int_\text{H} \big( \Delta \phi + \frac{h^2}{2} \sin 2 \phi \big) \hspace{0.5pt} \partial_3 \partial_t \phi &= \frac{L_\text{H}}{2} \int_\text{H} \big( \Delta \phi + \frac{h^2}{2} \sin 2 \phi \big) \hspace{0.5pt} \partial_t \big(  \sin 2 \phi \big) \\[1mm]
    &= L_\text{H} \int_\text{H} \big( \Delta \phi + \frac{h^2}{2} \sin 2 \phi \big) \hspace{0.5pt} \cos 2 \phi \hspace{1.5pt} \partial_t \phi.
\end{align*}Note that $u = 0$ on H. The last equalities and the equation of $\phi$ in \ref{sEL} infer
\begin{align}\label{I.2.1}
- \int_\text{H} \big( \Delta \phi + \frac{h^2}{2} \sin 2 \phi \big) \hspace{0.5pt} \partial_3 \partial_t \phi = L_\text{H} \int_\text{H} \big( \Delta \phi + \frac{h^2}{2} \sin 2 \phi \big)^2 \hspace{0.5pt} \cos 2 \phi  
    \hspace{1.5pt}\lesssim\hspace{1.5pt} \int_\text{H} \big( \Delta \phi + \frac{h^2}{2} \sin 2 \phi \big)^2.
\end{align}
By the fundamental theorem of calculus, \begin{align*}
    \int_\text{H} \big( \Delta \phi + \frac{h^2}{2} \sin 2 \phi \big)^2 &= - 2   \int_{\Omega} \big( \Delta \phi + \frac{h^2}{2} \sin 2 \phi \big) \partial_3 \big( \Delta \phi + \frac{h^2}{2} \sin 2 \phi \big).
\end{align*}Apply this equality to the right-hand side of \eqref{I.2.1} and then use Young's inequality. It follows \begin{align}\label{I.2.2}
- \int_\text{H} \big( \Delta \phi + \frac{h^2}{2} \sin 2 \phi \big) \hspace{0.5pt} \partial_3 \partial_t \phi \hspace{1.5pt}\leq\hspace{1.5pt} \frac{1}{12} \int_\Omega  \big| \hspace{0.8pt}\nabla \big( \Delta \phi + \frac{h^2}{2} \sin 2 \phi \big) \hspace{0.5pt}\big|^2 + c_* \int_\Omega \big( \Delta \phi + \frac{h^2}{2} \sin 2 \phi \big)^2. 
\end{align}

We continue to use $\big(u, \phi\big) = 0$ on P and $u = 0$ on H, from which yield 
\begin{align}\label{Exp of 2nd}
    - \int_{\text{H}} \big( \Delta \phi + \frac{h^2}{2} \sin 2 \phi \big) \partial_3 \big(u \cdot \nabla \phi\big) &= - \int_\text{H} \big( \Delta \phi + \frac{h^2}{2} \sin 2 \phi \big) \hspace{1pt}\big(\partial_3 u\big) \cdot \nabla \phi  \nonumber\\[2mm]
    &\h{-155pt} = \int_{\Omega} \partial_3 \left[ \big( \Delta \phi + \frac{h^2}{2} \sin 2 \phi \big) \hspace{1pt}\big(\partial_3 u\big) \cdot \nabla \phi \right] \\[2mm]
    &\h{-155pt} = \int_{\Omega} \partial_3 \big( \Delta \phi + \frac{h^2}{2} \sin 2 \phi \big) \hspace{1pt} \big(\partial_3 u\big) \cdot  \nabla \phi  +  \big( \Delta \phi + \frac{h^2}{2} \sin 2 \phi \big)\hspace{1pt}  \partial_{33} u  \cdot  \nabla \phi    +  \big( \Delta \phi + \frac{h^2}{2} \sin 2 \phi \big)\hspace{1pt}  \partial_3 u  \cdot  \nabla \partial_3 \phi. \nonumber
\end{align}

The first integrand in the last line of \eqref{Exp of 2nd} can be estimated by \begin{align*}
    \int_{\Omega} \partial_3 \big( \Delta \phi + \frac{h^2}{2} \sin 2 \phi \big) \hspace{1pt} \big(\partial_3 u\big) \cdot  \nabla \phi \hspace{1.5pt}\leq\hspace{1.5pt} \frac{1}{12} \int_\Omega  \big| \hspace{0.8pt}\nabla \big( \Delta \phi + \frac{h^2}{2} \sin 2 \phi \big) \hspace{0.5pt}\big|^2 + c_* \int_\Omega \big| \hspace{0.5pt}\nabla u \hspace{0.5pt} \big|^2 \hspace{1pt} \big| \hspace{0.5pt}\nabla \phi \hspace{0.5pt} \big|^2. 
\end{align*}Applying H\"{o}lder inequality, \eqref{gn for nabla u}, and Sobolev inequality induce \begin{align*}
    &\int_{\Omega} \partial_3 \big( \Delta \phi + \frac{h^2}{2} \sin 2 \phi \big) \hspace{1pt} \big(\partial_3 u\big) \cdot  \nabla \phi \\[2mm] 
&\hspace{80pt} \leq\hspace{1.5pt} \frac{1}{12} \int_\Omega  \big| \hspace{0.8pt}\nabla \big( \Delta \phi + \frac{h^2}{2} \sin 2 \phi \big) \hspace{0.5pt}\big|^2 + c_* \hspace{0.5pt}  \big\| \hspace{0.5pt}\nabla u \hspace{0.5pt}\big\|^2_{L^4}  \hspace{0.5pt}\big\| \hspace{0.5pt}\nabla \phi \hspace{0.5pt}\big\|^2_{L^4}\\[2mm] 
    &\hspace{80pt} \leq \hspace{1.5pt} \frac{1}{12} \int_\Omega  \big| \hspace{0.8pt}\nabla \big( \Delta \phi + \frac{h^2}{2} \sin 2 \phi \big) \hspace{0.5pt}\big|^2 + c_* \left[\hspace{0.5pt}\| \hspace{0.3pt}u\hspace{0.3pt}\|^2_{L^2} + \| \hspace{0.3pt}u\hspace{0.3pt}\|^{\frac{1}{2}}_{H^1} \big\| \hspace{0.3pt}\nabla^2 u\hspace{0.3pt}\big\|^{\frac{3}{2}}_{L^2}\hspace{0.5pt}\right] \hspace{0.5pt}\big\| \hspace{0.5pt}\nabla \phi \hspace{0.5pt}\big\|^2_{H^1}.
\end{align*}Using \eqref{Poincare ineq} and Lemmas \ref{Stokes est}-\ref{est nabla 2 phi}, we reduce the last estimate to \begin{align}\label{I2.2.3.1}
    &\int_{\Omega} \partial_3 \big( \Delta \phi + \frac{h^2}{2} \sin 2 \phi \big) \hspace{1pt} \big(\partial_3 u\big) \cdot  \nabla \phi \\[1mm] &\h{50pt}\leq\hspace{1.5pt} \frac{1}{12} \int_\Omega  \big| \hspace{0.8pt}\nabla \big( \Delta \phi + \frac{h^2}{2} \sin 2 \phi \big) \hspace{0.5pt}\big|^2 + A \hspace{1pt}\mathcal{Q}\big(\mathcal{E} + A \big) + A^{\frac{1}{4}}\mathcal{Q}\big(\mathcal{E} + A \big) \hspace{1pt} \big\| \hspace{0.3pt}S\hspace{0.3pt} u\hspace{0.3pt}\big\|^{\frac{3}{2}}_{L^2}\nonumber\\[2mm]
    &\h{50pt}\leq\hspace{1.5pt} \frac{1}{12} \int_\Omega  \big| \hspace{0.8pt}\nabla \big( \Delta \phi + \frac{h^2}{2} \sin 2 \phi \big) \hspace{0.5pt}\big|^2 + \frac{1}{12} \int_\Omega \big|\hspace{0.5pt} S\hspace{0.3pt}u\hspace{0.5pt}\big|^2 + A\hspace{1pt}\mathcal{Q}\big(\mathcal{E} + A \big). \nonumber
\end{align}Here, $\mathcal{Q}$ is a single-variable non-constant polynomial whose coefficients are all non-negative. This polynomial can vary in different estimates below. \vspace{0.2pc}

The second integrand in the last line of \eqref{Exp of 2nd} can be estimated by  
\begin{align*}
    \int_{\Omega} \big( \Delta \phi + \frac{h^2}{2} \sin 2 \phi \big)\hspace{1pt}  \partial_{33} u  \cdot  \nabla \phi &\hspace{1.5pt}\leq\hspace{1.5pt} \big\| \hspace{0.5pt}\nabla^2 u\hspace{0.5pt}\big\|_{L^2} \left(\int_{\Omega} \big( \Delta \phi + \frac{h^2}{2} \sin 2 \phi \big)^2 \hspace{1pt}\big|\hspace{0.5pt}\nabla \phi\hspace{0.5pt}\big|^2\right)^{\frac{1}{2}} \\[2mm]
    &\hspace{1.5pt}\leq\hspace{1.5pt} \frac{1}{12} \int_\Omega \big|\hspace{0.5pt} S\hspace{0.3pt}u\hspace{0.5pt}\big|^2 + c_* \left(\int_{\Omega} \big( \Delta \phi + \frac{h^2}{2} \sin 2 \phi \big)^4\right)^{\frac{1}{2}} \left( \int_\Omega \big|\hspace{0.5pt}\nabla \phi\hspace{0.5pt}\big|^4 \right)^{\frac{1}{2}}.
\end{align*}We apply the Gagliardo-Nirenberg inequality to estimate $\Delta \phi + \frac{h^2}{2} \sin 2 \phi$ as follows: \begin{align}\label{gn del phi}
    &\big\| \hspace{0.5pt}\Delta \phi + \frac{h^2}{2} \sin 2 \phi \hspace{0.5pt}\big\|_{L^4}  \lesssim\hspace{1.5pt} \big\| \hspace{0.5pt}\nabla \hspace{1pt}\big( \Delta \phi + \frac{h^2}{2} \sin 2 \phi \big) \hspace{0.5pt}\big\|^{\frac{3}{4}}_{L^2} \hspace{1.5pt}\big\| \hspace{0.5pt}\Delta \phi + \frac{h^2}{2} \sin 2 \phi \hspace{0.5pt}\big\|^{\frac{1}{4}}_{L^2} + \big\| \hspace{0.5pt}\Delta \phi + \frac{h^2}{2} \sin 2 \phi \hspace{0.5pt}\big\|_{L^2}. 
\end{align}The last two estimates and the fact that  $\big\| \nabla \phi \big\|^2_{L^4} \lesssim \big\| \nabla \phi \big\|^2_{L^2} + \big\| \nabla^2 \phi \big\|^2_{L^2} \lesssim 1 + \mathcal{E} + A$ yield \begin{align}\label{I2.2.3.2}
    &\int_{\Omega} \big( \Delta \phi + \frac{h^2}{2} \sin 2 \phi \big)\hspace{1pt}  \partial_{33} u  \cdot  \nabla \phi \leq\hspace{1.5pt}\frac{1}{12} \int_\Omega \big|\hspace{0.5pt} S\hspace{0.3pt}u\hspace{0.5pt}\big|^2 \nonumber\\[2mm] &\h{20pt} + \mathcal{Q}\big(\mathcal{E} + A \big)
\left[\hspace{1pt}\big\| \hspace{0.5pt}\nabla \hspace{1pt}\big( \Delta \phi + \frac{h^2}{2} \sin 2 \phi \big) \hspace{0.5pt}\big\|^{\frac{3}{2}}_{L^2} \hspace{1.5pt}\big\| \hspace{0.5pt}\Delta \phi + \frac{h^2}{2} \sin 2 \phi \hspace{0.5pt}\big\|^{\frac{1}{2}}_{L^2} + \big\| \hspace{0.5pt}\Delta \phi + \frac{h^2}{2} \sin 2 \phi \hspace{0.5pt}\big\|^2_{L^2} \right] \nonumber\\[2mm]
&\hspace{20pt}\leq\hspace{1.5pt} \frac{1}{12} \int_\Omega \big|\hspace{0.5pt} S\hspace{0.3pt}u\hspace{0.5pt}\big|^2 + \frac{1}{12} \int_\Omega  \big| \hspace{0.8pt}\nabla \big( \Delta \phi + \frac{h^2}{2} \sin 2 \phi \big) \hspace{0.5pt}\big|^2 + A \hspace{1pt}\mathcal{Q}\big(\mathcal{E} + A \big).\end{align}\vspace{0.2pc}

As for the third integrand in the last line of \eqref{Exp of 2nd}, it can be controlled by \begin{align*}
    \int_{\Omega}   \big( \Delta \phi + \frac{h^2}{2} \sin 2 \phi \big)\hspace{1pt}  \partial_3 u  \cdot  \nabla \partial_3 \phi \hspace{1.5pt}\leq\hspace{1.5pt} \big\| \hspace{0.5pt}\nabla^2 \phi \hspace{0.5pt}\big\|_{L^2} \hspace{.5pt}\big\| \hspace{0.5pt}\nabla u \hspace{0.5pt}\big\|_{L^4}  \hspace{.5pt}\big\| \hspace{0.5pt}\Delta \phi + \frac{h^2}{2} \sin 2 \phi \hspace{0.5pt}\big\|_{L^4}.
\end{align*}Recall the Gagliardo-Nirenberg estimates in \eqref{gn for nabla u} and \eqref{gn del phi} and apply Lemmas \ref{Stokes est}-\ref{est nabla 2 phi}. The above estimate can be reduced to \begin{align*}
    &\int_{\Omega}   \big( \Delta \phi + \frac{h^2}{2} \sin 2 \phi \big)\hspace{1pt}  \partial_3 u  \cdot  \nabla \partial_3 \phi\\[1mm] &\h{30pt}\lesssim\hspace{1pt} A^{\frac{1}{4}}\big(1 + \mathcal{E} + A \big)^{\frac{1}{2}} \hspace{.5pt}\left[\hspace{1pt} A^{\frac{3}{8}} + \big\| \hspace{0.3pt}S\hspace{0.3pt} u\hspace{0.3pt}\big\|^{\frac{3}{4}}_{L^2}\right] \hspace{.5pt}\left[ A^{\frac{3}{8}} + \big\| \hspace{0.5pt}\nabla \hspace{1pt}\big( \Delta \phi + \frac{h^2}{2} \sin 2 \phi \big) \hspace{0.5pt}\big\|^{\frac{3}{4}}_{L^2} \right].
\end{align*}By Young's inequality, we obtain from the last estimate that \begin{align}\label{I2.2.3.3}
    &\int_{\Omega}   \big( \Delta \phi + \frac{h^2}{2} \sin 2 \phi \big)\hspace{1pt}  \partial_3 u  \cdot  \nabla \partial_3 \phi \nonumber\\[1mm] &\hspace{30pt}\leq \hspace{1.5pt}\frac{1}{12} \int_\Omega \big|\hspace{0.5pt} S\hspace{0.3pt}u\hspace{0.5pt}\big|^2 + \frac{1}{12} \int_\Omega  \big| \hspace{0.8pt}\nabla \big( \Delta \phi + \frac{h^2}{2} \sin 2 \phi \big) \hspace{0.5pt}\big|^2 + A \hspace{0.5pt}\mathcal{Q}\big(\mathcal{E} + A \big).
\end{align}

Combining this estimate, \eqref{I2.2.3.1}, and \eqref{I2.2.3.2}, we estimate the right-hand side of \eqref{Exp of 2nd} and get 
\begin{align*}
- &\int_{\text{H}} \big( \Delta \phi + \frac{h^2}{2} \sin 2 \phi \big) \partial_3 \big(u \cdot \nabla \phi\big) \hspace{1.5pt}\leq\hspace{1.5pt} \frac{1}{4} \int_\Omega \big|\hspace{0.5pt} S\hspace{0.3pt}u\hspace{0.5pt}\big|^2 + \frac{1}{4} \int_\Omega  \big| \hspace{0.8pt}\nabla \big( \Delta \phi + \frac{h^2}{2} \sin 2 \phi \big) \hspace{0.5pt}\big|^2 + A\hspace{0.5pt}\mathcal{Q}\big(\mathcal{E} + A \big).
\end{align*}In light of this estimate, \eqref{basic dep of I2} and \eqref{I.2.2}, it follows 
\begin{align}\label{est of I2}
    I_2 \hspace{1.5pt}\leq\hspace{1.5pt} \frac{1}{4} \int_\Omega \big|\hspace{0.5pt} S\hspace{0.3pt}u\hspace{0.5pt}\big|^2 - \frac{2}{3} \int_\Omega  \big| \hspace{0.8pt}\nabla \big( \Delta \phi + \frac{h^2}{2} \sin 2 \phi \big) \hspace{0.5pt}\big|^2 + A \hspace{0.5pt}\mathcal{Q}\big(\mathcal{E} + A \big).
\end{align}\vspace{0.2pc}

\noindent\textbf{Estimate for $I_3$.} The $I_3$-term can be split into \begin{align}\label{split of I3}
    I_3  &= \int_\Omega \big( \Delta \phi + \frac{h^2}{2} \sin 2 \phi \big) \hspace{1pt} \big(S\hspace{0.3pt}u\big) \cdot \nabla \phi -  \int_\Omega \big( \Delta \phi + \frac{h^2}{2} \sin 2 \phi \big) \hspace{1pt} \nabla p  \cdot \nabla \phi \nonumber\\[2mm]
    &- 2 \int_\Omega \big( \Delta \phi + \frac{h^2}{2} \sin 2 \phi \big) \hspace{1pt} \partial_i u \cdot \nabla \partial_i \phi - \int_\Omega \big( \Delta \phi + \frac{h^2}{2} \sin 2 \phi \big) \hspace{1pt} u \cdot \nabla 
 \Delta \phi.
\end{align}Note that the first term on the right-hand side of \eqref{split of I3} can be cancelled by the last term in \eqref{eq of I1}. We therefore only consider the rest three terms on the right-hand side of \eqref{split of I3}. \vspace{0.2pc}
 
By H\"older and Sobolev inequalities, the second term on the right-hand side of \eqref{split of I3} satisfies  
\begin{align*}
-  \int_\Omega \big( \Delta \phi + \frac{h^2}{2} \sin 2 \phi \big) \hspace{1pt} \nabla p  \cdot \nabla \phi &\hspace{1.5pt}\leq \hspace{1.5pt} \big\| \hspace{0.5pt}\nabla p \hspace{0.5pt}\big\|_{L^2} \big\| \hspace{0.5pt}\nabla \phi \hspace{0.5pt}\big\|_{L^4}  \hspace{.5pt}\big\| \hspace{0.5pt}\Delta \phi + \frac{h^2}{2} \sin 2 \phi \hspace{0.5pt}\big\|_{L^4}\\[1mm] &\hspace{1.5pt}\lesssim\hspace{1.5pt}\big\| \hspace{0.5pt}\nabla p \hspace{0.5pt}\big\|_{L^2} \big\| \hspace{0.5pt}\nabla \phi \hspace{0.5pt}\big\|_{H^1}  \hspace{.5pt}\big\| \hspace{0.5pt}\Delta \phi + \frac{h^2}{2} \sin 2 \phi \hspace{0.5pt}\big\|_{L^4}. 
\end{align*}Applying Lemmas \ref{Stokes est}-\ref{est nabla 2 phi} and the estimate \eqref{gn del phi}, we can bound the right-hand side above and get \begin{align*}
 -  \int_\Omega \big( \Delta \phi + \frac{h^2}{2} \sin 2 \phi \big) \hspace{1pt} \nabla p  \cdot \nabla \phi &\hspace{1.5pt}\lesssim \hspace{1.5pt} \big\| \hspace{0.5pt}S \hspace{0.3pt}u \hspace{0.5pt}\big\|_{L^2} \big( 1 + \mathcal{E} + A \big)^{\frac{1}{2}} \left[  A^{\frac{1}{2}} + A^{\frac{1}{8}} \big\| \hspace{0.5pt}\nabla \hspace{1pt}\big( \Delta \phi + \frac{h^2}{2} \sin 2 \phi \big) \hspace{0.5pt}\big\|^{\frac{3}{4}}_{L^2}  \right] \\[2mm]
 &\hspace{-30pt}\leq\hspace{1.5pt} A^{\frac{1}{2}}\mathcal{Q}\big(\mathcal{E} + A \big) \hspace{1pt} \big\| \hspace{0.5pt}S \hspace{0.3pt}u \hspace{0.5pt}\big\|_{L^2} + A^{\frac{1}{8}}\mathcal{Q}\big(\mathcal{E} + A \big) \hspace{1pt} \big\| \hspace{0.5pt}S \hspace{0.3pt}u \hspace{0.5pt}\big\|_{L^2} \hspace{1pt}\big\| \hspace{0.5pt}\nabla \hspace{1pt}\big( \Delta \phi + \frac{h^2}{2} \sin 2 \phi \big) \hspace{0.5pt}\big\|^{\frac{3}{4}}_{L^2}.
\end{align*}Young's inequality then infers\begin{align}\label{I.3.1}
& -  \int_\Omega \big( \Delta \phi + \frac{h^2}{2} \sin 2 \phi \big) \hspace{1pt} \nabla p  \cdot \nabla \phi \nonumber\\[1mm]
&\h{30pt}\leq\hspace{1.5pt}\frac{1}{24} \int_\Omega \big|\hspace{0.5pt} S\hspace{0.3pt}u\hspace{0.5pt}\big|^2 + \frac{1}{24} \int_\Omega  \big| \hspace{0.8pt}\nabla \big( \Delta \phi + \frac{h^2}{2} \sin 2 \phi \big) \hspace{0.5pt}\big|^2 + A \hspace{0.5pt}\mathcal{Q}\big(\mathcal{E} + A \big).
\end{align}\vspace{0.2pc}

The third term on the right-hand side of \eqref{split of I3} can be controlled in a way similar as \eqref{I2.2.3.3}. We give the estimate in the following without the proof. \begin{align}\label{I.3.2}
    & - 2 \int_\Omega \big( \Delta \phi + \frac{h^2}{2} \sin 2 \phi \big) \hspace{1pt} \partial_i u \cdot \nabla \partial_i \phi \nonumber\\[1mm] &\h{30pt}\leq \hspace{1.5pt}\frac{1}{24} \int_\Omega \big|\hspace{0.5pt} S\hspace{0.3pt}u\hspace{0.5pt}\big|^2 + \frac{1}{24} \int_\Omega  \big| \hspace{0.8pt}\nabla \big( \Delta \phi + \frac{h^2}{2} \sin 2 \phi \big) \hspace{0.5pt}\big|^2 + A\hspace{0.5pt}\mathcal{Q}\big(\mathcal{E} + A \big).
\end{align}\vspace{0.2pc}

The last term on the right-hand side of \eqref{split of I3} can be calculated by \begin{align*}
    & - \int_\Omega \big( \Delta \phi + \frac{h^2}{2} \sin 2 \phi \big) \hspace{1pt} u \cdot \nabla 
 \Delta \phi \\[1mm] 
 & \h{30pt} = - \frac{1}{2} \int_\Omega   u \cdot \nabla \hspace{1pt}\big(  
 \Delta  \phi + \frac{h^2}{2} \sin 2 \phi \big)^2  +  h^2\int_\Omega \cos 2 \phi \hspace{1pt}\big( \Delta \phi + \frac{h^2}{2} \sin 2 \phi \big) \hspace{1pt} u \cdot \nabla \phi\\[1mm]
 & \h{30pt}= h^2\int_\Omega \cos 2 \phi \hspace{1pt}\big( \Delta \phi + \frac{h^2}{2} \sin 2 \phi \big) \hspace{1pt} u \cdot \nabla \phi.
\end{align*}In the second equality above, we use $\mathrm{div} \hspace{0.5pt} u = 0$ in $\Omega$ and $u = 0$ on $\mathrm{H} \cup \mathrm P$. \vspace{0.2pc}

We now apply the last identity, \eqref{I.3.1} and \eqref{I.3.2} to the right-hand side of \eqref{split of I3}. It turns out \begin{align}\label{est of I3}
I_3  \hspace{1.5pt}\leq\hspace{1.5pt} &\frac{1}{12} \int_\Omega \big|\hspace{0.5pt} S\hspace{0.3pt}u\hspace{0.5pt}\big|^2  + \frac{1}{12} \int_\Omega  \big| \hspace{0.8pt}\nabla \big( \Delta \phi + \frac{h^2}{2} \sin 2 \phi \big) \hspace{0.5pt}\big|^2 + A\hspace{0.5pt}\mathcal{Q}\big(\mathcal{E} + A \big)\\[2mm]
&+ \int_\Omega \big( \Delta \phi + \frac{h^2}{2} \sin 2 \phi \big) \hspace{1pt} \big(S\hspace{0.3pt}u\big) \cdot \nabla \phi + h^2\int_\Omega \cos 2 \phi \hspace{1pt}\big( \Delta \phi + \frac{h^2}{2} \sin 2 \phi \big) \hspace{1pt} u \cdot \nabla \phi. \nonumber
\end{align}\vspace{0.2pc}

\noindent\textbf{Estimate for $I_4$.} According to the definition of $A(t)$, the $I_4$-term can be estimated as follows:
\begin{align}\label{est of I4}
    I_4 \hspace{1.5pt}\leq\hspace{1.5pt}  h^2 \hspace{0.3pt}A - h^2 \int_\Omega \cos 2 \phi 
 \hspace{1pt}\big( \Delta \phi + \frac{h^2}{2} \sin 2 \phi \big)\hspace{1pt} u \cdot \nabla \phi.
\end{align}\vspace{0.1pc}

The proof is finished by applying \eqref{Inq I.1.}, \eqref{est of I2}, \eqref{est of I3}-\eqref{est of I4} to the right-hand side of \eqref{high ord energy iden}.
\end{proof}

\subsection{The \texorpdfstring{$\omega$}{TEXT}-limit set of global classical hydrodynamic flow}\vspace{0.2pc}

We begin to study the asymptotic limit of the global classical solution of $\mathrm{IBVP}$. In this section, we show that the $\omega$-limit set of the global classical solution is a singleton. 

\begin{lem}\label{limit and unif bound}
Suppose $\big(u, \phi\big)$ is a classical solution of $\mathrm{IBVP}$ on $\big[\hspace{0.5pt}T_0, \infty \big)$ for some $T_0 \geq 0$. Then, 
\begin{align} \label{H1L2limit}
    \lim_{t \hspace{.5pt}\to \hspace{.5pt} \infty} \left\{ \hspace{1pt} \| \hspace{0.5pt} u \hspace{0.5pt}\|_{H^1} + \big\| \hspace{0.5pt}\Delta \phi + \frac{h^2}{2} \sin 2 \phi \hspace{0.5pt}\big\|_{L^2} \right\} = 0.
\end{align}Moreover, the following uniform boundedness holds:
\begin{equation} \label{H1H2bdd}
    \sup_{t \hspace{0.5pt} \geq \hspace{0.5pt} T_0} \Big\{ \hspace{1pt} \|\hspace{0.5pt} u  \hspace{0.5pt}\|_{H^1} + \|\hspace{0.5pt} \phi  \hspace{0.5pt}\|_{H^2} \Big\} < \infty.
\end{equation}
\end{lem}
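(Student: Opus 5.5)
The plan is to combine the basic energy identity of Lemma \ref{basic energy est} with the differential inequality \eqref{HigherE} of Lemma \ref{high ord est}, following the standard argument for "integrable quantity with controlled growth tends to zero". The first step is a uniform lower bound on $\mathcal{E}$. Since $\cos 2\phi + 1 \geq 0$, we have $\mathcal{E}(t) \geq \frac12\|\nabla\phi\|_{L^2}^2 + \frac12\|u\|_{L^2}^2 \geq 0$. Lemma \ref{basic energy est} then gives $\mathcal{E}(t) \leq \mathcal{E}(T_0)$ for all $t \geq T_0$, and also
\begin{align*}
\int_{T_0}^\infty A(s)\,\mathrm{d}s \leq \mathcal{E}(T_0) < \infty .
\end{align*}
It follows that $\sup_{t\ge T_0}(\|u\|_{L^2} + \|\nabla\phi\|_{L^2})<\infty$. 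By Poincar\'e's inequality with $\phi=0$ on P, the $H^1$-norm of $\phi$ is bounded as well.

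The second step, which I expect to be the main obstacle, is to show that $A(t)\to 0$. Dropping the nonnegative dissipative terms in \eqref{HigherE} and using $\mathcal{E}\le \mathcal{E}(T_0)$, we get $A' \leq A\,\mathcal{Q}(\mathcal{E}(T_0)+A) =: F(A)$, where $F$ is a polynomial with $F(0)=0$ that is increasing in $A$. Fix a small $\varepsilon>0$. Because $A$ is integrable, there is $T_1$ with $\int_{T_1}^\infty A < \delta$, where $\delta$ will be chosen depending on $\varepsilon$. For $t\ge T_1+1$, pick $s\in[t-1,t]$ with $A(s)\le \delta$.

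Now compare $A$ on $[s,t]$ with the ODE $y'=F(y)$, $y(s)=\delta$. For $\delta$ small, the solution exists on an interval of length $1$ and stays below $\varepsilon$, since the ODE blow-up time tends to infinity as $\delta\to0$. This forces $A(t)\le\varepsilon$.

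A cleaner alternative avoids the blow-up comparison. Integrate $A'\le F(A)$ from $s$ to $t$ while $A\le 1$, which gives
\begin{align*}
A(t)\le \delta + C\int_s^t A \le (1+C)\delta ,
\end{align*}
and a continuity argument keeps $A\le 1$ along the way. Either route yields $\|\nabla u\|_{L^2} + \|\Delta\phi+\frac{h^2}{2}\sin 2\phi\|_{L^2}\to0$. Together with the Poincar\'e inequality \eqref{Poincare ineq} for $u$, this is exactly \eqref{H1L2limit}.

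For the uniform bound \eqref{H1H2bdd}, first note that $A(t)\to0$ and $A$ is continuous on $[T_0,\infty)$, so $A$ is bounded there. This bounds $\|u\|_{H^1}$ uniformly by \eqref{Poincare ineq}. It also gives
\begin{align*}
\|\Delta\phi\|_{L^2} \le A^{1/2} + \tfrac{h^2}{2}|\Omega|^{1/2},
\end{align*}
which is uniformly bounded. Lemma \ref{est nabla 2 phi} then controls $\|\nabla^2\phi\|_{L^2}$ by $\|\Delta\phi\|_{L^2}$ and $\|\nabla\phi\|_{L^2}$. Combined with the $H^1$ bound on $\phi$ from the first step, this gives the uniform $H^2$ bound on $\phi$ and completes the proof.
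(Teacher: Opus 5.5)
Your proposal is correct and follows the paper's route. The energy identity gives $\sup_{t\ge T_0}(\|u\|_{L^2}+\|\phi\|_{H^1})<\infty$ and $\int_{T_0}^\infty A<\infty$; the differential inequality \eqref{HigherE} then forces $A(t)\to0$; and Lemma \ref{est nabla 2 phi} yields the uniform $H^2$ bound. The only difference is that the paper gets $A\to 0$ by citing Lemma 6.2.1 and Remark 6.2.1 of Zheng, whereas your continuity argument proves this step directly and covers the superquadratic polynomial $\mathcal{Q}$ without further work.
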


\begin{proof}[\bf Proof]
Since $\phi = 0$ on P, same proof for \eqref{Poincare ineq} induces $\| \hspace{0.5pt}\phi\hspace{.5pt}\|_{L^2} \lesssim \| \hspace{0.5pt}\nabla \phi\hspace{.5pt}\|_{L^2}$. By the basic energy law in \eqref{basic energy est}, it turns out 
\begin{equation}\label{unif est 1}
    \sup_{t \hspace{1pt}\geq \hspace{1pt} T_0} \Big\{ \hspace{1pt} \|\hspace{0.5pt} u \hspace{0.5pt}\|^2_{L^2} + \big\|\hspace{0.5pt} \phi \hspace{0.5pt}\big\|_{H^1}^2 \Big\} + \int_{T_0}^{\infty} A(s) \hspace{1.5pt}\mathrm{d} s \hspace{1.5pt}\lesssim\hspace{1.5pt} \mathcal{E}(T_0).
\end{equation}
Taking into account \eqref{unif est 1} and Lemma \ref{high ord est}, we apply Lemma 6.2.1 and Remark 6.2.1 in \cite{Zheng2004} to get  
\begin{equation*}
    \lim_{t \hspace{1pt}\to \hspace{1pt} \infty} A(t) = \lim_{t \hspace{1pt}\to \hspace{1pt} \infty} \int_{\Omega} \big|\hspace{0.5pt}\nabla u \hspace{0.5pt}\big|^2 + \big|\hspace{0.5pt}\Delta \phi + \frac{h^2}{2} \sin 2 \phi \hspace{0.5pt}\big|^2 = 0.
\end{equation*}\eqref{H1L2limit} then follows by the above limit and \eqref{Poincare ineq}. Moreover, the uniform boundedness result in \eqref{H1H2bdd} holds with the last limit, \eqref{unif est 1}, and Lemma \ref{est nabla 2 phi}.
\end{proof}

Due to Lemma \ref{limit and unif bound}, if $\big(u, \phi\big)$ is a global classical solution of IBVP, its $\omega$-limit set, denoted by $\omega\big(u, \phi\big)$, is a subset of $\Sigma$, where 
\begin{align*}
        \Sigma := \Big\{ \hspace{1pt} \big(0, \phi_\infty\big) \colon \text{$\phi_\infty$ is a strong solution of (\ref{SG})}\hspace{1pt}\Big\}.
    \end{align*}
In addition, we have
\begin{lem}\label{uniq limit}
Suppose $(u, \phi)$ is a classical solution of $\mathrm{IBVP}$ on $[\hspace{0.5pt}T_0, \infty )$ for some $T_0 \geq 0$. Then $\omega\left(u, \phi\right)$ is a singleton. There is a constant $T_\rho > T_0$ such that
    \begin{align}\label{dist phi and phi infty}
        \big\|\hspace{1pt} \phi(t) - \phi_{\infty} \hspace{1pt}\big\|_{H^1} \hspace{1pt}\leq\hspace{1pt} \rho \hspace{20pt}\text{for all $t \geq T_\rho$.}
    \end{align} Here, $ (\hspace{0.2pt}0, \phi_\infty) \in \omega\left(u, \phi\right) \subset \Sigma$. The positive constant $\rho$ is given in Theorem \ref{LS general}. 
\end{lem}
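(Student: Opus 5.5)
The plan is the standard \L{}ojasiewicz--Simon argument for gradient-like flows, applied to the total energy $\mathcal{E}$. By \eqref{H1H2bdd}, the trajectory $\{\phi(t)\}_{t\ge T_0}$ is bounded in $H^2(\Omega)$ and hence precompact in $H^1(\Omega)$. Its $u$-component tends to $0$ in $H^1$ by \eqref{H1L2limit}. So $\omega(u,\phi)$ is nonempty. Fix $(0,\phi_\infty)\in\omega(u,\phi)\subset\Sigma$; then $\phi_\infty$ is a critical point of $E$, and Theorem \ref{LS general} supplies constants $\rho,\gamma,\theta$. Choose $t_k\to\infty$ with $\phi(t_k)\to\phi_\infty$ in $H^1$. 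The energy $\mathcal{E}(t)$ is nonincreasing by Lemma \ref{basic energy est}, and $E$ is continuous on $H^1$, so $\mathcal{E}(t)\downarrow E[\phi_\infty]$ as $t\to\infty$. If $\mathcal{E}(t_*)=E[\phi_\infty]$ at some finite time, the dissipation vanishes afterwards, so $u\equiv0$ and $\partial_t\phi\equiv0$, and we are done. Hence we may assume $\mathcal{E}(t)>E[\phi_\infty]$ for all $t$.

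The key step is a differential inequality for $H(t):=\big(\mathcal{E}(t)-E[\phi_\infty]\big)^\theta$. Write $w:=\Delta\phi+\frac{h^2}{2}\sin2\phi$, so that $-\frac{d}{dt}\mathcal{E}=\|\nabla u\|_{L^2}^2+\|w\|_{L^2}^2$. I would use two ingredients. First, $E'[\phi]$ is represented by $-w$ in $L^2$, since the boundary terms cancel by \eqref{bc}; together with the Poincar\'e-type embedding $L^2\hookrightarrow\mathscr V'$ this gives $\|E'[\phi]\|_{\mathscr V'}\lesssim\|w\|_{L^2}$. Second, by Poincar\'e, $|\mathcal{E}-E[\phi]|=\frac12\|u\|_{L^2}^2\lesssim\|\nabla u\|_{L^2}^2$, and $\|\nabla u\|_{L^2}\to0$ by \eqref{H1L2limit}.

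Whenever $\|\phi(t)-\phi_\infty\|_{H^1}\le\rho$, Theorem \ref{LS general} together with the elementary inequality $(a+b)^{1-\theta}\le a^{1-\theta}+b^{1-\theta}$ yields
\[
(\mathcal{E}-E[\phi_\infty])^{1-\theta}\lesssim \|w\|_{L^2}+\|\nabla u\|_{L^2}^{2(1-\theta)}\lesssim \|w\|_{L^2}+\|\nabla u\|_{L^2}.
\]
Here the last step uses $2(1-\theta)\ge1$ and the smallness of $\|\nabla u\|_{L^2}$ for large $t$. It follows that
\[
-\frac{d}{dt}H=\theta\,(\mathcal{E}-E[\phi_\infty])^{\theta-1}\big(\|\nabla u\|^2+\|w\|^2\big)\gtrsim \|\nabla u\|_{L^2}+\|w\|_{L^2}.
\]
Since $\partial_t\phi=w-u\cdot\nabla\phi$ and $\|u\cdot\nabla\phi\|_{L^2}\lesssim\|\nabla u\|_{L^2}\|\phi\|_{H^2}$ by \eqref{H1H2bdd}, we get $\|\partial_t\phi\|_{L^2}\lesssim -\frac{d}{dt}H$ on any interval where $\phi$ stays in the $\rho$-ball.

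Next comes the trapping argument, which I expect to be the main obstacle: the inequality only yields $L^2$ control of $\partial_t\phi$, while the ball in Theorem \ref{LS general} is measured in $H^1$. I would handle this by interpolation, $\|f\|_{H^1}\lesssim\|f\|_{L^2}^{1/2}\|f\|_{H^2}^{1/2}$, applied with the uniform $H^2$ bound \eqref{H1H2bdd}. Pick $k$ large so that $\|\phi(t_k)-\phi_\infty\|_{H^1}<\rho/2$ and $H(t_k)$ is tiny. Let $s$ be the first exit time from the $\rho$-ball after $t_k$. On $[t_k,s)$ we have $\|\phi(t)-\phi(t_k)\|_{L^2}\lesssim H(t_k)$, so $\|\phi(t)-\phi(t_k)\|_{H^1}\lesssim H(t_k)^{1/2}<\rho/2$, which contradicts exit at time $s$. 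Hence $s=\infty$ and \eqref{dist phi and phi infty} holds with $T_\rho=t_k$. Finally, $\int_{T_\rho}^\infty\|\partial_t\phi\|_{L^2}<\infty$, so $\phi(t)$ converges in $L^2$, and by precompactness in $H^1$ it converges in $H^1$ to $\phi_\infty$. This shows that $\omega(u,\phi)$ is the singleton $\{(0,\phi_\infty)\}$.
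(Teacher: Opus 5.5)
Your proposal is correct and follows essentially the same route as the paper. The \L{}ojasiewicz--Simon inequality combined with the energy identity gives $-\frac{\mathrm d}{\mathrm dt}\big(\mathcal E-E[\phi_\infty]\big)^\theta\gtrsim A^{1/2}$, the uniform bound \eqref{H1H2bdd} gives $\|\partial_t\phi\|_{L^2}\lesssim A^{1/2}$, and a first-exit-time argument traps $\phi$ in the $\rho$-ball. The only real difference is how you pass from $L^2$ to $H^1$ control: you interpolate against the uniform $H^2$ bound and get a direct contradiction, whereas the paper argues along the exit times $t_{m,*}$ and upgrades $L^2$ convergence to $H^1$ convergence via $\mathcal E(t_{m,*})\to E[\phi_\infty]$ and compactness of the trace. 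The paper also gets the singleton property by letting the ball radius $\epsilon\le\rho$ be arbitrary, rather than through your finite-length argument.
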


\begin{proof}[\bf Proof]
Given $(0, \phi_\infty) \in \omega\left(u, \phi\right)$, there is a strictly increasing time sequence $\big\{ t_m \big\}$, which diverges to $\infty$ as $m \to \infty$, such that
\begin{align}\label{strong H1 conv}
    \lim_{m \hspace{1pt}\to\hspace{1pt} \infty} \|\hspace{1pt} u(t_m) \hspace{1pt}\|_{H^1} + \big\|\hspace{1pt} \phi (t_m) - \phi_{\infty} \hspace{1pt}\big\|_{H^1} = 0.
\end{align}Without loss of generality, we can assume $t_m > T_0$ for all $m$. Using \eqref{strong H1 conv}, we obtain
\begin{align}\label{conv of energ}
    \lim_{m \hspace{1pt}\to \hspace{1pt} \infty} \cE(t_m) = E\hspace{0.5pt}\big[\hspace{0.5pt}\phi_{\infty}\hspace{0.5pt}\big].
\end{align}

Assume there is  $T \geq T_0$ such that $\cE(T) = E\hspace{0.5pt}[\hspace{0.5pt}\phi_{\infty}\hspace{0.5pt}]$. In light of \eqref{conv of energ} and the basic energy law \eqref{energy}, we have $\cE(t) = E\hspace{0.5pt}[\hspace{0.5pt}\phi_{\infty}\hspace{0.5pt}]$ for all $t \in [\hspace{0.5pt}T, \infty )$. Moreover,
\begin{align}\label{id beyond T}
    \big\|\hspace{0.5pt} \nabla u \hspace{0.5pt}\big\|_{L^2} + \big\| \hspace{0.5pt}\Delta \phi + \frac{h^2}{2} \sin 2 \phi\hspace{0.5pt}\big\|_{L^2} = 0 \hspace{20pt}\text{on $(T, \infty)$.}
\end{align}
It turns out $u \equiv 0$ on $\Omega \times (T, \infty)$. According to the equation of $\phi$ in \eqref{sEL}, \eqref{id beyond T} further implies $\partial_t \phi \equiv 0$ on $\Omega \times (T, \infty)$. We then have $\phi = \phi_\infty$ on $\Omega \times (T, \infty)$. The lemma follows in this case.\vspace{0.2pc}

In the rest of the proof we assume $\cE(t) > E\hspace{0.5pt}[\hspace{0.5pt}\phi_{\infty}\hspace{0.5pt}]$ for all $t > T_0$. $\epsilon$ is an arbitrary positive number no more than $\rho$. By \eqref{strong H1 conv}, there is a natural number $N_\epsilon$, such that
\begin{align*}
    \big\|\hspace{1pt} \phi (t_m) - \phi_{\infty} \hspace{1pt}\big\|_{H^1} \hspace{1pt}\leq\hspace{1pt} \frac{\epsilon}{2} \hspace{20pt}\text{for all $m \geq N_\epsilon$.}
\end{align*}
For $m \geq N_\epsilon$, we define
\begin{align*}
    t_{m, *} := \sup \Big\{ \hspace{1pt} t \colon t \geq t_m \hspace{3pt}\text{and satisfies}\hspace{3pt} \big\|\hspace{1pt} \phi(s) - \phi_{\infty} \hspace{1pt}\big\|_{H^1} \hspace{0.5pt}\leq\hspace{0.5pt} \epsilon \hspace{4pt}\text{for all} \hspace{3pt} s \in [\hspace{0.5pt}t_m, t\hspace{0.5pt}] \hspace{1pt}\Big\}.
\end{align*}We claim there exists a natural number $M_{\epsilon} \geq N_\epsilon$ such that $t_{M_{\epsilon}, *} = \infty$. Therefore, $\omega(u, \phi)$ is singleton by the arbitrariness of $\epsilon$. Letting $\epsilon = \rho$, we also prove \eqref{dist phi and phi infty}.\vspace{0.2pc}
 
Now we assume $t_{m, *}$ is finite for all $m \geq N_\epsilon$. By \L{}ojasiewicz-Simon inequality in Theorem \ref{LS general}, \begin{align}\label{app of LS}
\big\|\hspace{1pt} E'\hspace{0.5pt}[\hspace{0.5pt}\phi\hspace{0.5pt}] \hspace{1.5pt}\big\|_{\mathscr{V}'} \geq \gamma \hspace{1pt}\big|\hspace{1pt} E\hspace{0.5pt}[\hspace{0.5pt}\phi\hspace{0.5pt}] - E\hspace{0.5pt}[\hspace{0.5pt}\phi_\infty\hspace{0.5pt}] \hspace{1.5pt}\big|^{1 - \theta} \hspace{20pt}\text{on $\big[\hspace{0.5pt}t_m, t_{m, *}\hspace{0.5pt}\big]$}.
\end{align} According to the computation of $E'$ in \eqref{1st deriv of E} and the boundary condition of $\phi$, we can integrate by parts and obtain \begin{align*}
    \big< \hspace{0.5pt}E'[\hspace{0.5pt}\phi\hspace{0.5pt}], \varphi \hspace{0.5pt}\big>_{\mathscr{V}'\times \mathscr{V}} = - \int_{\Omega} \big( \hspace{0.5pt}\Delta \phi + \frac{h^2}{2} \sin 2 \phi\hspace{0.5pt}\big)\hspace{1pt}\varphi \hspace{20pt}\text{for all $\varphi \in \mathscr{V} = H_\text{P}^1(\Omega)$.}
\end{align*}Applying H\"{o}lder and Poincar\'{e} inequalities on the right-hand side above induces \begin{align*}
    \big< \hspace{0.5pt}E'[\hspace{0.5pt}\phi\hspace{0.5pt}], \varphi \hspace{0.5pt}\big>_{\mathscr{V}'\times \mathscr{V}} \hspace{1pt}\leq\hspace{1pt} \big\| \hspace{0.5pt}\Delta \phi + \frac{h^2}{2} \sin 2 \phi\hspace{0.5pt}\big\|_{L^2} \| \hspace{0.5pt}\varphi\hspace{0.5pt}\|_{L^2} \hspace{1pt}\lesssim\hspace{1pt} \big\| \hspace{0.5pt}\Delta \phi + \frac{h^2}{2} \sin 2 \phi\hspace{0.5pt}\big\|_{L^2} \| \hspace{0.5pt}\varphi\hspace{0.5pt}\|_{H^1}.
\end{align*}By duality argument, it implies $$\big\|\hspace{1pt} E'\hspace{0.5pt}[\hspace{0.5pt}\phi\hspace{0.5pt}] \hspace{1.5pt}\big\|_{\mathscr{V}'} \hspace{1pt}\lesssim\hspace{1pt}\big\| \hspace{0.5pt}\Delta \phi + \frac{h^2}{2} \sin 2 \phi\hspace{0.5pt}\big\|_{L^2},$$ which can bound the left-hand side in \eqref{app of LS} from above and infer  \begin{align*}
\big\| \hspace{0.5pt}\Delta \phi + \frac{h^2}{2} \sin 2 \phi\hspace{0.5pt}\big\|_{L^2} \hspace{1pt}\gtrsim\hspace{1pt} \hspace{1pt}\big|\hspace{1pt} E\hspace{0.5pt}[\hspace{0.5pt}\phi\hspace{0.5pt}] - E\hspace{0.5pt}[\hspace{0.5pt}\phi_\infty\hspace{0.5pt}] \hspace{1.5pt}\big|^{1 - \theta} \hspace{20pt}\text{on $\big[\hspace{0.5pt}t_m, t_{m, *}\hspace{0.5pt}\big]$}.
 \end{align*}
Note that $\theta \in (\hspace{0.3pt}0, \frac{1}{2}\hspace{0.5pt}]$. By the last estimate, it follows
\begin{align*}
    \Big\{ \cE(t) - E[\phi_{\infty}] \hspace{0.5pt}\Big\}^{1 - \theta} \hspace{1pt}\lesssim\hspace{1pt} \Big\{ \|\hspace{0.5pt} u \hspace{0.5pt}\|_{L^2}^2 + \big\| \hspace{0.5pt}\Delta \phi + \frac{h^2}{2} \sin 2 \phi\hspace{0.5pt}\big\|_{L^2}^{\frac{1}{1 - \theta}} \hspace{1pt}\Big\}^{1 - \theta} \hspace{1pt}\lesssim\hspace{1pt} \|\hspace{0.5pt} u \hspace{0.5pt}\|_{L^2} + \big\| \hspace{0.5pt}\Delta \phi + \frac{h^2}{2} \sin 2 \phi\hspace{0.5pt}\big\|_{L^2},
\end{align*} where $t \in \big[\hspace{0.5pt}t_m, t_{m, *}\hspace{0.5pt}\big]$. Applying Poincar\'{e} inequality \eqref{Poincare ineq} then yields
\begin{align}\label{LS application}
    \Big\{ \cE(t) - E[\phi_{\infty}] \hspace{0.5pt}\Big\}^{1 - \theta}  \hspace{1pt}\lesssim\hspace{1pt} \|\hspace{0.5pt} \nabla u \hspace{0.5pt}\|_{L^2} + \big\| \hspace{0.5pt}\Delta \phi + \frac{h^2}{2} \sin 2 \phi\hspace{0.5pt}\big\|_{L^2}  \hspace{20pt}\text{for all $t \in \big[\hspace{0.5pt}t_m, t_{m, *}\hspace{0.5pt}\big]$}.
\end{align}
According to the differential version of the energy identity \eqref{basic energy est} and the above estimate, 
\begin{align*}
    - \frac{\mathrm{d}}{ \mathrm{d} \hspace{0.3pt}t} \hspace{1pt} \Big\{ \cE(t) - E[\phi_{\infty}] \hspace{.8pt}\Big\}^{\theta} &=   \theta \hspace{0.5pt}A \hspace{1pt}\Big\{ \cE(t) - E[\phi_{\infty}] \hspace{0.8pt}\Big\}^{\theta - 1}  \gtrsim\hspace{1pt} A^{\frac{1}{2}}\hspace{20pt}\text{for all $t \in \big[\hspace{0.5pt}t_m, t_{m, *}\hspace{0.5pt}\big]$}.
\end{align*}We integrate the above estimate from $t_m$ to $t$. It induces
\begin{align}\label{bd of Ahalf}
    \Big\{ \cE(t) - E[\phi_{\infty}] \hspace{0.5pt}\Big\}^{\theta} + C_0 \int_{t_m}^t A^{\frac{1}{2}} \hspace{1pt}\leq\hspace{1pt} \Big\{ \cE(t_m) - E[\phi_{\infty}] \hspace{0.5pt}\Big\}^{\theta} \hspace{20pt}\text{for any $t \in \big[\hspace{0.5pt} t_m, t_{m, *} \hspace{0.5pt}\big]$.}
\end{align}
The $L^2$-norm of $\phi(t_{m, *}) - \phi_\infty$ can be controlled by the triangle inequality as follows:
\begin{align}\label{tn*}
    \big\|\hspace{0.5pt} \phi(t_{m, *}) - \phi_{\infty} \hspace{0.5pt}\big\|_{L^2} &\hspace{1pt}\leq\hspace{1pt} \big\|\hspace{0.5pt} \phi(t_m) - \phi_{\infty} \hspace{0.5pt}\big\|_{L^2} + \big\|\hspace{0.5pt} \phi(t_{m, *}) - \phi(t_m) \hspace{0.5pt}\big\|_{L^2} \nonumber\\[1mm]&\hspace{1pt}\leq\hspace{1pt} \big\|\hspace{0.5pt} \phi(t_m) - \phi_{\infty} \hspace{0.5pt}\big\|_{L^2} + \int_{t_m}^{t_{m, *}} \big\|\hspace{0.5pt} \partial_{s} \phi \hspace{0.5pt}\big\|_{L^2}.
\end{align}Recalling the equation satisfied by $\phi$ in \eqref{sEL}, we can apply H\"{o}lder and Sobolev inequalities to get \begin{align*}
    \big\|\hspace{0.5pt} \partial_{s} \phi \hspace{0.5pt}\big\|_{L^2} \hspace{1pt}\leq\hspace{1pt}\big\|\hspace{0.5pt} u \cdot \nabla \phi \hspace{0.5pt}\big\|_{L^2} + \big\| \hspace{0.5pt}\Delta \phi + \frac{h^2}{2} \sin 2 \phi\hspace{0.5pt}\big\|_{L^2} \hspace{1pt}\leq\hspace{1pt}A^{\frac{1}{2}} + \|\hspace{0.5pt} u \hspace{0.5pt}\|_{H^1} \big\|\hspace{0.5pt}\nabla \phi \hspace{0.5pt}\big\|_{H^1},\hspace{15pt}\text{for all $s \in \big[\hspace{0.5pt}t_m, t_{m, *}\hspace{0.5pt}\big]$}.
\end{align*}By  \eqref{Poincare ineq} and the uniform boundedness of the $H^2$-norm of $\phi$ in \eqref{H1H2bdd}, it holds $$
    \big\|\hspace{0.5pt} \partial_{s} \phi \hspace{0.5pt}\big\|_{L^2} \hspace{1pt}\lesssim\hspace{1pt}\big(A(s)\big)^{\frac{1}{2}}, \hspace{20pt}\text{for all $s \in \big[\hspace{0.5pt}t_m, t_{m, *}\hspace{0.5pt}\big]$}.$$ By plugging this estimate to the right-hand side of \eqref{tn*} and then using \eqref{bd of Ahalf},  \begin{align*}
        \big\|\hspace{0.5pt} \phi(t_{m, *}) - \phi_{\infty} \hspace{0.5pt}\big\|_{L^2}  \hspace{1pt}\lesssim\hspace{1pt} \big\|\hspace{0.5pt} \phi(t_m) - \phi_{\infty} \hspace{0.5pt}\big\|_{L^2} + \int_{t_m}^{t_{m, *}} A^{\frac{1}{2}} \hspace{1pt}\lesssim\hspace{1pt} \big\|\hspace{0.5pt} \phi(t_m) - \phi_{\infty} \hspace{0.5pt}\big\|_{L^2} + \Big\{ \cE(t_m) - E[\phi_{\infty}] \hspace{0.5pt}\Big\}^{\theta}.
    \end{align*}
Taking into account the limits \eqref{strong H1 conv} and \eqref{conv of energ}, we have 
\begin{align}\label{L2 conv}
    \lim_{m \hspace{1pt}\to\hspace{1pt} \infty} \big\| \hspace{0.5pt}\phi(t_{m, *}) - \phi_{\infty} \hspace{0.5pt}\big\|_{L^2} = 0.
\end{align}Let $t = t_{m, *}$ in \eqref{bd of Ahalf}. It induces \begin{align}\label{energy conv at tn*}
    \cE(t_{m, *}) - E[\phi_{\infty}] = \frac{1}{2}  \int_\Omega \big| \hspace{0.5pt}u(t_{m, *}) \hspace{0.5pt}\big|^2 + E\big[ \phi(t_{m, *}) \big] - E[\phi_{\infty}] \longrightarrow 0 \hspace{10pt}\text{as $m \to \infty$.}
\end{align}Using the convergence of $u$ in \eqref{H1L2limit}, the compactness of the trace operator from $H_\text{P}^1(\Omega)$ to $L^2(H)$ and \eqref{L2 conv}, we obtain from \eqref{energy conv at tn*} that $$\big\| \hspace{0.5pt}\nabla \phi(t_{m, *}) \hspace{0.5pt}\big\|_{L^2} \longrightarrow \big\| \hspace{0.5pt}\nabla \phi_\infty \hspace{0.5pt}\big\|_{L^2} \hspace{20pt} \text{as $m \to \infty$.}$$ Then,  $\phi(t_{m, *})$ converges to $\phi_\infty$ strongly in $H^1(\Omega)$ as $m \to \infty$. Hence, $\big\|\hspace{0.5pt} \phi(t_{m, *}) - \phi_{\infty} \hspace{0.5pt}\big\|_{H^1} < \epsilon$ for sufficiently large $m$, which violates the definition of $t_{m, *}$. The proof is completed. 
\end{proof}

\subsection{Convergence rate to the asymptotic limit}

Continuing the last section, we now investigate the rate of convergence to the equilibrium solution $(0, \phi_\infty)$ as $t \to \infty$, along the global classical solution $(u, \phi)$ of $\mathrm{IBVP}$. Our main result is
\begin{prop}\label{convergence rate}
Suppose $(u, \phi)$ is a classical solution of $\mathrm{IBVP}$ on $\big[\hspace{0.5pt}T_0, \infty \big)$ for some $T_0 \geq 0$. $(0, \phi_\infty)$ is its unique long-time asymptotic limit. If $\theta$ is the \L{}ojasiewicz-Simon exponent in Theorem \ref{LS general} associated with the critical point $\phi_\infty$, then we have  
    \begin{itemize}
        \item[$\mathrm{(1).}$] If $0 < \theta < \frac{1}{2}$, then
        \begin{align*}
            \big\|\hspace{0.5pt} u(t) \hspace{0.5pt}\big\|_{H^1} + \big\|\hspace{0.5pt} \phi(t) - \phi_{\infty} \hspace{0.5pt}\big\|_{H^2} \hspace{1pt}\lesssim\hspace{1pt} (1 + t)^{- \frac{\theta}{1 - 2 \theta}} \hspace{20pt} \text{for all $t > T_0$.}
        \end{align*}

        \item[$\mathrm{(2).}$] If $\theta = \frac{1}{2}$, then for some positive constant $\kappa$, it holds
        \begin{align*}
            \big\|\hspace{0.5pt} u(t) \hspace{0.5pt}\big\|_{H^1} + \big\|\hspace{0.5pt} \phi(t) - \phi_{\infty} \hspace{0.5pt}\big\|_{H^2} \hspace{1pt}\lesssim\hspace{1pt} e^{- \kappa t} \hspace{20pt} \text{for all  $t > T_0$.}
        \end{align*}Here, $\kappa$ is a constant depending on $h$, $L_{\mathrm{H}}$, $\Omega$, $T_0$, and the value of $(u, \phi)$ at $T_0$.
    \end{itemize}
\end{prop}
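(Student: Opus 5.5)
Set $D(t) := \mathcal{E}(t) - E[\phi_\infty] \ge 0$. By Lemma \ref{uniq limit}, for $t \geq T_\rho$ the trajectory stays in the $\rho$-ball of Theorem \ref{LS general}. Hence the inequality \eqref{LS application} holds for all $t \ge T_\rho$, namely $D^{1-\theta} \lesssim A^{1/2}$. If $D(T)=0$ for some $T$, the proof of Lemma \ref{uniq limit} shows the solution is stationary afterwards, so we may assume $D>0$.

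\textbf{Decay of $D$.} The energy identity \eqref{energy} gives $D' = -A \lesssim -D^{2(1-\theta)}$. For $\theta<\tfrac12$, integrating yields $D(t)\lesssim (1+t)^{-1/(1-2\theta)}$. For $\theta=\tfrac12$ the inequality is linear and gives $D(t)\lesssim e^{-ct}$.

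\textbf{$L^2$ distance.} As in \eqref{bd of Ahalf}, $-\frac{\mathrm d}{\mathrm dt}D^\theta \gtrsim A^{1/2}$, and $\|\partial_s\phi\|_{L^2}\lesssim A^{1/2}$ by the uniform bound \eqref{H1H2bdd}. Therefore
\[
\|\phi(t)-\phi_\infty\|_{L^2} \le \int_t^\infty \|\partial_s\phi\|_{L^2} \lesssim D(t)^\theta .
\]
This is $(1+t)^{-\theta/(1-2\theta)}$ in case (1) and $e^{-c\theta t}$ in case (2), which already has the claimed rate.

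\textbf{Upgrading to $\|u\|_{H^1}$ and $\|\phi-\phi_\infty\|_{H^2}$.} This is the main obstacle, since the bounds so far only control $\int_t^\infty A$ and $L^2$ distances. I would use the higher-order inequality \eqref{HigherE}. Since $\mathcal{E}+A$ is bounded for $t \ge T_0$ (Lemma \ref{limit and unif bound}), it gives $A' \le C A$. For $s\in[t-1,t]$ this implies $A(t)\le e^{C}A(s)$, and averaging over $s$ gives
\[
A(t)\lesssim \int_{t-1}^{t}A = D(t-1)-D(t)\le D(t-1).
\]
Hence $\|\nabla u\|_{L^2}^2 + \|\Delta\phi+\tfrac{h^2}{2}\sin2\phi\|_{L^2}^2 \lesssim D(t-1)$.

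Next, $w:=\phi-\phi_\infty$ satisfies
\[
\Delta w = \bigl(\Delta\phi+\tfrac{h^2}{2}\sin2\phi\bigr) - \tfrac{h^2}{2}\bigl(\sin2\phi-\sin2\phi_\infty\bigr),
\]
so $\|\Delta w\|_{L^2}\lesssim A^{1/2}+\|w\|_{L^2}$. The boundary data of $w$ on H is $\partial_\nu w = \tfrac{L_{\rm H}}{2}(\sin2\phi-\sin2\phi_\infty)$. I would apply the elliptic estimate for the mixed Dirichlet/Neumann problem (Grisvard, as in Section 2), or redo the argument of Lemma \ref{est nabla 2 phi}. This gives
\[
\|w\|_{H^2}\lesssim \|\Delta w\|_{L^2}+\|w\|_{H^1}+\|\sin2\phi-\sin2\phi_\infty\|_{H^{1/2}(\mathrm H)} .
\]
The last term is at most $\|w\|_{H^1}$, using the $H^2$ bounds on $\phi$ and $\phi_\infty$. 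Finally, $\|w\|_{H^1}$ is bounded by interpolating between $\|w\|_{L^2}$ and $\|w\|_{H^2}$ and absorbing.

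Combining these, $\|u\|_{H^1}+\|w\|_{H^2}\lesssim D(t-1)^{1/2}+D(t-1)^\theta \lesssim D(t-1)^\theta$, using $\theta\le\frac12$ and $D$ bounded. Inserting the decay of $D$ gives $(1+t)^{-\theta/(1-2\theta)}$ in case (1) and $e^{-\kappa t}$ with $\kappa=c/2$ in case (2). On the compact interval $[T_0,T_\rho+1]$ the estimates hold trivially by \eqref{H1H2bdd} after enlarging the constant. The delicate points are the boundary-term elliptic estimate for $w$ and the local-in-time comparison $A(t)\lesssim\int_{t-1}^t A$ derived from \eqref{HigherE}.
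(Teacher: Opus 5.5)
Your proposal is correct. Its first two stages, the decay of $D$ and the bound $\|\phi(t)-\phi_\infty\|_{L^2}\lesssim D(t)^\theta$, coincide with Step 1 of the paper.

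The upgrade step is where you genuinely depart from the paper.

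\textbf{The paper's route.} It introduces the functional $G(t)$ of \eqref{defn G}, built from $\|u\|_{L^2}^2$, $\|\phi-\phi_\infty\|_{H^1}^2$ and the remainders $R_*(\phi_\infty,\phi)$. It derives the damped inequality $G'+\frac{C_1}{2}G\lesssim K(t)^{2\theta}$, forced by the $L^2$ decay, and obtains $L^2\times H^1$ decay at rate $K(t/2)^{2\theta}$. It then couples $A'\le c_1A$ with the $G$-inequality, integrating by parts in time, to reach $A(t)\lesssim K(t/4)^{2\theta}$. It finishes with the integration-by-parts Hessian identity of Lemma \ref{est nabla 2 phi}, adapted to $\phi-\phi_\infty$.

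\textbf{Your route.} You bypass $G$ entirely. Averaging $A'\le CA$ over $[t-1,t]$ against the energy identity \eqref{energy} gives $A(t)\lesssim D(t-1)$ at once. The $H^1$ norm of $w$ is then recovered from the elliptic estimate plus interpolation, using $\|\Delta w\|_{L^2}\lesssim A^{1/2}+\|w\|_{L^2}$, rather than from a separate $H^1$ decay step.

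\textbf{What each buys.} Your argument is shorter. It is also sharper for the velocity: it gives $\|u(t)\|_{H^1}\lesssim D(t-1)^{1/2}$, i.e.\ $(1+t)^{-1/(2(1-2\theta))}$ in case (1), so the exponent $\theta$ enters only through $\|\phi-\phi_\infty\|_{L^2}$. The rate for $\phi-\phi_\infty$ in $H^2$ is the same in both approaches. The paper's route yields the intermediate $L^2\times H^1$ rate as a by-product and stays within energy-type identities. Your use of mixed Dirichlet--Neumann $H^2$ regularity is harmless here, since H and P are disjoint boundary components. As you note, you could equally replace it by redoing the identity \eqref{Hessian delta id} for $w$.
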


\begin{proof}[\bf Proof]
Throughout the proof, we assume $\cE(t) > E[\phi_{\infty}]$ for all $t \geq T_0$. We also assume $t \geq T_\rho$ such that \eqref{dist phi and phi infty} in Lemma \ref{uniq limit} holds. The proof is divided into 3 steps.\vspace{0.3pc}

\noindent\textbf{Step 1: $L^2$-estimate of $\phi$.}\\[2mm]  Same proof for \eqref{LS application} induces \begin{align*}
     \cE(t) - E[\phi_{\infty}]  \hspace{1pt}\lesssim\hspace{1pt} A^{\frac{1}{2 - 2 \theta}}  \hspace{20pt}\text{for all $t \in \big[\hspace{0.5pt}T_\rho, \infty\hspace{0.5pt}\big)$}.
\end{align*}By the above estimate and the differential version of the energy identity in Lemma \ref{basic energy est},
\begin{align*}
    0 = \frac{\mathrm{d}}{\mathrm{d}\hspace{0.3pt}t} \hspace{0.2pt} \Big\{ \cE(t) - E[\phi_{\infty}] \hspace{0.5pt}\Big\} + A(t) \hspace{0.5pt}\geq\hspace{0.5pt} \frac{\mathrm{d}}{\mathrm{d} \hspace{0.3pt}t} \hspace{0.2pt}\Big\{ \cE(t) - E[\phi_{\infty}] \hspace{0.2pt}\Big\} + \mu \hspace{0.3pt}\Big\{ \cE(t) - E[\phi_{\infty}] \hspace{0.2pt}\Big\}^{2-2\theta} \hspace{20pt}\text{for all $t \geq T_\rho$.}
\end{align*}Here, $\mu > 0$ is constant. It can be adjusted suitably small in the estimates below. Applying this ODE inequality yields, for all $t \geq 10 \hspace{0.5pt}T_\rho$, that
\begin{align}\label{defn of K}
    \cE(t) - E[\phi_{\infty}] \hspace{1.5pt}\lesssim\hspace{1.5pt}K(t), \hspace{15pt}\text{where} \hspace{5pt} K(t) := \left\{\begin{array}{lcl} (1 + t)^{- \frac{1}{1 - 2 \theta}} \hspace{15pt}&\text{if $0 < \theta < \frac{1}{2}$;}\\[2mm]
    \hspace{10pt}e^{- \mu \hspace{0.2pt}t}  &\hspace{{-19.5pt}}\text{if $\theta = \frac{1}{2}$.}\end{array}\right. 
\end{align}
For all $t \geq 10 \hspace{0.5pt}T_\rho$, the uniform boundedness in \eqref{H1H2bdd} infers $\|\hspace{0.5pt} u \cdot \nabla \phi \hspace{0.5pt}\|_{L^2} \lesssim  \|\hspace{0.5pt} \nabla u \hspace{0.5pt}\|_{L^2}$. We then have
\begin{align*}
    \big\|\hspace{0.5pt} \phi(t) - \phi_{\infty} \hspace{0.5pt}\big\|_{L^2} \hspace{1pt}\leq\hspace{1pt} \int_t^{\infty} \big\|\hspace{0.5pt} \partial_{s} \phi \hspace{0.5pt}\big\|_{L^2} \hspace{1pt}\lesssim\hspace{1pt} \int_t^{\infty} \big\|\hspace{0.5pt} \nabla u \hspace{0.5pt}\big\|_{L^2} + \big\|\hspace{0.5pt} \Delta \phi + \frac{h^2}{2} \sin 2 \phi \hspace{0.5pt}\big\|_{L^2} \hspace{20pt} \text{for all $t \geq 10\hspace{0.5pt}T_\rho$.} \end{align*}By the two estimates above and the similar derivations for \eqref{bd of Ahalf}, it turns out
    \begin{align}\label{L2 estimate with decay}
    \big\|\hspace{0.5pt} \phi(t) - \phi_{\infty} \hspace{0.5pt}\big\|^{\frac{1}{\theta}}_{L^2} \hspace{1pt}\lesssim\hspace{1pt}  \cE(t) - E[\phi_{\infty}]  \lesssim  K(t)  \hspace{20pt}\text{for all $t \geq 10\hspace{0.5pt}T_\rho$.}
\end{align}\\[-3mm]
\noindent\textbf{Step 2: Convergence of $(u, \phi)$ in $L^2 \times H^1$.}\\[2mm] In light of the equation \eqref{sEL} satisfied by $(u, \phi)$ and the equation in \eqref{SG} satisfied by $\phi_\infty$, it holds
\begin{align}\label{variant of sel}
    \left\{ \begin{aligned}
        &\partial_t\big(\phi - \phi_{\infty}\big) +  u \cdot \nabla \phi = \Delta \big(\phi - \phi_{\infty}\big) + \frac{h^2}{2} \big(\sin 2 \phi - \sin 2 \phi_{\infty}\big), \\[2mm]
        &\hspace{10pt}\partial_t u +  u \cdot \nabla  u - \Delta u = - \nabla p - \big(\Delta \phi + \frac{h^2}{2} \sin 2 \phi\big) \hspace{0.5pt}\nabla \phi.
    \end{aligned} \right.
\end{align}
Multiply $\partial_t \big(\phi - \phi_{\infty}\big)$ and take the inner product with $u$ on both sides of the first and second equations above, respectively. Through integration by parts, we get
\begin{align*}
    \frac{1}{2} \hspace{1pt}\frac{\mathrm{d}}{\mathrm{d} \hspace{0.3pt}t} \hspace{1pt} \left[\hspace{2pt}  \int_{\Omega}  |\hspace{0.5pt}u\hspace{0.5pt}|^2 +  \big|\hspace{0.5pt}\nabla \phi - \nabla \phi_{\infty} \hspace{0.5pt}\big|^2 + \frac{h^2}{2} R_*\big(\phi_\infty, \phi\big) + \frac{L_\text{H}}{2} \int_\text{H} R_*\big(\phi_\infty, \phi\big) \hspace{1.5pt} \right]
    = - A,
\end{align*} where \begin{align}\label{defn R} R_*\big(\phi_\infty, \phi\big) := \cos 2 \phi - \cos 2 \phi_{\infty} + 2\hspace{0.3pt}(\phi - \phi_{\infty})  \sin 2 \phi_{\infty}.\end{align} Moreover, we can also multiply $\phi - \phi_{\infty}$ on the both sides of the first equation in \eqref{variant of sel} and integrate by part. It then turns out \begin{align*}
    &\frac{1}{2} \frac{\mathrm{d}}{\mathrm{d}\hspace{0.3pt}t} \int_{\Omega} \big|\hspace{0.5pt}\phi - \phi_{\infty}\hspace{0.5pt}\big|^2 + \int_{\Omega} \big|\hspace{0.5pt}\nabla \phi - \nabla \phi_{\infty} \hspace{0.5pt}\big|^2 = - \int_{\Omega} 
 \big(\phi - \phi_{\infty}\big) \hspace{0.2pt} u \cdot \nabla \phi_\infty  \\[2mm]
    &\h{30pt}+ \frac{h^2}{2} \int_{\Omega} \big(\sin 2 \phi - \sin 2 \phi_{\infty}\big) \big(\phi - \phi_{\infty}\big) + \frac{L_\text{H}}{2} \int_{\text{H}} \big(\sin 2 \phi - \sin 2 \phi_{\infty}\big) \big(\phi - \phi_{\infty}\big).
\end{align*}Denote by $G(t)$ the function
\begin{align}\label{defn G}
    G(t) := \frac{1}{2} \int_{\Omega} |\hspace{0.5pt}u\hspace{0.5pt}|^2 + \big|\hspace{0.5pt}\phi - \phi_{\infty}\hspace{0.5pt}\big|^2 + \big|\hspace{0.5pt}\nabla  \phi - \nabla \phi_{\infty} \hspace{0.5pt}\big|^2 + \frac{h^2}{2} R_*\big(\phi_\infty, \phi\big) + \frac{L_\text{H}}{4} \int_\text{H} R_*\big(\phi_\infty, \phi\big).
\end{align}The above arguments then induce
\begin{align}\label{energy id of G} \frac{\mathrm{d}\hspace{0.3pt}G}{\mathrm{d}\hspace{0.3pt} t} + A  &+  \int_{\Omega}  \big|\hspace{0.5pt}\nabla \phi - \nabla \phi_{\infty} \hspace{0.5pt}\big|^2 = - \int_{\Omega} 
 \big(\phi - \phi_{\infty}\big) \hspace{0.2pt} u \cdot \nabla \phi_\infty \\[2mm]
    & + \frac{h^2}{2} \int_{\Omega} \big(\sin 2 \phi - \sin 2 \phi_{\infty}\big) \big(\phi - \phi_{\infty}\big) + \frac{L_\text{H}}{2} \int_{\text{H}} \big(\sin 2 \phi - \sin 2 \phi_{\infty}\big) \big(\phi - \phi_{\infty}\big).\nonumber
\end{align}By the fundamental theorem of calculus, \begin{align}\label{L2 integral on H}
    \int_\text{H} \big(\phi - \phi_\infty\big)^2 = - \int_\Omega \partial_3  \big(\phi - \phi_\infty\big)^2 = - 2 \int_\Omega \big(\phi - \phi_\infty\big) \big(\partial_3 \phi - \partial_3 \phi_\infty\big).
\end{align}The right-hand side of \eqref{energy id of G} can therefore be estimated as follows:
\begin{align*}
   \frac{\mathrm{d}\hspace{0.3pt}G}{\mathrm{d}\hspace{0.3pt} t} + A +  \int_{\Omega}  \big|\hspace{0.5pt}\nabla \phi - \nabla \phi_{\infty} \hspace{0.5pt}\big|^2 &\hspace{1pt}\lesssim\hspace{1pt}   \int_{\Omega} 
 |\hspace{0.5pt}u\hspace{0.5pt}| \hspace{1pt} \big|\phi - \phi_{\infty}\big|    +  \int_{\Omega}  \big(\phi - \phi_{\infty}\big)^2 +  \int_{\text{H}} \big(\phi - \phi_{\infty}\big)^2\\[2mm]
 &\hspace{1pt}= \hspace{1pt}\int_{\Omega} 
 |\hspace{0.5pt}u\hspace{0.5pt}| \hspace{1pt} \big|\phi - \phi_{\infty}\big|    +  \int_{\Omega}  \big(\phi - \phi_{\infty}\big)^2 - 2 \int_\Omega \big(\phi - \phi_\infty\big) \big(\partial_3 \phi - \partial_3 \phi_\infty\big).
\end{align*}It then turns out by Poincar\'{e} inequality \eqref{Poincare ineq} and Young's inequality that \begin{align}\label{energy ineq of G}
    \frac{\mathrm{d}\hspace{0.3pt}G}{\mathrm{d}\hspace{0.3pt} t} + \frac{1}{2} \left[ \hspace{1pt}A +  \int_{\Omega}  \big|\hspace{0.5pt}\nabla \phi - \nabla \phi_{\infty} \hspace{0.5pt}\big|^2 \hspace{1.5pt}\right] &\hspace{1pt}\lesssim\hspace{1pt}\int_{\Omega}  \big(\phi - \phi_{\infty}\big)^2.
\end{align}Keep applying  \eqref{Poincare ineq} and notice the definition of $G$ in \eqref{defn G}. We rewrite the above estimate by
\begin{align*}
    \frac{\mathrm{d} \hspace{0.3pt}G}{\mathrm{d}\hspace{0.3pt}t}  + C_1\hspace{0.2pt} G  \hspace{1pt}\leq\hspace{1pt} C_2 \int_\Omega \big( \phi - \phi_\infty\big)^2 + \frac{C_1}{4} \left[\hspace{1pt}h^2 \int_{\Omega} R_*\big(\phi_\infty, \phi\big) + L_\text{H} \int_{\text{H}} R_*\big(\phi_\infty, \phi\big)\hspace{1pt}\right].
\end{align*}Here, $C_1$ and $C_2$ are positive constants. Recall the definition of $R_*\big(\phi_\infty, \phi\big)$ in \eqref{defn R}. It holds
\begin{align}\label{control of R}
\Big|\hspace{1pt}R_*\big(\phi_\infty, \phi\big)\hspace{1pt}\Big| \hspace{1.5pt}\lesssim\hspace{1.5pt} \big( \phi - \phi_{\infty} \big)^2.
\end{align} The last two estimates and \eqref{L2 integral on H} then yield
\begin{align*}
\frac{\mathrm{d} \hspace{0.3pt}G}{\mathrm{d}\hspace{0.3pt}t}  + C_1\hspace{0.2pt} G  \hspace{1.5pt}\lesssim\hspace{1.5pt}  \int_\Omega \big( \phi - \phi_\infty\big)^2 +  \int_{\text{H}} \big( \phi - \phi_\infty\big)^2 = \int_\Omega \big( \phi - \phi_\infty\big)^2 - 2 \int_\Omega \big(\phi - \phi_\infty\big) \big(\partial_3 \phi - \partial_3 \phi_\infty\big).
\end{align*}Using Young's inequality and the decay estimate in \eqref{L2 estimate with decay}, we get 
\begin{align}\label{conv control of G}
 \frac{\mathrm{d} \hspace{0.3pt}G}{\mathrm{d}\hspace{0.3pt}t}  + \frac{C_1}{2}\hspace{0.2pt} G  \hspace{1.5pt}\lesssim\hspace{1.5pt}\big( K(t) \big)^{2 \theta} \hspace{20pt}\text{for all $t \geq 10\hspace{0.3pt}T_\rho$,}\end{align}which further implies \begin{align*}
    G(t)  &\hspace{1.5pt}\leq \hspace{1.5pt} G(10\hspace{0.3pt}T_\rho) \hspace{1pt}\exp\Big\{5\hspace{0.5pt}C_1 T_\rho - \frac{C_1}{2} t\hspace{0.5pt}\Big\} + c_* \int_{10\hspace{0.3pt}T_\rho}^t \big(K(s)\big)^{2\theta} e^{- \frac{C_1 ( t - s)}{2}} \hspace{1.5pt}\mathrm{d} s \nonumber\\[2mm]    &\hspace{1.5pt}\lesssim\hspace{1.5pt} e^{ - \frac{C_1}{2} t} + \int_{10\hspace{0.3pt}T_\rho}^{\frac{t +  10 \hspace{0.3pt}T_{\rho}}{2}} \big(K(s)\big)^{2\theta} e^{- \frac{C_1 ( t - s)}{2}} \hspace{1.5pt}\mathrm{d} s + \int_{\frac{t + 10\hspace{0.3pt}T_\rho}{2}}^t \big(K(s)\big)^{2\theta} e^{- \frac{C_1 ( t - s)}{2}} \hspace{1.5pt}\mathrm{d} s \nonumber\\[2mm]&\hspace{1.5pt}\lesssim\hspace{1.5pt}e^{ - \frac{C_1}{4} t} + \left[ \hspace{1pt}K\left(\frac{t + 10\hspace{0.3pt}T_\rho}{2}\right)\hspace{1pt}\right]^{2\theta}\hspace{30pt}\text{for all $t \geq 10\hspace{0.3pt}T_\rho$.}\end{align*}If $0 < \theta < \frac{1}{2}$, it follows from the last estimate that 
\begin{align}\label{decay est of G}
    G(t) \hspace{1.5pt}\lesssim\hspace{1.5pt} \big( K( t/2) \big)^{2\theta} \hspace{20pt}\text{for all $t \geq 10\hspace{0.3pt}T_\rho$}.
\end{align}If $\theta = \frac{1}{2}$, we can choose the parameter $\mu$ in $K(t)$ (see \eqref{defn of K}) suitably small such that \eqref{decay est of G} still holds.  The smallness of $\mu$ depends on $C_1$. \eqref{defn G} and \eqref{decay est of G} then induce
\begin{align*}
 \int_{\Omega} |\hspace{0.5pt}u\hspace{0.5pt}|^2  + \big|\hspace{0.5pt}\nabla  \phi - \nabla \phi_{\infty} \hspace{0.5pt}\big|^2 \hspace{1.5pt}\lesssim\hspace{1.5pt}  \big( K(t/2) \big)^{2\theta} + \int_\Omega \big(\phi - \phi_\infty\big)^2 +  \int_\text{H} \big(\phi - \phi_\infty\big)^2 \hspace{20pt}\text{for all $t \geq 10\hspace{0.3pt}T_\rho$.}
\end{align*}Here we also use \eqref{control of R}. Now we apply \eqref{L2 integral on H} and \eqref{L2 estimate with decay} to estimate the right-hand side above. It turns out \begin{align*}
     \int_{\Omega} |\hspace{0.5pt}u\hspace{0.5pt}|^2  + \big|\hspace{0.5pt}\nabla  \phi - \nabla \phi_{\infty} \hspace{0.5pt}\big|^2 \hspace{1.5pt}\lesssim\hspace{1.5pt}  \big( K(t/2) \big)^{2\theta}  - 2 \int_\Omega \big(\phi - \phi_\infty\big) \big(\partial_3 \phi - \partial_3 \phi_\infty\big) \hspace{20pt}\text{for all $t \geq 10\hspace{0.3pt}T_\rho$.}
\end{align*}Using Young's inequality and \eqref{L2 estimate with decay} again, we reduce the last estimate to \begin{align}\label{H1L2 estimate}
\int_{\Omega} |\hspace{0.5pt}u\hspace{0.5pt}|^2  + \big|\hspace{0.5pt}\nabla  \phi - \nabla \phi_{\infty} \hspace{0.5pt}\big|^2 \hspace{1.5pt}\lesssim\hspace{1.5pt}  \big( K(t/2) \big)^{2\theta} \hspace{20pt}\text{for all $t \geq 10\hspace{0.3pt}T_\rho$.}
\end{align}\vspace{0.2pc}

\noindent\textbf{Step 3: Convergence of $(u, \phi)$ in $H^1 \times H^2$.}\\[2mm]
Recall Lemma \ref{high ord est} and the uniform boundedness in \eqref{H1H2bdd}. It holds
\begin{align}\label{linear ineq of A}
    \frac{\mathrm{d} \hspace{0.2pt}A}{\mathrm{d}
    \hspace{0.7pt}t}  \hspace{1.5pt}\leq\hspace{1.5pt}c_1 A \h{15pt}\text{for all $t \geq 10 T_\rho$.}
\end{align}Here, $c_1 > 0$ is a constant. According to \eqref{energy ineq of G} and \eqref{L2 estimate with decay}, we also have \begin{align*}
    \frac{\mathrm{d}\hspace{0.3pt}G}{\mathrm{d}\hspace{0.3pt} t} + \frac{1}{2} \left[ \hspace{1pt}A +  \int_{\Omega}  \big|\hspace{0.5pt}\nabla \phi - \nabla \phi_{\infty} \hspace{0.5pt}\big|^2 \hspace{1.5pt}\right] &\hspace{1pt}\leq\hspace{1pt}c_2\hspace{0.1pt} \big(K(t)\big)^{2\theta} \hspace{20pt}\text{for some constant $c_2 > 0$ and all $t \geq 10 \hspace{0.3pt}T_\rho$.}
\end{align*}Multiplying the both sides of the above estimate by $4c_1$ and then adding \eqref{linear ineq of A}, we obtain \begin{align*}
 \frac{\mathrm{d} \hspace{0.2pt}A}{\mathrm{d}
    \hspace{0.7pt}t}+  c_1 \hspace{.1pt}A &\hspace{1pt}\leq\hspace{1pt}- 4\hspace{0.2pt}c_1\frac{\mathrm{d}\hspace{0.3pt}G}{\mathrm{d}\hspace{0.3pt} t} + 4\hspace{0.2pt}c_1c_2\hspace{0.1pt} \big(K(t)\big)^{2\theta} \hspace{20pt}\text{for all $t \geq 10 \hspace{0.3pt}T_\rho$.}
\end{align*}By solving this ODE inequality, it turns out,  for all $t \geq 10 \hspace{0.3pt}T_\rho$, that \begin{align*}
    &e^{c_1 t} A(t)  \hspace{1.5pt}\leq \hspace{1.5pt} e^{10 \hspace{0.2pt}c_1 T_\rho} A(10 \hspace{0.3pt}T_\rho) - 4\hspace{0.2pt}c_1 \int_{10 T_\rho}^t e^{c_1 s}\hspace{0.5pt} \frac{\mathrm d G}{\mathrm d s} \hspace{2pt}\mathrm{d} s + 4\hspace{0.2pt}c_1c_2\hspace{0.1pt}\int_{10 T_\rho}^t e^{c_1 s}\hspace{0.3pt} \big(K(s)\big)^{2\theta} \hspace{2pt}\mathrm{d} s\\[2mm]
    &\hspace{10pt}=\hspace{1.5pt}
    e^{10 \hspace{0.2pt}c_1 T_\rho} 
 \Big[ \hspace{1pt}A(10 \hspace{0.3pt}T_\rho) + 4 \hspace{0.2pt}c_1 G(10 T_\rho) \hspace{1pt}\Big] - 4\hspace{0.2pt}c_1  e^{c_1t} G(t)  + 4 \hspace{0.2pt}c_1 
    \int_{10 T_\rho}^t e^{c_1 s}\hspace{0.3pt} \Big[ \hspace{1pt}c_1G(s) + c_2 \big(K(s)\big)^{2\theta} \hspace{1pt}\Big] \hspace{2pt}\mathrm{d} s.
\end{align*}Using the decay estimate of $G(t)$ in \eqref{decay est of G}, the definition of $G$ in \eqref{defn G} and the estimate in \eqref{control of R}, we keep estimating the right-hand side above and get \begin{align*}
    e^{c_1 t} A(t) &\hspace{1.5pt}\lesssim \hspace{1.5pt} 1 + e^{c_1 t} \left[ \hspace{1pt} \int_\Omega \big(\phi - \phi_\infty\big)^2 + \int_\text{H} \big(\phi - \phi_\infty\big)^2 \hspace{1pt}\right] +  \int_{10 T_\rho}^t e^{c_1 s}\hspace{0.3pt}  \big(K(s/2)\big)^{2\theta} \hspace{2pt}\mathrm{d} s.
\end{align*}By \eqref{L2 estimate with decay}, \eqref{H1L2 estimate} and trace theorem, it turns out from the last estimate that \begin{align*}
    A(t) &\hspace{1.5pt}\lesssim \hspace{1.5pt} e^{- c_1 t} + \big( K(t/2)\big)^{2\theta} +  \int_{10 T_\rho}^t e^{- c_1 ( t -  s)}\hspace{0.3pt}  \big(K(s/2)\big)^{2\theta} \hspace{2pt}\mathrm{d} s.
\end{align*}Similar arguments for deriving \eqref{decay est of G} can be applied to obtain \begin{align}\label{decay est of A}A(t) \hspace{1.5pt}\lesssim\hspace{1.5pt} \big( K(t/4) \big)^{2 \theta} \hspace{20pt}\text{for all $t \geq 10\hspace{0.3pt}T_\rho$}.
\end{align}If $\theta = \frac{1}{2}$, we also choose $\mu$ in $K(t)$ suitably small. The smallness of $\mu$ depends on $c_1$.\vspace{0.2pc}

It remains to study the $L^2$-norm of the second-order derivatives of $\phi - \phi_\infty$. Applying integration by parts, we can use the same arguments as in the proof of Lemma \ref{est nabla 2 phi} to obtain \begin{align*}
    \int_\Omega \big|\hspace{1pt}\nabla^2 \phi - \nabla^2 \phi_\infty\hspace{1pt}\big|^2 &\hspace{1pt}=\hspace{1pt} 2 L_\text{H} \int_\text{H} \nabla' \big(\phi - \phi_\infty\big) \cdot  \big( \cos 2 \phi \hspace{1.5pt}\nabla' \phi - \cos 2 \phi_\infty \hspace{1pt}\nabla' \phi_\infty\big) + \int_\Omega \big(\Delta \phi - \Delta \phi_\infty\big)^2\\[2mm]
    &\hspace{1pt}\leq\hspace{1pt} \int_\Omega \big( \Delta \phi - \Delta \phi_\infty\big)^2 + c_* \int_\text{H} \big|\hspace{1pt}\phi - \phi_\infty\hspace{1pt}\big|^2 + \big|\hspace{1pt}\nabla' \phi - \nabla' \phi_\infty \hspace{1pt}\big|^2.
\end{align*}Note that \begin{align*}
    \int_\text{H} \big|\hspace{1pt} \nabla' \phi - \nabla' \phi_\infty \big|^2  = - 2 \int_\Omega \nabla' \big(\phi - \phi_\infty\big) \cdot \partial_3 \nabla' \big(\phi - \phi_\infty\big).
\end{align*}It then turns out \begin{align*}
 \frac{1}{2}\hspace{0.5pt}\int_\Omega \big|\hspace{1pt}\nabla^2 \phi - \nabla^2 \phi_\infty\hspace{1pt}\big|^2
    \hspace{1pt}\leq\hspace{1pt} \int_\Omega \big( \Delta \phi - \Delta \phi_\infty\big)^2 + c_* \int_\text{H} \big|\hspace{1pt}\phi - \phi_\infty\hspace{1pt}\big|^2 + c_* \int_\Omega  \big|\hspace{1pt}\nabla \phi - \nabla \phi_\infty \hspace{1pt}\big|^2.
\end{align*}Still use \eqref{L2 estimate with decay}, \eqref{H1L2 estimate} and trace theorem, it follows from the last estimate that \begin{align}\label{H2 est of phi 1}
    \int_\Omega \big|\hspace{1pt}\nabla^2 \phi - \nabla^2 \phi_\infty\hspace{1pt}\big|^2
    \hspace{1pt}\lesssim\hspace{1pt} \int_\Omega \big( \Delta \phi - \Delta \phi_\infty\big)^2 + \big( K(t/2)\big)^{2\theta} \hspace{20pt}\text{for all $t \geq 10 \hspace{0.3pt} T_\rho$.}
    \end{align} The $L^2$-norm of $\Delta \phi - \Delta \phi_\infty$ can be estimated by \eqref{decay est of A}. In fact, \begin{align*}
        \int_\Omega \big( \Delta \phi - \Delta \phi_\infty\big)^2 = \int_\Omega \big( \Delta \phi + \frac{h^2}{2} \sin 2 \phi + \frac{h^2}{2} \big( \sin 2 \phi_\infty - \sin 2 \phi\big)\big)^2 \hspace{1pt}\lesssim\hspace{1pt} A(t) + \int_\Omega \big(\phi - \phi_\infty\big)^2.
    \end{align*}Applying \eqref{decay est of A} and \eqref{L2 estimate with decay} to the right-hand side above yields \begin{align*}
        \int_\Omega \big( \Delta \phi - \Delta \phi_\infty\big)^2 \hspace{1pt}\lesssim\hspace{1pt}\big(K(t/4)\big)^{2 \theta} \hspace{20pt}\text{for all $t \geq 10\hspace{0.3pt}T_\rho$}.
    \end{align*}This estimate together with \eqref{H2 est of phi 1} induces the decay estimate of the $L^2$-norm of $\nabla^2 \phi - \nabla^2 \phi_\infty$.
\end{proof}

\section{Partial regularity of the suitable weak solutions}

In this section, we establish the regularity results for the global suitable weak solution $(u, \phi)$ of the IBVP under the small dissipation energy condition. Moreover, $u$ is shown converging to $0$ in $L^\infty(\Omega)$ as $t \to \infty$. The main results are read as follows:

\begin{thm} \label{partial regularity}
Suppose $(u, \phi)$ is a global suitable weak solution of the $\mathrm{IBVP}$. For any $\epsilon > 0$, there exists a sufficiently large time $T_{\epsilon}$, which depends on $\epsilon$ and the solution $(u, \phi)$, such that 
    \begin{equation} \label{smallness}
        \int_{T_{\epsilon}}^\infty \int_{\Omega} \big|\hspace{.5pt} \nabla u \hspace{.5pt}\big|^2 + \Big|\hspace{0.5pt}\Delta \phi + \frac{h^2}{2} \sin 2 \phi \hspace{0.5pt}\Big|^2 \hspace{1.5pt}\leq\hspace{1.5pt} \epsilon.
    \end{equation} 
If $\epsilon$ is sufficiently small, then $(u, \phi)$ is regular on $\overline{\Omega} \times \big[\h{1pt} T_{\epsilon} + 10, \infty \big)$. In addition,
    \begin{equation} \label{decay of sup u}
    \lim_{t \h{0.5pt}\to \h{0.5pt} \infty} \big\|\h{0.5pt} u(t) \h{0.5pt}\big\|_{L^{\infty}(\Omega)} = 0.
\end{equation}
\end{thm}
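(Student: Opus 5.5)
The first claim \eqref{smallness} follows directly from the integrability condition \eqref{infty222} in Definition \ref{suitable wk sol}. The dissipation integral over $(0,\infty)$ is finite, so its tail over $(T_\epsilon,\infty)$ tends to $0$ as $T_\epsilon\to\infty$. The real content is regularity on $\overline\Omega\times[T_\epsilon+10,\infty)$. I would prove it through an $\varepsilon$-regularity theorem in the spirit of Caffarelli-Kohn-Nirenberg, adapted to parabolic cylinders centred at interior points, at points of P, and at points of H.

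\textbf{Step 1: scale-invariant quantities.} For $z_0=(x_0,t_0)$ and the (half-)cylinder $Q_r(z_0)$ intersected with $\Omega\times(0,\infty)$, I would work with
\begin{align*}
\frac{1}{r}\int_{Q_r}\bigl(|\nabla u|^2+|\nabla^2\phi|^2\bigr),\qquad \frac{1}{r^2}\int_{Q_r}\bigl(|u|^3+|\nabla\phi|^3\bigr),\qquad \frac{1}{r^2}\int_{Q_r}|p-\bar p_r|^{3/2}.
\end{align*}
Smallness of the first quantity on $(T_\epsilon,\infty)$ is obtained as follows. Lemma \ref{est nabla 2 phi} controls $\nabla^2\phi$ by $\Delta\phi$ and $\nabla\phi$. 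Combined with \eqref{smallness} and the uniform bound of $\nabla\phi$ in $L^2$, this gives smallness of the dissipation on unit time intervals, up to a lower-order term of size $r^2$ that disappears under scaling. By interpolation, the cubic quantities are then also small at scale $r=1$ for every base point with $t_0\ge T_\epsilon+1$.

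\textbf{Step 2: decay lemma via blow-up.} The key claim is that if the combined scaled quantity is below some $\varepsilon_0$ at scale $r$, then at scale $\tau r$ it is at most $\tfrac12$ of its value at scale $r$ (plus a higher power of $r$ coming from the lower-order terms $h^2\sin2\phi$ and the boundary terms). I would argue by contradiction. Normalize a sequence of counterexamples by the square root of the scaled energy and use the generalized energy inequality \eqref{energy ineq} to obtain compactness. The limit then solves a linear Stokes/heat system: no-slip and Dirichlet on P, and, near H, no-slip for $u$ together with a linearized Robin/Neumann condition $\partial_\nu\phi=L_{\mathrm H}\cos2\phi_0\,\phi$. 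After rescaling, the factor $L_{\mathrm H}r$ tends to $0$, so the condition becomes Neumann. Linear boundary estimates, using Seregin's flat-boundary Stokes theory and Lemma \ref{Stokes est}-type bounds, give the decay for the limit, which is the contradiction.

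I expect the main obstacle to be the boundary integrals on H in \eqref{energy ineq}. These include $L_{\mathrm H}\int_{\mathrm H\times\{T\}}\varphi\sin^2\phi$, the term $\partial_t\varphi\sin^2\phi$, and the tangential gradient terms in $R(\phi,\varphi)$, and they do not scale favourably. Here I would exploit the null structure noted in the introduction: replace $\sin^2\phi$ by $\sin^2\phi-C$ with $C$ the average of $\sin^2\phi$ over the boundary cylinder. The resulting terms are then controlled, via the trace inequality and Poincaré's inequality, by $\nabla\phi$ on the cylinder, and hence become small after normalization. The terms $\int_{\mathrm H}\varphi|\nabla'\phi|^2$ I would bound through the fundamental-theorem-of-calculus trick of \eqref{L2 integral on H}, by $\int|\nabla\phi||\nabla^2\phi|$ in the interior. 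The pressure I would handle by the usual local Stokes decomposition near a flat boundary.

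\textbf{Step 3: iteration and conclusion.} Iterating the decay lemma gives Morrey/Campanato bounds for $u$ and $\nabla\phi$, and hence Hölder continuity uniformly for $t\ge T_\epsilon+10$. Bootstrapping with Lemma \ref{Stokes est}, Lemma \ref{est nabla 2 phi} and parabolic Schauder theory then gives full regularity up to $\overline\Omega$. Once regularity holds, $(u,\phi)$ is a classical solution on $[T_\epsilon+10,\infty)$, so Lemma \ref{limit and unif bound} yields $\|u\|_{H^1}\to0$ and a uniform $H^2$ bound. The uniform Hölder/$C^{1}$ bounds from the iteration, interpolated with $\|u\|_{L^2}\to0$, give \eqref{decay of sup u}. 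Alternatively, a uniform higher-norm bound obtained from the $\varepsilon$-regularity estimates can be interpolated with the decaying $H^1$ norm.
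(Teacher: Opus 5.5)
The gap is in Step~1, and Step~1 is what feeds your decay lemma.

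Interpolation does make $\int_{Q_1}|u|^3$ small, since $\|u\|_{L^2}$ is bounded and $\int\|\nabla u\|_{L^2}^2$ is small. It does not make $\int_{Q_1}|\nabla\phi|^3$ small: $\|\nabla\phi(t)\|_{L^2}$ is only bounded. Nothing makes it small; in the HAN regime $d>d_c$ it tends to $\|\nabla\phi_*\|_{L^2}>0$. For the same reason $\int_{Q_1}|\nabla^2\phi|^2$ stays of order one at unit scale, and so does the pressure oscillation (at equilibrium $p=-\tfrac12|\nabla\phi_*|^2-\tfrac{h^2}{4}\cos 2\phi_*+\mathrm{const}$).

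As your own remark on the lower-order term indicates, \eqref{smallness} and Lemma~\ref{est nabla 2 phi} make $r^{-1}\int_{Q_r}(|\nabla u|^2+|\nabla^2\phi|^2)$ small only for $r\lesssim\epsilon$; this is Lemma~\ref{dis small}. At those scales interpolation (Lemma~\ref{inequality of C}) bounds $r^{-2}\int_{Q_r}|\nabla\phi|^3$ only through $\mathrm{A}(r)=\sup_t r^{-1}\int_{B_r^\pm}(|u|^2+|\nabla\phi|^2)$, which is a priori of size $r^{-1}$. So the smallness hypothesis of your $\varepsilon$-regularity lemma is never verified.

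What is missing is the Caffarelli--Kohn--Nirenberg passage from small scaled dissipation to small $L^3$ quantities, carried out in Section~\ref{dis imply L3}. It consists of (i) the localized energy bound $\mathrm{A}(\rho/2)+\mathrm{B}(\rho/2)\lesssim\rho+\mathrm{C}^{2/3}+\mathrm{D}^{4/3}+\mathrm{A}^{1/2}\mathrm{B}^{1/2}\mathrm{C}^{1/3}$ (Lemma~\ref{loc est}), (ii) the pressure decay estimate (Lemma~\ref{inequality of D1}), (iii) an iteration down from a scale where $\mathrm{A}$ and $\mathrm{D}_1$ are merely bounded (Lemma~\ref{smallness of A C D}), and (iv) smallness of $\rho^{-5}\int|\phi-(\phi)|^3$ (Corollary~\ref{smallness of extra term}). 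This uses an ingredient absent from your plan: the bound $\|\phi\|_{L^\infty}\le M_{\phi_0}$ of Proposition~\ref{bounded of phi}, obtained from the maximum principle for mollified-drift approximations. With it, the cubic coupling term $\int(u\cdot\nabla\phi)\,\nabla\phi\cdot\nabla\varphi^2$ is integrated by parts onto $\phi$ and enters only at the level $\mathrm{C}^{2/3}$, so the iteration stays linear in $\mathrm{A}$.

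Three smaller points also need correcting.

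(i) On $\mathrm H$ the rescaled condition is $\partial_3\phi_i=-\tfrac{L_{\mathrm H}r_i}{2\epsilon_i}\sin 2(\epsilon_i\phi_i+c_i)$, with $c_i$ the subtracted mean. Its $\phi_i$-dependent part is $O(r_i|\phi_i|)$, but its zeroth-order part $\tfrac{L_{\mathrm H}r_i}{2\epsilon_i}\sin 2c_i$ has size $r_i/\epsilon_i$, not $r_i$. The limit is homogeneous Neumann only if the additive power of $r$ in your decay lemma forces $r_i/\epsilon_i\to0$. With the paper's choice one has only $r_i/\epsilon_i\le 2\theta_0^{1/3}$ by \eqref{est of r_i}. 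The limit then carries the constant datum $\mu_*L_{\mathrm H}\theta_0^{1/3}$, which the paper removes by subtracting a linear function of $x_3$ before the even reflection.

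(ii) In the null-structure step, $C$ must be constant in time for the identity to hold. Hence the time-slice term $\int_{\mathrm H\times\{T\}}\varphi(\sin^2\phi-C)$ is not controlled by $\nabla\phi$ and Poincar\'e alone; you also need time control of $\phi$ from its equation. The paper gets this by including $\rho^{-5}\int|\phi-(\phi)|^3$ in the scaled quantity and testing the $\phi$-equation.

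(iii) A contraction by the factor $\tfrac12$ gives only $\rho^{-2}\int_{Q_\rho}(|u|^3+|\nabla\phi|^3)\lesssim\rho^{\alpha}$ with small $\alpha>0$. Boundedness needs $\rho^{-5}\int_{Q_\rho}|u|^3\lesssim1$, so this is a Morrey bound, not a Campanato/H\"older one. An additional bootstrap through the linear Stokes and heat estimates, including the boundary condition on $\mathrm H$, is needed before the Schauder step in your Step~3 can start.
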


The remainder of this section is devoted to proving Theorem \ref{partial regularity}. The regularity at interior points has been investigated by Lin-Liu in \cite{LinLiu1996}. To be simple, we only consider the boundary case. The method we present here is based on a blow-up argument, which is motivated by that of Lin in \cite{Lin1998} and Seregin in \cite{Seregin2002} for the pure Navier-Stokes equations. The readers may also refer to Du-Hu-Wang \cite{DuHuWang2019} for the applications in the Beris-Edwards system. 

Before proceeding, we sketch the arguments in the following four sections. In Section \ref{unf of phi}, we prove a maximum principle and study the $L^\infty$-estimate of $\phi$ on $\overline{\Omega} \times [0,\infty)$. Sections \ref{dis imply L3} and \ref{L infty of u phi} are devoted to showing the $L^\infty$-estimate of $(u, \nabla \phi)$ near the boundary $\mathrm{H} \cup \mathrm{P}$  after a long time. With this boundedness result, in Section \ref{hoder of u}, we obtain the H\"{o}lder regularity of $u$ after a long time and verify the asymptotic limit \eqref{decay of sup u}. Throughout the following, the parabolic cylinder is denoted by
$$P_r(x, t) := \big(B_r(x) \cap  \Omega  \hspace{1pt} \big) \times (t - r^2, t).$$

\subsection{Maximum Principle and \texorpdfstring{$L^{\infty}$}{TEXT} -Estimates of \texorpdfstring{$\phi$}{TEXT}}\label{unf of phi}  
We study the advection-diffusion equation:
\begin{align}\label{sine-gordon with drft}
    \partial_t \phi - \Delta \phi + u \cdot \nabla \phi = h^2 \sin \phi \cos \phi \h{20pt}\text{on $\overline{\Omega} \times [\h{0.5pt}T_0, T\h{0.5pt}]$,}
\end{align}

\noindent where $T_0 \in [\h{0.5pt}0, T\h{0.5pt})$. $u$ is a divergence-free drift. The first result is about the maximum principle for the classic solutions to this equation subjecting to the boundary conditions in \eqref{bc}.
\begin{lem}\label{max principle}
Assume that $u \in C^\infty\big(\h{1pt}\overline{\Omega} \times [\h{0.5pt}T_0, T\h{0.5pt}]\h{1pt}\big)$ and $\phi$ is a smooth solution of \eqref{sine-gordon with drft} subjecting to the boundary condition \eqref{bc}. 
\begin{itemize}
    \item[$\mathrm{(1).}$] If $\phi \geq m_1 \pi$ at $T_0$ for some $m_1 \in \mathbb Z$, then $\phi \geq m_1 \pi$ at all $t \in [\h{0.5pt}T_0, T\h{0.5pt}]$. \vspace{0.3pc}

    \item[$\mathrm{(2).}$] If $\phi \leq m_2 \pi$ at $T_0$ for some $m_2 \in \mathbb Z$, then $\phi \leq m_2\pi$ at all $t \in [\h{0.5pt}T_0, T\h{0.5pt}]$. \vspace{0.3pc}

    \item[$\mathrm{(3).}$] If $0 \leq \phi \leq \pi$ at $T_0$ and $\phi(\cdot, T_0) \not\equiv 0$, then $0 < \phi < \pi$ on $\Omega \times (\h{0.5pt}T_0, T\h{0.5pt})$.
\end{itemize}
    
\end{lem}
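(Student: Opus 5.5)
We plan to prove (1) and (2) together by a comparison argument, and then get (3) from a strong maximum principle plus Hopf's lemma. Set $w := \phi - m_1\pi$ for (1). Since $\sin 2(w + m_1\pi) = \sin 2w$, $w$ solves
\[
\partial_t w - \Delta w + u\cdot\nabla w = c(x,t)\, w, \qquad c := h^2\,\frac{\sin 2w}{2w}.
\]
The boundary conditions become
\[
\partial_\nu w = L_{\mathrm H}\, b(x,t)\, w \ \text{ on H}, \qquad b := \frac{\sin 2w}{2w}, \qquad w = -m_1\pi \ \text{ on P}.
\]
Here $c$ and $b$ are bounded smooth functions, with $|c|\le h^2$ and $|b|\le 1$, because $\phi$ is smooth on the compact set $\overline\Omega\times[T_0,T]$. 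On P the value $-m_1\pi$ is non-negative exactly when $m_1 \le 0$. For $m_1>0$ the hypothesis $\phi\ge m_1\pi$ at $T_0$ is incompatible with $\phi=0$ on P, so that case is vacuous. Hence in every relevant case $w\ge 0$ on P.

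The main step is to show $w\ge 0$. We use the exponential shift $\tilde w := e^{-\lambda t} w$ with $\lambda > h^2$, so that $\partial_t \tilde w - \Delta \tilde w + u\cdot\nabla\tilde w + (\lambda - c)\tilde w = 0$ with $\lambda - c>0$. The boundary term has the wrong sign for a direct argument, since $\partial_\nu$ is the outer normal and $L_{\mathrm H} b$ may be positive. To handle it we first multiply by a positive weight $\psi(x_3) = 1 + \beta x_3(2d - x_3)/d^2$, or more simply $e^{\beta x_3}$, chosen so that the outer normal derivative of $\psi$ on H, namely $-\psi'(0)$, dominates. For $v := w/\psi$ the boundary condition on H reads $\partial_\nu v = (L_{\mathrm H} b + \psi'(0)/\psi(0))\, v$ with $x_3 = 0$. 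Taking $\psi'(0) = -\beta$ with $\beta > L_{\mathrm H}$ (for example $\psi = e^{-\beta x_3}$) makes this coefficient negative. The interior equation for $v$ gains bounded first- and zeroth-order terms, which the factor $e^{-\lambda t}$ absorbs once $\lambda$ is large.

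Now suppose $\tilde v := e^{-\lambda t} v$ attains a negative minimum over $\overline\Omega\times[T_0,t_1]$. The minimum cannot occur at $t = T_0$ or on P, where $\tilde v \ge 0$. At an interior or final-time point we have $\partial_t\tilde v\le 0$, $\nabla\tilde v=0$ and $\Delta\tilde v\ge0$, while the zeroth-order term is strictly positive times a negative value; this is a contradiction. At a point of H we have $\partial_\nu\tilde v\le 0$ at a minimum, but the boundary condition gives $\partial_\nu\tilde v = (\text{negative coefficient})\cdot\tilde v > 0$, again a contradiction. Hence $w\ge0$, which proves (1). The periodic directions create no boundary, so there is nothing further to check.

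Part (2) is proved in the same way with $w := m_2\pi - \phi$. It satisfies the same linear structure, since $\sin$ is odd and $\pi$-periodic in $2\phi$, and $m_2\ge0$ makes $w\ge 0$ on P.

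For (3), parts (1) and (2) give $0\le\phi\le\pi$. The function $\phi\ge0$ solves a linear parabolic equation with bounded coefficient $c$. By the parabolic strong maximum principle, if $\phi(x_0,t_0)=0$ at an interior point, then $\phi\equiv0$ on $\overline\Omega\times[T_0,t_0]$, which contradicts $\phi(\cdot,T_0)\not\equiv0$. The upper bound is handled the same way with $\pi-\phi$. We need only consider $\Omega\times(T_0,T)$, so boundary points are not required. The only delicate point of the whole argument is the sign of the Robin coefficient on H, and the weight $\psi$ removes it.
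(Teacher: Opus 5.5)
Your proposal is correct and follows essentially the paper's route: the paper multiplies $\gamma=\phi-m_1\pi$ (or $m_2\pi-\phi$) by $e^{L_{\mathrm H}x_3-(M_1+L_{\mathrm H}^2)t}$, which is your weight with $\beta=L_{\mathrm H}$, using the strict inequality $\tfrac12\sin 2\gamma-\gamma>0$ for $\gamma<0$ in place of your $\beta>L_{\mathrm H}$; it then runs the same negative-minimum and boundary-sign contradiction, and uses the strong maximum principle for (3). Two small corrections: only the choice $\psi=e^{-\beta x_3}$ works, since your first suggestions $1+\beta x_3(2d-x_3)/d^2$ and $e^{\beta x_3}$ have $\psi'(0)>0$ and push the Robin coefficient the wrong way; and for the upper bound in (3), the contradiction to $\phi\equiv\pi$ must come from $\phi=0$ on P, not from $\phi(\cdot,T_0)\not\equiv 0$.
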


\begin{proof}[\bf Proof] Notice that $\phi - m_1 \pi$ and $m_2\pi - \phi$ satisfies the same transported sine-Gordon equation in \eqref{sine-gordon with drft} and the boundary condition on H in \eqref{bc}. We change the variable by letting  \begin{align}\label{defn of gamma} \text{$\psi :=  \gamma \hspace{1.5pt}e^{L_\text{H} x_3 - \left( M_1 + L_\text{H}^2 \right) \hspace{1pt}t}$, where \text{$\gamma$ denotes either  $\phi - m_1 \pi$ or $m_2\pi - \phi$}.} \end{align} It then turns out \begin{align}\label{revision of eq bc} \left\{ \begin{array}{lcl}
\partial_t \psi +  v \cdot \nabla \psi \hspace{1.5pt}=\hspace{1.5pt} \Delta \psi + B \hspace{0.8pt} \psi \hspace{100pt}&\text{in $\Omega$,}\\[2mm]
\hspace{54.5pt}\psi \geq 0 &\text{on P,}\\[2mm]
\hspace{15.5pt}-  L_\text{H}^{-1} \hspace{1pt}\partial_3 \psi = e^{- \left(M_1 + L_\text{H}^2\right) \hspace{1pt}t} \hspace{1pt}\Big(\frac{1}{2} \sin 2 \gamma - \gamma \Big)   &\h{1pt}\text{on H.}
\end{array}
\right.
\end{align}In the above, $v := u + 2 \hspace{0.5pt} L_\text{H} \hspace{0.5pt} e_3$ with $e_3$ the unit positive direction along the $x_3$-variable. The coefficient $B$ is given by \begin{align}\label{defn of B}
 B = B(\phi, u_3) :=  L_\text{H} \hspace{0.5pt}u_3  - M_1 + \dfrac{h^2 \sin 2 \hspace{0.5pt}\gamma}{2 \hspace{0.5pt} \gamma}. 
\end{align}It satisfies $B \leq -1$ in $\Omega$ if the constant $M_1$ in \eqref{defn of B} is sufficiently large.

To prove (1) and (2) in the lemma, it suffices to show
\begin{align}\label{equivalency of (1)}
    \inf \left\{\hspace{1pt}\min_{ \overline{\Omega}} \psi(\cdot, t) : t \in (\h{0.5pt}T_0, T\h{0.5pt})\right\} \geq 0 \hspace{20pt}\text{if $\psi \geq 0$ at $T_0$}.
\end{align}Suppose that there is a $T_* \in (\h{0.5pt} T_0, T\h{0.5pt})$, so that the minimum of $\psi\left(\cdot, T_*\right)$ over $\overline{\Omega}$ is negative. Then
\begin{align*}
    \min_{\overline{\Omega}} \psi\left(\cdot, t_*\right) \hspace{1pt}=\hspace{1pt} \psi\left(x_*, t_*\right) \hspace{1pt}=\hspace{1pt}
    \min\Big\{\hspace{1pt}\psi\left(x, t\right) : \left(x, t\right) \in \overline{\Omega} \times [\hspace{0.5pt}T_0, T_*\hspace{0.5pt}] \hspace{1.5pt}\Big\} < 0, 
\end{align*}for some $t_* \in \left(\hspace{0.5pt}T_0, T_*\hspace{0.5pt}\right]$ and $x_* \in \overline{\Omega}$. It then turns out \begin{align}\label{sgn of partial t}
\partial_t \psi \left(x_*, t_*\right) \leq 0.\end{align}By the sign condition of $\psi$ on P (see \eqref{revision of eq bc}), the point $x_*$ is not on P. We also claim that $x_* \not \in \text{H}$. Otherwise, it holds $\partial_3 \psi \left(x_*, t_*\right) \geq 0$. By the boundary condition on H in \eqref{revision of eq bc}, \begin{align*}
    0 \geq -  L_\text{H}^{-1} \hspace{1pt}\partial_3 \psi \left(x_*, t_*\right) = e^{- \left(M_1 + L_\text{H}^2\right) \hspace{1pt}t_*} \hspace{1pt}\Big(\hspace{1pt}\frac{1}{2} \sin 2 \gamma \left(x_*, t_*\right) - \gamma \left(x_*, t_*\right) \Big).
\end{align*}However, the right-hand side above is strictly positive  because $\gamma\left(x_*, t_*\right) < 0$. By $x_* \in \Omega$, it follows $$
\nabla \psi\left(x_*, t_*\right) = 0 \hspace{20pt}\text{and}\hspace{20pt}\Delta \psi\left(x_*, t_*\right) \geq 0.$$ Since $B \hspace{0.5pt} \psi > 0$ at $(x_*, t_*)$, we obtain
\begin{align*}
    \partial_t \psi\left(x_*, t_*\right) = - \big(v \cdot \nabla  \psi \big) \left(x_*, t_*\right) + \Delta \psi \left(x_*, t_*\right) + \big(B \hspace{0.5pt} \psi \big) (x_*, t_*) > 0.
\end{align*}It violates \eqref{sgn of partial t}. Therefore, \eqref{equivalency of (1)} holds. We obtain (1) and (2) in the lemma.

Now we prove (3) in the lemma. In the following, we fix $m_1 = 0$ and $m_2 = 1$. If $\psi \geq 0$ at $T_0$, then it satisfies $\psi \geq 0$ on $\Omega \times [\hspace{0.5pt}T_0, T\h{0.5pt}]$. If $\psi = 0$ at some point  $(x_0, t_0) \in \Omega \times (T_0, T)$, then by strong maximum principle of parabolic equations (see Theorem 2.7 in \cite{Liebmann}), it holds $\psi \equiv 0$ on $\Omega \times [\hspace{0.5pt}T_0, t_0\hspace{0.5pt}]$. Therefore, $\gamma \equiv 0$ on $\Omega \times [\hspace{0.5pt}T_0, t_0\hspace{0.5pt}]$. Recall the definition of $\gamma$ in \eqref{defn of gamma}. If $\gamma = \phi$, then $\phi \equiv 0$ on $\Omega \times [\hspace{0.5pt}T_0, t_0\hspace{0.5pt}]$. This violates the non-equivalent-zero condition of $\phi$ at $T_0$. If $\gamma = \pi - \phi$, then $\phi \equiv \pi$ on $\Omega \times [\hspace{0.5pt}T_0, t_0\hspace{0.5pt}]$. It is a contradiction to the homogeneous Dirichlet boundary condition of $\phi$ on P. If the assumptions in the (3) of the lemma hold, then $\psi > 0$ on $\Omega \times (T_0, T)$. The proof is completed.
\end{proof}

We now apply this maximum principle and an approximation argument to obtain the uniform boundedness of $\phi$ over $\overline{\Omega} \times [0, \infty)$, where $(u, \phi)$ is a global suitable weak solution of the $\mathrm{IBVP}$. \begin{prop}\label{bounded of phi}Suppose $(u, \phi)$ is a global suitable weak solution of the $\mathrm{IBVP}$. Then it satisfies  $$ \big\|\h{1pt} \phi \h{1pt}\big\|_{L^\infty\left(\overline{\Omega} \h{0.5pt}\times \h{0.5pt} [\h{0.5pt}0, \infty) \right)} \leq M_{\phi_0},$$ where $M_{\phi_0}$ is a positive constant depending only on the $L^\infty$-norm of the initial angle $\phi_0$.
\end{prop}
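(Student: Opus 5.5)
We will reduce the statement to the maximum principle of Lemma \ref{max principle} by approximation. Fix integers $m_1 \le 0 \le m_2$ with $m_1\pi \le -\|\phi_0\|_{L^\infty}$ and $m_2\pi \ge \|\phi_0\|_{L^\infty}$, for instance $m_2 = -m_1 = \lceil \|\phi_0\|_{L^\infty}/\pi \rceil$. Then $m_1\pi \le \phi_0 \le m_2\pi$, and we set $M_{\phi_0} := m_2\pi$. The target is $m_1 \pi \le \phi \le m_2 \pi$ a.e. in $\overline{\Omega}\times[0,\infty)$. This bound is meaningful because the constants $m\pi$ are compatible with both boundary conditions in \eqref{bc}: $\sin(m\pi)\cos(m\pi)=0$, and $0\in[m_1\pi,m_2\pi]$ on P.

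\textbf{Step 1: the linear drift problem for fixed $u$.} The key observation is that the $\phi$-equation is a scalar semilinear parabolic equation with a given divergence-free drift $u$. By \eqref{infty222}, $u\in L^\infty_tL^2_x\cap L^2_tH^1_0$. We mollify $u$ in space and time to obtain $u^\varepsilon\in C^\infty(\overline{\Omega}\times[0,T])$, keeping $u^\varepsilon$ divergence-free and vanishing on $\mathrm H\cup\mathrm P$. One way is to mollify in time and apply a smoothed Stokes/Leray-type cut-off near the boundary; only smoothness and $u^\varepsilon\to u$ in $L^2_tH^1_x$ are actually needed. We also approximate $\phi_0$ by smooth $\phi_0^\varepsilon$ with values in $[m_1\pi,m_2\pi]$, satisfying the compatibility conditions of \eqref{bc}. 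Standard parabolic theory with the nonlinear Robin-type condition on H then gives a smooth solution $\phi^\varepsilon$ of \eqref{sine-gordon with drft} with drift $u^\varepsilon$ and boundary conditions \eqref{bc}. This solution exists globally on $[0,T]$, since the nonlinearity and the boundary term are globally Lipschitz. By Lemma \ref{max principle} (1)--(2), $m_1\pi\le\phi^\varepsilon\le m_2\pi$ on $\overline{\Omega}\times[0,T]$.

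\textbf{Step 2: passing to the limit and identifying the limit with $\phi$.} Energy estimates for $\phi^\varepsilon$ are uniform in $\varepsilon$: test with $\phi^\varepsilon$ and use $\mathrm{div}\,u^\varepsilon=0$, $u^\varepsilon=0$ on $\partial\Omega$, together with the trace estimate \eqref{L2 integral on H}. They give $\phi^\varepsilon$ bounded in $L^\infty_tL^2\cap L^2_tH^1$, and $\partial_t\phi^\varepsilon$ bounded in a dual space. Aubin--Lions compactness then yields $\phi^\varepsilon\to\tilde\phi$ strongly in $L^2$, with the bound $m_1\pi\le\tilde\phi\le m_2\pi$ preserved. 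The limit $\tilde\phi$ is a weak solution of the same linear-drift problem with the same data as $\phi$.

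It remains to show $\tilde\phi=\phi$, which I expect to be the main obstacle, since weak solutions with rough drift need not be unique a priori. The plan is to take $w=\phi-\tilde\phi$ and test its equation with $w$. The transport term $\int (u\cdot\nabla w)w$ vanishes once it is justified, and this is where the regularity of both functions matters. For $\phi$, the suitable weak solution, we have $\nabla\phi\in L^\infty_tL^2$ and $\Delta\phi\in L^2_tL^2$ from \eqref{infty222}, so $\phi\in L^2_tH^2$ by Lemma \ref{est nabla 2 phi}. The energy equality \eqref{energy eqa phi} gives exactly the $L^2$ identity needed for $\phi$. For $\tilde\phi$, the $L^2$ energy equality follows by the same limiting procedure. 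The sine terms are Lipschitz in the bulk, and on H we use $|\sin2\phi-\sin2\tilde\phi|\le2|w|$ together with \eqref{L2 integral on H} and Young's inequality to absorb $\int_{\mathrm H}w^2$ into $\int_\Omega|\nabla w|^2$. This produces $\frac{d}{dt}\|w\|_{L^2}^2\lesssim\|w\|_{L^2}^2$, and since $w(0)=0$, Gronwall gives $w\equiv0$.

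\textbf{Conclusion.} Since $T$ is arbitrary, $|\phi|\le m_2\pi=M_{\phi_0}$ a.e. in $\Omega\times[0,\infty)$. If an $L^\infty$ statement up to the boundary is needed, it follows for traces as well, because $\phi\in L^2_tH^2$.
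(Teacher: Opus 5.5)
Your argument is correct and follows the same overall plan as the paper's proof. You smooth the drift, apply Lemma~\ref{max principle}~(1)--(2) to the smooth problem, obtain uniform $L^\infty_tL^2\cap L^2_tH^1$ bounds, and extract a strong $L^2$ limit by Aubin--Lions. You then identify the limit with $\phi$ by an $L^2$ Gronwall argument built on \eqref{energy eqa phi} and the trace identity \eqref{L2 integral on H}.

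The real difference lies in the order of the last two steps. The paper never treats the weak limit as an object of its own. It derives \eqref{L2 est of error} for $\phi-\psi^{\delta_k}$ directly. This is legitimate because the smooth $\psi^{\delta_k}$ is an admissible test function in the weak formulation of $\phi$. The only term that does not close is bounded by a multiple of $\|u_{\delta_k}-u\|_{L^{10/3}(\Omega\times(0,T))}$, using $\phi\in L^{50/9}$ and the uniform bounds on $\psi^{\delta_k}$. This yields the quantitative estimate \eqref{conver of psi delta}: $\psi^{\delta_k}(t)\to\phi(t)$ in $L^2(\Omega)$ for every $t\in(0,T)$, which the paper reuses in the final section.

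You instead pass to the limit first, and then you need uniqueness of weak solutions of the linear-drift problem. Your justification of that step is incomplete. The claim that the energy equality for $\tilde\phi$ ``follows by the same limiting procedure'' does not hold as stated: weak lower semicontinuity of $\iint|\nabla\phi^\varepsilon|^2$ gives only an inequality. That inequality points in the harmless direction for the $\|\tilde\phi\|^2$ part of $\|\phi-\tilde\phi\|^2$. The cross term $\langle\phi,\tilde\phi\rangle$, however, still needs a product rule for two weak solutions, and you do not address it. There are two ways to supply it:
\begin{itemize}
\item Apply the Lions--Magenes lemma to $w=\phi-\tilde\phi$ directly. Since $u\in L^2_tL^6$, $\phi\in L^\infty_tH^1$ and $\tilde\phi\in L^\infty$, you get $uw\in L^2_tL^2$. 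Hence $\partial_t w\in L^2_t(H^1_{\mathrm P})'$, and $\int u\cdot\nabla(w^2)=0$ holds for a.e.\ $t$.
\item More simply, test before letting $\varepsilon\to0$, exactly as the paper does.
\end{itemize}

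Your divergence-free approximation vanishing on $\mathrm H\cup\mathrm P$ is more than you need. Lemma~\ref{max principle} does not require the drift to vanish on the boundary. The paper simply extends $u$ by zero, mollifies in $\mathbb R^4$, and bounds the transport term in the energy estimate using the uniform $L^\infty$ bound.

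Your smoothing of $\phi_0$, by contrast, is a genuine improvement. The paper applies the classical maximum principle to $H^2$ initial data without comment.
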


\begin{proof}[\bf Proof] We divide the proof into 4 steps. \\[1mm]
\textbf{Step 1. Approximation.}
We extend to define $u \equiv 0$ on the complement set of $\overline{\Omega} \times [\h{0.5pt}0, \infty\h{0.5pt})$ in $\mathbb R^4$. Using the standard mollifier $\eta$ on $\mathbb R^4$, we define the mollification of $u$ by $u_{\delta} := \eta_{\delta} * u$. Here, $\delta > 0$ is a scaling parameter. $\eta_\delta$ is given by $$\eta_\delta (x, t) := \frac{1}{\delta^4} \h{1pt}\eta \left(\frac{x}{\delta}, \frac{t}{\delta}\right).$$  For any $T > 0$, Condition (1) in Definition \ref{suitable wk sol} and Proposition 3.2 in the Chapter 1 of \cite{Dibenedetto1993} infer that $u \in L^{\frac{10}{3}}\big( \Omega \times (0, T) \big)$. Hence, $u_{\delta} \to u$ strongly in $L^{\frac{10}{3}}\big(\Omega \times (0, T)\big)$ as $\delta \to 0$. In addition, the incompressibility condition $\div u_\delta = 0$ is preserved. 

Now, we fix a sequence $\big\{\delta_k\big\}$ converging to $0$ as $k \to \infty$. Using this sequence, we introduce the approximation of $\phi$ as follows:\vspace{0.2pc} 
\begin{equation} \label{eq of psi_delta}\left\{ \begin{aligned}
    \partial_t \psi^{\delta_k} - \Delta \psi^{\delta_k} + u_{\delta_k} \cdot \nabla \psi^{\delta_k} &= h^2 \sin \psi^{\delta_k} \cos \psi^{\delta_k} \quad\quad\hspace{17pt} \text{in } \Omega \times (0, \infty); \\[.5mm]
    \psi^{\delta_k} &= 0 \quad\quad\hspace{90pt} \text{on P}; \\[.5mm]
    -  \partial_3 \psi^{\delta_k} &= L_\text{H} \sin \psi^{\delta_k} \cos \psi^{\delta_k} \quad\quad\hspace{13.5pt} \text{on H}. 
\end{aligned} \right. \end{equation}

\noindent We also set $\psi^{\delta_k}(\cdot, 0) = \phi_0$. By (1) and (2) in Lemma \ref{max principle}, there is a positive constant $M_{\phi_0}$ such that 
\begin{align}\label{unif psi del}
    \max_{\overline{\Omega} \times [\h{0.5pt}0, T \h{0.5pt}]} \big|\h{.5pt} \psi^{\delta_k} \h{.5pt}\big| \leq M_{\phi_0} \h{20pt}\text{for any $k \in \mathbb N$ and $T > 0$.}
\end{align}

\noindent Here, $M_{\phi_0}$ depends only on the $L^\infty$-norm of $\phi_0$ on $\Omega$. Hence, we can find a $\psi^0 \in L^\infty\big(\Omega \times (0, T)\big)$ and a subsequence of $\big\{\delta_k\big\}$, which is still denoted by $\big\{\delta_k\big\}$, such that 
\begin{align*}
    \psi^{\delta_k} \rightharpoonup \psi^0 \quad\quad \text{weakly in } L^2\big(\Omega \times (0, T)\big) \h{3pt}\text{as $k \to \infty$}.
\end{align*}

\noindent \textbf{Step 2. Energy Estimate.} We claim the following energy estimate for any $\psi^{\delta_k}$:
\begin{equation} \label{L2LinftyL2H1 of psi_delta}
    \sup_{t \h{1pt}\in \h{1pt}(\h{0.5pt}0, \h{1pt} T \h{0.5pt})}\int_{\Omega} \big|\h{.5pt} \psi^{\delta_k} \h{.5pt}\big|^2  +  \int_0^{T} \int_{\Omega}  \big|\h{.5pt} \nabla \psi^{\delta_k} \h{.5pt}\big|^2 \h{1.5pt}\leq\h{1.5pt} M_{u, \h{1pt}\phi_0, \h{1pt}T}.
\end{equation}Here, $M_{u, \h{1pt}\phi_0, \h{1pt}T} > 0$ is a constant depending on $u$, $\phi_0$ and $T$. To prove \eqref{L2LinftyL2H1 of psi_delta}, we multiply $\psi^{\delta_k}$ on the both sides of \eqref{eq of psi_delta} and then integrate over $\Omega \times (0, t)$. It turns out for any $t \in (0, T)$ that
\begin{align}\label{energ eqa of psi k}
    \int_{\Omega \times \{t \}} \big|\h{.5pt} \psi^{\delta_k} \h{.5pt}\big|^2 &+ 2 \int_0^{t} \int_{\Omega} \big|\h{.5pt} \nabla \psi^{\delta_k} \h{.5pt}\big|^2 \\[2mm]
    &= \int_\Omega \phi_0^2 - 2\int_0^{t} \int_{\Omega} \psi^{\delta_k}  \h{1pt}\big(u_{\delta_k} \cdot \nabla \big)  \psi^{\delta_k} + h^2 \int_0^{t} \int_{\Omega}  \psi^{\delta_k} \h{0.5pt} \sin 2 \psi^{\delta_k} + L_\mathrm{H}  \int_0^{t} \int_{\mathrm{H}} \psi^{\delta_k} \h{0.5pt} \sin 2 \psi^{\delta_k}. \nonumber
\end{align}

\noindent Using the uniform boundedness of $\psi^{\delta_k}$ in Step 1 and Young's inequality, we have
\begin{align*}
    \left|\h{2pt} \int_0^{t} \int_\Omega \psi^{\delta_k} \big(u_{\delta_k} \cdot \nabla\big) \psi^{\delta_k} \right| \leq M_{\phi_0} \int_0^{t} \int_\Omega \big|\h{.5pt} u_{\delta_k} \h{.5pt}\big|^2 + \frac{1}{2} \int_0^{t} \int_\Omega \big|\h{.5pt} \nabla \psi^{\delta_k} \h{.5pt} \big|^2.
\end{align*}

\noindent Therefore,
\begin{align*}
    &\int_{\Omega \times \{ t \}} \big|\h{.5pt} \psi^{\delta_k} \h{.5pt}\big|^2 +  \int_0^{t} \int_{\Omega} \big|\h{.5pt} \nabla \psi^{\delta_k} \h{.5pt}\big|^2 \lesssim_{M_{\phi_0}} 1 + T + \int_0^{T} \int_\Omega \big|\h{.5pt} u_{\delta_k} \h{.5pt}\big|^2.
\end{align*}The estimate \eqref{L2LinftyL2H1 of psi_delta} is obtained since over $\Omega \times [\h{0.5pt}0, T\h{0.5pt}]$, the $L^2$-norm of $u_{\delta_k}$ is uniformly bounded by the $L^2$-norm of $u$. \vspace{0.3pc}

\noindent \textbf{Step 3. Strong $L^2$-convergence.} 
Suppose $\xi$ is a smooth function compactly supported in $\Omega$. The bracket $\big<\cdot, \cdot \big>$ is the duality between the Sobolev space $H_0^{1}(\Omega)$ and its dual space $H^{-1} (\Omega)$. Using \eqref{eq of psi_delta}, we have 
\begin{align*}
    \big<\partial_t \psi^{\delta_k}, \xi\big> &= \int_{\Omega} \psi^{\delta_k} \h{1pt}u_{\delta_k} \cdot \nabla \xi  - \int_{\Omega} \nabla \psi^{\delta_k} \cdot \nabla \xi + \frac{h^2}{2} \int_{\Omega} \xi \sin 2 \psi^{\delta_k}. 
\end{align*}

\noindent It can be estimated that
\begin{align*}
    \big<\partial_t \psi^{\delta_k}, \xi\big> \h{1.5pt}&\lesssim_{K_*}\h{1.5pt} \big\|\h{.5pt} \psi^{\delta_k} \h{.5pt}\big\|_{L^6(\Omega)} \big\|\h{.5pt} u_{\delta_k} \h{.5pt}\big\|_{L^3(\Omega)} \big\|\h{.5pt} \nabla \xi \h{.5pt}\big\|_{L^2(\Omega)} + \big\|\h{0.5pt} \nabla \psi^{\delta_k} \h{0.5pt}\big\|_{L^{2}(\Omega)}  \h{1pt} \big\| \h{0.5pt}\nabla \xi \h{0.5pt}\big\|_{L^2(\Omega)} + \big\|\h{0.5pt} \xi \h{0.5pt}\big\|_{L^2(\Omega)}\\[1mm]
    &\lesssim_{\h{0.5pt}u, \h{1pt}\phi_0} \big\|\h{.5pt} \xi \h{.5pt}\big\|_{H^{1}(\Omega)}+ \big\|\h{0.5pt} \nabla \psi^{\delta_k} \h{0.5pt}\big\|_{L^{2}(\Omega)}  \h{1pt} \big\| \h{0.5pt}\nabla \xi \h{0.5pt}\big\|_{L^2(\Omega)}.
\end{align*}

\noindent Take supreme over all $\xi$ with $ \|\h{0.5pt} \xi \h{0.5pt}\|_{H^{1}(\Omega)} \leq 1$ and integrate the $t$-variable from $0$ to $T$. It follows 
\begin{align*}
    \int_{0}^{T}\big\| \h{0.5pt}\partial_t \psi^{\delta_k} \h{0.5pt}\big\|^{2}_{H^{-1}(\Omega)} \lesssim_{\h{0.5pt}u, \h{1pt}\phi_0} T +  \int_0^{T} \int_{\Omega}  \big|\h{.5pt} \nabla \psi^{\delta_k} \h{.5pt}\big|^2 \leq M_{\h{0.5pt}u, \h{1pt}\phi_0, \h{1pt}T}.
\end{align*}

\noindent Note that $H_{\mathrm P}^1(\Omega)$ is compactly embedded into $L^2(\Omega)$. $L^2(\Omega)$ is continuously embedded into $H^{-1}(\Omega)$. By the Aubin-Lions compactness lemma, it follows that
\begin{align*}
    \psi^{\delta_k} \to \psi^0 \quad \text{strongly in } L^2\big(\Omega \times (0, T)\big) \h{3pt}\text{as $k \to \infty$}.
\end{align*}

\noindent \textbf{Step 4. Uniqueness of Limit.} We prove $\psi^0 = \phi$ almost everywhere on $\Omega \times (0, T)$. This uniqueness result finishes the proof of the $L^\infty$-estimate of $\phi$. Note that $\phi$ solves the third equation in \eqref{sEL} weakly. Taking $\psi^{\delta_k}$ as the test function, we obtain, for any $t \in (0, T)$, that \begin{align*}
    \int_{\Omega \times \{t\}} \psi^{\delta_k} \phi + \int_0^t \int_\Omega \nabla \psi^{\delta_k} \cdot \nabla \phi &= \int_\Omega \phi_0^2 + \int_0^t \int_\Omega \phi \h{1pt}\partial_t \psi^{\delta_k} \\[1mm]
    &+ \int_0^t \int_\Omega \phi\h{1pt}u \cdot \nabla \psi^{\delta_k} + \frac{h^2}{2} \int_0^t \int_\Omega \psi^{\delta_k} \sin 2 \phi + \frac{L_{\mathrm H}}{2} \int_0^t \int_{\mathrm H} \psi^{\delta_k} \sin 2 \phi.
\end{align*} We can also multiply $\phi$ on the first equation in \eqref{eq of psi_delta} and integrate by part. It turns out that \begin{align*}
    \int_0^t \int_\Omega \phi \h{1pt}\partial_t \psi^{\delta_k} &+ \int_0^t \int_\Omega \nabla \psi^{\delta_k} \cdot \nabla \phi \\[1mm]
    &= - \int_0^t \int_\Omega \phi\h{1pt}u_{\delta_k} \cdot \nabla \psi^{\delta_k} + \frac{h^2}{2} \int_0^t \int_\Omega \phi \sin 2 \psi^{\delta_k} + \frac{L_{\mathrm H}}{2} \int_0^t \int_{\mathrm H} \phi \sin 2 \psi^{\delta_k}.
\end{align*} Summing the last two equalities induces \begin{align*}
    \int_{\Omega \times \{t\}} \psi^{\delta_k} \phi &+ 2\int_0^t \int_\Omega \nabla \psi^{\delta_k} \cdot \nabla \phi  = \int_\Omega \phi_0^2 + \int_0^t \int_\Omega \phi\h{1pt}\big(u - u_{\delta_k}\big) \cdot \nabla \psi^{\delta_k} \\[1mm]
     &+ \frac{h^2}{2} \int_0^t \int_\Omega \psi^{\delta_k} \sin 2 \phi + \phi \sin 2 \psi^{\delta_k} + \frac{L_{\mathrm H}}{2} \int_0^t \int_{\mathrm H} \psi^{\delta_k} \sin 2 \phi + \phi \sin 2 \psi^{\delta_k}.
\end{align*}By this equality, \eqref{energ eqa of psi k}, and \eqref{energy eqa phi}, it follows that \begin{align}\label{L2 est of error}
    \int_{\Omega \times \{t\}} \big|\h{.5pt} \phi - \psi^{\delta_k} \h{.5pt}\big|^2 &+ 2 \int_0^t\int_{\Omega} \big|\h{.5pt} \nabla \phi - \nabla \psi^{\delta_k} \h{.5pt}\big|^2 = 2 \int_0^t\int_{\Omega} \bigl(\phi - \psi^{\delta_k}\bigr) \bigl( u_{\delta_k} - u \bigr) \cdot \nabla   \psi^{\delta_k}   \\[1mm]
   & + h^2 \int_0^t\int_{\Omega} (\sin 2 \phi - \sin 2 \psi^{\delta_k}) (\phi - \psi^{\delta_k}) + L_{\mathrm{H}} \int_0^t\int_{\mathrm{H}} (\sin 2 \phi - \sin 2 \psi^{\delta_k}) (\phi - \psi^{\delta_k}). \nonumber
\end{align} 

By the fundamental theorem of calculus,
\begin{align*}
    \int_{\mathrm{H}} \bigl(\phi - \psi^{\delta_k} \bigr)^2 = - \int_{\Omega} \partial_3 \bigl(\phi - \psi^{\delta_k} \bigr)^2 = - 2 \int_{\Omega} \bigl( \phi - \psi^{\delta_k} \bigr) \bigl( \partial_3 \phi - \partial_3 \psi^{\delta_k} \bigr).
\end{align*}
Using Young's inequality then infers
\begin{align}\label{fir est}
    \left|\h{2pt}  h^2  \int_0^t\int_{\Omega} (\sin 2 \phi - \sin 2 \psi^{\delta_k}) (\phi - \psi^{\delta_k}) \right. &+ \left.  L_{\mathrm{H}}  \int_0^t\int_{\mathrm{H}} (\sin 2 \phi - \sin 2 \psi^{\delta_k}) (\phi - \psi^{\delta_k})\h{2pt} \right| \nonumber\\[2mm]
    &\leq \frac{1}{2} \int_0^t\int_\Omega \big|\h{.5pt} \nabla \phi - \nabla \psi^{\delta_k} \h{.5pt}\big|^2 + K_* \int_0^t \int_\Omega \big|\h{.5pt} \phi - \psi^{\delta_k} \h{.5pt}\big|^2.
\end{align}

For the first term on the right-hand side of \eqref{L2 est of error}, we use H\"{o}lder's inequality and \eqref{L2LinftyL2H1 of psi_delta} to estimate it as follows: 
\begin{align}\label{second est}
    &\left|\h{1pt} \int_0^t\int_{\Omega} \bigl(\phi - \psi^{\delta_k}\bigr) \bigl( u_{\delta_k} - u \bigr) \cdot \nabla   \psi^{\delta_k} \h{1pt} \right|\nonumber\\[1mm] &\h{30pt}\leq \left( \int_0^t\int_\Omega \big|\h{.5pt} u_{\delta_k} - u \h{.5pt}\big|^{\frac{10}{3}} \right)^{\frac{3}{10}} \left( \int_0^t\int_\Omega \big|\h{.5pt} \nabla\phi - \nabla\psi^{\delta_k} \h{.5pt}\big|^2 \right)^{\frac{1}{2}} \left(\int_0^t\int_{\Omega} \big|\h{.5pt} \phi - \psi^{\delta_k} \h{.5pt}\big|^5 \right)^{\frac{1}{5}} \nonumber\\[2mm]
    &\h{30pt}\leq M_{*} \left( \int_0^t\int_\Omega \big|\h{.5pt} u_{\delta_k} - u \h{.5pt}\big|^{\frac{10}{3}} \right)^{\frac{3}{10}}   \left(1 + \int_0^t\int_{\Omega} \big|\h{.5pt} \phi \h{.5pt}\big|^5 \right)^{\frac{1}{5}}.
\end{align} Here, $M_* > 0$ is a constant depending on $u$, $\phi$, $\phi_0$ and $T$. We also use the uniform bound \eqref{unif psi del} in the last estimate. By \eqref{infty222}, Lemma \ref{est nabla 2 phi}, and Proposition 3.2 in the Chapter 1 of \cite{Dibenedetto1993}, we obtain $$\text{$\nabla \phi \in L^{\frac{10}{3}} \big( \Omega \times [\h{0.5pt}0, T\h{0.5pt}] \h{1pt} \big)$, which yields}\h{3pt} \phi \in L^\infty\big((0, T); L^2(\Omega)\big) \cap L^{\frac{10}{3}}\big((0, T); W^{1, \frac{10}{3}}(\Omega) \big).$$ Applying Proposition 3.2 in the Chapter 1 of \cite{Dibenedetto1993} again induces $\phi \in L^{\frac{50}{9}} \big( \Omega \times [\h{0.5pt}0, T\h{0.5pt}] \h{1pt} \big)$. The estimate in \eqref{second est} can then be reduced to \begin{align*}
    \left|\h{1pt} \int_0^t\int_{\Omega} \bigl(\phi - \psi^{\delta_k}\bigr) \bigl( u_{\delta_k} - u \bigr) \cdot \nabla   \psi^{\delta_k} \h{1pt} \right| \leq M_{*} \left( \int_0^t\int_\Omega \big|\h{.5pt} u_{\delta_k} - u \h{.5pt}\big|^{\frac{10}{3}} \right)^{\frac{3}{10}}. 
\end{align*}By this estimate and \eqref{fir est}, it then turns out from \eqref{L2 est of error} that \begin{align*} 
    \int_{\Omega \times \{t\}} \big|\h{.5pt} \phi - \psi^{\delta_k} \h{.5pt}\big|^2 \leq M_{*} \left( \int_0^t\int_\Omega \big|\h{.5pt} u_{\delta_k} - u \h{.5pt}\big|^{\frac{10}{3}} \right)^{\frac{3}{10}}  + K_* \int_0^t \int_\Omega \big|\h{.5pt} \phi - \psi^{\delta_k} \h{.5pt}\big|^2 \h{20pt}\text{for any $t \in (0, T)$}. \nonumber
\end{align*} By Gronwall's inequality, the above estimate is reduced to \begin{align}\label{conver of psi delta}
    \int_{\Omega \times \{t\}} \big|\h{.5pt} \phi - \psi^{\delta_k} \h{.5pt}\big|^2 \leq M_{*} \left( \int_0^T\int_\Omega \big|\h{.5pt} u_{\delta_k} - u \h{.5pt}\big|^{\frac{10}{3}} \right)^{\frac{3}{10}} \h{20pt}\text{for any $t \in (0, T)$}.
\end{align}
\noindent Thus, $\phi = \psi^0$ almost everywhere in $\Omega \times (0, T)$ by taking $k \to \infty$ in the last estimate.
\end{proof}

\subsection{Small dissipation energy implies the smallness of \texorpdfstring{$L^3$}{TEXT}-integrals}\label{dis imply L3} Given a suitable weak solution $(u, \phi)$, we consider, for all $r \leq r_0 := d\h{0.5pt}/\h{0.5pt}2$, the following dimensionless quantities:
\begin{align*}
    &\mathrm{A}(r) :=\sup_{t \h{1pt}\in \h{1pt}[\h{0.5pt}t_0 - r^2, \h{1pt} t_0 \h{0.5pt}]}  r^{-1} \int_{B^{\pm}_r(x_0) \times \{\h{0.5pt} t \h{0.5pt}\}} \big|\hspace{.5pt} u \hspace{.5pt}\big|^2 + \big|\hspace{.5pt} \nabla \phi \hspace{.5pt}\big|^2,\hspace{40pt} \mathrm{B}(r) :=  r^{-1} \int_{P_r(z_0)} \big|\hspace{.5pt} \nabla u \hspace{.5pt}\big|^2 + \big|\hspace{.5pt} \nabla^2 \phi\hspace{.5pt}\big|^2, \\[1mm]
    &\mathrm{C}(r) :=  r^{-2} \int_{P_r(z_0)} \big|\hspace{.5pt}u\hspace{.5pt}\big|^3 + \big|\hspace{.5pt}\nabla \phi\hspace{.5pt}\big|^3, \hspace{113.5pt} \mathrm{D}(r) :=  r^{-2} \int_{P_r(z_0)} \big|\hspace{.5pt}p - [\h{0.5pt}p\h{0.5pt}]_{x_0, r}\hspace{.5pt}\big|^{\frac{3}{2}}.
\end{align*}Here, $z_0 = (x_0, t_0)$. $p$ is the induced pressure. $B^{\pm}_r(x_0)$ is the half ball $B_r(x_0) \cap \Omega$. We choose ``$+$" superscript if  $x_0 \in \mathrm H$. If $x_0 \in \mathrm P$, then we choose ``$-$" superscript. In the definition of $\mathrm{D}(r)$, the notation $[\h{0.5pt}p\h{0.5pt}]_{x_0, r}$ is the average of the pressure $p$ on $B_r(x_0) \cap \Omega$. We also define
\begin{equation*}
    \mathrm{D}_1(r) :=  r^{-\frac{3}{2}} \int_{t_0 - r^2}^{t_0} \left(\int_{B^{\pm}_r(x_0)} \big|\hspace{.5pt} \nabla p\hspace{.5pt}\big|^{\frac{9}{8}} \right)^{\frac{4}{3}}.
\end{equation*} \vspace{.1pc}

\noindent By the Poincar\'e-Sobolev inequality, it satisfies \begin{align}\label{D, D_1}\mathrm{D}(r) \lesssim_{K_*} \mathrm{D}_1(r). \end{align} Here and in the following,  $K_* > 0$ is a universal constant. It depends, possibly, only on $h$ and $L_{\mathrm H}$.
\vspace{0.4pc}

\noindent \textbf{I. Estimate of the dissipation energy.} \vspace{0.4pc}

The smallness of dissipation energy is concluded in the following lemma.
\begin{lem}\label{dis small}
Suppose $(u, \phi)$ is a global suitable weak solution of the $\mathrm{IBVP}$. Then it satisfies the global energy inequality in  \eqref{infty222}. Hence, for any $\epsilon > 0$, we have \eqref{smallness} for some sufficiently large time $T_\epsilon$. There also exists a radius \begin{align*}
    r_\epsilon = \min \left\{ r_0, \frac{\epsilon}{K_0} \right\} 
\end{align*} such that the following holds. Given any $r \leq r_\epsilon$, we can find a large time $t_{\epsilon, \h{0.5pt}r}$ such that
    \begin{align*}
        \mathrm{B}(r) \h{1.5pt}\leq\h{1.5pt} \epsilon, \h{20pt}\text{for any $z_0 = (x_0, t_0)$ with $t_0 \geq t_{\epsilon,\h{0.5pt}r} + 10$.}
    \end{align*}Here, $K_0$ is a positive constant depending on $h$, $L_\mathrm{H}$, $\Omega$,  and the solution $(u, \phi)$.
\end{lem}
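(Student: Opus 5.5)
The first claim is immediate. The global energy bound \eqref{infty222} is item (1) of Definition \ref{suitable wk sol}. In particular
$$\int_0^\infty\int_\Omega |\nabla u|^2 + \Big|\Delta\phi + \tfrac{h^2}{2}\sin 2\phi\Big|^2 < \infty,$$
so the tail integral over $(T,\infty)$ tends to $0$ as $T\to\infty$. Choosing $T_\epsilon$ with tail at most $\epsilon$ gives \eqref{smallness}.

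For the bound on $\mathrm{B}(r)$, the plan is to dominate the local quantity by the tail of the global dissipation plus a term that is linear in $r$ after the factor $r^{-1}$. The gradient part is trivial: $r^{-1}\int_{P_r(z_0)}|\nabla u|^2 \le r^{-1}\int_{t_0-r^2}^{t_0}\int_\Omega |\nabla u|^2$. For the Hessian part, apply Lemma \ref{est nabla 2 phi} on each time slice:
$$\int_\Omega |\nabla^2\phi|^2 \lesssim \int_\Omega (\Delta\phi)^2 + \int_\Omega |\nabla\phi|^2 \lesssim \int_\Omega \Big|\Delta\phi + \tfrac{h^2}{2}\sin 2\phi\Big|^2 + 1 + \int_\Omega |\nabla\phi|^2 .$$
The added $1$ comes from $|\sin 2\phi|\le 1$ and the finite volume of $\Omega$. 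By \eqref{infty222}, $\sup_t \int_\Omega|\nabla\phi|^2 \le M$ for a constant $M$ depending on the solution. Lemma \ref{est nabla 2 phi} is stated for $H^2$ functions satisfying \eqref{bc}. For a suitable weak solution this holds for a.e. $t$, because \eqref{infty222} together with the identity \eqref{Hessian delta id} (or an approximation argument) gives $\phi(t)\in H^2$. If needed, one instead uses \eqref{Hessian delta id} directly with the trace bound on $\int_{\mathrm H}|\nabla'\phi|^2$, exactly as in the proof of Lemma \ref{est nabla 2 phi}.

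Integrating over $(t_0-r^2,t_0)$ then gives
$$\mathrm{B}(r) \le K_1 r^{-1}\int_{t_0-r^2}^{t_0}\int_\Omega |\nabla u|^2 + \Big|\Delta\phi+\tfrac{h^2}{2}\sin 2\phi\Big|^2 + K_1 (1+M)\, r ,$$
where $K_1$ depends on $h$, $L_{\mathrm H}$ and $\Omega$. Set $K_0 := 2K_1(1+M)$. Then $r\le r_\epsilon$ forces the last term to be at most $\epsilon/2$. For fixed $r$, apply \eqref{smallness} with $\epsilon$ replaced by $\epsilon r/(2K_1)$, and call the resulting time $t_{\epsilon,r}$. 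Whenever $t_0 \ge t_{\epsilon,r}+10 > t_{\epsilon,r}+r^2$ (note $r^2\le r_0^2$; if $d$ is large, replace $10$ by $r_0^2$ or simply enlarge $t_{\epsilon,r}$), the first term is at most $\epsilon/2$. Hence $\mathrm{B}(r)\le\epsilon$.

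The main (mild) obstacle is justifying the slicewise Hessian estimate for a merely suitable weak solution. This needs the a.e.-in-time $H^2$ regularity and the validity of \eqref{bc} in the trace sense. I would obtain it from the $L^2_tL^2_x$ bound on $\Delta\phi+\frac{h^2}{2}\sin2\phi$ via elliptic regularity with the Rapini–Papoular boundary condition, using Theorem 2.3.3.2 of \cite{Grisvard1985} as in the earlier sections. The dependence of $K_0$ on the solution enters only through $M$.
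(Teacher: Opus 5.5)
Your proposal is correct and follows the paper's proof essentially line by line. You bound the $\nabla u$ part of $\mathrm{B}(r)$ by the global dissipation, control the Hessian slice-wise via Lemma \ref{est nabla 2 phi} (equivalently \eqref{hes, del}) in terms of $\Delta\phi+\frac{h^2}{2}\sin 2\phi$, $|\Omega|$ and $\sup_t\int_\Omega|\nabla\phi|^2$, choose $r_\epsilon$ to absorb the resulting $O(r)$ term, and take $t_{\epsilon,r}$ from the dissipation tail. Your explicit tracking of the solution-dependent constant $M$ in $K_0$, and of the requirement that $r^2$ be smaller than the time gap, is slightly more careful than the paper's version, which leaves both implicit.
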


\begin{proof}[\bf Proof]

Utilizing Lemma \ref{est nabla 2 phi}, or equivalently \eqref{hes, del}, it holds \begin{align*}
        \int_{P_{r}(z_0)} \big|\h{.5pt} \nabla^2 \phi \h{.5pt}\big|^2 &\h{1.5pt} \lesssim_{K_*}\h{1.5pt} \int_{t_0 - r^2}^{t_0} \int_\Omega\big|\h{.5pt} \Delta \phi \h{.5pt}\big|^2 + \int_{t_0 - r^2}^{t_0} \int_{\Omega} \big|\h{.5pt} \nabla \phi \h{.5pt}\big|^2 \\[1.5mm] &\h{1.5pt}\lesssim_{K_*}\h{1.5pt} \int_{t_0 - r^2}^{t_0}\int_{\Omega} \Big|\h{1pt} \Delta \phi + \frac{h^2}{2} \sin 2 \phi \h{1pt}\Big|^2 +  r^2 \h{1pt}  \big|\h{0.5pt} \Omega \h{0.5pt}\big| + r^2  \sup_{t\h{0.8pt}\in\h{0.8pt}[\h{0.8pt}t_0 - r^2, \h{0.5pt} t_0\h{0.6pt}]} \int_\Omega \big|\h{.5pt} \nabla \phi \h{.5pt}\big|^2.
\end{align*}Here, $t_0 \geq 10$. The radius $r \in \left(0, \min \big\{1, r_0\big\}\right)$. Recall \eqref{infty222}. It then turns out\begin{align*}
    r^{-1}\int_{P_{r}(z_0)} \big|\h{.5pt} \nabla^2 \phi \h{.5pt}\big|^2  \h{1.5pt}\leq\h{1.5pt} K_* \h{0.5pt}r^{-1}\int_{t_0 - 1}^{\infty}\int_{\Omega} \Big|\h{1pt} \Delta \phi + \frac{h^2}{2} \sin 2 \phi \h{1pt}\Big|^2 + K_*\big(\h{1pt}|\h{0.5pt} \Omega \h{0.5pt}|   + 1 \big) \h{0.5pt}r.
\end{align*}For any $\epsilon > 0$, we take $$r_\epsilon = \min\left\{ r_0, \h{1pt}\frac{\epsilon}{K_0} \right\}, \h{15pt}\text{where $K_0 := 2 K_*\big(\h{1pt} |\h{0.5pt} \Omega \h{0.5pt} |   + 1 \big)$}. $$ Given $r \leq r_\epsilon$, we then apply \eqref{smallness} to find a $t_{\epsilon,\h{0.5pt}r}$ such that \begin{align*}
    K_* \h{0.5pt} r^{-1}\int_{t_{\epsilon,\h{0.5pt}r}}^{\infty}\int_{\Omega} \Big|\h{1pt} \Delta \phi + \frac{h^2}{2} \sin 2 \phi \h{1pt}\Big|^2 +  r^{-1} \int_{t_{\epsilon,\h{0.5pt}r}}^{\infty} \int_{\Omega} \big|\h{.5pt} \nabla u \h{.5pt}\big|^2\leq \frac{\epsilon}{2}.
\end{align*} The proof is completed.
\end{proof}

\noindent\textbf{II. Some preliminary estimates of $\mathrm A$, $\mathrm B$, $\mathrm C$, $\mathrm D$.}\vspace{0.4pc}

In this part, we introduce some estimates for our future study of the $L^3$-integrals of $u$ and $\nabla \phi$. 
\begin{lem} \label{inequality of C}For any $0 < \rho \leq r \leq r_0$, it satisfies
\begin{equation*}
    \mathrm{C}(\rho) \hspace{1.5pt}\lesssim_{K_*}\hspace{1.5pt} \left(\frac{\rho}{r}\right)^3 \mathrm{A}^{\frac{3}{2}}(r) + \left(\frac{r}{\rho}\right)^{3} \mathrm{A}^{\frac{3}{4}}(r) \h{1pt}\mathrm{B}^{\frac{3}{4}}(r).
\end{equation*}
\end{lem}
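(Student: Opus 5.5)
This is the standard Caffarelli--Kohn--Nirenberg interpolation, carried out on the half balls $B^{\pm}_r(x_0)$. We treat $f$ standing for $u$ and for $\nabla\phi$ together, since both enter $\mathrm{A}$, $\mathrm{B}$, $\mathrm{C}$ in the same way with $\nabla f$ controlled by $|\nabla u|$ or $|\nabla^2\phi|$.

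First, fix $t\in[t_0-\rho^2,t_0]$ and split $f=(f-[f]_r)+[f]_r$, where $[f]_r$ is the average of $f(\cdot,t)$ over $B^\pm_r(x_0)$. The half ball is a Lipschitz domain whose shape is uniform in $r$ (for $r\le r_0=d/2$ it is $B_r(x_0)$ cut by a single plane through its center). Scaling therefore gives Poincaré and Sobolev inequalities with constants independent of $r$. The average term is bounded by Hölder:
$$\int_{B^\pm_\rho}|[f]_r|^3\lesssim \rho^3 r^{-9/2}\Big(\int_{B^\pm_r}|f|^2\Big)^{3/2}\le \rho^3 r^{-3}\mathrm{A}^{3/2}(r).$$
For the oscillation term, apply the Gagliardo--Nirenberg/Sobolev interpolation on $B^\pm_r$:
$$\int_{B^\pm_r}|f-[f]_r|^3\lesssim \Big(\int_{B^\pm_r}|f|^2\Big)^{3/4}\Big(\int_{B^\pm_r}|\nabla f|^2\Big)^{3/4}\le (r\mathrm{A}(r))^{3/4}\Big(\int_{B^\pm_r}|\nabla f|^2\Big)^{3/4}.$$
This follows from the $L^2$ bound together with $\|g\|_{L^6}\lesssim\|\nabla g\|_{L^2}$ for mean-zero $g$. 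The step is scale invariant, so no lower-order term appears.

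Next, integrate in $t$ over $(t_0-\rho^2,t_0)$ and use Hölder in time, exponents $4/3$ and $4$:
$$\int_{t_0-\rho^2}^{t_0}\Big(\int|\nabla f|^2\Big)^{3/4}\le \rho^{1/2}\Big(\int_{P_r}|\nabla f|^2\Big)^{3/4}= \rho^{1/2}(r\mathrm{B}(r))^{3/4}.$$
This gives
$$\int_{P_\rho}|f|^3\lesssim \rho^5r^{-3}\mathrm{A}^{3/2}(r)+\rho^{1/2}r^{3/2}\mathrm{A}^{3/4}(r)\mathrm{B}^{3/4}(r).$$
Multiplying by $\rho^{-2}$, the first term becomes $(\rho/r)^3\mathrm{A}^{3/2}$. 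The second becomes $\rho^{-3/2}r^{3/2}\mathrm{A}^{3/4}\mathrm{B}^{3/4}\le (r/\rho)^3\mathrm{A}^{3/4}\mathrm{B}^{3/4}$, since $r\ge\rho$. Summing over $f=u$ and $f=\nabla\phi$ yields the claim.

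The only point needing care is the uniformity of the Sobolev/Poincaré constants on $B^\pm_r(x_0)$. These sets are all dilates of a fixed half ball, so this holds. The boundary conditions play no role, because we subtract averages and never use traces.
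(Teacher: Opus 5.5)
Your argument is correct and is the standard Caffarelli--Kohn--Nirenberg interpolation: splitting off the mean on $B^{\pm}_r(x_0)$, the scale-invariant Sobolev--Poincar\'e inequality, and H\"older in time. The paper omits its own proof and refers to Lemma 6.2 of \cite{DuHuWang2019}, which uses this same argument. Your intermediate factor $(r/\rho)^{3/2}$ is slightly sharper than the stated $(r/\rho)^{3}$, so the final weakening using $\rho \leq r$ is harmless.
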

\noindent We omit the proof of this lemma, which can be shown by following the proof of Lemma 6.2 in \cite{DuHuWang2019}.  \vspace{0.2pc}

In the next lemma, we introduce a local energy estimate for the suitable weak solution $(u, \phi)$.
\begin{lem}\label{loc est} For any $0 < \rho  \leq r_0$, it satisfies 
\begin{align*}
    \mathrm{A}\left(\frac{\rho}{2}\right) + \mathrm{B}\left(\frac{\rho}{2}\right) \hspace{1.5pt}\lesssim_{K_*, \h{1pt}\phi_0}\hspace{1.5pt} \rho +  \mathrm{C}^{\frac{2}{3}}(\rho) + \mathrm{D}^{\frac{4}{3}}(\rho) + \mathrm{A}^{\frac{1}{2}}(\rho) \h{1.5pt} \mathrm{B}^{\frac{1}{2}}(\rho)\h{1.5pt}\mathrm{C}^{\frac{1}{3}}(\rho).
\end{align*}Here, $z_0 = (x_0, t_0)$ with $t_0 \geq r_0^2 + 10$.
\end{lem}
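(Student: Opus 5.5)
We plan to derive the estimate from the generalized energy inequality \eqref{energy ineq} with a localized test function. Fix $z_0=(x_0,t_0)$ with $x_0\in\mathrm H$ (the case $x_0\in\mathrm P$ is analogous and simpler, since $\phi=0$ there and only the first term of $R(\phi,\varphi)$ appears). We take $\varphi=\zeta^2$ with $\zeta$ smooth, supported in $B_\rho(x_0)\times(t_0-\rho^2,t_0]$, equal to $1$ on $P_{\rho/2}(z_0)$, and satisfying $|\nabla\zeta|\lesssim\rho^{-1}$ and $|\partial_t\zeta|+|\nabla^2\zeta|\lesssim\rho^{-2}$. We then evaluate \eqref{energy ineq} at every $T\in[t_0-\rho^2/4,t_0]$, bound each right-hand side term by the dimensionless quantities, and finally divide by $\rho$. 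The left side then controls $\mathrm A(\rho/2)+\mathrm B(\rho/2)$, after we discard the nonnegative term $\frac{h^2L_{\mathrm H}}{2}\int\varphi\sin^22\phi$.

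The interior terms are standard, as in Caffarelli--Kohn--Nirenberg and Lin. First, $\int(|u|^2+|\nabla\phi|^2)|\partial_t\varphi+\Delta\varphi|\lesssim\rho^{-2}\int_{P_\rho}(|u|^2+|\nabla\phi|^2)$, and by H\"older this is at most $\rho^{-2}|P_\rho|^{1/3}(\int|u|^3+|\nabla\phi|^3)^{2/3}$; after dividing by $\rho$ this becomes $\lesssim\mathrm C^{2/3}(\rho)$. The cubic terms $\int|u|(|u|^2+|\nabla\phi|^2)|\nabla\varphi|$ and $\int|u||\nabla\phi|^2|\nabla\varphi|$ give $\rho^{-1}\cdot\rho^{2}\mathrm C(\rho)$, and hence $\mathrm C(\rho)$ after dividing by $\rho$. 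To obtain the stated form $\mathrm A^{1/2}\mathrm B^{1/2}\mathrm C^{1/3}$, we instead split as $\|u\|_{L^3}\,\|\nabla\phi\|_{L^3}^2$ and interpolate one factor of $\|\nabla\phi\|$ through $L^\infty_tL^2_x\cap L^2_tL^6_x$. For the pressure term, since $\int u\cdot\nabla\varphi=0$ we may replace $p$ by $p-[p]_{x_0,\rho}$; H\"older with exponents $3$ and $3/2$ then gives $\rho^{-1}\cdot\rho^2\mathrm C^{1/3}\mathrm D^{2/3}$, and Young's inequality bounds this by $\mathrm C^{2/3}+\mathrm D^{4/3}$. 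The term $2h^2\int\varphi|\nabla\phi|^2\cos2\phi$ is bounded by $\rho^2\,\mathrm A(\rho)\cdot\rho$, and we absorb it into $\rho$ times a constant. This step needs care: we want the bound to depend only on the solution through the global energy bound \eqref{infty222}, and, by H\"older, it is in fact $\lesssim\rho^{3}\mathrm C^{2/3}$.

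The main obstacle is the boundary integrals on $\mathrm H$ together with $R(\phi,\varphi)$. Our treatment is as follows.

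\textbf{(a) Terms involving $\sin^2\phi$.} For $L_{\mathrm H}\int_{\mathrm H\times\{T\}}\varphi\sin^2\phi-\int\!\!\int_{\mathrm H}\partial_t\varphi\sin^2\phi$ and the term $\frac{L_{\mathrm H}^2}{4}\int_{\mathrm H}\partial_3\varphi\sin^22\phi$, we use the boundedness of $\phi$ from Proposition \ref{bounded of phi}, which gives $|\sin\phi|\le1$. The area bounds are $\rho^2$ and $\rho^{-2}\cdot\rho^2\cdot\rho^2=\rho^2$, so after dividing by $\rho$ each contributes $\lesssim\rho$. This is where the dependence on $\phi_0$ enters. (The null structure with the constant $C$ is needed only in the blow-up argument, not here.)

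\textbf{(b) Terms involving $\nabla'\phi$.} The tangential terms $\int_{\mathrm H}|\nabla'\phi|^2(\varphi+|\partial_3\varphi|)$ and $\int_{\mathrm H}\sin2\phi\,\nabla'\phi\cdot\nabla'\varphi$ we convert to bulk integrals by the trace identity used in Lemma \ref{est nabla 2 phi}:
\[
\int_{\mathrm H}g\,|\nabla'\phi|^2=-\int_\Omega\partial_3\bigl(g\,|\nabla'\phi|^2\bigr),
\]
with $g$ a cutoff. This yields bounds by $\int\varphi|\nabla\phi||\nabla^2\phi|+\rho^{-1}\int_{P_\rho}|\nabla\phi|^2$. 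We absorb the first into the left-hand side via $\frac12\int\varphi|\nabla^2\phi|^2+C\int\varphi|\nabla\phi|^2$, and the second is again $\lesssim\rho\,\mathrm C^{2/3}$ after scaling.

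\textbf{(c) The term $\int_{\mathrm P}(\partial_3\phi)^2\partial_3\varphi$.} This vanishes for $x_0\in\mathrm H$, since $\rho\le r_0=d/2$. For $x_0\in\mathrm P$ we handle it by the same trace trick in $x_3$.

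We expect step (b) to be the most delicate, because after dividing by $\rho$ the absorption must not lose a power of $\rho$. If the direct trace estimate is too lossy, we would first try using a cutoff $g$ that decays in $x_3$ on the scale $\rho$, so that $\partial_3 g\sim\rho^{-1}$ matches the $\rho^{-1}$ weight in $\mathrm C^{2/3}$.
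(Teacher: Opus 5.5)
You handle the boundary terms in (a)--(c) correctly, and essentially as the paper does. The trace identity in $x_3$ plus Young's inequality gives $\rho^{-3}\int_{P_\rho}|\nabla\phi|^2\lesssim \mathrm C^{2/3}(\rho)$, which is scale-invariant, so the fallback in your last paragraph is not needed.

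The gap is in the interior cubic term $2\int_0^T\!\int_\Omega (u\cdot\nabla\phi)\,\nabla\phi\cdot\nabla\varphi$.
\begin{itemize}
\item H\"older's inequality gives $\mathrm C(\rho)$ after dividing by $\rho$.
\item Your interpolation through $L^\infty_tL^2_x\cap L^2_tL^6_x$ does not fix this. Since $\nabla\phi$ does not vanish on the flat part of $B^\pm_\rho(x_0)$, you only have $\|\nabla\phi\|_{L^6(B^\pm_\rho)}\lesssim\|\nabla^2\phi\|_{L^2}+\rho^{-1}\|\nabla\phi\|_{L^2}$. The lower-order part leaves a remainder $\mathrm A(\rho)\,\mathrm C^{1/3}(\rho)$.
\item Neither $\mathrm C$ nor $\mathrm A\,\mathrm C^{1/3}$ is bounded by $\rho+\mathrm C^{2/3}+\mathrm D^{4/3}+\mathrm A^{1/2}\mathrm B^{1/2}\mathrm C^{1/3}$.
\item This is not cosmetic. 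In Lemma \ref{smallness of A C D} the estimate is raised to the power $3/2$ and combined with Lemma \ref{inequality of C}. Either extra term would then contribute something of order $\mathrm A^{9/4}$, which is superlinear in $\mathrm E=\mathrm A^{3/2}+\mathrm D_1^2$. The iteration cannot absorb that, because $\mathrm E$ is not small at the starting scale.
\item You also cannot subtract a spatial mean here, because this term is not of the form $(u\cdot\nabla\varphi)\times(\text{scalar})$.
\end{itemize}

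The missing idea is to move the derivative off $\phi$. Since $\operatorname{div}u=0$ and $u=0$ on $\mathrm H\cup\mathrm P$, you have $u\cdot\nabla\phi=\operatorname{div}(\phi u)$, hence $\int_\Omega(u\cdot\nabla\phi)\nabla\phi\cdot\nabla\varphi=-\int_\Omega\phi\,u\cdot\nabla(\nabla\phi\cdot\nabla\varphi)$. Now use $|\phi|\le M_{\phi_0}$ from Proposition \ref{bounded of phi}, together with $|\nabla\varphi|\lesssim\rho^{-1}\zeta$, $|\nabla^2\varphi|\lesssim\rho^{-2}$ and Young's inequality. This bounds the term by $\sigma\int\varphi|\nabla^2\phi|^2+C_\sigma\rho^{-2}\int_{P_\rho}(|u|^2+|\nabla\phi|^2)$. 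The first piece is absorbed into the left side, and the second gives $\mathrm C^{2/3}$ after dividing by $\rho$. This is where the dependence on $\phi_0$ actually enters. It does not enter in your part (a), where $|\sin\phi|\le1$ needs no bound on $\phi$.

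A related fix is needed for $(u\cdot\nabla\varphi)(|u|^2+|\nabla\phi|^2)$. Subtract the spatial means, exactly as you did for $p$, and apply Sobolev--Poincar\'e: $\bigl\||\nabla\phi|^2-[|\nabla\phi|^2]_{x_0,\rho}\bigr\|_{L^{3/2}}\lesssim\|\nabla\phi\|_{L^2}\|\nabla^2\phi\|_{L^2}$, and similarly for $|u|^2$. This yields exactly $\mathrm A^{1/2}\mathrm B^{1/2}\mathrm C^{1/3}$, with no lower-order remainder.
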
\begin{proof}[\bf Proof] Choose a smooth test function $\varphi(x, t) = \varphi_1(x) \h{0.5pt}\varphi_2(t)$ such that $\varphi_1$ is compactly supported in $B_\rho(x_0)$. It is equal to $1$ on $B_{\rho/2}(x_0)$. As for $\varphi_2$, it is non-decreasing. In addition, it is identically equal to $1$ on $[\h{0.5pt}t_0 - \frac{\rho^2}{4}, t_0\h{0.5pt}]$ and is equal to $0$ if $t \leq t_0 - \rho^2$. We can also assume that $0 \leq \varphi_i \leq 1$, where $i = 1, 2$. Meanwhile, for some positive universal constant $K_*$, it holds  \begin{align*} \rho \h{1pt} \big|\hspace{1.5pt} \nabla \varphi \hspace{.5pt} \big|  + \rho^2 \h{1pt} \big|\hspace{.5pt} \partial_t \varphi \hspace{.5pt}\big| + \rho^2 \h{1pt}\big|\hspace{.5pt} \nabla^2 \varphi \hspace{.5pt} \big| \hspace{1.5pt}\leq\hspace{1.5pt} K_* \h{20pt} \text{ on $P_\rho(z_0)$}.
\end{align*}Replacing the test function in \eqref{energy ineq} with $\varphi^2$ yields, for any $T \in [\h{.5pt} t_0 - \frac{\rho^2}{4}, t_0 \h{.5pt}]$, that
\begin{align}\label{energy in lem 5}
      \rho^{-1}\int_{\Omega \times \{\h{0.5pt}T\h{0.5pt}\}} \varphi^2 \left( \h{1pt} |u|^2 + |\nabla \phi|^2\right)  &+ \rho^{-1}\int_0^T\int_\Omega \varphi^2\left(\h{1pt} | \nabla u |^2 + |\nabla^2 \phi |^2 \right) \\[1mm] & \h{-45pt} \leq K_* \hspace{.5pt}\rho + K_*\h{0.5pt}\rho^{-3} \int_{P_\rho(z_0)} \big|\hspace{.5pt} u \hspace{.5pt} \big|^2 + \big|\hspace{.5pt} \nabla \phi \hspace{.5pt} \big|^2  +  2 \rho^{-1} \int_0^T \int_\Omega \left(u \cdot \nabla \phi\right) \nabla \phi \cdot \nabla \varphi^2  \nonumber\\[1mm]
     &\h{-45pt} + K_*\h{0.5pt}\rho^{-2} \int_{P_\rho(z_0)}  \big|\hspace{.5pt} u \hspace{.5pt} \big| \h{1pt}\Big[ \h{1pt} \big|\hspace{.5pt} p - [\h{.8pt} p \h{.5pt}]_{x_0,\, \rho} \hspace{.8pt}\big| + \big| \h{1pt} |\hspace{.5pt} u \hspace{.5pt} |^2 -  [\h{0.8pt}|\hspace{.8pt} u \hspace{0.8pt}|^2  \h{0.8pt}]_{x_0,\, \rho} \h{1pt}\big| + \big| \h{1pt} |\hspace{.5pt} \nabla \phi \hspace{.5pt} |^2 -  [\h{0.8pt}|\hspace{.5pt} \nabla \phi \hspace{.5pt} |^2 \h{0.8pt}]_{x_0,\, \rho} \h{1pt}\big| \h{1pt}\Big]  \nonumber\\[1mm]
     &\h{-45pt} + K_*\h{0.5pt}\rho^{-2} \int_0^T\int_\text{H} | \nabla' \phi |^2 \h{1pt} \varphi - \rho^{-1}\int_0^T \Big[\h{1pt}\int_\text{P}   \left(\partial_3  \phi \right)^2 \h{1pt} \partial_3 \varphi^2 -  \frac{L_\text{H}^2}{4} \int_{\mathrm H} \partial_3 \varphi^2 \h{1pt} \sin^2 2 \phi \h{1pt} \Big]. \nonumber
\end{align}

\noindent Here, we also use the incompressibility condition of $u$. \vspace{0.2pc}

Using the H\"older inequality, we have  
\begin{align*}
      \rho^{-2} \int_{P_\rho(z_0)} \big|\hspace{.5pt} u \hspace{.5pt} \big| \h{1pt}\big|\hspace{.8pt} p - [\h{.5pt} p \h{.5pt}]_{x_0,\, \rho} \hspace{.8pt}\big|  \hspace{1.5pt}\lesssim_{K_*}\hspace{1.5pt} \mathrm{C}^{\frac{1}{3}}(\rho) \h{1.5pt}\mathrm{D}^{\frac{2}{3}}(\rho).
\end{align*}

Still by the H\"older inequality, it turns out
\begin{align*}
      &\rho^{-2} \int_{P_\rho(z_0)} \big|\hspace{.5pt} u \hspace{.5pt} \big|\h{1pt} \Big[\h{1pt}\big|\h{1pt} |\hspace{.5pt} u \hspace{.5pt} |^2 -  [\h{1pt} |\hspace{.5pt} u \hspace{.5pt} |^2 \h{1pt}]_{x_0,\, \rho} \h{1pt}\big| + \big|\h{1pt} |\hspace{.5pt} \nabla \phi \hspace{.5pt} |^2 -  [\h{1pt} |\hspace{.5pt} \nabla \phi \hspace{.5pt} |^2 \h{1pt}]_{x_0,\, \rho} \h{1pt}\big| \h{1pt}\Big] \\[1mm]
&\hspace{6pt}\lesssim_{K_*}\hspace{1.5pt} \rho^{-2} \int_{t_0 - \rho^2}^{t_0} \big\|\h{.5pt} u \h{.5pt}\big\|_{L^3(B^{\pm}_\rho(x_0) )}  \h{1.5pt}\Big[\h{1pt} \left\|\h{1pt} |\hspace{.5pt} u \hspace{.5pt} |^2 - [\h{1pt} |\hspace{.5pt} u \hspace{.5pt} |^2 \h{1pt}]_{x_0,\, \rho} \h{1pt}\right\|_{L^{\frac{3}{2}}(B^{\pm}_\rho(x_0))} + \left\|\h{1pt} |\hspace{.5pt} \nabla \phi \hspace{.5pt} |^2 -  [\h{1pt} |\hspace{.5pt} \nabla \phi \hspace{.5pt} |^2 \h{1pt}]_{x_0,\, \rho} \h{1pt}\right\|_{L^{\frac{3}{2}}(B^{\pm}_\rho(x_0))} \Big].
\end{align*}
Applying the Sobolev-Poincar\'{e} inequality and H\"{o}lder inequality, we obtain
\begin{align*}
    \Big\|\h{1pt} |\hspace{.5pt} u \hspace{.5pt} |^2 - [\h{1pt} |\hspace{.5pt} u \hspace{.5pt} |^2 \h{1pt}]_{x_0,\, \rho} \h{1pt}\Big\|_{L^{\frac{3}{2}}(B^{\pm}_\rho(x_0))} &+ \Big\|\h{1pt} |\hspace{.5pt} \nabla \phi \hspace{.5pt} |^2 -  [\h{1pt} |\hspace{.5pt} \nabla \phi \hspace{.5pt} |^2 \h{1pt}]_{x_0,\, \rho} \h{1pt}\Big\|_{L^{\frac{3}{2}}(B^{\pm}_\rho(x_0))}\\[1.5mm]& \h{-62pt}  \lesssim_{K_*} \left( \int_{B^{\pm}_\rho(x_0)} \big|\h{.5pt} u \h{.5pt} \big|^2 \right)^{\frac{1}{2}} \left( \int_{B^{\pm}_\rho(x_0)} \big|\h{.5pt} \nabla u \h{.5pt} \big|^2 \right)^{\frac{1}{2}} + \left( \int_{B^{\pm}_\rho(x_0)} \big|\h{.5pt} \nabla \phi \h{.5pt} \big|^2 \right)^{\frac{1}{2}} \left(\int_{B^{\pm}_\rho(x_0)} \big|\h{.5pt} \nabla^2 \phi \h{.5pt} \big|^2 \right)^{\frac{1}{2}}. 
\end{align*}
Therefore, 
\begin{align*}
    \rho^{-2} \int_{P_\rho(z_0)} \big|\hspace{.5pt} u \hspace{.5pt} \big|\h{1pt} \Big[\h{1pt}\big|\h{1pt} |\hspace{.5pt} u \hspace{.5pt} |^2 -  [\h{1pt} |\hspace{.5pt} u \hspace{.5pt} |^2 \h{1pt}]_{x_0,\, \rho} \h{1pt}\big| + \big|\h{1pt} |\hspace{.5pt} \nabla \phi \hspace{.5pt} |^2 -  [\h{1pt} |\hspace{.5pt} \nabla \phi \hspace{.5pt} |^2 \h{1pt}]_{x_0,\, \rho} \h{1pt}\big| \h{1pt}\Big]\h{1.5pt}\lesssim_{K_*}\h{1.5pt}  \mathrm{A}^{\frac{1}{2}}(\rho) \h{1.5pt} \mathrm{B}^{\frac{1}{2}}(\rho) \h{1.5pt}\mathrm{C}^{\frac{1}{3}}(\rho).
\end{align*}

Recall the boundary conditions in \eqref{bc}. We can apply the integration by parts to obtain \begin{align*}
    & K_* \rho^{-2} \int_0^T\int_\text{H} | \nabla' \phi |^2 \h{1pt} \varphi  - \rho^{-1}\int_0^T \left(\int_\text{P}   \left(\partial_3  \phi \right)^2 \h{1pt} \partial_3 \varphi^2 -  \frac{L_\text{H}^2}{4} \int_{\mathrm H} \partial_3 \varphi^2 \h{1pt} \sin^2 2 \phi \right)   \\[2mm]
    &\h{30pt}=\h{1.5pt}  - K_* \rho^{-2} \int_0^T \int_{\Omega}  2 \h{1pt}\varphi  \h{0.5pt}\nabla' \phi \cdot \partial_{3} \nabla' \phi  +   \big|\hspace{.5pt} \nabla' \phi \hspace{.5pt}\big|^2  \h{1pt}\partial_3 \varphi - \rho^{-1} \int_0^T \int_\Omega \partial_3 \left( \left(\partial_3  \phi \right)^2 \h{1pt} \partial_3 \varphi^2 \right).
\end{align*}
Since 
\begin{align*}
    - \int_\Omega \partial_3 \left( \left(\partial_3  \phi \right)^2 \h{1pt} \partial_3 \varphi^2 \right) = - \int_{\Omega} 4 \left(\partial_3 \phi\right) \left(\partial_{33} \phi\right) \h{1pt} \varphi \h{1pt} (\partial_3 \varphi) + 2 \left(\partial_3 \phi\right)^2  \big[  \left(\partial_3 \varphi\right)^2 +  \varphi \h{1pt}  \partial_{33} \varphi \h{1pt} \big], 
\end{align*}
we then obtain by the last two equalities and Young's inequality that
\begin{align*}
     K_* \rho^{-2} \int_0^T\int_\text{H} | \nabla' \phi |^2 \h{1pt} \varphi &- \rho^{-1}\int_0^T \left(\int_\text{P}   \left(\partial_3  \phi \right)^2 \h{1pt} \partial_3 \varphi^2 -  \frac{L_\text{H}^2}{4} \int_{\mathrm H} \partial_3 \varphi^2 \h{1pt} \sin^2 2 \phi \right)    \\[1.5mm]
    &  \leq \sigma \rho^{-1} \int_0^T \int_\Omega \varphi^2 \big| \nabla^2 \phi \big|^2  + c_\sigma\h{1pt} \rho^{-3}  \int_{P_\rho(z_0)} \big| \nabla \phi \big|^2.
\end{align*}Here, $\sigma > 0$ is a smll positive number.\vspace{0.2pc}

Now we deal with the term
\begin{align*}
    \rho^{-1} \int_0^T \int_\Omega (u \cdot \nabla \phi) \h{1pt}\nabla \phi \cdot \nabla \varphi^2.
\end{align*}

\noindent Using the boundary condition $u = 0$ on $\mathrm{P} \cup \mathrm{H}$ and the incompressibility condition of $u$, we perform integration by parts and obtain
\begin{align*}
    \rho^{-1} \int_0^T \int_\Omega (u \cdot \nabla \phi) \h{1pt}\nabla \phi \cdot \nabla \varphi^2  = - \rho^{-1} \int_0^T \int_\Omega \phi \h{1pt} u   \h{1pt}\cdot \nabla \big(\nabla \phi \cdot \nabla \varphi^2 \big).
\end{align*}

\noindent By the boundedness of $\phi$ in Proposition \ref{bounded of phi}, it turns out
\begin{align*}
    \rho^{-1} \left|\h{1.5pt} \int_0^T \int_\Omega (u \cdot \nabla \phi) \h{1pt} \nabla \phi \cdot \nabla \varphi^2 \h{1pt}\right| \h{1.5pt}&\leq\h{1.5pt} \sigma \rho^{-1} \int_0^T \int_{\Omega} \big|\h{.5pt} \nabla^2 \phi \h{.5pt}\big|^2 \varphi^2 + M_{\phi_0} \h{1pt}\sigma^{-1}\rho^{-3}\int_{P_\rho(z_0)} \big|\h{.5pt} u \h{.5pt}\big|^2 + \big|\h{.5pt} \nabla \phi \h{.5pt}\big|^2.
\end{align*}Here, $\sigma > 0$ is a small constant.

Apply all the above arguments to the right-hand side of \eqref{energy in lem 5} and take $\sigma$ small. It follows 
\begin{align}\label{temp energy}
    \rho^{-1}\int_{\Omega \times \{\h{0.5pt}T\h{0.5pt}\}} \varphi^2 \left( \h{1pt} |u|^2 + |\nabla \phi|^2\right)  &+ \rho^{-1}\int_0^T\int_\Omega \varphi^2\left(\h{1pt} | \nabla u |^2 + |\nabla^2 \phi |^2 \right)  \\[1mm] 
    & \h{-46pt} \lesssim_{K_*, \h{1pt}\phi_0}\hspace{1.5pt}\rho + \mathrm{C}^{\frac{1}{3}}(\rho) \h{1.5pt}\mathrm{D}^{\frac{2}{3}}(\rho) +  \mathrm{A}^{\frac{1}{2}}(\rho) \h{1.5pt} \mathrm{B}^{\frac{1}{2}}(\rho) \h{1.5pt}\mathrm{C}^{\frac{1}{3}}(\rho) + \rho^{-3} \int_{P_\rho(z_0)} \big|\hspace{.5pt} u \hspace{.5pt} \big|^2 + \big|\hspace{.5pt} \nabla \phi \hspace{.5pt} \big|^2 \nonumber \\[1mm] 
    & \h{-46pt} \lesssim_{K_*, \h{1pt}\phi_0}\hspace{1.5pt}\rho + \mathrm{C}^{\frac{2}{3}}(\rho) + \mathrm{C}^{\frac{1}{3}}(\rho) \h{1.5pt}\mathrm{D}^{\frac{2}{3}}(\rho) +  \mathrm{A}^{\frac{1}{2}}(\rho) \h{1.5pt} \mathrm{B}^{\frac{1}{2}}(\rho) \h{1.5pt}\mathrm{C}^{\frac{1}{3}}(\rho). \nonumber
\end{align}
The proof is completed by taking supreme over $T \in [\h{0.5pt} t_0 - \frac{\rho^2}{4}, t_0 \h{0.5pt}]$ in \eqref{temp energy}. 
\end{proof} 

Recall \eqref{D, D_1}. To estimate $\mathrm{D}(\rho)$, it suffices to control $\mathrm{D}_1(\rho)$. 
\begin{lem} \label{inequality of D1}
For any $0 < 2 \rho \leq r \leq r_0$, it satisfies
\begin{equation*}
    \mathrm{D}_1(\rho) \hspace{1.5pt}\lesssim_{K_*}\hspace{1.5pt} \left(\dfrac{\rho}{r} \right)^{2}  \big[\h{1pt} \mathrm{D}_1(r) + \mathrm{B}^{\frac{3}{4}}(r) \h{1pt}\big] + \left(\frac{r}{\rho} \right)^{\frac{3}{2}} \big[\h{1pt} \mathrm{A}^{\frac{1}{2}}(r) \h{1pt} \mathrm{B}(r) + \mathrm{A}^{\frac{3}{4}}(r) \h{1pt}\mathrm{B}^{\frac{3}{4}}(r) \h{1pt} \big].
\end{equation*}
\end{lem}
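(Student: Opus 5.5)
We will prove the pressure decay estimate by the standard Seregin-type splitting of the pressure near a flat boundary. The pressure in the momentum equation is driven by $f := -\partial_t u - u\cdot\nabla u + \Delta u - \nabla\cdot(\nabla\phi\odot\nabla\phi)$. Using $\nabla\cdot(\nabla\phi\odot\nabla\phi) = \Delta\phi\,\nabla\phi + \tfrac12\nabla|\nabla\phi|^2$, the gradient part is absorbed into a modified pressure, and the genuine forcing is
\[
F := -u\cdot\nabla u - \Delta\phi\,\nabla\phi .
\]
On the half-cylinder $Q_r = B^{\pm}_r(x_0)\times(t_0-r^2,t_0)$, with $B^{\pm}_r$ flattened and smoothed to a domain with $C^2$ boundary contained in $B^{\pm}_r$, we solve for a.e. $t$ the Stokes problem
\[
\partial_t v - \Delta v + \nabla q = F, \qquad \operatorname{div} v = 0,
\]
with zero initial and boundary data. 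We then set $w := u - v$ and $\pi := p - q$ (modulo the $|\nabla\phi|^2$ correction), so that $(w,\pi)$ solves the homogeneous Stokes system in $Q_r$ with $w = 0$ on the flat part of the boundary.

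\textbf{Step 1 (estimate of the inhomogeneous part).} We use the Solonnikov coercive estimate for the nonstationary Stokes system in mixed norms $L^{4/3}_tL^{9/8}_x$. This gives $\|\nabla q\|_{L^{4/3}L^{9/8}(Q_r)} \lesssim \|F\|_{L^{4/3}L^{9/8}(Q_r)}$. By H\"older's inequality,
\[
\|u\cdot\nabla u\|_{L^{9/8}} \le \|u\|_{L^{18/7}}\,\|\nabla u\|_{L^2},
\]
and interpolating $L^{18/7}$ between $L^2$ and $L^6$ (Sobolev and Poincar\'e on the half ball) gives a bound by $\mathrm{A}^{1/2}(r)\,\mathrm{B}(r)$ after scaling. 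Similarly, $\|\Delta\phi\,\nabla\phi\|_{L^{9/8}} \le \|\nabla^2\phi\|_{L^2}\,\|\nabla\phi\|_{L^{18/7}}$, and interpolating $\nabla\phi$ between $L^2$ and $L^6$ yields a bound of the form $\mathrm{A}^{1/2}\mathrm{B}$ or $\mathrm{A}^{3/4}\mathrm{B}^{3/4}$. Measured on the smaller cylinder $P_\rho$, this contribution costs the factor $(r/\rho)^{3/2}$ from the normalization of $\mathrm{D}_1$, which accounts for the second bracket in the statement.

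\textbf{Step 2 (regular part).} For the homogeneous solution we use the local boundary regularity for the Stokes system near a flat boundary, as in Seregin \cite{Seregin2002}. This gives $\nabla\pi \in L^{4/3}_tL^\infty_x(Q_{r/2})$, controlled by the $L^{4/3}L^{9/8}$ norms of $\nabla\pi$ and $\nabla w$ on $Q_r$. Hence
\[
\int_{t_0-\rho^2}^{t_0}\|\nabla\pi\|_{L^{9/8}(B_\rho^\pm)}^{4/3} \lesssim \rho^{\frac{8}{3}\cdot\frac{3}{2}}\,r^{-4}\bigl[\cdots\bigr],
\]
and after normalization this is bounded by $(\rho/r)^2\bigl[\mathrm{D}_1(r) + \mathrm{D}_1^{q}(r) + \text{(norm of }\nabla w)\bigr]$. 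Next, $\nabla\pi$ on $Q_r$ is controlled by $\nabla p + \nabla q$, i.e. by $\mathrm{D}_1(r)$ plus the Step 1 bound. The term $\nabla w$ is bounded by $\nabla u + \nabla v$; here $\|\nabla u\|_{L^{4/3}L^{9/8}}$ is controlled by $\mathrm{B}^{3/4}(r)$ via H\"older's inequality, with the scaling exponents matching, which produces the $\mathrm{B}^{3/4}$ term in the first bracket.

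\textbf{Step 3 (conclusion and main obstacle).} Adding the two pieces on $P_\rho$ and using $2\rho\le r$ gives the claimed inequality. The main obstacle is Step 2: one needs a boundary Stokes regularity result that is valid only on the flat portion of the boundary and that tolerates an arbitrary pressure. This requires either the Seregin local estimate with the mixed $L^{4/3}L^{9/8}$ norms, or replacing $B^\pm_r$ by a smooth subdomain, using interior-in-time cutoffs, and handling the time derivative of $w$ through the equation. Everything else is bookkeeping of scaling exponents.
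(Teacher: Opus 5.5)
Your proposal is essentially the paper's argument: split the pressure via a nonstationary Stokes problem with the nonlinear forcing and zero data (Solonnikov's coercive estimate), bound that forcing by H\"older and Gagliardo--Nirenberg through $L^{18/7}$, and control the homogeneous part by Seregin's local flat-boundary estimate, with $\nabla w=\nabla u-\nabla v$ producing the $\mathrm{B}^{3/4}(r)$ term. The paper uses Seregin's Proposition~2 with $\nabla q_2\in L^{3/2}_tL^{9}_x$, and $L^9_x$ is exactly what yields the factor $(\rho/r)^2$, so your $L^\infty_x$ is not needed.

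Note that $(\int|\nabla p|^{9/8})^{4/3}=\|\nabla p\|_{L^{9/8}}^{3/2}$, so $\mathrm{D}_1$ is built on the $L^{3/2}_tL^{9/8}_x$ norm. Every mixed norm in your argument should therefore carry time exponent $3/2$ rather than $4/3$; your scaling factor $\rho^{\frac{8}{3}\cdot\frac{3}{2}}$ already implicitly uses $3/2$. Splitting off $\tfrac12\nabla|\nabla\phi|^2$ into the pressure is harmless but unnecessary, since $|\nabla\cdot(\nabla\phi\odot\nabla\phi)|\lesssim|\nabla\phi|\,|\nabla^2\phi|$ can be estimated directly.
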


\begin{proof}[\bf Proof]

Utilizing the H\"older inequality, we have
\begin{align*}
    &\int_{t_0 - \rho^2}^{t_0} \big\| \h{1pt} u \cdot \nabla  u + \nabla \cdot \big(\nabla \phi \odot \nabla  \phi \big) \h{1pt}\big\|^{\frac{3}{2}}_{L^{\frac{9}{8}}(B^{\pm}_{\rho}(x_0))} \\[1mm]
    &\h{30pt}\lesssim_{K_*} \int_{t_0 - \rho^2}^{t_0} \big\| \h{0.5pt}u\h{0.5pt}\big\|^{\frac{3}{2}}_{L^{\frac{18}{7}}(B^{\pm}_{\rho}(x_0))} \h{1pt}\big\| \h{0.5pt}\nabla u \h{0.5pt}\big\|^{\frac{3}{2}}_{L^{2}(B^{\pm}_{\rho}(x_0))} + \big\| \h{0.5pt}\nabla \phi \h{0.5pt}\big\|^{\frac{3}{2}}_{L^{\frac{18}{7}}(B^{\pm}_{\rho}(x_0))} \h{1pt}\big\| \h{0.5pt}\nabla^2 \phi \h{0.5pt}\big\|^{\frac{3}{2}}_{L^{2}(B^{\pm}_{\rho}(x_0))}.
\end{align*}

\noindent To control the $L^{\frac{18}{7}}$-norms above, we apply the Gagliardo-Nirenberg inequality and get \begin{align*}
    &\h{10pt}\big\| \h{0.5pt}u\h{0.5pt}\big\|_{L^{\frac{18}{7}}(B^{\pm}_{\rho}(x_0))}\lesssim_{K_*} \big\| \h{0.5pt}\nabla u\h{0.5pt}\big\|^{\frac{1}{3}}_{L^{2}(B^{\pm}_{\rho}(x_0))} \h{1pt}\big\| \h{0.5pt}u\h{0.5pt}\big\|^{\frac{2}{3}}_{L^{2}(B^{\pm}_{\rho}(x_0))} + \rho^{- \frac{1}{3}} \big\|\h{.5pt} u \h{.5pt} \big\|_{L^2(B^\pm_\rho(x_0))}, \\[1mm]
     &\big\| \h{0.5pt}\nabla \phi\h{0.5pt}\big\|_{L^{\frac{18}{7}}(B^{\pm}_{\rho}(x_0))} \lesssim_{K_*} \big\| \h{0.5pt}\nabla^2 \phi\h{0.5pt}\big\|^{\frac{1}{3}}_{L^{2}(B^{\pm}_{\rho}(x_0))} \h{1pt}\big\| \h{0.5pt}\nabla \phi\h{0.5pt}\big\|^{\frac{2}{3}}_{L^{2}(B^{\pm}_{\rho}(x_0))} + \rho^{- \frac{1}{3}} \big\|\h{.5pt} \nabla \phi \h{.5pt} \big\|_{L^2(B^\pm_\rho(x_0))}.
\end{align*}

\noindent Then it follows that
\begin{align*}
    &\int_{t_0 - \rho^2}^{t_0} \big\| \h{1pt} u \cdot \nabla  u + \nabla \cdot \big(\nabla \phi \odot \nabla  \phi \big) \h{1pt}\big\|^{\frac{3}{2}}_{L^{\frac{9}{8}}(B^{\pm}_{\rho}(x_0))} \\[2mm]
    &\h{30pt}\lesssim_{K_*} \int_{t_0 - \rho^2}^{t_0} \big\| \h{0.5pt} \nabla u\h{0.5pt}\big\|^{2}_{L^{2}(B^{\pm}_{\rho}(x_0))} \h{1pt}\big\| \h{0.5pt} u \h{0.5pt}\big\|_{L^{2}(B^{\pm}_{\rho}(x_0))} + \rho^{- \frac{1}{2}}\big\| \h{0.5pt} \nabla u \h{0.5pt}\big\|^{\frac{3}{2}}_{L^{2}(B^{\pm}_{\rho}(x_0))} \h{1pt}\big\| \h{0.5pt} u \h{0.5pt}\big\|^{\frac{3}{2}}_{L^{2}(B^{\pm}_{\rho}(x_0))}\\[2mm]
    &\h{43pt}+ \int_{t_0 - \rho^2}^{t_0} \big\| \h{0.5pt} \nabla^2 \phi \h{0.5pt}\big\|^{2}_{L^{2}(B^{\pm}_{\rho}(x_0))} \h{1pt}\big\| \h{0.5pt} \nabla \phi \h{0.5pt}\big\|_{L^{2}(B^{\pm}_{\rho}(x_0))} + \rho^{- \frac{1}{2}}\big\| \h{0.5pt} \nabla^2 \phi \h{0.5pt}\big\|^{\frac{3}{2}}_{L^{2}(B^{\pm}_{\rho}(x_0))} \h{1pt}\big\| \h{0.5pt} \nabla \phi \h{0.5pt}\big\|^{\frac{3}{2}}_{L^{2}(B^{\pm}_{\rho}(x_0))}.
\end{align*}

\noindent Therefore, 
\begin{align*}
    \left\|\h{1pt} u \cdot \nabla u + \nabla \cdot \big(\nabla \phi \odot \nabla  \phi \big) \h{1pt} \right\|_{\frac{9}{8}, \frac{3}{2}, P_\rho(z_0)} \hspace{1.5pt}\lesssim_{K_*}\hspace{1.5pt} \rho \h{1pt} \mathrm{A}^{\frac{1}{3}}(\rho) \h{1pt} \mathrm{B}^{\frac{2}{3}}(\rho) + \rho \h{1pt} \mathrm{A}^{\frac{1}{2}}(\rho) \h{1pt}\mathrm{B}^{\frac{1}{2}}(\rho).
\end{align*}

Let $\big(v, q_1 \big)$ be a solution to the following initial boundary value problem:
\begin{equation}\label{eq 11} \left\{ \begin{aligned}
    \partial_t v - \Delta v + \nabla q_1 &= - u \cdot \nabla u - \nabla \cdot \big(\nabla \phi \odot \nabla  \phi \big) \quad  \h{9pt} \text{in } P_{\rho}(z_0); \\[.5mm]
    \operatorname{div} v &= 0 \quad\hspace{127pt} \text{in } P_{\rho}(z_0); \\[.5mm]
    v &= 0 \quad\hspace{127pt} \text{on } \mathscr{P} P_{\rho}(z_0).
\end{aligned} \right. \end{equation} 
Here, $\mathscr{P} P_{\rho}(z_0)$ denotes the parabolic boundary of $\mathscr{P} P_{\rho}(z_0)$. By Theorem 1.1 in \cite{Solonnikov2003}, 
\begin{align}\label{est 11}
     \rho^{-2} \| \h{0.5pt} v \h{0.5pt}\|_{\frac{9}{8}, \frac{3}{2}, P_\rho(z_0)} + \rho^{-1} \|\h{0.5pt} \nabla v \h{0.5pt}\|_{\frac{9}{8},\frac{3}{2}, P_\rho(z_0)} +   \|\h{0.5pt} \nabla q_1 \h{0.5pt}\|_{\frac{9}{8}, \frac{3}{2}, P_\rho(z_0)}  \lesssim_{K_*}  \rho\h{1pt}\mathrm{A}^{\frac{1}{3}}(\rho) \h{1pt} \mathrm{B}^{\frac{2}{3}}(\rho) + \rho\h{1pt}\mathrm{A}^{\frac{1}{2}}(\rho) \h{1pt} \mathrm{B}^{\frac{1}{2}}(\rho). 
\end{align}

Define $w := u - v$ and $q_2 := p - [\h{0.5pt}p\h{0.5pt}]_{x_0, \rho} - q_1$. It then holds
\begin{equation}\label{eq 12} \left\{ \begin{aligned}
    \partial_t w - \Delta w + \nabla q_2 &= 0 \quad \h{5.5pt}\text{in } P_{\rho}(z_0); \\[1mm]
    \operatorname{div} w &= 0 \quad \h{6pt} \text{in } P_{\rho}(z_0); \\[1mm]
    w &= 0 \quad \h{6pt}\text{on } \Big\{ \partial B^{\pm}_{\rho}(x_0) \cap \mathrm{H} \Big\} \times [\h{0.5pt}t_0 - \rho^2, t_0\h{0.5pt}].
\end{aligned} \right. \end{equation}
Using Proposition 2 in \cite{Seregin2003}, we obtain  
\begin{align*}
    \|\h{0.5pt} \nabla q_2 \h{0.5pt}\|_{9, \frac{3}{2}, P_{ \rho\h{0.5pt}/\h{0.5pt}2}(z_0)} \h{1.5pt}\lesssim_{K_*}\h{1.5pt} \rho^{- \frac{7}{3}} \left[\h{1pt} \rho^{-2} \|\h{0.5pt} w \h{0.5pt}\|_{\frac{9}{8}, \frac{3}{2}, P_{\rho}(z_0)} + \rho^{-1} \|\h{0.5pt} \nabla w \h{0.5pt}\|_{\frac{9}{8}, \frac{3}{2}, P_{\rho}(z_0)} + \rho^{-1} \|\h{0.5pt}q_2 \h{0.5pt}\|_{\frac{9}{8}, \frac{3}{2}, P_{\rho}(z_0)} \h{1pt}\right].
\end{align*}
We continue to estimate the right-hand side by the triangle and the Poincar\'e inequalities. Hence,
\begin{align*}
     \rho^{\frac{7}{3}} \|\h{0.5pt} \nabla q_2 \h{0.5pt}\|_{9, \frac{3}{2}, P_{ \rho\h{0.5pt}/\h{0.5pt}2}(z_0)}  \lesssim_{K_*}    \rho^{-1}\|\h{0.5pt} \nabla u \h{0.5pt}\|_{\frac{9}{8}, \frac{3}{2}, P_{\rho}(z_0)} + \|\h{0.5pt} \nabla p \h{0.5pt}\|_{\frac{9}{8}, \frac{3}{2}, P_{\rho}(z_0)} + \rho^{-1}\|\h{0.5pt} \nabla v \h{0.5pt}\|_{\frac{9}{8}, \frac{3}{2}, P_{\rho}(z_0)} +  \|\h{0.5pt} \nabla q_1 \h{0.5pt}\|_{\frac{9}{8}, \frac{3}{2}, P_{\rho}(z_0)}.
\end{align*} Here, we also assume $[\h{0.5pt}q_1\h{0.5pt}]_{x_0, \rho} = 0$. Note that the H\"older inequality infers
\begin{align*}
    \|\h{1pt} \nabla u \h{1pt}\|_{\frac{9}{8}, \frac{3}{2}, P_\rho(z_0)} + \|\h{1pt} \nabla^2 \phi \h{1pt}\|_{\frac{9}{8},  \frac{3}{2}, P_\rho(z_0)} \hspace{1.5pt}\lesssim_{K_*}\hspace{1.5pt} \rho^2 \h{1pt}\mathrm{B}^{\frac{1}{2}}(\rho).
\end{align*}
It then turns out, from the above arguments, that
\begin{align*}
   \rho^{\frac{4}{3}} \|\h{0.5pt} \nabla q_2 \h{0.5pt}\|_{9, \frac{3}{2}, P_{\rho\h{0.5pt}/\h{0.5pt}2}(z_0)}  \lesssim_{K_*}  \mathrm{B}^{\frac{1}{2}}(\rho) + \mathrm{D}_1^{\frac{2}{3}}(\rho) + \mathrm{A}^{\frac{1}{3}}(\rho)\h{1.5pt} \mathrm{B}^{\frac{2}{3}}(\rho) + \mathrm{A}^{\frac{1}{2}}(\rho) \h{1.5pt} \mathrm{B}^{\frac{1}{2}}(\rho).
\end{align*}

For any $0 < 2\rho \leq r \leq r_0$, we have
\begin{align*}
    \mathrm{D}_1(\rho) &\h{1.5pt}\lesssim_{K_*}\h{1.5pt} \rho^{-\frac{3}{2}} \int_{t_0 - \rho^2}^{t_0} \left( \int_{B^{\pm}_{\rho}(x_0)} \big|\hspace{.5pt} \nabla q_1 \hspace{.5pt}\big|^{\frac{9}{8}} \right)^{\frac{4}{3}} + \rho^2 \int_{t_0 - \rho^2}^{t_0} \left( \int_{B^{\pm}_{\rho}(x_0)} \big|\hspace{.5pt} \nabla q_2 \hspace{.5pt}\big|^{9} \right)^{\frac{1}{6}} \\[2mm]
    &\hspace{1.5pt}\lesssim_{K_*}\hspace{1.5pt} \rho^{-\frac{3}{2}} \int_{t_0 - r^2}^{t_0} \left( \int_{B^{\pm}_{r}(x_0)} \big|\hspace{.5pt} \nabla q_1 \hspace{.5pt}\big|^{\frac{9}{8}} \right)^{\frac{4}{3}} + \rho^2 \int_{t_0 - r^2/4}^{t_0} \left( \int_{B^{\pm}_{r\h{0.5pt}/\h{0.5pt}2}(x_0)} \big|\hspace{.5pt} \nabla q_2 \hspace{.5pt}\big|^{9} \right)^{\frac{1}{6}}. \end{align*} Our estimates for $q_1$ and $q_2$ above then induce \begin{align*}
    \mathrm D_1(\rho) &\hspace{1.5pt}\lesssim_{K_*}\hspace{1.5pt} \left(\frac{r}{\rho}\right)^{\frac{3}{2}} \big[ \h{1pt}\mathrm{A}^{\frac{1}{2}}(r) \h{1.5pt} \mathrm{B}(r) + \mathrm{A}^{\frac{3}{4}}(r) \h{1.5pt} \mathrm{B}^{\frac{3}{4}}(r) \h{1pt}\big] + \left(\frac{\rho}{r}\right)^2 \big[ \h{1pt} \mathrm{B}^{\frac{3}{4}}(r) + \mathrm{D}_1(r)  \h{1pt} \big] .
\end{align*}

\noindent The desired is thus obtained.
\end{proof}\vspace{0.2pc}

\noindent \textbf{III. The smallness assumptions.} This part is devoted to verifying some smallness assumptions that will be used in Section \ref{L infty of u phi}. 

\begin{lem} \label{smallness of A C D}
For any $\epsilon > 0$ suitably small, there exists a radius $\rho_\epsilon < r_\epsilon$ such that
\begin{align*}
    \max \Bigl\{ \mathrm{A}(\rho_\epsilon),\ \mathrm{C}(\rho_\epsilon),\ \mathrm{D}_1(\rho_\epsilon) \Bigr\} \h{1.5pt}\leq\h{1.5pt} \epsilon \h{20pt}\text{for any $z_0 = (x_0, t_0)$ with $t_0$ suitably large.}
\end{align*}The largeness of $t_0$ depends on $\epsilon$.
\end{lem}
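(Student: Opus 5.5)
This is a standard iteration scheme in the Lin/Seregin style. We combine Lemmas \ref{dis small}, \ref{inequality of C}, \ref{loc est} and \ref{inequality of D1} into a one-step decay for a combined quantity at scale $\theta r$. Choosing $\theta$ small and then working at large times makes $\mathrm{B}$ arbitrarily small, so the quantity contracts until it falls below $\epsilon$.

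Set $\mathcal{F}(r) := \mathrm{A}(r) + \mathrm{C}^{\frac{2}{3}}(r) + \mathrm{D}_1^{\frac{4}{3}}(r)$, or some comparable combination. First I would establish an a priori bound $\mathcal{F}(r)\le M$ at a fixed initial scale $r\le r_0$, uniformly for large $t_0$. For $\mathrm{A}$ this follows from \eqref{infty222}, since $r^{-1}\int|u|^2+|\nabla\phi|^2 \lesssim r^{-1}$. For $\mathrm{C}$ it follows from Lemma \ref{inequality of C} with $\rho=r$ together with the smallness of $\mathrm{B}$. For $\mathrm{D}_1$ I would use the Stokes estimate on the whole slab, as in the proof of Lemma \ref{inequality of D1}; here the uniform bounds and \eqref{smallness} give a finite bound $M$ depending only on $r$ and the solution.

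Next, fix $\theta\in(0,\frac14)$ and let $r\le r_\epsilon$ and $\rho=\theta r$. Apply Lemma \ref{loc est} at radius $2\rho$ to control $\mathrm{A}(\rho)$. Apply Lemma \ref{inequality of C} with radii $\rho$ and $2\rho$, and with $\rho$ and $r$. Apply Lemma \ref{inequality of D1} with radii $\rho$ and $r$. The key structure is that every term either carries a positive power of $\theta$ or contains a factor $\mathrm{B}^{\alpha}$ with $\alpha>0$, and the latter is at most $\epsilon_1^{\alpha}$ by Lemma \ref{dis small} once $t_0$ is large. This yields an inequality of the form
\begin{align*}
\mathcal{F}(\theta r)\le K_*\big(\theta^{\beta}\,\mathcal{F}(r) + \theta^{-N}\,\mathcal{Q}(\mathcal{F}(r))\,\epsilon_1^{\alpha} + \theta r\big),
\end{align*}
with $\beta>0$ and $\mathcal{Q}$ a polynomial. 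Two terms need care. The first is the $\mathrm{C}^{\frac23}$ term in Lemma \ref{loc est}. I would handle it by bounding $\mathrm{C}(2\rho)$ through Lemma \ref{inequality of C} from scale $r$, namely $(\theta)^3\mathrm{A}^{\frac32}(r)+\theta^{-3}\mathrm{A}^{\frac34}\mathrm{B}^{\frac34}$, so that it gains a factor $\theta^2$. The second is the $\mathrm{D}^{\frac43}$ term, which is bounded through \eqref{D, D_1} and Lemma \ref{inequality of D1}; this gives $\theta^{\frac83}\mathrm{D}_1^{\frac43}(r)$ plus terms containing $\mathrm{B}$.

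Finally, choose $\theta$ so that $K_*\theta^{\beta}\le\frac12$. Then pick $\epsilon_1$ and $r$ small, depending on $M$ and $\theta$, and $t_0$ large via Lemma \ref{dis small}. Iterating $k$ times, staying inside the region where $\mathcal{F}\le M$, gives $\mathcal{F}(\theta^k r)\le 2^{-k}M + C\epsilon$. Taking $k$ large yields $\rho_\epsilon=\theta^k r<r_\epsilon$ with $\mathrm{A},\mathrm{C},\mathrm{D}_1\le\epsilon$. All constants are independent of $x_0\in\mathrm H\cup\mathrm P$, which gives uniformity in $z_0$.

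I expect the main obstacle to be bookkeeping the $\mathrm{C}^{\frac23}$ and $\mathrm{D}^{\frac43}$ terms of Lemma \ref{loc est}, since as written they come without a $\theta$-gain. The plan is to always pass them down from the larger scale $r$ rather than $2\rho$. A second concern is the uniform initial bound for $\mathrm{D}_1$ at a fixed scale. If that fails, I would try a bound in terms of $\int|\nabla p|^{9/8}$ obtained from global Stokes regularity together with \eqref{infty222}.
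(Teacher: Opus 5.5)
Your architecture matches the paper's. You combine Lemmas \ref{inequality of C}, \ref{loc est} and \ref{inequality of D1}, pass the $\mathrm{C}^{2/3}$ and $\mathrm{D}^{4/3}$ terms down from the larger scale, use Lemma \ref{dis small} to make $\mathrm{B}$ small, and start from crude bounds at a fixed radius. The gap is in how you close the iteration.

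Your constant $M$ necessarily depends on the starting radius $r$. The only available bounds are $\mathrm{A}(r)\le M_{u,\phi}\,r^{-1}$ and, from the global Stokes estimate, $\mathrm{D}_1(r)\lesssim r^{-3}$, so $\mathcal{F}(r)\lesssim r^{-4}$. On the other hand, Lemma \ref{dis small} gives $\mathrm{B}(\rho)\le\epsilon_1$ only for $\rho\le\epsilon_1/K_0$. The bound behind it is $\mathrm{B}(\rho)\lesssim\rho+\rho^{-1}\int_{t_0-1}^{\infty}\int_\Omega\bigl(|\nabla u|^2+|\Delta\phi+\frac{h^2}{2}\sin 2\phi|^2\bigr)$, and taking $t_0$ large kills only the second term. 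So $r$ must be chosen after $\epsilon_1$, and you cannot also choose $\epsilon_1$ ``depending on $M$''.

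This is not merely a matter of ordering. At the first step, the term $\theta^{-2}\mathrm{A}^{1/2}(r)\mathrm{B}^{1/2}(r)$ (from $\mathrm{C}^{2/3}(\theta r)$ via Lemma \ref{inequality of C}) is only known to be bounded by $\theta^{-2}(M_{u,\phi}r^{-1}\cdot K_0 r)^{1/2}=O(\theta^{-2})$, which does not become small. Hence an additive error of the form $\theta^{-N}\mathcal{Q}(M)\epsilon_1^{\alpha}$ cannot be made $\le\epsilon$, and ``staying inside the region $\mathcal{F}\le M$'' does not rescue the argument.

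The missing idea is that every term in the one-step inequality is at most linear in the combined quantity. Young's inequality then makes the recursion affine, with an additive error involving only $\mathrm{B}$ and the radius. For example, $\theta^{-2}\mathrm{A}^{1/2}\mathrm{B}^{1/2}\le\theta^{2}\mathrm{A}+\theta^{-6}\mathrm{B}$, while terms such as $\theta^{-2}\mathrm{A}\mathrm{B}$ (from $\mathrm{D}_1^{4/3}$) are linear with a small coefficient. This gives
\begin{align*}
\mathcal{F}(\theta r)\le\bigl[K\theta^{\beta}+K\theta^{-N}\mathrm{B}^{\alpha}(r)\bigr]\mathcal{F}(r)+K\theta^{-N'}\mathrm{B}^{\alpha'}(r)+K\theta r .
\end{align*}
Then $\mathcal{F}(\theta^k r)\le 2^{-k}\mathcal{F}(r)+\epsilon/2$, with no a priori bound needed along the way. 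The large initial value $\mathcal{F}(r)\lesssim r^{-4}$ is absorbed by choosing $k$ large after $r$ has been fixed. This is exactly the paper's argument, with $\mathrm{E}=\mathrm{A}^{3/2}+\mathrm{D}_1^2$ and $\theta=\epsilon$; $\mathrm{C}$ is recovered only at the last step from Lemma \ref{inequality of C}.

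For the initial bound on $\mathrm{D}_1$, the proof of Lemma \ref{inequality of D1} is the wrong model, since Seregin's local pressure estimate needs $\mathrm{D}_1$ at a larger scale. The paper instead picks $t_*\in[t_0-2r^2,t_0-r^2]$ with $\int_\Omega|\nabla u(t_*)|^2\le M_{u,\phi}r^{-2}$ using \eqref{infty222}. It then applies Solonnikov's estimate on the whole slab $\Omega\times(t_*,t_0)$ with initial datum $u(t_*)\in H^1$, which gives $\mathrm{D}_1(r)\lesssim r^{-3}$.
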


\begin{proof}[\bf Proof.] We divide the proof into 3 steps. \\[2mm]
\textbf{Step 1. Iterative argument.} For any $\rho \in (0, r_\epsilon]$ and $\theta \in (0, \frac{1}{2}\h{0.5pt}]$, it follows from Lemma \ref{loc est} that
\begin{align*}
    \mathrm{A}^{\frac{3}{2}}\left(\frac{1}{2} \h{0.5pt} \theta \rho \right) + \mathrm{B}^{\frac{3}{2}}\left(\frac{1}{2} \h{0.5pt} \theta \rho\right) \h{1.5pt}\lesssim_{K_*,\h{1pt}\phi_0}\h{1.5pt} \mathrm{C}\left(\theta \rho\right) + \mathrm{A}^{\frac{3}{2}}\left(\theta \rho\right) \mathrm{B}^{\frac{3}{2}}\left(\theta \rho\right) + \mathrm{D}_1^{2}\left(\theta \rho\right) + \left(\theta \rho\right)^{\frac{3}{2}}.
\end{align*}

\noindent Also, by Lemma \ref{inequality of D1},
\begin{align*}
    \mathrm{D}_1^2\left(\theta \rho\right) \hspace{1.5pt}\lesssim_{K_*}\hspace{1.5pt} \theta^4 \h{1pt} \Bigl[ \mathrm{B}^{\frac{3}{2}}(\rho) + \mathrm{D}_1^2(\rho) \Bigr] + \theta^{-3} \h{1pt} \Bigl[ \mathrm{A}(\rho) \h{1.5pt}\mathrm{B}^2(\rho) + \mathrm{A}^{\frac{3}{2}}(\rho) \h{1.5pt} \mathrm{B}^{\frac{3}{2}}(\rho) \Bigr].
\end{align*}

\noindent Applying the above estimates, together with Lemma \ref{inequality of C}, we obtain
\begin{align*}
    \mathrm{A}^{\frac{3}{2}}\left(\frac{1}{2} \h{0.5pt}\theta \rho\right) + \mathrm{B}^{\frac{3}{2}}\left(\frac{1}{2} \h{0.5pt} \theta \rho\right) \h{1.5pt}&\lesssim_{K_*, \h{1pt}\phi_0}\h{1.5pt} \theta^3 \mathrm{A}^{\frac{3}{2}}(\rho) + \theta^{-3} \mathrm{A}^{\frac{3}{4}}(\rho) \h{1.5pt} \mathrm{B}^{\frac{3}{4}}(\rho) + \theta^{-3} \mathrm{A}^{\frac{3}{2}}(\rho) \h{1.5pt}\mathrm{B}^{\frac{3}{2}}(\rho) \\[1mm]
    &\h{27pt}+\left(\theta \rho\right)^{\frac{3}{2}} +  \theta^4 \h{1pt} \Bigl[ \mathrm{B}^{\frac{3}{2}}(\rho) + \mathrm{D}_1^2(\rho) \Bigr] + \theta^{-3} \h{1pt} \Bigl[ \mathrm{A}(\rho) \h{1.5pt} \mathrm{B}^2(\rho) + \mathrm{A}^{\frac{3}{2}}(\rho) \h{1.5pt} \mathrm{B}^{\frac{3}{2}}(\rho) \Bigr].
\end{align*}

\noindent We introduce a new quantity
$\mathrm{E}(\rho)  :=  \mathrm{A}^{\frac{3}{2}}(\rho) + \mathrm{D}^2_1(\rho)$ and apply the Young's inequality. The last two estimates then yield \begin{align*}
    \mathrm{E}\left(\frac{1}{2} \h{0.5pt}\theta \rho\right) \h{1.5pt} \lesssim_{K_*, \h{1pt}\phi_0}\h{1.5pt} \mathrm{E}(\rho) \Bigl[ \theta^3 + \theta^{-3} \h{1pt} \mathrm{B}^{\frac{3}{2}}(\rho) + \theta^4 \Bigr] + \Bigl[ \theta^{-9} \h{1pt} \mathrm{B}^{\frac{3}{2}}(\rho) + \theta^4 \h{1pt} \mathrm{B}^{\frac{3}{2}}(\rho) + \theta^{-15} \h{1pt}\mathrm{B}^6(\rho)  \Bigr] + \left(\theta \rho\right)^{\frac{3}{2}}.
\end{align*} 
From Lemma \ref{dis small}, for any $0 < \rho \leq r_\epsilon$, there exists a $T_{\epsilon, \h{1pt}\rho}$ large enough such that
\begin{align}\label{est B}
     \mathrm{B}^{\frac{3}{4}}(\rho) < \epsilon^9 \h{15pt}\text{for any $z_0 = (x_0, t_0)$ with $t_0 \geq T_{\epsilon, \h{1pt}\rho}$}.
\end{align}

\noindent We then take $\theta = \epsilon$ and conclude from the last two estimates that \begin{align*}
    \mathrm{E}\left(\frac{1}{2} \epsilon \rho \right) \h{1.5pt}\leq\h{1.5pt} \frac{1}{2} \mathrm{E}(\rho) + \frac{1}{4} \epsilon^2 \h{15pt}\text{for any $0 < \rho \leq r_\epsilon$ and $z_0 = (x_0, t_0)$ with $t_0 \geq T_{\epsilon, \h{1pt}\rho}$.}
\end{align*}Here, the smallness of $\epsilon$ depends on $K_*$ and $\phi_0$. Iterating this inequality yields that
\begin{align}\label{ind ar}
    \mathrm{E}\left(\left(\frac{\epsilon}{2} \right)^k r_\epsilon \right) \h{1.5pt}\leq\h{1.5pt} \frac{1}{2^{k-1}} \h{1pt}\mathrm{E}\left( \frac{\epsilon}{2}\h{1pt}r_\epsilon\right) + \frac{1}{2} \epsilon^2 \h{15pt}\text{for any $z_0 = (x_0, t_0)$ with $t_0 \geq T_{k, \h{1pt} \epsilon}$.}
\end{align}Here, $T_{k, \h{1pt} \epsilon} > 0$ is a large time. \vspace{0.4pc}

\noindent \textbf{Step 2. Estimates of $\mathrm A(r_\epsilon)$ and $\mathrm D_1(r_\epsilon)$.} First, for some positive constant $M_{u, \h{1pt}\phi}$, which depends on the suitable weak solution $(u, \phi)$, we have
\begin{align}\label{est of A}
    \mathrm{A}(r_\epsilon) &\h{1.5pt}\leq\h{1.5pt} r_\epsilon^{-1} \sup_{t \h{1pt}\geq\h{1pt} 0} \int_{\Omega \times \{ t \}} \big|\h{.5pt} u \h{.5pt}\big|^2 + \big|\h{.5pt} \nabla \phi \h{.5pt}\big|^2 \h{1.5pt}\leq\h{1.5pt} M_{u, \h{1pt}\phi} \h{1.5pt}r_\epsilon^{-1}.
\end{align}

Now we bound $\mathrm{D}_1(r_\epsilon)$. Arguing by contradiction, we can find a $t_* \in \big[\h{0.5pt}t_0 - 2 r_\epsilon^2, t_0 - r_\epsilon^2\h{0.5pt}\big]$ such that
\begin{align*}
    \int_{\Omega \times \{ t_* \}} \big|\h{.5pt} \nabla u \h{.5pt}\big|^2  \leq M_{u, \h{1pt}\phi} \h{1.5pt} r_\epsilon^{-2}.
\end{align*}

\noindent Since $u(t_*, \cdot) \in H^1(\Omega) = B_{2, 2}^1(\Omega) \hookrightarrow B_{\frac{9}{8}, \frac{3}{2}}^{\frac{2}{3}}(\Omega)$, by Theorem 1.1 in \cite{Solonnikov2003}, it follows that
\begin{align*}
   r_\epsilon^{\frac{3}{2}} \h{1pt} \mathrm{D}_1(r_\epsilon) &\leq   \int_{t_*}^{t_0} \left(\int_{\Omega} \big|\hspace{.5pt} \nabla p \hspace{.5pt}\big|^{\frac{9}{8}} \right)^{\frac{4}{3}}\\[1mm]  &\lesssim_{K_*} \left(\int_{\Omega \times \{ t_* \}} \big|\h{.5pt} \nabla u \h{.5pt}\big|^2\right)^{\frac{3}{4}} + \Big\|\h{.5pt} u \cdot \nabla u + \nabla \cdot \big(\nabla \phi \odot \nabla  \phi \big)  \h{.5pt}\Big\|_{L^{\frac{3}{2}}\big(\big(t_0 - 2 \h{0.5pt} r_\epsilon^2, t_0\big) ;\, L^{\frac{9}{8}}(\Omega)\big)}^{\frac{3}{2}}.
\end{align*}

\noindent The second term in the last line above can be estimated the same as in the proof of Lemma \ref{inequality of D1}. Utilizing H\"older inequality, we obtain
\begin{align*}
    &\int_{t_0 - 2 r_\epsilon^2}^{t_0} \big\| \h{1pt} u \cdot \nabla  u + \nabla \cdot \big(\nabla \phi \odot \nabla  \phi \big) \h{1pt}\big\|^{\frac{3}{2}}_{L^{\frac{9}{8}}(\Omega)}  \lesssim_{K_*} \int_{t_0 - 2 r_\epsilon^2}^{t_0} \big\| \h{0.5pt}u\h{0.5pt}\big\|^{\frac{3}{2}}_{L^{\frac{18}{7}}(\Omega)} \h{1pt}\big\| \h{0.5pt}\nabla u \h{0.5pt}\big\|^{\frac{3}{2}}_{L^{2}(\Omega)} + \big\| \h{0.5pt}\nabla \phi \h{0.5pt}\big\|^{\frac{3}{2}}_{L^{\frac{18}{7}}(\Omega)} \h{1pt}\big\| \h{0.5pt}\nabla^2 \phi \h{0.5pt}\big\|^{\frac{3}{2}}_{L^{2}(\Omega)}.
\end{align*}

\noindent To control the $L^{\frac{18}{7}}$-norms above, we apply the Gagliardo-Nirenberg inequality. Hence, \begin{align*}
    &\h{10pt}\big\| \h{0.5pt}u\h{0.5pt}\big\|_{L^{\frac{18}{7}}(\Omega)}\lesssim_{K_*} \big\| \h{0.5pt}\nabla u\h{0.5pt}\big\|^{\frac{1}{3}}_{L^{2}(\Omega)} \h{1pt}\big\| \h{0.5pt}u\h{0.5pt}\big\|^{\frac{2}{3}}_{L^{2}(\Omega)} + \big\|\h{.5pt} u \h{.5pt} \big\|_{L^2(\Omega)}, \\[1mm]
     &\big\| \h{0.5pt}\nabla \phi\h{0.5pt}\big\|_{L^{\frac{18}{7}}(\Omega)} \lesssim_{K_*} \big\| \h{0.5pt}\nabla^2 \phi\h{0.5pt}\big\|^{\frac{1}{3}}_{L^{2}(\Omega)} \h{1pt}\big\| \h{0.5pt}\nabla \phi\h{0.5pt}\big\|^{\frac{2}{3}}_{L^{2}(\Omega)} + \big\|\h{.5pt} \nabla \phi \h{.5pt} \big\|_{L^2(\Omega)}.
\end{align*}

\noindent Then it follows that
\begin{align*}
    \int_{t_0 - 2 r_\epsilon^2}^{t_0} \big\| \h{1pt} u \cdot \nabla  u + \nabla \cdot \big(\nabla \phi \odot \nabla  \phi \big) \h{1pt}\big\|^{\frac{3}{2}}_{L^{\frac{9}{8}}(\Omega)} \lesssim_{K_*} &\int_{t_0 - 2 r_\epsilon^2}^{t_0} \big\| \h{0.5pt} \nabla u\h{0.5pt}\big\|^{2}_{L^{2}(\Omega)} \h{1pt}\big\| \h{0.5pt} u \h{0.5pt}\big\|_{L^{2}(\Omega)} + \big\| \h{0.5pt} \nabla u \h{0.5pt}\big\|^{\frac{3}{2}}_{L^{2}(\Omega)} \h{1pt}\big\| \h{0.5pt} u \h{0.5pt}\big\|^{\frac{3}{2}}_{L^{2}(\Omega)}\\[1mm]
    + &\int_{t_0 - 2 r_\epsilon^2}^{t_0} \big\| \h{0.5pt} \nabla^2 \phi \h{0.5pt}\big\|^{2}_{L^{2}(\Omega)} \h{1pt}\big\| \h{0.5pt} \nabla \phi \h{0.5pt}\big\|_{L^{2}(\Omega)} + \big\| \h{0.5pt} \nabla^2 \phi \h{0.5pt}\big\|^{\frac{3}{2}}_{L^{2}(\Omega)} \h{1pt}\big\| \h{0.5pt} \nabla \phi \h{0.5pt}\big\|^{\frac{3}{2}}_{L^{2}(\Omega)}.
\end{align*}

\noindent By Lemma \ref{est nabla 2 phi}, we have
\begin{align*}
    \int_\Omega \big|\h{.5pt} \nabla^2 \phi \h{.5pt}\big|^2 \h{1.5pt}\lesssim_{K_*}\h{1.5pt} \int_{\Omega} \big|\h{.5pt} \Delta \phi + \frac{h^2}{2} \sin 2\phi \h{.5pt}\big|^2 + \big| \Omega \big| + \int_\Omega \big|\h{.5pt} \nabla \phi \h{.5pt}\big|^2.
\end{align*}

\noindent The last two estimates and \eqref{infty222} infer that
\begin{align*}
    &\int_{t_0 - 2 r_\epsilon^2}^{t_0} \big\| \h{1pt} u \cdot \nabla  u + \nabla \cdot \big(\nabla \phi \odot \nabla  \phi \big) \h{1pt}\big\|^{\frac{3}{2}}_{L^{\frac{9}{8}}(\Omega)} \h{1.5pt}\leq M_{u, \h{1pt} \phi}.
\end{align*}
We then conclude from the above discussions that \begin{align}\label{est D_1}
\mathrm D_1(r_\epsilon) \leq M_{u, \h{1pt}\phi} \h{1.5pt}r_\epsilon^{-3}.
\end{align}

\noindent \textbf{Step 3.} Applying \eqref{est of A} and \eqref{est D_1} to the right-hand side of \eqref{ind ar} yields \begin{align*}
    \mathrm{E}\left(\left(\frac{\epsilon}{2} \right)^k r_\epsilon \right) &\leq \frac{1}{2^{k-1} }\h{1pt}\frac{K_*}{\epsilon^{3}} \h{1pt}  \mathrm{E}\left( r_\epsilon\right) + \frac{1}{2} \h{1pt}\epsilon^2 \leq \frac{1}{2^{k-1}} \h{1pt} \frac{M_{u, \h{1pt}\phi}}{\epsilon^{3} \h{1pt}r_\epsilon^{6}}   + \frac{1}{2} \epsilon^2 \h{1.5pt} \h{15pt}\text{for any $z_0 = (x_0, t_0)$ with $t_0 \geq T_{k, \h{1pt} \epsilon}$.}
\end{align*}

\noindent By this estimate and Lemma $\ref{inequality of C}$, it further turns out that
\begin{align*}
    \mathrm{C}\left(\left(\frac{\epsilon}{2}\right)^k r_\epsilon \right) &\hspace{1.5pt}\leq\hspace{1.5pt} K_* \left(\frac{\epsilon}{2}\right)^3 \mathrm{A}^{\frac{3}{2}}\left(\left(\frac{\epsilon}{2}\right)^{k-1} r_\epsilon \right) + K_* \left(\frac{\epsilon}{2}\right)^{-3} \mathrm{A}^{\frac{3}{4}}\left(\left(\frac{\epsilon}{2}\right)^{k-1} r_\epsilon \right) \h{1pt}\mathrm{B}^{\frac{3}{4}}\left(\left(\frac{\epsilon}{2} \right)^{k-1} r_\epsilon \right) \\[2mm]
    &\h{1.5pt}\leq\h{1.5pt} K_* \left(\frac{1}{2^{k-2}} \h{1pt} \frac{M_{u, \h{1pt}\phi}}{r_\epsilon^{6}}   + \frac{1}{2} \epsilon^5 \right) + K_* \h{1pt}\epsilon^{-3} \left( \frac{1}{2^{k-2}} \h{1pt} \frac{M_{u, \h{1pt}\phi}}{\epsilon^{3} \h{1pt}r_\epsilon^{6}}   + \frac{1}{2} \epsilon^2 \right)^{\frac{1}{2}} \mathrm{B}^{\frac{3}{4}}\left(\left(\frac{\epsilon}{2}\right)^{k-1} r_\epsilon \right).
\end{align*}

\noindent We take $k = k_\epsilon$ suitably large and denote $\rho_\epsilon := \left(\frac{\epsilon}{2}\right)^{k_\epsilon} r_\epsilon$. The last two estimates then infer that 
\begin{align*}
    \mathrm{E}\left(\rho_\epsilon \right) + \mathrm{C}\left(\rho_\epsilon \right) \leq \frac{3}{4}\h{1pt}\epsilon^2 + \epsilon^{-3} \h{1pt}\mathrm{B}^{\frac{3}{4}}\left(\left(\frac{\epsilon}{2}\right)^{k_\epsilon-1} r_\epsilon \right) \h{15pt}\text{for any $z_0 = (x_0, t_0)$ with $t_0 \geq T_{k_\epsilon, \h{1pt} \epsilon}$.}
\end{align*}

\noindent The proof of the lemma then follows by the last estimate and Lemma \ref{dis small}.
\end{proof}
 
\begin{cor} \label{smallness of extra term}
    For the $\rho_\epsilon$ found in Lemma \ref{smallness of A C D}, we have
 \begin{align*}
        \rho_\epsilon^{-5} \int_{P_{\rho_\epsilon}(z_0)}  \big|\h{.5pt} \phi - (\phi)_{z_0, \rho_\epsilon} \big|^3 \leq K_* \epsilon \h{20pt}\text{for all $t \geq t_\epsilon$}.
    \end{align*} Here, $t_\epsilon$ is a sufficiently large time.
\end{cor}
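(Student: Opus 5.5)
We bound the $L^3$ oscillation of $\phi$ on $P_{\rho_\epsilon}(z_0)$ by a parabolic Poincaré inequality, whose spatial part is controlled by $\mathrm C$ and whose time part is controlled through the equation for $\phi$. Write $\rho = \rho_\epsilon$ and let $(\phi)_{z_0,\rho}$ be the space–time average over $P_\rho(z_0)$. For each fixed $t$, let $[\phi]_{x_0,\rho}(t)$ be the spatial average over $B^\pm_\rho(x_0)$. Then
\begin{align*}
\phi - (\phi)_{z_0,\rho} = \big(\phi - [\phi]_{x_0,\rho}(t)\big) + \big([\phi]_{x_0,\rho}(t) - (\phi)_{z_0,\rho}\big).
\end{align*}
The Poincaré inequality on the half ball (a Lipschitz domain, with constant uniform after scaling) gives $\int_{B^\pm_\rho}|\phi-[\phi]_{x_0,\rho}|^3 \lesssim \rho^3\int_{B^\pm_\rho}|\nabla\phi|^3$. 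Integrating in time, the first piece contributes
\begin{align*}
\rho^{-5}\int_{P_\rho}|\nabla\phi|^3\rho^3 = \mathrm C(\rho)\le\epsilon
\end{align*}
by Lemma \ref{smallness of A C D}, valid once $t_0$ is large.

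For the second piece, we estimate the oscillation in time of the spatial average $a(t):=[\phi]_{x_0,\rho}(t)$. Take a cutoff $\zeta(x)$ supported in $B_\rho(x_0)$, equal to $1$ on $B_{\rho/2}(x_0)$, with $|\nabla\zeta|\lesssim\rho^{-1}$. It is convenient to work with the weighted average $a_\zeta(t)=\int\zeta\phi/\int\zeta$; the difference between $a$ and $a_\zeta$ is again controlled by the Poincaré term above. Testing the weak form of the $\phi$-equation against $\zeta$ gives
\begin{align*}
\frac{\mathrm d}{\mathrm dt}\int\zeta\phi = -\int\nabla\zeta\cdot\nabla\phi + \int \phi\, u\cdot\nabla\zeta + \frac{h^2}{2}\int\zeta\sin 2\phi + \frac{L_{\mathrm H}}{2}\int_{\mathrm H}\zeta\sin 2\phi .
\end{align*}
Here we have used $\div u = 0$ and $u = 0$ on the boundary. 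The boundary term appears only if $x_0\in\mathrm H$. If $x_0\in\mathrm P$, $\zeta$ does not vanish on $\mathrm P$, but there $\phi=0$, and the weak formulation with $\zeta$ extended works as well. Now bound each term over a time interval of length $\rho^2$, using Proposition \ref{bounded of phi} ($|\phi|\le M_{\phi_0}$) and Hölder's inequality. The gradient term is $\lesssim \rho^{-1}\rho^{2}\,(\rho^{-5}\int|\nabla\phi|^3)^{1/3}\rho^{5/3}\cdot\rho^{-1}$. After dividing by $\int\zeta\sim\rho^3$, this gives
\begin{align*}
|a_\zeta(t)-a_\zeta(s)| \lesssim \mathrm C^{1/3}(\rho) + M_{\phi_0}\mathrm C^{1/3}(\rho) + \rho^2 + \rho ,
\end{align*}
where the transport term is handled like the gradient term via $\int|u|^3$, and the two sine terms give $\rho^2$ (volume) and $\rho$ (boundary area $\rho^2$, times $\rho^2$, over $\rho^3$). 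Cubing and integrating over $P_\rho$ (volume $\rho^5$), the second piece contributes
\begin{align*}
\lesssim \mathrm C(\rho) + \rho^3 \lesssim \epsilon + \rho_\epsilon^3 .
\end{align*}
Since $\rho_\epsilon<r_\epsilon\le \epsilon/K_0$, we have $\rho_\epsilon^3\le\epsilon$. Combining the two pieces yields $K_*\epsilon$, with $K_*$ depending also on $M_{\phi_0}$.

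The main obstacle is the time-oscillation step. It needs the weak formulation to be testable by a spatial cutoff near the boundary, including near P, where a Dirichlet condition holds. Near H, the Rapini–Papoular boundary integral must be shown to scale favorably; it contributes only $\rho$, not $O(1)$, precisely because $\sin 2\phi$ is bounded. A further point is to confirm the uniform Poincaré constant on the scaled half balls.
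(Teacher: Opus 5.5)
Your argument works for $x_0\in\mathrm H$, but the case $x_0\in\mathrm P$ rests on a false step. There the cutoff $\zeta$ does not vanish on $\mathrm P$, so it is not an admissible test function: the weak formulation only tests against functions vanishing on the Dirichlet part. The fact that $\phi=0$ on $\mathrm P$ does not repair this. If you write the equation a.e. (using $\nabla^2\phi\in L^2$) and integrate $\int\zeta\,\Delta\phi$ by parts, you pick up the flux term
\begin{align*}
\int_s^t\int_{\mathrm P}\zeta\,\partial_3\phi ,
\end{align*}
and the Dirichlet condition says nothing about $\partial_3\phi$ on $\mathrm P$. So your formula for $\frac{\mathrm d}{\mathrm dt}\int\zeta\phi$ is wrong near $\mathrm P$.

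The repair is easy, and in fact the $\mathrm P$ case needs no time step at all. Since $\phi$ has zero trace on the flat part of $B^-_\rho(x_0)$, Poincar\'e gives $\int_{B^-_\rho(x_0)}|\phi|^3\lesssim\rho^3\int_{B^-_\rho(x_0)}|\nabla\phi|^3$, hence
\begin{align*}
\rho^{-5}\int_{P_\rho(z_0)}\big|\phi-(\phi)_{z_0,\rho}\big|^3\lesssim\rho^{-5}\int_{P_\rho(z_0)}|\phi|^3\lesssim\mathrm C(\rho).
\end{align*}
This is also consistent with the convention $(\phi)_{z_0,\rho}=0$ for $x_0\in\mathrm P$ used in Lemma \ref{decay lem}. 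Alternatively, keep the flux term and integrate it by parts in $x_3$. It then costs $\mathrm C^{1/3}(\rho)+\mathrm B^{1/2}(\rho)$, which is small by Lemma \ref{dis small}.

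For $x_0\in\mathrm H$ your route genuinely differs from the paper's. The paper integrates the equation itself over the half ball with a sharp cutoff and never integrates $\Delta\phi$ by parts. It keeps $\Delta\phi+\frac{h^2}{2}\sin 2\phi$ as an $L^2$ function, so the time-oscillation piece is bounded by $\mathrm C^2(\rho)+\big(\rho^{-1}\int_{P_\rho(z_0)}|\Delta\phi+\frac{h^2}{2}\sin 2\phi|^2\big)^{3/2}$. The last term is small for large $t_0$ by the dissipation tail \eqref{smallness}. Because no boundary terms ever appear, that argument treats $\mathrm H$ and $\mathrm P$ uniformly.

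Your weak-form argument instead moves the Laplacian onto $\zeta$ and uses the natural Rapini--Papoular condition. This gives the bound $\mathrm C(\rho)+\rho^3$ without any dissipation input, but for exactly that reason it only works where the boundary condition is natural.

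Two small remarks. First, your displayed intermediate expression for the gradient term is garbled, though the conclusion is right: $\rho^{-1}\int_{P_\rho}|\nabla\phi|\lesssim\rho^{3}\,\mathrm C^{1/3}(\rho)$. Second, Proposition \ref{bounded of phi} is unnecessary. Estimating $\int\zeta\,u\cdot\nabla\phi$ directly by H\"older gives $\mathrm C^{2/3}(\rho)$ with a universal constant, as the statement asserts.
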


\begin{proof}[\bf Proof.]
For some universal positive constant $K_*$, it holds that 
\begin{align}\label{tei angle}
    \int_{P_{\rho_\epsilon}(z_0)}  \big|\h{.5pt} \phi - (\phi)_{z_0, \rho_\epsilon} \big|^3 \lesssim_{K_*} \int_{P_{\rho_\epsilon}(z_0)}  \big|\h{.5pt} \phi - [\phi]_{x_0, \rho_\epsilon} \big|^3 + \int_{P_{\rho_\epsilon}(z_0)}  \big|\h{1.5pt} [\phi]_{x_0, \rho_\epsilon} - (\phi)_{z_0, \rho_\epsilon} \h{1pt} \big|^3.
\end{align}

Using Poincar\'e's inequality and Lemma \ref{smallness of A C D}, we have
\begin{align}\label{fir poin ca}
    \rho_\epsilon^{-5} \int_{P_{\rho_\epsilon}(z_0)} \big|\h{.5pt} \phi - [\phi]_{x_0, \rho_\epsilon} \big|^3 \lesssim_{K_*} \rho_\epsilon^{-2} \int_{P_{\rho_\epsilon}(z_0)} \big|\h{.5pt} \nabla \phi \h{.5pt} \big|^3 \leq \mathrm{C}(\rho_\epsilon) \leq  \epsilon.
\end{align}

For the second term on the right-hand side of \eqref{tei angle}, it can be estimated  by
\begin{align}\label{aver est}
    \int_{P_{\rho_\epsilon}(z_0)} \big|\h{1.5pt} [\phi]_{x_0, \rho_\epsilon} - (\phi)_{z_0, \rho_\epsilon} \big|^3 \lesssim_{K_*} \rho_\epsilon^3 \int_{t_0 - \rho_\epsilon^2}^{t_0} \big|\h{1.5pt} [\phi]_{x_0, \rho_\epsilon}(t) - (\phi)_{z_0, \rho_\epsilon} \big|^3 \h{2pt}\mathrm d t.
\end{align}

\noindent Note that
\begin{align*}
    [\phi]_{x_0, \rho_\epsilon}(t) - (\phi)_{z_0, \rho_\epsilon} &= [\phi]_{x_0, \rho_\epsilon}(t) - \rho_\epsilon^{-2} \int^{t_0}_{t_0 - \rho_\epsilon^2}  \big|\h{.5pt} B_{\rho_\epsilon}(x_0) \h{.5pt}\big|^{-1} \int_{B_{\rho_\epsilon}(x_0)} \phi(y, s) \h{1pt}\D y \h{1pt} \D s \\[2mm]
    &=  \rho_\epsilon^{-2} \int_{t_0 - \rho_\epsilon^2}^{t_0} [\phi]_{x_0, \rho_\epsilon}(t) - [\phi]_{x_0, \rho_\epsilon}(s) \h{1pt} \D s.
\end{align*}

\noindent The problem is therefore reduced to estimating the last line above. Integrating the equation of $\phi$ over the ball $B_{\rho_\epsilon}(x_0)$, we obtain
\begin{align*}
    \partial_t \int_{B_{\rho_\epsilon}(x_0)} \phi + \int_{B_{\rho_\epsilon}(x_0)} u \cdot \nabla \phi = \int_{B_{\rho_\epsilon}(x_0)} \Delta \phi + \frac{h^2}{2} \sin 2 \phi.
\end{align*}

\noindent Then we integrate with respect to time for $t_0 - \rho^2_\epsilon \leq s \leq t \leq t_0$. Hence,
\begin{align*}
    \int_{B_{\rho_\epsilon}(x_0) \times \{ t \}} \phi - \int_{B_{\rho_\epsilon}(x_0) \times \{ s \}} \phi = - \int_s^t \int_{B_{\rho_\epsilon}(x_0)} u \cdot \nabla \phi + \int_s^t \int_{B_{\rho_\epsilon}(x_0)} \Delta \phi + \frac{h^2}{2} \sin 2 \phi.
\end{align*}

\noindent By H\"older's inequality, it follows that
\begin{align*}
    &\rho_\epsilon^3 \h{1pt} \bigl|\h{1.5pt} [\phi]_{x_0, {\rho_\epsilon}}(t) - [\phi]_{x_0, {\rho_\epsilon}}(s) \h{1pt}\bigr| \\[2mm]
    &\h{20pt}\lesssim_{K_*}  \rho_\epsilon^{\frac{5}{3}}\left(\int_{P_{\rho_\epsilon}(z_0)} \big|\h{.5pt} u \h{.5pt}\big|^3 \right)^{\frac{1}{3}} \left(\int_{P_{\rho_\epsilon}(z_0)} \big|\h{.5pt} \nabla \phi \h{.5pt}\big|^3 \right)^{\frac{1}{3}} + \rho_\epsilon^{\frac{5}{2}}\left( \int_{P_{\rho_\epsilon}(z_0)} \big|\h{.5pt} \Delta \phi + \frac{h^2}{2} \sin 2 \phi \h{.5pt}\big|^2 \right)^{\frac{1}{2}} \\[2mm]
    &\h{20pt}\lesssim_{K_*} \rho_\epsilon^3 \h{1.5pt} \mathrm{C}^{\frac{2}{3}}(\rho_\epsilon) + \rho_\epsilon^{\frac{5}{2}}\left( \int_{P_{\rho_\epsilon}(z_0)} \big|\h{.5pt} \Delta \phi + \frac{h^2}{2} \sin 2 \phi \h{.5pt}\big|^2 \right)^{\frac{1}{2}}.
\end{align*}

\noindent The estimate \eqref{aver est} can then be reduced to \begin{align*}
    \rho_\epsilon^{-5}\int_{P_{\rho_\epsilon}(z_0)} \big|\h{1.5pt} [\phi]_{x_0, \rho_\epsilon} - (\phi)_{z_0, \rho_\epsilon} \big|^3 \lesssim_{K_*}   \mathrm{C}^{2}(\rho_\epsilon) + \left( \rho_\epsilon^{- 1} \int_{P_{\rho_\epsilon}(z_0)} \big|\h{.5pt} \Delta \phi + \frac{h^2}{2} \sin 2 \phi \h{.5pt}\big|^2 \right)^{\frac{3}{2}}.  
\end{align*}The proof then follows by applying this estimate and \eqref{fir poin ca} to the right-hand side of \eqref{tei angle}. Here, we also use Lemmas \ref{smallness of A C D} and \ref{dis small}.
\end{proof}

\subsection{\texorpdfstring{$L^\infty$}{TEXT}-estimate induced by some small \texorpdfstring{$L^3$}{TEXT}-integrals}\label{L infty of u phi}

The $L^\infty$-estimates of $u$ and $\nabla \phi$ are investigated in this section. Since Lin-Liu has already discussed the interior case in \cite{LinLiu1996}, our main arguments are devoted to proving the boundary case. See Lemma \ref{decay lem} below. In the following discussions, the spatial average of the pressure is denoted by
\begin{equation*}
    [\h{0.5pt}p\h{0.5pt}]_{x_0, r}(t) := \frac{1}{\big|\hspace{.5pt}B_r(x_0) \cap  \Omega  \hspace{.5pt}\big|}\int_{B_r(x_0) \hspace{.5pt}\cap \h{1pt} \Omega } p(x, t) \hspace{1.5pt}\mathrm{d} x.
\end{equation*} For any $z = (x, t)$ with $ x \in \mathrm H \cup \mathrm P$ and a function $\psi$ over $P_r(z)$, we define \begin{align*}
    (\psi)_{z, r} := \left\{\begin{array}{lcl} \displaystyle \frac{1}{|\h{0.5pt} P_{r}(z) \h{0.5pt}| } \int_{P_{r}(z)} \psi \h{15pt} &\text{if $ x \in  \mathrm H$,}\\[6mm] \h{30pt}0, &\text{if $x \in  \mathrm P$.}\end{array}\right. \h{15pt} 
\end{align*}  

\begin{lem} \label{decay lem}
There exist a universal small constant $\theta_0 > 0$ and a constant $\epsilon_0 > 0$ such that if 
\begin{equation*}
  F(r, z_0) :=  r^{-2} \int_{P_r(z_0)} \big|\hspace{.5pt} u \hspace{.5pt}\big|^3 + \big|\hspace{.5pt} \nabla \phi\hspace{.5pt} \big|^3 + r^{-5} \int_{P_r(z_0)} \big|\hspace{.5pt} \phi - (\phi)_{z_0, r} \hspace{.5pt}\big|^3 + \Bigl(r^{-2} \int_{P_r(z_0)} \big|\hspace{.5pt} p - [\h{0.5pt}p\h{0.5pt}]_{x_0, r} \hspace{.5pt}\big|^{\frac{3}{2}} \Bigr)^2 \hspace{1.5pt}\leq\hspace{1.5pt} \epsilon_0^3,
\end{equation*}for some $r > 0$ and $z_0 = (x_0, t_0) \in \big(\mathrm {H \cup P}\big) \times (r^2, \infty),$ then we have
\begin{equation*}
     F\big(\theta_0 r, z_0\big) \hspace{1.5pt}\leq\hspace{1.5pt}  \theta_0^{3} \max \Bigl\{\h{0.5pt} \theta_0^{-9}\h{0.5pt}r^3,  F\big(r, z_0\big) \h{0.5pt}\Bigr\}.
\end{equation*} Here, $\epsilon_0$ is small enough. $\theta_0$ is universal, meaning that it depends possibly only on $h$ and $L_{\mathrm H}$.
\end{lem}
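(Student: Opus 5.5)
We argue by contradiction via a blow-up (compactness) argument, in the spirit of Lin \cite{Lin1998} and Seregin \cite{Seregin2002}. Fix $\theta_0$ to be chosen below. If the lemma fails for this $\theta_0$, there are sequences $\epsilon_k \to 0$, radii $r_k$, and boundary points $z_k = (x_k, t_k)$ with $F(r_k, z_k) = \epsilon_k^3$ and
\[
F(\theta_0 r_k, z_k) > \theta_0^3 \max\{\theta_0^{-9} r_k^3, \epsilon_k^3\}.
\]
The first alternative in the max forces $r_k \lesssim \epsilon_k$, so $r_k \to 0$. After passing to a subsequence, all $x_k$ lie on H or all lie on P. We rescale to the unit half-cylinder $Q^+ := (B_1 \cap \{x_3>0\}) \times (-1,0)$ (with the obvious reflection in the P case):
\[
v_k(y,s) = \tfrac{r_k}{\epsilon_k} u(x_k + r_k y, t_k + r_k^2 s), \qquad \nabla \psi_k = \tfrac{r_k}{\epsilon_k}\nabla \phi(\cdots),
\]
\[
\psi_k = \epsilon_k^{-1}\bigl(\phi(\cdots) - (\phi)_{z_k, r_k}\bigr), \qquad q_k = \tfrac{r_k^2}{\epsilon_k}\bigl(p - [p]_{x_k, r_k}\bigr).
\]
With this normalization, $\|v_k\|_{L^3}^3 + \|\nabla\psi_k\|_{L^3}^3 + \|\psi_k\|_{L^3}^3 + \|q_k\|_{L^{3/2}}^{3} = 1$, and the contradiction hypothesis becomes $\tilde F_k(\theta_0) > \theta_0^3$. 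The rescaled system is a Stokes system perturbed by $\epsilon_k(v_k\cdot\nabla v_k + \nabla\cdot(\nabla\psi_k\odot\nabla\psi_k))$. The rescaled $\psi$-equation is a heat equation with drift $\epsilon_k v_k$ and source $r_k^2\epsilon_k^{-1} h^2 \sin\phi\cos\phi = O(r_k^2/\epsilon_k) \to 0$. The boundary condition on H becomes
\[
-\partial_3 \psi_k = \tfrac{r_k}{\epsilon_k} L_{\mathrm H} \sin\phi\cos\phi ,
\]
which is $O(r_k/\epsilon_k)$ and bounded by Proposition~\ref{bounded of phi}. On P we have $\psi_k = 0$, which is why $(\phi)_{z,r}$ is set to $0$ there.

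The first main step is uniform local energy bounds on a smaller cylinder $Q_{1/2}^+$. We plug $\varphi^2$ into the rescaled generalized energy inequality \eqref{energy ineq}, exactly as in Lemma~\ref{loc est}. The pressure and cubic terms are controlled by the normalized $L^3$ and $L^{3/2}$ norms. The boundary integrals on H are the delicate point. After rescaling they carry factors of $r_k/\epsilon_k$ or $r_k^2/\epsilon_k^2$, and the term $L_{\mathrm H}\int_{\mathrm H\times\{T\}}\varphi\sin^2\phi - \int\int_{\mathrm H}\partial_t\varphi\sin^2\phi$ is not small by itself. Here we use the null structure noted in the introduction: we subtract the constant $C = \sin^2\bigl((\phi)_{z_k,r_k}\bigr)$. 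Then
\[
\sin^2\phi - C = O\bigl(|\phi - (\phi)_{z_k,r_k}|\bigr) = O(\epsilon_k|\psi_k|),
\]
and its trace is controlled by $\|\psi_k\|_{L^2}$ together with $\|\nabla\psi_k\|_{L^2}$, the latter absorbed on the left. This yields
\[
\sup_s \int (|v_k|^2 + |\nabla\psi_k|^2) + \int\int (|\nabla v_k|^2 + |\nabla^2\psi_k|^2) \lesssim 1
\]
on $Q_{1/2}^+$. A pressure estimate as in Lemma~\ref{inequality of D1} splits $q_k$ into a Solonnikov part \cite{Solonnikov2003}, which is $O(\epsilon_k)$-small, and a harmonic-in-space part controlled by Seregin's estimate \cite{Seregin2003}. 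Time-derivative bounds in negative Sobolev norms then let us apply Aubin--Lions, giving strong convergence of $v_k$, $\nabla\psi_k$, $\psi_k$ in $L^3(Q_{1/2}^+)$ and weak convergence of $q_k$ (strong for its regular part).

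The limit $(v, \psi, q)$ solves the linear Stokes system with no-slip on the flat boundary, and $\psi$ solves the heat equation. The boundary condition for $\psi$ is Neumann ($\partial_3\psi = 0$) in the H case, since $r_k/\epsilon_k \to 0$ along a subsequence, which uses $r_k \le \theta_0^{-3}\epsilon_k$ adjusted so the ratio actually vanishes; otherwise we keep a bounded Robin/Neumann datum. In the P case $\psi$ satisfies Dirichlet. Boundary regularity for the Stokes system \cite{Seregin2002} and for the heat equation then give the linear decay $F_{\lim}(\theta_0) \le K_*\theta_0^{5}\,F_{\lim}(1/2)$, with mean-subtracted $\psi$ and $q$. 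Choosing $\theta_0$ small universal makes this at most $\theta_0^3/2$, which contradicts
\[
\lim_k \tilde F_k(\theta_0) \ge \theta_0^3 .
\]
This uses the strong $L^3$ convergence and, for the pressure term, the decomposition above. That term is the step I expect to be the main obstacle: the pressure converges only weakly, so $\limsup \|q_k - [q_k]\|_{L^{3/2}(Q_{\theta_0})}$ must be bounded via the Solonnikov/Seregin splitting rather than by compactness.
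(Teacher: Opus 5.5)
Your outline is the same as the paper's: contradiction, blow-up at scale $r_k$, local energy bound via the null structure, Solonnikov/Seregin splitting of the pressure, Aubin--Lions, and a linear limit problem. Your pressure treatment matches Step 3.III there.

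\textbf{First gap: the size of $r_k/\epsilon_k$.} Since $F(\theta_0 r,z_0)\le 8\theta_0^{-5}F(r,z_0)$, the first alternative in the max gives $\theta_0^{-6}r_k^3<8\theta_0^{-5}\epsilon_k^3$, i.e.\ $r_k\le 2\theta_0^{1/3}\epsilon_k$. Your bound $r_k\le\theta_0^{-3}\epsilon_k$ is not the right one, and nothing forces $r_k/\epsilon_k\to0$. Consequently the rescaled datum $-\frac{L_{\mathrm H}}{2}\frac{r_k}{\epsilon_k}\sin 2\bigl(\epsilon_k\psi_k+(\phi)_{z_k,r_k}\bigr)$ converges only to a constant $c$ with $|c|\le L_{\mathrm H}\theta_0^{1/3}$ (the error is $O(r_k|\psi_k|)$). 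The limit therefore satisfies $\partial_3\psi=c$ on the flat boundary. The paper keeps this datum and subtracts $c\,x_3$. Its contribution to the rescaled $F$ at scale $\theta_0$ is $\theta_0^{-2}|Q_{\theta_0}|\,|c|^3\lesssim\theta_0^4$; this is exactly what the entry $\theta_0^{-9}r^3$ in the max is for. Your fallback ``keep a bounded Robin/Neumann datum'' does not close the argument. A constant gradient of size $O(1)$ already contributes at the critical order $\theta_0^3$, and with your bound it could contribute $\theta_0^{-6}$. The paper also uses the smallness $r_k/\epsilon_k\lesssim\theta_0^{1/3}$ in Step 2, to absorb the terms $\frac{r_k}{\epsilon_k}\int\varphi^2|\nabla^2\psi_k|^2$ and $\frac{r_k}{\epsilon_k}\sup_s\int\varphi^2|\nabla\psi_k|^2$ produced by the null-structure boundary term. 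That term is a trace at a fixed time, so it also needs an $L^\infty_tL^2_x$ bound on $\psi_k-\bar\psi_k$; the paper gets this by testing the $\psi_k$-equation with $\varphi^2(\psi_k-\bar\psi_k)$.

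\textbf{Second gap: the claimed linear decay.} The estimate $F_{\lim}(\theta_0)\le K_*\theta_0^5F_{\lim}(1/2)$ is false as stated. Beyond the scaling rate $\theta_0^3$ you gain only for $v$, for the pressure, and for the gradient component that vanishes on the flat boundary: $\partial_3\psi-c$ in the H case, $\nabla'\psi$ in the P case. For these, $\frac13$-H\"older bounds give $\theta_0^4$, as in the paper. The other gradient component need not vanish at the origin. For example, $\psi=g\,x_1$ in the H case, or $\psi=g\,x_3$ in the P case, is an admissible limit with $F_{\lim}(\theta_0)=(2\theta_0)^3F_{\lim}(1/2)$. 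This is compatible with $\lim_k\tilde F_k(\theta_0)\ge\theta_0^3$, so no contradiction arises.

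The paper's Step 4 has the same weakness. It covers this component with $\int_{Q_{\theta_0}}|\nabla'\phi_*|^2\lesssim\theta_0^7\int_{Q_{1/2}}|\nabla'\phi_*|^2$ (and the analogous bound for $\partial_3\phi_*$ in Case 2). That inequality fails for $\phi_*=x_1$; an estimate of this type holds only for $\nabla'\phi_*-(\nabla'\phi_*)_{\theta_0}$. So this step needs an additional idea in your argument and in the paper's.
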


\begin{proof}[\bf Proof] 
The proof is divided into 4 steps.\\[2mm]
\textbf{Step 1.} We construct a blow-up sequence. Suppose the conclusion is false. Then for a $\theta_0 \in (0, \frac{1}{4})$ to be determined later, we can find $r_i > 0$ and $z_i = (x_i, t_i) \in \big(\mathrm {H \cup P} \big) \times \big(r_i^2, \infty\big)$ such that
\begin{equation}\label{small ener}
    F(r_i, z_i) := \epsilon_i^3 \longrightarrow 0 \h{15pt}\text{as $i \to \infty$}.
\end{equation}
Meanwhile, it satisfies
\begin{equation} \label{tau_blow_up}  F(\theta_0 r_i, z_i) > \theta_0^{3} \max\Big\{\theta_0^{-9} r_i^3, \epsilon_i^3\Big\}.
 \end{equation}
\eqref{small ener}-\eqref{tau_blow_up} infer that \begin{align}\label{est of r_i}  r_i^3 \leq 8 \h{0.5pt}\theta_0 \h{.5pt}\epsilon_i^3 \longrightarrow 0 \h{15pt}\text{ as $i \to \infty$.} \end{align}

Assume either $\big\{x_i\big\} \subset  \mathrm H$ or $\big\{x_i\big\} \subset  \mathrm P$. $B^{\pm}_r(x)$ denotes the parts of $B_r(x)$ lying in $\big\{x_3 \gtrless 0\big\}$, respectively.  If $x = 0$, we simply drop $0$ from the notation $B^{\pm}_r(0)$. Now, we assume all $r_i$ are sufficiently small and introduce the blow-up sequence:
\begin{equation}\label{defn blow up}
    \big(u_i, \phi_i, p_i\big)(x, t) :=   \left( \frac{r_i u}{\epsilon_i}, \h{0.5pt} \frac{\phi - (\h{0.5pt}\phi\h{0.5pt})_{z_i, \frac{r_i}{2}}}{\epsilon_i}, \h{0.5pt} \frac{r_i^2 \big(\h{1pt}p - [\h{0.5pt}p\h{0.5pt}]_{x_i, r_i}\big)}{\epsilon_i}\right)\big(x_i + r_i\h{0.5pt} x, t_i + r_i^2 \h{0.5pt} t\big) \h{15pt}\text{for $(x, t) \in Q_1$.}
\end{equation}Here, given $r > 0$, the notation $Q_r$ is particularly used to denote the cylinder $B_r^{\pm} \times (- r^2, 0\h{0.5pt})$. The $\pm$ is determined by the sequence $\big\{x_i\big\}$. In light of \eqref{sEL}, $(u_i, \phi_i, p_i)$ is a suitable weak solution of the scaled system:
\begin{equation} \label{rescaled system} \left\{ \begin{aligned}
    \partial_t u_i + \epsilon_i \h{1pt}u_i \cdot \nabla u_i - \Delta u_i &= - \nabla p_i - \epsilon_i \h{1pt}\nabla \cdot \big(\nabla \phi_i \odot \nabla  \phi_i\big), \\[1mm]
    \operatorname{div} u_i &= 0, \\[1mm]
    \partial_t \phi_i + \epsilon_i \h{1pt}u_i \cdot \nabla  \phi_i - \Delta \phi_i &=  \frac{h^2 \h{1pt}r_i^2 \h{1pt} \epsilon_i^{-1}}{2}    \sin 2 \left(\epsilon_i \h{0.5pt} \phi_i + (\h{0.5pt}\phi\h{0.5pt})_{z_i, \frac{r_i}{2}}\right) 
\end{aligned} \right. \h{25pt}\text{on $Q_1$.}\end{equation}Moreover,
\begin{align} \label{rescaled L3 estimate} 
    &\mathrm{(1).} \h{5pt}\int_{Q_{1}} \big|\hspace{.5pt} u_i \hspace{.5pt}\big|^3 + \big|\hspace{.5pt} \nabla \phi_i \hspace{.5pt} \big|^3 + \big|\h{0.5pt} \phi_i - (\phi_i)_{0, 1}^* \h{0.5pt}\big|^3 + \left(\int_{Q_{1}} \big| \hspace{.5pt} p_i \hspace{.5pt} \big|^{\frac{3}{2}} \right)^2 = 1, \nonumber\\[2mm]
    &\mathrm{(2).} \h{6pt}\theta_0^{-2} \int_{Q_{\theta_0}} \big| \hspace{.5pt} u_i \hspace{.5pt} \big|^3 + \big| \hspace{.5pt} \nabla \phi_i \hspace{.5pt} \big|^3 + \theta_0^{-5} \int_{Q_{\theta_0}} \big|\h{0.5pt} \phi_i - (\phi_i)_{0, \theta_0}^* \h{0.5pt}\big|^3 + \left( \theta_0^{-2} \int_{Q_{\theta_0}} \big|\hspace{.5pt} p_i - [\h{0.5pt}p_i\h{0.5pt}]^*_{0, \theta_0} \hspace{.5pt}\big|^{\frac{3}{2}} \right)^2 > \theta_0^{3}.
  \end{align} In the item (1) of the above, if $\big\{x_i\big\} \subset \mathrm H$, then $(\psi)_{0, r}^*$ is the average of $\psi$ over $Q_r$. If $\big\{x_i\big\} \subset \mathrm P$, then $(\psi)_{0, r}^*$ is equal to $0$. In the item (2) of \eqref{rescaled L3 estimate}, the notation $[\h{0.5pt}p_i\h{0.5pt}]^*_{0, \theta_0}$ is the average of $p_i$ over $B_{\theta_0}^{\pm}$ at time $t$. Same as before, the $\pm$ is determined by $\big\{x_i\big\}$. We also have the following boundary condition for $\phi_i$. If $\big\{x_i\big\} \subset  \mathrm H$, then
\begin{equation}\label{scaled wk anchoring}
    \partial_3 \phi_i = - \frac{L_\mathrm{H} \h{0.5pt} r_i \h{0.5pt}\epsilon_i^{-1}}{2}   \sin  2\left(\epsilon_i \h{0.5pt}\phi_i +  (\h{0.5pt}\phi\h{0.5pt})_{z_i, \frac{r_i}{2}}\right)  \quad\quad \text{on } B'_1 \times (-1, 0\h{0.5pt}).
\end{equation} Here, $B'_r$ is the flat boundary of $B_r^{\pm}$.  When it satisfies $\big\{x_i\big\} \subset  \mathrm P$, we have \begin{equation}\label{scaled wk anchoring Diri}
    \phi_i = 0  \quad\quad \text{on } B'_1 \times (-1, 0\h{0.5pt}).
\end{equation}

\noindent\textbf{Step 2.} 
We claim that there exists a universal constant $K_* > 0$ such that for all $i$, it holds
\begin{equation} \label{rescaled weak}
    \sup_{t \h{1pt}\in \h{1pt}[- \frac{1}{4}, \h{1pt} 0\h{0.5pt}]}\int_{B^{\pm}_{1/2}}   |u_i|^2 + |\nabla \phi_i|^2  +  \int_{Q_{1/2}}   | \nabla u_i |^2 + |\nabla^2 \phi_i |^2 \leq K_*.
\end{equation}
The constant $K_*$ is universal in the sense that it depends possibly only on $h$ and $L_{\mathrm{H}}$. The $\pm$ is determined by the sequence $\big\{x_i\big\}$. \vspace{0.2pc}

To show this energy estimate, we delve into the generalized energy inequality \eqref{energy ineq}. In the following, $\varphi = \varphi^*_1(x) \h{1pt} \varphi^*_2(t)$ is a smooth test function, where $\varphi^*_1$ is compactly supported on $B_1$ and is equivalently equal to $1$ on $B_{1/2}$. $\varphi^*_2$ is defined on $(- \infty, 0\h{1pt}]$ such that $\varphi^*_2$ is equivalently equal to $0$ for all $t \leq -1$ and is equivalently equal to 1 on $[- 1/4, 0\h{1pt}]$. $\varphi^*_1$ and $\varphi^*_2$ are all non-negative functions with the maximum values no more than $1$. With the function $\varphi$, we define $$ \varphi_i (x, t) := \varphi \left( \frac{x - x_i}{r_i}, \frac{t - t_i}{r_i^2} \right) \h{15pt}\text{for all $(x, t) \in P_{r_i}(z_i)$}.$$ Then we replace the test function in \eqref{energy ineq} with $\varphi_i^2$. \vspace{0.4pc}

\noindent \textbf{I. Estimates of $R\left(\phi, \varphi_i^2\right)$.} \vspace{0.4pc} 

\noindent According to \eqref{defn of R}, $R(\phi, \varphi_i^2)$ is given as follows:
\begin{align*} R\left(\phi, \varphi_i^2\right)  :=    - \int_\text{P}   \left(\partial_3  \phi \right)^2 \h{1pt} \partial_3 \varphi_i^2 +  \frac{L_\text{H}^2}{4} \int_{\mathrm H} \partial_3 \varphi_i^2 \h{1pt} \sin^2 2 \phi   + \int_\text{H} | \nabla' \phi |^2 \h{1pt} \partial_3 \varphi_i^2 + 2L_\text{H} \int_\text{H} \varphi_i^2 \h{1pt}  |\nabla' \phi|^2 \cos 2 \phi. 
\end{align*} If $\big\{x_i \big\} \subset \mathrm P$, then the last two integrals above vanish. Hence, we consider the case when $\big\{x_i \big\} \subset \mathrm H$. The consequence in this part is also valid when $\big\{x_i \big\} \subset \mathrm P$. \vspace{0.4pc}

For the first two terms in $R(\phi, \varphi_i^2)$, we note that 
\begin{align*}
    - \int_\text{P}   \left(\partial_3  \phi \right)^2 \h{1pt} \partial_3 \varphi_i^2 +  \frac{L_\text{H}^2}{4} \int_{\mathrm H} \partial_3 \varphi_i^2 \h{1pt} \sin^2 2 \phi = - \int_\Omega \partial_3 \left( \left(\partial_3  \phi \right)^2 \h{1pt} \partial_3 \varphi_i^2 \right).
\end{align*} 

\noindent It then follows
\begin{align*}
    - \int_\Omega \partial_3 \left( \left(\partial_3  \phi \right)^2 \h{1pt} \partial_3 \varphi_i^2 \right) = - \int_{\Omega} 4 \left(\partial_3 \phi\right) \left(\partial_{33} \phi\right) \h{1pt} \varphi_i \h{1pt} (\partial_3 \varphi_i) + 2 \left(\partial_3 \phi\right)^2  \big[  \left(\partial_3 \varphi_i\right)^2 +  \varphi_i \h{1pt}  \partial_{33} \varphi_i \h{1pt} \big]. 
\end{align*}
Using the Young's inequality infers
\begin{align}\label{est 1 in R}
    \int_{0}^{t_i} \left| \h{2pt} \int_\Omega \partial_3 \left( \left(\partial_3  \phi \right)^2 \h{1pt} \partial_3 \varphi_i^2 \right) \h{1pt}\right| \leq \frac{1}{16}  \int_{P_{r_i}(z_i)} \big|\h{.5pt} \partial_{33} \phi \h{.5pt}\big|^2 \h{1pt} \varphi_i^2 + K_* \h{1pt}r_i^{-2} \int_{P_{r_i}(z_i)} \big|\h{.5pt} \partial_3 \phi \h{.5pt}\big|^2.
\end{align}

For the third term in $R(\phi, \varphi_i^2)$, we make use of the boundary condition $\nabla' \phi = 0$ on P. Therefore,
\begin{align*}
    \int_\text{H} \big|\h{.5pt} \nabla' \phi \h{.5pt}\big|^2 \h{1pt} \partial_3 \varphi_i^2  =   - \int_{\Omega} 4  \h{1pt}\nabla' \phi \cdot \partial_3 \nabla' \phi   \h{2pt} \varphi_i \h{1pt} \partial_3 \varphi_i - 2 \int_{\Omega} \big|\h{.5pt} \nabla' \phi \h{.5pt}\big|^2 \h{1pt} \big[ \left(\partial_3 \varphi_i\right)^2 +  \varphi_i \left(\partial_{33} \varphi_i\right) \big].
\end{align*}
Same derivations as in \eqref{est 1 in R} imply
\begin{align}\label{est 2 in R}
    \int_{0}^{t_i} \left|\h{2pt} \int_\text{H} \big|\h{.5pt} \nabla' \phi \h{.5pt}\big|^2 \h{1pt} \partial_3 \varphi_i^2 \h{1pt}\right| \leq \frac{1}{16} \int_{P_{r_i}(z_i)} \big|\h{.5pt} \partial_3 \nabla' \phi \h{.5pt}\big|^2 \varphi_i^2 + K_*\h{1pt}r_i^{-2} \int_{P_{r_i}(z_i)} \big|\h{.5pt} \nabla' \phi \h{.5pt}\big|^2.
\end{align}

For the last term in $R(\phi, \varphi_i^2)$, we still use the boundary condition $\phi = 0$ on P, and obtain
\begin{align*}
    \int_{\mathrm{H}} \big|\hspace{.5pt} \nabla' \phi \hspace{.5pt}\big|^2 \varphi_i^2 = - 2 \int_{\Omega}  \varphi_i^2 \h{0.5pt}\nabla' \phi \cdot \partial_{3} \nabla' \phi  +   \big|\hspace{.5pt} \nabla' \phi \hspace{.5pt}\big|^2  \varphi_i \h{1pt}\partial_3 \varphi_i.
\end{align*}Applying the Young's inequality then infers
\begin{align}\label{est of nabla' on H}
     \int_0^{t_i}\int_{\mathrm{H}} \big|\hspace{.5pt} \nabla' \phi \hspace{.5pt}\big|^2 \varphi_i^2  \leq \frac{1}{16 L_{\mathrm H}} \int_{P_{r_i}(z_i)} \big|\hspace{.5pt} \partial_{3} \nabla' \phi \hspace{.5pt}\big|^2 \varphi_i^2 + K_* \h{1pt}r_i^{-2}\int_{P_{r_i}(z_i)} \big|\hspace{.5pt} \nabla' \phi \hspace{.5pt}\big|^2.
\end{align}

By the above estimates \eqref{est 1 in R}-\eqref{est of nabla' on H}, 
\begin{align}\label{est R for use} 
 &\int_0^{t_i} \left|\h{2pt}R\left(\phi, \varphi_i^2 \right) \h{1pt} \right| \leq  \frac{1}{4} \int_{P_{r_i}(z_i)} \big|\h{.5pt} \nabla^2 \phi \h{.5pt}\big|^2 \h{1pt} \varphi_i^2 + K_* \h{0.5pt} r_i^{-2}\int_{P_{r_i}(z_i)} \big|\hspace{.5pt} \nabla \phi \hspace{.5pt}\big|^2.
\end{align} 

\noindent \textbf{II. Estimates of the integrals from the second-order normal derivative.} \vspace{0.2pc}  

\noindent In this part, we fix $t \in [\h{0.5pt} - \frac{1}{4}, 0\h{0.5pt}]$ and consider the following boundary integrals:
\begin{align}\label{rm bdr term}
         \int_{\mathrm{H} \times \{\h{0.5pt} t_i + r_i^2 t\h{0.5pt}\}} \varphi_i^2 \sin^2 \phi +  \int_0^{t_i + r_i^2 t}\int_{\mathrm{H}}   2 \left(\cos 2\phi\right) \left| \nabla' \phi\right|^2 \varphi_i^2  +   \left(\sin 2\phi\right)  \nabla' \phi \cdot \nabla' \varphi_i^2 -    \partial_s \varphi_i^2 \h{1pt} \sin^2 \phi.
\end{align}Same as in Part I, we assume $\big\{x_i \big\} \subset \mathrm H$. Otherwise,  if $\big\{x_i \big\} \subset \mathrm P$, all integrals in \eqref{rm bdr term} vanish.\vspace{0.4pc}

Denote by $\mathrm S\h{0.5pt}[\h{0.5pt}\phi\h{0.5pt}]$ the constant \begin{align*}
    \mathrm S\h{0.5pt}[\h{0.5pt}\phi\h{0.5pt}] :=  \sin^2 \left(\big(\phi\big)_{z_i, \frac{r_i}{2}} + \epsilon_i \big(\phi_i\big)^*_{0, 1} \right),
\end{align*}where $\big(\phi_i\big)^*_{0, 1}$ is the average of $\phi_i$ over $Q_1$. Since it satisfies 
\begin{align*}
    \int_{\mathrm H \times \{\h{0.5pt} t_i + r_i^2 t \h{0.5pt}\}} \varphi_i^2 \sin^2 \phi &- \int_{t_i - r_i^2}^{t_i + r_i^2 t} \int_{\mathrm H} \partial_s \varphi_i^2 \h{1pt}\sin^2 \phi  \\[1mm]
    &=     \int_{\mathrm H \times \{\h{0.5pt} t_i + r_i^2 t\h{0.5pt}  \}} \varphi_i^2 \Bigl(\sin^2 \phi - \mathrm S\h{0.5pt}[\h{0.5pt}\phi\h{0.5pt}] \h{1pt}\Bigr) -  \int_{t_i - r_i^2}^{t_i + r_i^2 t} \int_{\mathrm H} \partial_s \varphi_i^2 \Bigl(\sin^2 \phi - \mathrm S \h{0.5pt}[\h{0.5pt}\phi\h{0.5pt}] \h{1pt} \Bigr),
\end{align*} we then obtain, by the mean value theorem and the change of variables, that \begin{align}\label{es 1,null} &\left|\h{2pt} \int_{\mathrm H \times \{\h{0.5pt} t_i + r_i^2 t\h{0.5pt} \}} \varphi_i^2 \sin^2 \phi - \int_{t_i - r_i^2}^{t_i + r_i^2 t} \int_{\mathrm H} \partial_s \varphi_i^2 \h{1pt}\sin^2 \phi \h{2pt}\right| \\[1mm]  &\h{100pt} \lesssim_{K_*} r_i^2\h{0.5pt}\epsilon_i\int_{B'_1 \times \{ \h{0.5pt}t\h{0.5pt} \}} \varphi^2 \left|\h{1pt}  \phi_i -   \big(\phi_i\big)^*_{0, 1} \h{1pt} \right| +  r_i^{2} \h{0.5pt}\epsilon_i  \int_{-1}^{t} \int_{B'_1} \varphi \left|\h{1pt} \phi_i -  \big(\phi_i\big)^*_{0, 1} \right|\nonumber\\[1mm]
&\h{38pt}\lesssim_{K_*} r_i^2\h{0.5pt}\epsilon_i + r_i^2\h{0.5pt}\epsilon_i \int_{B'_1 \times \{ \h{0.5pt}t\h{0.5pt} \}} \varphi^4 \left( \phi_i -   \big(\phi_i\big)^*_{0, 1}  \right)^2 +  r_i^{2} \h{0.5pt}\epsilon_i  \int_{-1}^{t} \int_{B'_1} \varphi^2 \left( \phi_i -  \big(\phi_i\big)^*_{0, 1} \right)^2. \nonumber
\end{align}Applying the integration by parts with respect to the $x_3$-variable yields \begin{align*}
    &\int_{B'_1 \times \{ \h{0.5pt}t\h{0.5pt} \}} \varphi^4 \left(  \phi_i -   \big(\phi_i\big)^*_{0, 1} \right)^2 +  \int_{-1}^{t} \int_{B'_1} \varphi^2 \left( \phi_i -  \big(\phi_i\big)^*_{0, 1} \right)^2 \\[1mm]
    &\h{20pt}\lesssim_{K_*} \int_{B^{+}_1 \times \{\h{0.5pt}t\h{0.5pt}\}} \varphi^2  \h{1pt}\big|\h{1pt}  \nabla \phi_i \h{1pt} \big|^2 + \varphi^2 \left(  \phi_i -   \big(\phi_i\big)^*_{0, 1} \right)^2  +  \int_{Q_1}  \big| \nabla \phi_i \big|^2 + \left( \phi_i -  \big(\phi_i\big)^*_{0, 1} \right)^2.
\end{align*}The last two integrals above are uniformly bounded by (1) in \eqref{rescaled L3 estimate}. It then turns out \begin{align}\label{before L2}
     \int_{B'_1 \times \{ \h{0.5pt}t\h{0.5pt} \}} \varphi^4 \left(  \phi_i -   \big(\phi_i\big)^*_{0, 1} \right)^2  &+  \int_{-1}^{t} \int_{B'_1} \varphi^2 \left( \phi_i -  \big(\phi_i\big)^*_{0, 1} \right)^2 \\[1mm]
    & \lesssim_{K_*} 1 + \int_{B^{+}_1 \times \{\h{0.5pt}t\h{0.5pt}\}}  \varphi^2  \h{1pt}\big|\h{1pt}  \nabla \phi_i \h{1pt} \big|^2 + \varphi^2 \left(  \phi_i -   \big(\phi_i\big)^*_{0, 1} \right)^2. \nonumber
\end{align}

Multiply $\varphi^2 \left(\phi_i - \big(\phi_i\big)^*_{0, 1}\right)$ on the both sides of the third equation in \eqref{rescaled system} and integrate. It holds \begin{align*}
\int_{B^{+}_1 \times \{ t \}  }\varphi^2  \left(\phi_i - \big(\phi_i\big)^*_{0, 1}\right)^2  = &\int_{-1}^t\int_{B^{+}_1} \left(\phi_i - \big(\phi_i\big)^*_{0, 1}\right)^2 \partial_s \varphi^2 - 2 \epsilon_i \h{0.5pt} \varphi^2 \left(\phi_i - \big(\phi_i\big)^*_{0, 1}\right)u_i \cdot \nabla  \phi_i\\[1mm]
&\h{-60pt}+ \int_{-1}^t\int_{B^+_1}2 \varphi^2 \left(\phi_i - \big(\phi_i\big)^*_{0, 1}\right) \Delta \phi_i +  \frac{h^2 \h{1pt}r_i^2 }{\h{1pt} \epsilon_i} \varphi^2 \left(\phi_i - \big(\phi_i\big)^*_{0, 1}\right)   \sin 2 \left(\epsilon_i \h{0.5pt} \phi_i + (\h{0.5pt}\phi\h{0.5pt})_{z_i, \frac{r_i}{2}}\right).
\end{align*}Utilizing the boundedness of $\varphi$ and its derivatives, we obtain from this equality the estimate: \begin{align}\label{L2 of phii}
    \int_{B^{+}_1 \times \{ t \}  }\varphi^2  \left(\phi_i - \big(\phi_i\big)^*_{0, 1}\right)^2  \lesssim_{K_*} & 1  +  \int_{Q_1}     \varphi^2 \big(\Delta \phi_i\big)^2 + \int_{Q_1} \big|\h{0.5pt} u_i \h{0.5pt}\big|^3 + \big|\h{0.5pt}\nabla  \phi_i \h{0.5pt}\big|^3 + \left| \h{1pt}\phi_i - \big(\phi_i\big)^*_{0, 1} \h{1pt}\right|^3.
\end{align}Here, we also use Young's inequality. Apply (1) of \eqref{rescaled L3 estimate}. The above estimate is reduced to \begin{align*}
    &\int_{B^{+}_1 \times \{ t \}  }\varphi^2  \left(\phi_i - \big(\phi_i\big)^*_{0, 1}\right)^2  \lesssim_{K_*}  1  +  \int_{Q_1}     \varphi^2 \big(\Delta \phi_i\big)^2.
\end{align*} This estimate and \eqref{es 1,null}-\eqref{before L2} induce \begin{align}\label{time slice}
&\left|\h{2pt} \int_{\mathrm H \times \{\h{0.5pt} t_i + r_i^2 t\h{0.5pt} \}} \varphi_i^2 \sin^2 \phi - \int_{t_i - r_i^2}^{t_i + r_i^2 t} \int_{\mathrm H} \partial_s \varphi_i^2 \h{1pt}\sin^2 \phi \h{2pt}\right| \\[1mm] 
&\h{100pt}\lesssim_{K_*} r_i^2\h{0.5pt}\epsilon_i + r_i^2\h{0.5pt}\epsilon_i   \int_{B^{+}_1 \times \{\h{0.5pt}t\h{0.5pt}\}}  \varphi^2  \h{1pt}\big|\h{1pt}  \nabla \phi_i \h{1pt} \big|^2 + r_i^2\h{0.5pt}\epsilon_i  \int_{Q_1}     \varphi^2 \big(\Delta \phi_i\big)^2. \nonumber
\end{align}

We continue to estimate integrals in \eqref{rm bdr term}. Applying \eqref{est of nabla' on H}  induces
\begin{align}\label{2nd inegral}
    \left|\h{1.5pt} \int_0^{t_i + r_i^2 t}\int_{\mathrm{H}}  2 \left(\cos 2\phi\right) \left| \nabla' \phi\right|^2 \varphi_i^2 \h{1.5pt}\right| \leq \frac{1}{8L_{\mathrm H}} \int_{P_{r_i}(z_i)} \big|\hspace{.5pt} \partial_{3} \nabla' \phi \hspace{.5pt}\big|^2 \varphi_i^2 + K_* \h{1pt}r_i^{-2}\int_{P_{r_i}(z_i)} \big|\hspace{.5pt} \nabla' \phi \hspace{.5pt}\big|^2.
\end{align}

For the rest integral in \eqref{rm bdr term}, we use the boundary condition of $\phi$ on P and integrate by parts with respect to $x_3$. Consequently, it turns out
\begin{align*}
    & - \int_{0}^{t_i + r_i^2 t} \int_{\mathrm{H}} (\sin 2 \phi) \nabla' \phi \cdot \nabla' \varphi_i^2  \\[1mm]
    &=    \int_0^{t_i + r_i^2 t} \int_{\Omega} 2 \h{0.5pt} \cos 2 \phi  \left(\partial_3 \phi\right) \left(\nabla' \phi\right) \cdot \nabla' \varphi_i^2  +   \sin 2 \phi \h{1pt} (\partial_3 \nabla' \phi) \cdot \nabla' \varphi_i^2 +    \sin 2 \phi \h{1pt} (\nabla' \phi) \cdot \partial_3 \nabla \varphi_i^2.
\end{align*}

\noindent Direct estimates infer that \begin{align}\label{third est requ}
    \left|\h{1pt}\int_{0}^{t_i + r_i^2 t} \int_{\mathrm{H}} (\sin 2 \phi) \nabla' \phi \cdot \nabla' \varphi_i^2 \h{1pt}\right| \leq \frac{1}{8 L_{\mathrm H}} \int_{P_{r_i}(z_i)} \big|\hspace{.5pt} \partial_{3} \nabla' \phi \hspace{.5pt}\big|^2 \varphi_i^2 + K_* r_i^3  + K_* r_i^{-2} \int_{P_{r_i}(z_i)} \big|\h{0.5pt}\nabla \phi \h{0.5pt}\big|^2. 
\end{align} 

From the last estimate and \eqref{time slice}-\eqref{2nd inegral}, the integrals in \eqref{rm bdr term} are bounded by
\begin{align*}
    \frac{1}{4 L_{\mathrm H}} \int_{P_{r_i}(z_i)} \big|\h{.5pt} \nabla^2 \phi \h{.5pt}\big|^2 \varphi_i^2 + K_* r_i^2\left( \epsilon_i +   \epsilon_i \int_{B_1^+ \times \{ t\}}  \varphi^2  \h{1pt}\big|\h{1pt}  \nabla \phi_i \h{1pt} \big|^2 +  \epsilon_i  \int_{Q_1}     \varphi^2 \big|\nabla^2 \phi_i\big|^2 +  r_i^{- 4} \int_{P_{r_i}(z_i)} \big|\h{.5pt} \nabla \phi \h{.5pt}\big|^2\right).
\end{align*}
Here, we also use \eqref{est of r_i}.\vspace{0.4pc}

\noindent\textbf{III. Energy estimate induced by the generalized energy inequality \eqref{energy ineq}}\vspace{0.4pc}

Using the H\"{o}lder and Young's inequalities, and the incompressibility condition, we can bound from above the  integrals over $\Omega \times [\h{0.5pt}0, t_i + r_i^2 t\h{0.5pt}]$ on the right-hand side of \eqref{energy ineq} by \begin{align*}
        r_i^{-1}\int_{P_{r_i}(z_i)} \left| u \right|^3 + | \nabla \phi |^3 + r_i^{-2}\int_{P_{r_i}(z_i)} |u|^2 + |\nabla \phi |^2 +   r_i^{-1} \left(\int_{P_{r_i}(z_i)} \left| u \right|^3 \right)^{\frac{1}{3}} \left( \int_{P_{r_i}(z_i)} \big|\h{1pt} p - [\h{0.5pt}p\h{0.5pt}]_{x_i, r_i} \big|^{\frac{3}{2}} \right)^{\frac{2}{3}},
\end{align*}up to a coefficient $K_*$. By this bound and the consequences in Parts I and II, \eqref{energy ineq} infers

\begin{align*}
       & \int_{B^{\pm}_1 \times \{\h{0.5pt}t\h{0.5pt}\}}  \varphi^2 \left( \h{.5pt} |\h{0.5pt}u_i\h{0.5pt}|^2 + |\h{0.5pt}\nabla \phi_i\h{0.5pt}|^2 \right)  +  2  \int_{-1}^{t }\int_{B^{\pm}_1} \varphi^2 \left( \h{0.5pt}  | \nabla u_i |^2 + |\nabla^2 \phi_i |^2 \h{0.5pt}\right) \\[1mm] &\h{30pt} \leq K_* \h{0.5pt}\frac{r_i }{ \epsilon_i}   + \frac{1}{2}  \int_{Q_1} \varphi^2 \h{1pt} \big| \nabla^2 \phi_i \big|^2   + K_* \h{0.5pt} \frac{r_i}{  \epsilon_i} \int_{Q_1} \varphi^2 \big| \nabla^2 \phi_i \big|^2 + K_* \h{0.5pt} \frac{r_i}{\epsilon_i} \int_{B_1^{\pm} \times \{t\}} \varphi^2 \h{0.5pt}\big| \nabla \phi_i \big|^2   \\[1mm]
       &\h{30pt} + K_*  \epsilon_i \int_{Q_1} \left| u_i \right|^3 + | \nabla \phi_i |^3  + K_*  \int_{Q_1} |u_i|^2 + |\nabla \phi_i |^2 +  K_*   \left(  \int_{Q_1} \left| u_i \right|^3 \right)^{\frac{1}{3}} \left(   \int_{Q_1} \big|\h{1pt} p_i \big|^{\frac{3}{2}} \right)^{\frac{2}{3}}.
\end{align*}
 
\noindent Here, the change of variables is also applied. We now utilize \eqref{est of r_i} and (1) in \eqref{rescaled L3 estimate}. The last estimate can then be  reduced to \begin{align*}
    & \int_{B^{\pm}_1 \times \{\h{0.5pt}t\h{0.5pt}\}}  \varphi^2 \left( \h{.5pt} |\h{0.5pt}u_i\h{0.5pt}|^2 + |\h{0.5pt}\nabla \phi_i\h{0.5pt}|^2 \right)  +  2  \int_{-1}^{t }\int_{B^{\pm}_1} \varphi^2 \left( \h{0.5pt}  | \nabla u_i |^2 + |\nabla^2 \phi_i |^2 \h{0.5pt}\right) \\[1mm] &\h{30pt} \leq K_*   + \frac{1}{2}  \int_{Q_1} \varphi^2 \h{1pt} \big| \nabla^2 \phi_i \big|^2   + K_* \h{0.5pt} \theta_0^{\frac{1}{3}} \int_{Q_1} \varphi^2 \big| \nabla^2 \phi_i \big|^2 + K_* \h{0.5pt} \theta_0^{\frac{1}{3}} \int_{B_1^{\pm} \times \{t\}} \varphi^2 \h{0.5pt}\big| \nabla \phi_i \big|^2.
\end{align*}Take $\theta_0$ sufficiently small with the smallness depending on $K_*$ only. The above estimate then yields
\begin{align*}
       & \sup_{t \h{1pt}\in \h{1pt}[ - \frac{1}{4}, 0 \h{0.8pt}]} \h{1.5pt}\int_{B^{\pm}_1 \times \{\h{0.5pt}t\h{0.5pt}\}}  \varphi^2 \left( \h{.5pt} |\h{0.5pt}u_i\h{0.5pt}|^2 + |\h{0.5pt}\nabla \phi_i\h{0.5pt}|^2 \right)  +   \int_{Q_1} \varphi^2 \left( \h{0.5pt}  | \nabla u_i |^2 + |\nabla^2 \phi_i |^2 \h{0.5pt}\right)  \leq K_*.
\end{align*}
  
\noindent The claim \eqref{rescaled weak} follows since $\varphi \equiv 1$ on $Q_{1/2}$.\vspace{0.4pc}

\noindent \textbf{Step 3.} We give some compactness results on the convergence of $(u_i, \phi_i)$. A decay estimate for the pressure is also derived. \vspace{0.4pc}

\noindent \textbf{I. Strong $L^3$-convergence of $\phi_i$}\vspace{0.2pc}

Applying \eqref{rescaled weak}, we obtain from Proposition 3.2 in the Chapter 1 of \cite{Dibenedetto1993} that \begin{align}\label{10/3 est}
    \| u_i \|_{L^{\frac{10}{3}}(Q_{1/2})} + \| \nabla \phi_i \|_{L^{\frac{10}{3}}(Q_{1/2})} \hspace{1.5pt}\leq\hspace{1.5pt} K_*, \h{15pt}\text{for all $i$.}
\end{align}
Use this estimate, \eqref{rescaled weak}, and the equation of $\phi_i$ in \eqref{rescaled system}. It turns out that \begin{align}\label{bound of partIt}
    \int_{Q_{1/2}} \big|\h{0.5pt} \partial_t \phi_i \h{0.5pt}\big|^{\frac{5}{3}}  &\lesssim_{K_*}  1 + \int_{Q_{1/2}} \big| u_i \big|^{\frac{5}{3}} \h{1pt}\big| \nabla  \phi_i \big|^{\frac{5}{3}} + \big| \nabla^2 \phi_i \big|^{\frac{5}{3}} \nonumber\\[1mm]
    &\lesssim_{K_*} 1 + \left(\int_{Q_{1/2}} \big| u_i \big|^{\frac{10}{3}} \right)^{\frac{1}{2}} \left( \int_{Q_{1/2}} \big| \nabla  \phi_i \big|^{\frac{10}{3}} \right)^{\frac{1}{2}} + \left(\int_{Q_{1/2}}\big| \nabla^2 \phi_i \big|^{2}\right)^{\frac{5}{6}} \leq K_*. 
\end{align} Here, $K_*$ is also a universal constant depending possibly only on $h$ and $L_{\mathrm H}$. From our construction of the blow-up sequence, either the average of $\phi_i$ on $Q_{1/2}$ is $0$ or $\phi_i = 0$ on $B'_{1/2} \times ( - \frac{1}{4}, 0 \h{0.5pt})$. We then can apply the $L^2$-estimate of $\nabla \phi_i$ in \eqref{rescaled weak}, the last estimate of $\partial_t \phi_i$,  and Poincar\'{e}'s inequality to obtain that $\phi_i$ is uniformly bounded in $W^{1, \frac{5}{3}}(Q_{1/2})$. By the Sobolev embedding, the $L^{\frac{20}{7}}$-norm of $\{\phi_i\}$ over $Q_{1/2}$ is uniformly bounded from above by the universal constant $K_*$. Using the compactness of the Sobolev embedding, up to a subsequence, $\{\phi_i\}$ converges to a limit function $\phi_*$ as $i \to \infty$, strongly in $L^2(Q_{1/2})$. Since the $L^2$-norm of $\nabla^2 \phi_i$ over $Q_{1/2}$ is uniformly bounded from above by $K_*$, we can keep extracting a subsequence such that $\nabla^2 \phi_i$ converges to $\nabla^2 \phi_*$ weakly in $L^2(Q_{1/2})$. By lower semi-continuity, it turns out \begin{align}\label{bd phi- phi*}
    \int_{Q_{1/2}} \big| \nabla^2 \phi_* \big|^2 \leq  \liminf_{i \to \infty} \int_{Q_{1/2}} \big| \nabla^2 \phi_i \big|^2 \leq K_*.
\end{align} 

We now fix the subsequence and verify the strong $L^3$-convergence of $\{ \phi_i \}$ to $\phi_*$. Utilizing the Gagliardo–Nirenberg inequality induces \begin{align*}
     \int_{Q_{1/2}} \big|\h{1pt}\phi_i - \phi_* \h{1pt}\big|^3 &\lesssim_{K_*} \int_{- 1/4}^0 \big\| \nabla^2 \phi_i - \nabla^2 \phi_* \big\|^{\frac{3}{4}}_{L^2(B^{\pm}_{1/2})} \h{2pt}\big\|   \phi_i - \phi_* \big\|^{\frac{9}{4}}_{L^2(B^{\pm}_{1/2})} + \big\|   \phi_i - \phi_* \big\|^{3}_{L^2(B^{\pm}_{1/2})}\\[1mm]
    & \h{- 50pt}\lesssim_{K_*} \left(\int_{Q_{1/2}}  \big| \nabla^2 \phi_i - \nabla^2 \phi_* \big|^{2}  \right)^{\frac{3}{8}} \left(\int_{- 1/4}^0\big\|   \phi_i - \phi_* \big\|^{\frac{18}{5}}_{L^2(B^{\pm}_{1/2})} \right)^{\frac{5}{8}} + \int_{- 1/4}^0 \big\|   \phi_i - \phi_* \big\|^{3}_{L^2(B^{\pm}_{1/2})}.
\end{align*}Using \eqref{bd phi- phi*} and the $L^2$-estimate of $\nabla^2 \phi_i$ in \eqref{rescaled weak}, we reduce the above estimate to \begin{align}\label{L3 over Q1/2}
     \int_{Q_{1/2}} \big|\h{1pt}\phi_i - \phi_* \h{1pt}\big|^3 \lesssim_{K_*} \left(\int_{- 1/4}^0\big\|   \phi_i - \phi_* \big\|^{\frac{18}{5}}_{L^2(B^{\pm}_{1/2})} \right)^{\frac{5}{8}} + \int_{- 1/4}^0 \big\|   \phi_i - \phi_* \big\|^{3}_{L^2(B^{\pm}_{1/2})}.
\end{align} To control the $L^2$-norm of $\phi_i$, we first note that \begin{align*}
    \int_{Q_{1/2}} \big|\h{0.5pt} \phi_i \h{0.5pt}\big|^2 \leq K_* \h{15pt}\text{for all $i$ and some universal constant $K_*$.}
\end{align*}Therefore, for each $i$, there exists a $t_i \in [ - \frac{1}{4}, 0 \h{0.5pt}]$ such that \begin{align*}
    \int_{B^{\pm}_{1/2} \times \{ \h{0.5pt}t_i \h{0.5pt}\}} \big|\h{0.5pt} \phi_i \h{0.5pt}\big|^2 \leq 8K_*. 
\end{align*}Applying this estimate, the upper boundedness of the $L^{\frac{20}{7}}$-norm of $\{\phi_i\}$ over $Q_{1/2}$, and \eqref{bound of partIt}, we get, for all $t \in [-\frac{1}{4}, 0\h{0.5pt}]$, that
\begin{align*}
    \int_{B^{\pm}_{1/2} \times \{ \h{0.5pt}t  \h{0.5pt}\}} \big|\h{0.5pt} \phi_i \h{0.5pt}\big|^2 &= \int_{B^{\pm}_{1/2} \times \{ \h{0.5pt}t_i \h{0.5pt}\}} \big|\h{0.5pt} \phi_i \h{0.5pt}\big|^2 + 2 \int_{t_i}^t  \int_{B^{\pm}_{1/2}}  \phi_i \h{1pt} \partial_s \phi_i \\[1mm]
    &\h{20pt}\lesssim_{K_*} 1 +  \left(\int_{Q_{1/2}} \big|\h{0.5pt}\phi_i \h{0.5pt}\big|^{\frac{20}{7}}\right)^{\frac{7}{20}}\left(\int_{Q_{1/2}} \big|\h{0.5pt}\partial_s \phi_i \h{0.5pt}\big|^{\frac{5}{3}}\right)^{\frac{3}{5}} \leq K_*.
\end{align*}Since $\{\phi_i\} \to \phi_*$ strongly in $L^2(Q_{1/2})$, 
then $\{\phi_i(\cdot, t)\} \to \phi_*(\cdot, t)$ strongly in $L^2(B^{\pm}_{1/2})$ for almost all $t \in [-\frac{1}{4}, 0\h{0.5pt}]$. The last estimate yields \begin{align*}
   \int_{B^{\pm}_{1/2} \times \{ \h{0.5pt}t  \h{0.5pt}\}} \big|\h{0.5pt} \phi_* \h{0.5pt}\big|^2 = \lim_{i \h{0.5pt}\to\h{0.5pt} \infty} \int_{B^{\pm}_{1/2} \times \{ \h{0.5pt}t  \h{0.5pt}\}} \big|\h{0.5pt} \phi_i \h{0.5pt}\big|^2  \leq K_* \h{20pt}\text{for almost all $t \in [-\frac{1}{4}, 0\h{0.5pt}]$}
\end{align*} We apply the last two estimates to the right-hand side of \eqref{L3 over Q1/2}. It follows \begin{align*}
    \int_{Q_{1/2}} \big|\h{1pt}\phi_i - \phi_* \h{1pt}\big|^3 \lesssim_{K_*} \left(\int_{Q_{1/2}} \big|\h{1pt}   \phi_i - \phi_* \h{1pt}\big|^{2}  \right)^{\frac{5}{8}} + \int_{Q_{1/2}} \big|\h{0.5pt}   \phi_i - \phi_* \h{0.5pt}\big|^{2} \longrightarrow 0 \h{20pt}\text{as $i \to \infty$}.
\end{align*}\vspace{0.2pc}

\noindent \textbf{II. Strong $L^3$-convergence of $(u_i, \nabla \phi_i)$} \vspace{0.2pc}

By (1) in \eqref{rescaled L3 estimate}, \eqref{rescaled weak}, and the duality argument, it turns out from the equation \eqref{rescaled system} that
\begin{equation*} 
    \big\|\h{0.5pt} \partial_t u_i \h{0.5pt}\big\|_{L_t^{\frac{3}{2}} \h{0.5pt}W^{-1, \frac{3}{2}}_x(Q_{1/2})} + \big\|\h{0.5pt} \partial_t \nabla \phi_i \h{0.5pt}\big\|_{L_t^{\frac{3}{2}} \h{0.5pt}W^{-1, \frac{3}{2}}_x(Q_{1/2})} \hspace{1.5pt}\leq\hspace{1.5pt} K_* \h{15pt}\text{for all $i$.}
\end{equation*}Recall \eqref{10/3 est}. We now take $$p = 3, \h{10pt}q = \frac{10}{3}, \h{10pt}X = H^1\big(B^{\pm}_{1/2}; \mathbb R^3\big), \h{10pt}B = L^3\big(B^{\pm}_{1/2}; \mathbb R^3\big), \h{10pt} Y = W^{-1, \frac{3}{2}}\big(B^{\pm}_{1/2}; \mathbb R^3\big)$$ in the Aubin-Lions' type compactness lemma. See Corollary 6 of \cite{S1986}. We then obtain 
\begin{equation} \label{L3 convergence}
    \big(u_i, \nabla \phi_i\big) \hspace{1.5pt}\longrightarrow\hspace{1.5pt} \big(u_*, \nabla \phi_*\big) \quad \text{strongly in } L^3\big(Q_{1/2}\big),
\end{equation}up to a subsequence. Here, $u_* \in L^3(Q_{1/2})$. $\phi_*$ is the same as we obtain in Part I of this step.\vspace{0.4pc}

\noindent \textbf{III. Decay estimate of $p_i$} \vspace{0.2pc}

In what follows, the norm of the space $L^p_t L^q_x(Q_r)$ is denoted by $\|\h{0.2pt} \cdot \h{0.2pt}\|_{q,\, p,\, Q_r}$. The standard space-time Sobolev spaces are used, as in \cite{Seregin2003}. 

Suppose $\big(u_i^{(1)}, p_i^{(1)}\big)$ satisfy the initial boundary value problem:
\begin{equation*} \left\{ \begin{aligned}
    \partial_t u_i^{(1)} - \Delta u_i^{(1)} + \nabla p_i^{(1)} &= - \epsilon_i \h{1pt}u_i \cdot \nabla u_i - \epsilon_i \h{1pt}\nabla \cdot \big(\nabla \phi_i \odot \nabla  \phi_i\big) \hspace{.5pt} \quad\quad \text{in } Q_{1/2}, \\[1mm]
    \operatorname{div} u_i^{(1)} &= 0 \quad\hspace{160.5pt} \text{in } Q_{1/2}, \\[1mm]
    u_i^{(1)} &= 0 \quad\hspace{160.5pt} \text{on } \mathscr P Q_{1/2}, 
\end{aligned} \right. \end{equation*}where $\mathscr P Q_{1/2}$ is the parabolic boundary of $Q_{1/2}$. By Theorem 1.1 in \cite{Solonnikov2003}, 
\begin{equation}\label{est of u_1 1st}
    \big\| \h{1pt} u_i^{(1)} \big\|_{W^{2,1}_{\frac{9}{8}, \frac{3}{2}}(Q_{1/2})} + \big\|\h{1pt} \nabla p_i^{(1)} \big\|_{\frac{9}{8}, \frac{3}{2},\, Q_{1/2}} \hspace{1.5pt}\lesssim_{K_*}\hspace{1.5pt}  \epsilon_i \h{0.5pt}\big\| \h{1pt} u_i \cdot \nabla  u_i + \nabla \cdot \big(\nabla \phi_i \odot \nabla  \phi_i\big) \h{1pt}\big\|_{\frac{9}{8}, \frac{3}{2}, \,Q_{1/2}}.
\end{equation} Utilizing H\"{o}lder's inequality, we have \begin{align*}
    &\int_{- 1/4}^0 \big\| \h{1pt} u_i \cdot \nabla  u_i + \nabla \cdot \big(\nabla \phi_i \odot \nabla  \phi_i\big) \h{1pt}\big\|^{\frac{3}{2}}_{L^{\frac{9}{8}}(B^{\pm}_{1/2})} \\[1mm]
    &\h{30pt}\lesssim_{K_*} \int_{- 1/4}^0 \big\| \h{0.5pt}u_i\h{0.5pt}\big\|^{\frac{3}{2}}_{L^{\frac{18}{7}}(B^{\pm}_{1/2})} \h{1pt}\big\| \h{0.5pt}\nabla u_i\h{0.5pt}\big\|^{\frac{3}{2}}_{L^{2}(B^{\pm}_{1/2})} + \big\| \h{0.5pt}\nabla \phi_i\h{0.5pt}\big\|^{\frac{3}{2}}_{L^{\frac{18}{7}}(B^{\pm}_{1/2})} \h{1pt}\big\| \h{0.5pt}\nabla^2 \phi_i\h{0.5pt}\big\|^{\frac{3}{2}}_{L^{2}(B^{\pm}_{1/2})}.
\end{align*}
To control the $L^{\frac{18}{7}}$-norms above, we apply the Gagliardo-Nirenberg inequality and get \begin{align*}
    &\big\| \h{0.5pt}u_i\h{0.5pt}\big\|_{L^{\frac{18}{7}}(B^{\pm}_{1/2})}\lesssim_{K_*} \big\| \h{0.5pt}\nabla u_i\h{0.5pt}\big\|^{\frac{1}{3}}_{L^{2}(B^{\pm}_{1/2})} \h{1pt}\big\| \h{0.5pt}u_i\h{0.5pt}\big\|^{\frac{2}{3}}_{L^{2}(B^{\pm}_{1/2})} + \big\| \h{0.5pt}u_i\h{0.5pt}\big\|_{L^{2}(B^{\pm}_{1/2})}, \\[1mm]
     &\big\| \h{0.5pt}\nabla \phi_i\h{0.5pt}\big\|_{L^{\frac{18}{7}}(B^{\pm}_{1/2})} \lesssim_{K_*} \big\| \h{0.5pt}\nabla^2 \phi_i\h{0.5pt}\big\|^{\frac{1}{3}}_{L^{2}(B^{\pm}_{1/2})} \h{1pt}\big\| \h{0.5pt}\nabla \phi_i\h{0.5pt}\big\|^{\frac{2}{3}}_{L^{2}(B^{\pm}_{1/2})} + \big\| \h{0.5pt}\nabla \phi_i\h{0.5pt}\big\|_{L^{2}(B^{\pm}_{1/2})}.
\end{align*}By \eqref{rescaled weak} and the last three estimates, \eqref{est of u_1 1st} can be reduced to  \begin{align}\label{est of u_1}
\big\| \h{1pt} u_i^{(1)} \big\|_{W^{2,1}_{\frac{9}{8}, \frac{3}{2}}(Q_{1/2})} + \big\|\h{1pt} \nabla p_i^{(1)} \big\|_{\frac{9}{8}, \frac{3}{2},\, Q_{1/2}}  \lesssim_{K_*}  \epsilon_i 
\end{align}

Now, we decompose $(u_i, p_i)$ into $$u_i = u_i^{(1)} + u_i^{(2)},\h{15pt} p_i = p_i^{(1)} + p_i^{(2)}.$$ The sequence $\bigl(u_i^{(2)}, p_i^{(2)}\bigr)$ then satisfy
\begin{equation*} \left\{ \begin{aligned}
    \partial_t u_i^{(2)} - \Delta u_i^{(2)} + \nabla p_i^{(2)} &= 0 \quad\quad \text{in } Q_{1/2}, \\[1mm]
    \operatorname{div} u_i^{(2)} &= 0 \quad\quad \text{in } Q_{1/2}; \\[1mm]
    u_i^{(2)} &= 0 \quad\quad \text{on } B'_{1/2} \times (- \frac{1}{4}, 0\h{0.5pt}).
\end{aligned} \right. \end{equation*}
Applying Proposition 2 in \cite{Seregin2003} to the above system induces
\begin{align}\label{basic est of pI2}
    \big\|\h{1pt} p^{(2)}_i \h{1pt}\big\|_{W^{1,0}_{9, \frac{3}{2}} (Q_{1/4})} &\h{1.5pt}\lesssim_{K_*}\h{1.5pt}  \big\|\h{1pt} u^{(2)}_i \h{1pt}\big\|_{W^{1, 0}_{\frac{9}{8}, \frac{3}{2}}(Q_{1/2})} + \big\|\h{1pt} p^{(2)}_i \h{1pt}\big\|_{\frac{9}{8}, \frac{3}{2},\, Q_{1/2}}.
\end{align}
Using triangle inequality and H\"{o}lder inequality, we obtain \begin{align}\label{control u_*}\big\|\h{1pt} u^{(2)}_i \h{1pt}\big\|_{W^{1, 0}_{\frac{9}{8}, \frac{3}{2}}(Q_{1/2})} \leq \big\|\h{1pt} u^{(1)}_i \h{1pt}\big\|_{W^{1, 0}_{\frac{9}{8}, \frac{3}{2}}(Q_{1/2})} + \big\|\h{1pt} u_i \h{1pt}\big\|_{W^{1, 0}_{2, 2} (Q_{1/2})} \leq K_*.\end{align} If we assume $p^{(1)}_i$ has $0$ average over $B^{\pm}_{1/2}$, then  by triangle inequality and Poincar\'{e} inequality, \begin{align}\label{control P_*}\big\|\h{1pt} p^{(2)}_i \h{1pt}\big\|_{\frac{9}{8}, \frac{3}{2}, \, Q_{1/2}} \leq \big\|\h{1pt} \nabla p^{(1)}_i \h{1pt}\big\|_{\frac{9}{8}, \frac{3}{2}, \,Q_{1/2}} + \big\|\h{1pt} p_i \h{1pt}\big\|_{\frac{3}{2}, \frac{3}{2}, \,Q_{1/2}} \leq K_*.\end{align}
Therefore, $\big\|\h{1pt} \nabla p_i^{(2)} \h{1pt}\big\|_{9, \frac{3}{2}, \, Q_{1/4}}\hspace{1.5pt}\leq\hspace{1.5pt} K_*$, which together with the estimate of $\nabla p_i^{(1)}$ in \eqref{est of u_1} infers
\begin{align*}
    \int_{Q_{\theta_0}} \big|\hspace{.8pt} p_i - [\h{.5pt} p_i \h{.5pt}]^*_{0, \theta_0} \hspace{.5pt}\big|^{\frac{3}{2}} &\hspace{1.5pt}\lesssim_{K_*}\hspace{1.5pt}   \theta_0^{\frac{1}{2}} \int_{- \theta_0^2}^0 \left(\int_{B^{\pm}_{\theta_0}} \big|\hspace{.5pt} \nabla p_i^{(1)} \hspace{.5pt} \big|^{\frac{9}{8}} \right)^{\frac{4}{3}} +  \theta_0^{4} \int_{- \theta_0^2}^0 \left(\int_{B^{\pm}_{\theta_0}} \big|\hspace{.5pt} \nabla p_i^{(2)} \hspace{.5pt}\big|^{9} \right)^{\frac{1}{6}} \h{1.5pt}\lesssim_{K_*}\h{1.5pt} \theta_0^{\frac{1}{2}} \h{1pt}\epsilon_i^{\frac{3}{2}} + \theta_0^4.
\end{align*}
Here, the Sobolev, H\"{o}lder, and Poincar\'{e} inequality are also used. Hence, 
\begin{equation*}
    \limsup_{i\h{1pt}\to\h{1pt}\infty} \int_{Q_{\theta_0}} \big|\hspace{.8pt} p_i - [\h{.5pt} p_i \h{.5pt}]^*_{0, \theta_0} \hspace{.5pt}\big|^{\frac{3}{2}} \lesssim_{K_*} \h{0.5pt}\theta_0^4.
\end{equation*}

\noindent\textbf{Step 4.} Using the results obtained in Step 3, we take $i \to \infty$ in (2) of \eqref{rescaled L3 estimate} and arrive at \begin{align}\label{contra}
 \theta_0^{3} \leq K_* \h{0.5pt}   \theta_0^4 + \theta_0^{-2} \int_{Q_{\theta_0}} \big| \hspace{.5pt} u_* \hspace{.5pt} \big|^3 + \big| \hspace{.5pt} \nabla \phi_* \hspace{.5pt} \big|^3 + \theta_0^{-5} \int_{Q_{\theta_0}} \big|\h{0.5pt} \phi_* - (\phi_*)_{0, \theta_0}^* \h{0.5pt}\big|^3.
\end{align} Recall \eqref{small ener}. We can also take $i \to \infty$ in the first equation of \eqref{rescaled system}. $u_*$ then solves the following boundary value problem:\begin{align*}
    \partial_t u_* - \Delta u_* = - \nabla p_*, \hspace{20pt} \operatorname{div} u_* = 0 \quad\quad \text{in } Q_{1/2}; \h{20pt} u_* = 0 \quad\quad\text{on $B'_{1/2} \times [- 1/4, 0\h{0.8pt}]$}.
\end{align*} Apply Lemma 1 in \cite{Seregin2003} to this Stokes equation. $u_*$ is therefore $\frac{1}{3}$-H\"older continuous on the closure of $Q_{1/8}$. The semi-H\"{o}lder norm over $\overline{Q_{1/8}}$ is bounded from above by $K_*$. Here, we use \eqref{control u_*}-\eqref{control P_*}. Since $u_* = 0$ on $B'_{1/2}$, it follows that
\begin{align}\label{decay_1}
    \theta_0^{-2} \int_{Q_{\theta_0}} \big|\hspace{.5pt} u_* \hspace{.5pt} \big|^3  \hspace{1.5pt}\lesssim_{K_*}\hspace{1.5pt}  \theta_0^4.
\end{align} Applying this estimate to the right-hand side of \eqref{contra} induces \begin{align}\label{contra2}
 \theta_0^{3} \leq K_* \h{0.5pt}   \theta_0^4 + \theta_0^{-2} \int_{Q_{\theta_0}}  \big| \hspace{.5pt} \nabla \phi_* \hspace{.5pt} \big|^3 + \theta_0^{-5} \int_{Q_{\theta_0}} \big|\h{0.5pt} \phi_* - (\phi_*)_{0, \theta_0}^* \h{0.5pt}\big|^3 \h{15pt}\text{for all $\theta_0 \in \big(0, \frac{1}{8}\big)$.}
\end{align}

Since $\big\{r_i\big\}$ and $ \big\{ r_i^2 \h{0.5pt} \epsilon_i^{-1} \big\} \to 0$ as $i \to \infty$, taking $i \to \infty$ in third equation of \eqref{rescaled system} then induces
\begin{align*}
    \partial_t \phi_* - \Delta \phi_* = 0  \hspace{20pt}\text{in $Q_{1/2}$}. 
\end{align*}The boundary condition of $\phi_*$ on $B'_{1/2}$ is different when the spatial domain is $B^+_{1/2}$ or $B^-_{1/2}$.\vspace{0.4pc}

\noindent\textbf{Case 1.} If $\big\{x_i \big\} \subset \mathrm H$, by \eqref{scaled wk anchoring}, the boundary condition of $\phi_i$ on $B'_{1/2}$ can be rewritten as \begin{align*}
    \partial_3 \phi_i = - \frac{L_\mathrm{H}}{2} \h{1pt}\frac{ \h{0.5pt} r_i}{\epsilon_i} \left(  \sin  2\left(\epsilon_i \h{0.5pt}\phi_i +  (\h{0.5pt}\phi\h{0.5pt})_{z_i, \frac{r_i}{2}}\right) - \sin  2 \h{1pt}(\h{0.5pt}\phi\h{0.5pt})_{z_i, \frac{r_i}{2}} \right) - \frac{L_\mathrm{H}}{2} \h{1pt}\frac{ \h{0.5pt} r_i}{\epsilon_i} \sin  2 \h{1pt}(\h{0.5pt}\phi\h{0.5pt})_{z_i, \frac{r_i}{2}}.  
\end{align*} There is a $\mu_* \in [-1, 1]$ so that the right-hand side above converges to $\mu_* \h{0.5pt}L_{\mathrm H} \h{0.8pt} \theta_0^{\frac{1}{3}}$ as $i \to \infty$, up to a subsequence. Hence, \begin{equation} \label{sbc}
    \partial_3 \phi_* = \mu_* \h{0.5pt}L_{\mathrm H} \h{0.8pt} \theta_0^{\frac{1}{3}}   \quad\quad \text{on } B'_{1/2} \times (- \frac{1}{4}, 0\h{0.5pt}).
\end{equation}

\noindent\textbf{Case 2.} If  $\big\{x_i \big\} \subset \mathrm P$, by \eqref{scaled wk anchoring Diri}, the boundary condition  of $\phi_*$ is given as follows:
\begin{equation} \label{sbc Diri}
    \phi_* = 0 \quad\quad \text{on } B'_{1/2} \times (- \frac{1}{4}, 0\h{0.5pt}).
\end{equation} 

We now estimate the function $\phi_*$ by separately discussing the above two cases. \vspace{0.4pc} 
 
\noindent\textbf{I.1. Spatial gradient estimates of $\phi_*$ in Case 1.} \vspace{0.2pc}

Recalling \eqref{sbc}, we extend $\psi_0 := \phi_* -   \mu_* \h{0.5pt} L_{\mathrm{H}} \h{0.5pt}\theta_0^{\frac{1}{3}} \h{0.5pt}x_3$ to $B_{1/2}$ by even extension. Define $$Q^*_{r} := B_r \times \big(- r^2, 0\h{0.5pt}\big).$$ Then, $\psi_0$  solves the parabolic equation \begin{align}\label{eq of psi-0}\partial_t \psi_0 = \Delta \psi_0 \h{15pt}\text{on $Q^*_{1/2}$}.\end{align} As shown on P53 of \cite{Liebmann}, $\nabla \psi_0$  is uniformly bounded on $Q^*_{1/4}$ with its $L^\infty$-norm satisfying \begin{align}\label{Linfty of nabla}
  \big\| \nabla \psi_0 \big\|^2_{L^\infty(Q^*_{1/4})} \leq  K_* \int_{Q^*_{1/2}} \big| \nabla \psi_0 \big|^2 \leq K_*.
\end{align}Utilize Theorem 4.7 in \cite{Liebmann}. $\nabla \psi_0$ is also $\frac{1}{3}$-H\"{o}lder continuous over $\overline{Q^*_{1/4}}$ with its semi-H\"{o}lder norm bounded from above by a universal constant $K_*$. Since $\partial_3 \psi_0 = 0$ on the flat boundary $B'_{1/2} \times (- \frac{1}{4}, 0\h{0.5pt})$,  same as \eqref{decay_1}, it holds \begin{align*}
    \theta_0^{-2} \int_{Q_{\theta_0}} \big|\hspace{.5pt} \partial_3 \psi_0 \hspace{.5pt} \big|^3  \hspace{1.5pt}\lesssim_{K_*}\hspace{1.5pt}  \theta_0^4.
\end{align*}Therefore, \begin{align*}
    \theta_0^{-2} \int_{Q_{\theta_0}} \big|\hspace{.5pt} \partial_3 \phi_* \hspace{.5pt} \big|^3  \hspace{1.5pt}\lesssim_{K_*}\hspace{1.5pt}  \theta_0^4.
\end{align*} As for the tangential derivatives, we use \eqref{Linfty of nabla} and apply Lemma 4.20 in \cite{Liebmann} to $\nabla' \phi_*$. It follows \begin{align*}
    \int_{Q_{\theta_0}} \big| \nabla' \phi_*\big|^3 \lesssim_{K_*} \int_{Q_{\theta_0}} \big| \nabla' \phi_*\big|^2 \lesssim_{K_*} \theta_0^7 \int_{Q_{1/2}} \big| \nabla' \phi_*\big|^2 \lesssim_{K_*} \theta_0^7.
\end{align*} Combining the last two estimates induces \begin{align}\label{est in even}
\theta_0^{-2} \int_{Q_{\theta_0}} \big|\hspace{.5pt} \nabla \phi_* \hspace{.5pt} \big|^3  \hspace{1.5pt}\lesssim_{K_*}\hspace{1.5pt} \theta_0^4 \h{15pt}\text{for all $\theta_0 \in \big(0, \frac{1}{4}\big)$.}
\end{align}

\noindent\textbf{I.2. $L^3$-estimate of $\phi_*$ in Case 1.} \vspace{0.2pc}

To estimate the $L^3$-integral of $\phi_* - (\phi_*)^*_{0, \theta_0}$ over $Q_{\theta_0}$, we also need to bound the $L^3$-integral of $\partial_t \phi_*$ over $Q_{\theta_0}$. Note that for $j = 1, 2, 3$, the spatial derivative $\partial_j \psi_0$ solves the same equation as $\psi_0$ in \eqref{eq of psi-0}. Using Proposition 7.14 in \cite{Liebmann} infers \begin{align*} \int_{Q^*_{\theta_0}} \big|\h{0.5pt} \nabla^2 \psi_0 \h{0.5pt}\big|^3 \lesssim_{K_*} \theta_0^{-3} \int_{Q^*_{36 \theta_0}}
\big|\h{0.5pt} \nabla \psi_0 \h{0.5pt}\big|^3 \lesssim_{K_*} \theta^3_0 \h{20pt}\text{for all $\theta_0 \in \big(0, \frac{1}{144} \big)$}.
\end{align*} It turns out \begin{align}\label{est of part t}\int_{Q_{\theta_0}} \big|\h{0.5pt} \partial_t \phi_* \h{0.5pt}\big|^3 = \int_{Q_{\theta_0}} \big|\h{0.5pt} \partial_t \psi_0 \h{0.5pt}\big|^3 \lesssim_{K_*} \theta_0^3 \h{20pt} \text{for all $\theta_0 \in \big(0, \frac{1}{144} \big)$}.
\end{align}
By Poincar\'{e} inequality, \begin{align*}
    \int_{Q_{\theta_0}} \big|\h{0.5pt} \phi_* - (\phi_*)_{0, \theta_0}^* \h{0.5pt}\big|^3 \lesssim_{K_*} \theta_0^3 \int_{Q_{\theta_0}} \big|\h{0.5pt} \nabla \phi_* \h{0.5pt}\big|^3 + \theta_0^6 \int_{Q_{\theta_0}} \big|\h{0.5pt} \partial_t \phi_* \h{0.5pt}\big|^3.
\end{align*}Applying \eqref{est in even}-\eqref{est of part t} to the right-hand side above gives us\begin{align}\label{control of L3 in even}
\theta_0^{-5}\int_{Q_{\theta_0}} \big|\h{0.5pt} \phi_* - (\phi_*)_{0, \theta_0}^* \h{0.5pt}\big|^3 \lesssim_{K_*} \theta_0^4 \h{20pt} \text{for all $\theta_0 \in \big(0, \frac{1}{144} \big)$}.
\end{align}

\noindent\textbf{II. Estimates of $\phi_*$ in Case 2.} \vspace{0.2pc}

In light of \eqref{sbc Diri}, we extend $\phi_*$ to $Q^*_{1/2}$ via the odd extension with respect to the $x_3$-variable. The extended $\phi_*$ satisfies the same equation as $\psi_0$ in \eqref{eq of psi-0}. Using the same argument for deriving \eqref{Linfty of nabla}, we obtain \begin{align}\label{Linfty of nabla phi*}
  \big\| \nabla \phi_* \big\|^2_{L^\infty(Q^*_{1/4})} \leq  K_* \int_{Q^*_{1/2}} \big| \nabla \phi_* \big|^2 \leq K_*.
\end{align}
Still utilize Theorem 4.7 in \cite{Liebmann}. $\nabla \phi_*$ is also $\frac{1}{3}$-H\"{o}lder continuous over $\overline{Q^*_{1/4}}$ with its semi-H\"{o}lder norm bounded from above by a universal constant $K_*$. Since $\nabla' \phi_* = 0$ on the flat boundary $B'_{1/2} \times (- \frac{1}{4}, 0\h{0.5pt})$,  same as \eqref{decay_1}, it turns out \begin{align*}
    \theta_0^{-2} \int_{Q_{\theta_0}} \big|\hspace{.5pt} \nabla' \phi_* \hspace{.5pt} \big|^3  \hspace{1.5pt}\leq\hspace{1.5pt} K_* \h{0.5pt} \theta_0^4.
\end{align*}

\noindent As for the normal derivative, we use \eqref{Linfty of nabla phi*} and apply Lemma 4.20 in \cite{Liebmann} to $\partial_3 \phi_*$. It holds
\begin{align*}
    \int_{Q_{\theta_0}} \big|\h{.5pt} \partial_3 \phi_* \h{.5pt}\big|^3 \h{1.5pt}\lesssim_{K_*}\h{1.5pt}  \int_{Q_{\theta_0}} \big|\h{.5pt} \partial_3 \phi_* \h{.5pt}\big|^2 \lesssim_{K_*}  \theta_0^7 \int_{Q_{1/2}} \big|\h{.5pt} \partial_3 \phi_* \h{.5pt}\big|^2 \lesssim_{K_*} \theta_0^7 \h{20pt}\text{for all $\theta_0 \in \big(0, \frac{1}{4}\big)$}. 
\end{align*}We can apply Lemma 4.20 in \cite{Liebmann} to $\partial_3 \phi_*$ since it satisfies $$\partial_{33} \phi_* = \partial_t \phi_* - \Delta' \phi_* = 0 \h{20pt}\text{ on $B'_{1/2} \times \big(- \frac{1}{4}, 0 \big)$}$$ by the boundary condition \eqref{sbc Diri}. Therefore, we still obtain the decay estimate \eqref{est in even} for the $\phi_*$ in case 2 by combining the last two estimates. Furthermore, by Poincar\'{e} inequality, \begin{align}\label{control of L3 in odd}
\theta_0^{-5}\int_{Q_{\theta_0}} \big|\h{0.5pt} \phi_* \h{0.5pt}\big|^3 \lesssim_{K_*} \theta_0^{-2}\int_{Q_{\theta_0}} \big|\h{0.5pt} \nabla \phi_* \h{0.5pt}\big|^3 \lesssim_{K_*} \theta_0^4 \h{20pt} \text{for all $\theta_0 \in \big(0, \frac{1}{4} \big)$}.
\end{align}

We now complete the proof of this lemma. Applying \eqref{est in even} and \eqref{control of L3 in even} if Case 1 holds, or \eqref{est in even} and \eqref{control of L3 in odd} if Case 2 holds to the right-hand side of \eqref{contra2} yields \begin{align*} 
 \theta_0^{3} \leq K_* \h{0.5pt}   \theta_0^4  \h{20pt}\text{for all $\theta_0 \in \big(0, \frac{1}{144}\big)$.}
\end{align*}This is impossible if we take $\theta_0$ to be $\frac{1}{2}\min \left\{ \frac{1}{144}, \frac{1}{K_*} \right\}$. $K_*$ is the constant in the last estimate.
\end{proof}

Iteratively applying Lemma \ref{decay lem}, we have
\begin{lem} \label{iter}
Fix $r > 0$ and $z_0 = (x_0, t_0) \in \big(\mathrm {H \cup P}\big) \times (r^2, \infty)$. Assume that
\begin{equation}\label{small assumption}
    \max \Big\{ \h{0.5pt} \theta_0^{-9} \h{0.5pt} r^3, \h{2pt}F(r, z_0) \h{0.5pt}\Big\} \hspace{1.5pt}\leq\hspace{1.5pt} \epsilon_0^3,
\end{equation}where $\epsilon_0$ is given in Lemma \ref{decay lem}. Then, 
\begin{align*}\max \Bigl\{\h{0.5pt} \theta_0^{-9}\h{0.5pt}\big(\theta^k_0\h{0.5pt}r\big)^3,  \h{0.5pt}F\big(\theta^k_0 \h{0.5pt} r, z_0\big) \h{0.5pt}\Bigr\}  \hspace{1.5pt}\leq\hspace{1.5pt}  \theta_0^{3k} \max \Bigl\{\h{0.5pt} \theta_0^{-9}\h{0.5pt}r^3,  F\big(r, z_0\big) \h{0.5pt}\Bigr\} \h{20pt}\text{for all $k \in \mathbb N \cup \{\h{0.5pt}0\h{0.5pt}\}$.}
\end{align*} Equivalently, 
\begin{align}\label{average est}
     \big(\theta^k_0 \h{0.5pt} r\big)^{-3}F\big(\theta^k_0 \h{0.5pt} r, z_0\big) \hspace{1.5pt}\leq\hspace{1.5pt}  r^{-3} \max \Bigl\{\h{0.5pt} \theta_0^{-9}\h{0.5pt}r^3,  F\big(r, z_0\big) \h{0.5pt}\Bigr\} \h{20pt}\text{for all $k \in \mathbb N \cup \{\h{0.5pt}0\h{0.5pt}\}$.}
\end{align}
\end{lem}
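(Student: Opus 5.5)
We argue by induction on $k$, applying Lemma \ref{decay lem} at each step with the radius $\theta_0^k r$ in place of $r$ and the same point $z_0$. Write $M_k := \max\{\theta_0^{-9}(\theta_0^k r)^3, F(\theta_0^k r, z_0)\}$, so that the claim reads $M_k \le \theta_0^{3k} M_0$. The case $k=0$ is trivial. The second form \eqref{average est} follows from the first by dividing by $(\theta_0^k r)^3$: since $F(\theta_0^k r,z_0) \le M_k \le \theta_0^{3k}M_0$, we get $(\theta_0^k r)^{-3}F(\theta_0^k r,z_0) \le r^{-3}M_0$.

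For the inductive step, assume $M_k \le \theta_0^{3k} M_0$. We first check that the hypothesis of Lemma \ref{decay lem} holds at radius $\theta_0^k r$. By the inductive hypothesis, \eqref{small assumption}, and $\theta_0<1$,
\[
F(\theta_0^k r, z_0) \le M_k \le \theta_0^{3k}\epsilon_0^3 \le \epsilon_0^3 .
\]
We also need $z_0 \in (\mathrm{H}\cup\mathrm{P})\times((\theta_0^k r)^2,\infty)$. This follows from $t_0 > r^2 \ge (\theta_0^k r)^2$. Lemma \ref{decay lem} then gives
\[
F(\theta_0^{k+1} r, z_0) \le \theta_0^3 \max\{\theta_0^{-9}(\theta_0^k r)^3, F(\theta_0^k r,z_0)\} = \theta_0^3 M_k .
\]
The radius term scales exactly:
\[
\theta_0^{-9}(\theta_0^{k+1} r)^3 = \theta_0^3\cdot \theta_0^{-9}(\theta_0^k r)^3 \le \theta_0^3 M_k .
\]
Hence $M_{k+1} \le \theta_0^3 M_k \le \theta_0^{3(k+1)} M_0$, which closes the induction.

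The only point that needs care is keeping the hypothesis of Lemma \ref{decay lem} alive at every scale. This is why the maximum also tracks the term $\theta_0^{-9}r^3$: the conclusion of Lemma \ref{decay lem} contains that term, so $F$ alone does not propagate. Both entries of the maximum decay geometrically, so the smallness $\le \epsilon_0^3$ persists for all $k$. Beyond this, the argument is bookkeeping.
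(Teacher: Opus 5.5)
Your induction is correct and is the iteration the paper intends; the paper gives no written proof beyond the remark that the lemma follows by iterating Lemma \ref{decay lem}. You also correctly identify that keeping the term $\theta_0^{-9}r^3$ in the maximum, together with $t_0 > r^2 \geq (\theta_0^k r)^2$, is what keeps the hypothesis $F(\theta_0^k r, z_0)\le \epsilon_0^3$ of Lemma \ref{decay lem} valid at every scale.
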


A direct corollary of Lemma \ref{iter} is read as follows. \begin{cor}\label{aver on cylinder}
    Assume the same $r$ and $z_0$ as in Lemma \ref{iter}. Then we have \begin{align*}
        \rho^{-3}  F\big(\rho, z_0\big) \leq 8 \h{0.5pt}\theta_0^{-8}  r^{-3} \max \Bigl\{\h{0.5pt} \theta_0^{-9}\h{0.5pt}r^3,  F\big(r, z_0\big) \h{0.5pt}\Bigr\} \h{40pt}\text{for all $\rho \in (0, r \h{0.5pt}]$.}
    \end{align*}
\end{cor}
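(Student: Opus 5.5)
The corollary is an interpolation between the discrete scales $\theta_0^k r$ supplied by Lemma \ref{iter}. Given $\rho\in(0,r]$, I choose the integer $k\ge 0$ with $\theta_0^{k+1}r<\rho\le\theta_0^k r$. The plan is to compare $F(\rho,z_0)$ with $F(\theta_0^k r,z_0)$ using the monotonicity of the integrals in the radius. Then I apply \eqref{average est} at the scale $\theta_0^k r$ and absorb the mismatch of the normalizing powers of the radius into a constant depending only on $\theta_0$.

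\textbf{Comparing the two scales.} Set $R:=\theta_0^k r$, so $\theta_0 R<\rho\le R$ and $P_\rho(z_0)\subset P_R(z_0)$. I treat the four pieces of $F$ separately.

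For the $L^3$ terms in $u$ and $\nabla\phi$, monotonicity of the integral gives
\[
\rho^{-2}\int_{P_\rho}\big(|u|^3+|\nabla\phi|^3\big)\le(R/\rho)^2R^{-2}\int_{P_R}\big(|u|^3+|\nabla\phi|^3\big)\le\theta_0^{-2}\,R^{-2}\int_{P_R}\big(|u|^3+|\nabla\phi|^3\big).
\]

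For the oscillation of $\phi$ I use the standard fact that replacing the mean by any constant changes the $L^3$ norm only by a factor $2$:
\[
\int_{P_\rho}|\phi-(\phi)_{z_0,\rho}|^3\lesssim\int_{P_\rho}|\phi-(\phi)_{z_0,R}|^3\le\int_{P_R}|\phi-(\phi)_{z_0,R}|^3 .
\]
This gives a factor $\lesssim(R/\rho)^5\le\theta_0^{-5}$. In the case $x_0\in\mathrm P$ both averages vanish, so the step is trivial.

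For the pressure the same argument applies with the time-dependent spatial averages $[p]_{x_0,\rho}(t)$ versus $[p]_{x_0,R}(t)$. At each fixed $t$ the $L^{3/2}$ norm over $B_\rho^\pm$ of $p-[p]_{x_0,\rho}$ is at most twice that of $p-[p]_{x_0,R}$. After squaring, this piece contributes a factor $\lesssim(R/\rho)^4\le\theta_0^{-4}$.

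Summing, $F(\rho,z_0)\le C\theta_0^{-5}F(R,z_0)$ with $C$ universal.

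\textbf{Conclusion.} Since $\rho^{-3}\le\theta_0^{-3}R^{-3}$, I get
\[
\rho^{-3}F(\rho,z_0)\le C\theta_0^{-8}R^{-3}F(R,z_0).
\]
Estimate \eqref{average est} bounds the right-hand side by $C\theta_0^{-8}r^{-3}\max\{\theta_0^{-9}r^3,F(r,z_0)\}$. The stated constant $8$ arises from the factors $2^3$ in the mean-replacement step; with slightly more careful bookkeeping of each term, any universal constant suffices for later use.

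The only delicate point is the averaging step, because the centering constants depend on the radius and, for the pressure, on time. The remedy is the elementary inequality $\|f-\bar f_A\|_{L^q(A)}\le2\|f-c\|_{L^q(A)}$ for any constant $c$, applied on $A=P_\rho$ for $\phi$, and slice by slice on $B_\rho^\pm$ for $p$.
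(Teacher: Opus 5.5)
Your proposal is correct. It is the interpolation argument the paper has in mind when it calls this a direct corollary of Lemma \ref{iter}; no separate proof is given there. Your bookkeeping also gives the stated constant exactly, so the hedge about ``any universal constant'' is unnecessary: each piece of $F$ picks up at most the recentering factor $2^3$ and the normalization ratio $(R/\rho)^{8}<\theta_0^{-8}$ coming from $\rho^{-3}F(\rho,z_0)$, which yields precisely $8\,\theta_0^{-8}$.
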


The average of $|\h{0.5pt}u\h{0.5pt}|^2 + |\h{0.5pt}\nabla \phi \h{0.5pt}|^2$ on $B^{\pm}_{\rho\h{0.5pt}/\h{0.5pt}2}(x_0)$ at time $t_0$ can  be bounded by Corollary \ref{aver on cylinder} and \eqref{energy ineq}.  

\begin{prop}
Assume the same $r$ and $z_0$ as in Lemma \ref{iter}. Then we have
\begin{align*}
    \rho^{-3} \int_{B^{\pm}_{\rho \h{0.5pt}/\h{0.5pt} 2}(x_0) \times \{\h{0.5pt} t_0 \h{0.5pt}\}} \big|\h{.5pt} u \h{.5pt}\big|^2 + \big|\h{.5pt} \nabla \phi \h{.5pt}\big|^2 \h{1.5pt}\lesssim_{K_*}\h{1.5pt} 1 + \max \left\{ \theta^{-9}_0, \frac{F\big(r, z_0 \big)}{r^3} \right\}\h{20pt}\text{for all $\rho \in (0, r]$.}
\end{align*}
\end{prop}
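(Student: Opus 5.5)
The bound at time $t_0$ will come from the local energy estimate of Lemma \ref{loc est}, rescaled to the radius $\rho$ and fed with Corollary \ref{aver on cylinder}. Set $M := \max\{\theta_0^{-9}, F(r,z_0)/r^3\}$. Corollary \ref{aver on cylinder} then gives $F(\rho,z_0) \lesssim_{K_*} \rho^3 M$ for every $\rho \in (0,r]$. Reading off the definition of $F$, this controls $\rho^{-2}\int_{P_\rho(z_0)} |u|^3+|\nabla\phi|^3 \lesssim \rho^3 M$ and $\big(\rho^{-2}\int_{P_\rho(z_0)}|p-[p]_{x_0,\rho}|^{3/2}\big)^2 \lesssim \rho^3 M$. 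In the notation of Section \ref{dis imply L3} these read $\mathrm{C}(\rho)\lesssim \rho^3 M$ and $\mathrm{D}(\rho)\lesssim \rho^{3/2}M^{1/2}$.

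Next I insert these into the estimate \eqref{temp energy} from the proof of Lemma \ref{loc est}. That estimate bounds
\[
\rho^{-1}\sup_{t\in[t_0-\rho^2/4,\,t_0]}\int_{B^\pm_{\rho/2}(x_0)\times\{t\}}|u|^2+|\nabla\phi|^2
\]
by $\rho+\mathrm{C}^{2/3}(\rho)+\mathrm{C}^{1/3}(\rho)\mathrm{D}^{2/3}(\rho)+\mathrm{A}^{1/2}(\rho)\mathrm{B}^{1/2}(\rho)\mathrm{C}^{1/3}(\rho)$. I deliberately keep the term $\mathrm{C}^{1/3}\mathrm{D}^{2/3}$ rather than passing to $\mathrm{D}^{4/3}$, because this preserves the scaling. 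To land exactly on $\rho^{-3}\int$, I multiply through by $\rho^{-2}$. The scaling contributions are then as follows. The first term gives $\rho^{-2}\rho = \rho^{-1}$. The second gives $\rho^{-2}\mathrm{C}^{2/3}\lesssim M^{2/3}$. The third gives $\rho^{-2}\mathrm{C}^{1/3}\mathrm{D}^{2/3}\lesssim \rho^{-2}\rho\, M^{1/3}\,\rho\, M^{1/3} = M^{2/3}$. All of these are $\lesssim 1+M$, with the exception of the $\rho^{-1}$ term and the $\mathrm{A}\mathrm{B}\mathrm{C}$ term.

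\textbf{Step 1: the low-order term.} The $\rho$ in Lemma \ref{loc est} is produced by the lower-order/forcing contributions, namely $2h^2\varphi|\nabla\phi|^2\cos2\phi$, the boundary pieces, and $\rho^{-3}\int_{P_\rho}|u|^2+|\nabla\phi|^2$. I redo this bookkeeping so that each of these is bounded by quantities controlled by $F$. For the $L^2$ integrals, Hölder gives $\rho^{-3}\int_{P_\rho}|u|^2+|\nabla\phi|^2\lesssim \rho^{-3}\rho^{5/3}(\int|u|^3+|\nabla\phi|^3)^{2/3}\lesssim (\rho^3M)^{2/3}$, and after the extra factor $\rho^{-2}$ this is $\lesssim M^{2/3}$. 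The constant contributions, such as the one from $\sin^2 2\phi$ on H, can be absorbed by the null structure used in Lemma \ref{decay lem}: subtract the constant $\sin^2$ of the average and use the $\phi-(\phi)_{z_0,\rho}$ part of $F$. This yields a bound of size $\rho^{-2}\cdot\rho^{2}(1+M)$. This is where the ``$1+$'' in the statement originates.

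\textbf{Step 2: the cubic term (main obstacle).} The term $\mathrm{A}^{1/2}\mathrm{B}^{1/2}\mathrm{C}^{1/3}$ involves $\mathrm{A}$ and $\mathrm{B}$ on the larger radius. I handle it by Young's inequality at two scales: apply the energy estimate on $P_{\rho}$ to control $\mathrm{A}(\rho/2)+\mathrm{B}(\rho/2)$, and bound $\mathrm{A}(\rho)+\mathrm{B}(\rho)$ by the same inequality applied at radius $2\rho$. This produces $\rho^{-2}(\mathrm{A}+\mathrm{B})(\rho)\,\mathrm{C}^{1/3}(\rho)\lesssim \rho^{-2}(\mathrm{A}+\mathrm{B})(\rho)\,\rho M^{1/3}$. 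Since $F(r,z_0)\le\epsilon_0^3$ makes $\mathrm{C}$ small, I would run a standard iteration/absorption lemma on the quantity $\Phi(\rho):=\rho^{-2}(\mathrm{A}+\mathrm{B})(\rho)$ over dyadic radii between $\rho$ and $r$. This gives $\Phi(\rho/2)\le \tfrac12\Phi(\rho)+C(1+M)$. Iterating, and using that $\Phi(r)$ is bounded by the initial $F(r,z_0)/r^3$ (again via Lemma \ref{loc est} at scale $r$), yields $\Phi(\rho/2)\lesssim 1+M$ for all $\rho\le r$. Evaluating the supremum in $\mathrm{A}$ at $t=t_0$ then gives the claimed bound. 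The delicate point is making the absorption coefficient genuinely small, using the smallness of $\epsilon_0$ and $\theta_0^{-9}r^3\le\epsilon_0^3$.
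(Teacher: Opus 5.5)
\textbf{Step 2 is a genuine gap.} Your iteration for $\Phi(\rho)=\rho^{-2}(\mathrm{A}+\mathrm{B})(\rho)$ needs $\Phi(r)\lesssim_{K_*}1+M$ at the top scale, and the justification you give does not hold. Lemma \ref{loc est} at scale $r$ controls only $\mathrm{A}(r/2)+\mathrm{B}(r/2)$, and its right-hand side again contains $\mathrm{A}(r)\mathrm{B}(r)$. Moreover, $F(r,z_0)$ contains neither $\nabla u$, nor $\nabla^2\phi$, nor any $L^\infty_tL^2_x$ quantity. So $r^{-3}\sup_t\int_{B^{\pm}_r(x_0)}|u|^2+|\nabla\phi|^2$ and $r^{-3}\int_{P_r(z_0)}|\nabla u|^2+|\nabla^2\phi|^2$ are bounded only through the global energy, by a solution-dependent constant times $r^{-3}$. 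Iterating $\Phi(\rho/2)\le\frac12\Phi(\rho)+C(1+M)$ then leaves a term of size $(\rho/r)\Phi(r)$, which is not $\lesssim_{K_*}1+M$ for $\rho$ comparable to $r$.

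The remedy is not a better iteration but to never produce the term $\mathrm{A}^{1/2}\mathrm{B}^{1/2}\mathrm{C}^{1/3}$. It comes from the Sobolev--Poincar\'e treatment of $|u|^2-[|u|^2]_{x_0,\rho}$ in Lemma \ref{loc est}, which was designed to close in $\mathrm{A},\mathrm{B},\mathrm{C}$ alone. Here $F$ controls the cubic integrals directly. So, as the paper does, test \eqref{energy ineq} with $\varphi^2$ and bound the flux terms $(u\cdot\nabla\varphi^2)(|u|^2+|\nabla\phi|^2)+2(u\cdot\nabla\phi)\,\nabla\phi\cdot\nabla\varphi^2$ simply by $\rho^{-1}\int_{P_\rho(z_0)}|u|^3+|\nabla\phi|^3$. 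Corollary \ref{aver on cylinder} makes this $\lesssim\rho^4M$, and everything then closes at the single scale $\rho$ with no iteration. Building on \eqref{temp energy} has a second defect: its constant depends on $\phi_0$, through the integration by parts that uses Proposition \ref{bounded of phi}, whereas the statement asserts $\lesssim_{K_*}$.

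Step 1 uses the right mechanism but applies it to the wrong term and skips its key ingredient. The contributions that are only $O(\rho^2)$ are $L_{\mathrm H}\int_{B'_\rho(x_0)\times\{t_0\}}\varphi^2\sin^2\phi-L_{\mathrm H}\int_{t_0-\rho^2}^{t_0}\int_{B'_\rho(x_0)}\partial_t\varphi^2\,\sin^2\phi$. The $\sin^2 2\phi$ term carries $\partial_3\varphi^2$ and is already $O(\rho^3)$. After subtracting $\sin^2(\phi)_{z_0,\rho}$, the first piece is still a boundary integral at the fixed time $t_0$, which the space-time $L^3$ part of $F$ cannot control. You need the trace inequality in $x_3$ to reduce it to $\int_{B^+_\rho(x_0)\times\{t_0\}}\varphi^2|\nabla\phi|^2$, which is absorbed into the left side, plus $\int_{B^+_\rho(x_0)\times\{t_0\}}\varphi^2|\phi-(\phi)_{z_0,\rho}|^2$. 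Estimate the latter by testing the $\phi$-equation with $\varphi^2(\phi-(\phi)_{z_0,\rho})$, as in \eqref{L2 of phii}, and absorb the resulting $\int_{P_\rho(z_0)}\varphi^2|\nabla^2\phi|^2$ by Young's inequality. Only then do you get the bound $\rho^3(1+M)$ plus absorbable terms.
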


\begin{proof}[\bf Proof.]
Choose the cut-off function $\varphi$ supported in $P_\rho(z_0)$, and with the properties:
\begin{align*}
    &\mathrm{(1).} \h{5pt}0 \hspace{1.5pt}\leq\hspace{1.5pt} \varphi \hspace{1.5pt}\leq\hspace{1.5pt} 1 \h{15pt}\text{in $P_\rho(z_0)$}; \h{20pt} \mathrm{(2).}\h{5pt}\rho \h{0.5pt}\big|\hspace{1.5pt} \nabla \varphi \hspace{.5pt} \big| +  \rho^2\h{0.5pt}\big|\hspace{.5pt} \partial_t \varphi \hspace{.5pt}\big| + \rho^2 \h{0.5pt} \big|\hspace{.5pt} \nabla^2 \varphi \hspace{.5pt} \big| \hspace{1.5pt}\leq K_* \h{15pt}\text{in $P_{\rho}(z_0)$}; \\[1mm] &\mathrm{(3).}\h{5pt} \varphi \hspace{1.5pt}\equiv\hspace{1.5pt} 1 \h{34pt}\text{ in } P_{\rho \h{0.5pt}/\h{0.5pt} 2}(z_0).
\end{align*}
Replacing the test function in \eqref{energy ineq} with $\varphi^2$, we obtain
\begin{align*}
    &\int_{B_{\rho}^{\pm}(x_0) \times \{ \h{0.5pt} t_0 \h{0.5pt}\} } \varphi^2 \left(\h{0.5pt} \big|\h{.5pt} u \h{.5pt}\big|^2 + \big|\h{.5pt} \nabla \phi \h{.5pt}\big|^2 \h{0.5pt} \right) +    \int_{P_{\rho}(z_0)} \varphi^2 \left(\h{0.5pt} \big|\h{.5pt} \nabla u \h{.5pt}\big|^2 + \big|\h{.5pt} \nabla^2 \phi\h{.5pt}\big|^2 \h{0.5pt}\right)\\[1mm]
    &\h{50pt}\lesssim_{K_*} \rho^3    + \big| \h{0.5pt}\mathrm{T}(\rho, z_0) \h{0.5pt}\big|  +  \rho^{-2} \int_{P_\rho(z_0)}  \big|\hspace{.5pt} u \hspace{.5pt} \big|^2 + \big|\hspace{.5pt} \nabla \phi \hspace{.5pt} \big|^2\\[1mm]
     &\h{63pt}+  \rho^{-1} \int_{P_\rho(z_0)}\big|\hspace{.5pt} u \hspace{.5pt} \big|^3 + \big|\hspace{.5pt} \nabla \phi \hspace{.5pt}\big|^3  + \rho^{-1} \left( \int_{P_\rho(z_0)} \big|\h{.5pt} u \h{.5pt}\big|^3 \right)^{\frac{1}{3}}\left(\int_{P_\rho(z_0)}   \big|\hspace{.5pt} p - [\h{.5pt} p \h{.5pt}]_{x_0,\, \rho} \hspace{.5pt}\big|^{\frac{3}{2}} \right)^{\frac{2}{3}}.
\end{align*}
We have applied the same arguments for \eqref{est R for use} and  \eqref{2nd inegral}-\eqref{third est requ} in the above estimate. Moreover,
\begin{align*}
    \mathrm{T}(\rho, z_0) := L_{\mathrm H}\int_{B'_{\rho}(x_0) \times \{\h{0.5pt} t_0 \h{0.5pt} \}} \varphi^2 \Bigl(\sin^2 \phi - \sin^2 (\phi)_{z_0, \rho} \Bigr) - L_{\mathrm H}\int_{t_0 - \rho^2}^{t_0} \int_{B'_{\rho}(x_0)} \partial_t \varphi^2 \h{1pt} \Bigl( \sin^2 \phi - \sin^2 (\phi)_{z_0, \rho} \Bigr).
\end{align*}
The last energy estimate can be reduced by Corollary \ref{aver on cylinder} as follows:
\begin{align}\label{reduc energy}
   \int_{B_{\rho}^{\pm}(x_0) \times \{ \h{0.5pt} t_0 \h{0.5pt}\} } \varphi^2 \left(\h{0.5pt} \big|\h{.5pt} u \h{.5pt}\big|^2 + \big|\h{.5pt} \nabla \phi \h{.5pt}\big|^2 \h{0.5pt} \right) &+    \int_{P_{\rho}(z_0)} \varphi^2  \h{0.5pt}  \big|\h{.5pt} \nabla^2 \phi\h{.5pt}\big|^2 \nonumber\\[1mm]
   &\lesssim_{K_*} \big| \h{0.5pt}\mathrm{T}(\rho, z_0) \h{0.5pt}\big| + \rho^3    +  \rho^3 \max \left\{\h{0.5pt} \theta_0^{-9}, \h{0.5pt}\frac{F\big(r, z_0\big)}{r^3} \h{0.5pt}\right\}.
\end{align}

If $x_0 \in \mathrm P$, then $\mathrm{T}(\rho, z_0) = 0$. The proof is finished. Now, we assume $x_0 \in \mathrm H$ and estimate $\mathrm{T}(\rho, z_0)$ in a similar fashion as in \eqref{es 1,null}. In this way, it turns out
\begin{align*}
    \big|\h{0.5pt}\mathrm{T}(\rho, z_0) \h{0.5pt}\big|  \lesssim_{K_*}  \int_{B_\rho'(x_0) \times \{\h{0.5pt} t_0 \h{0.5pt}\}} \varphi^2 \h{.5pt} \big|\h{.5pt} \phi - (\phi)_{z_0,\, \rho} \h{.5pt}\big| + \rho^{-2} \int_{t_0 - \rho^2}^{t_0} \int_{B_\rho'(x_0)} \varphi \h{1pt} \big|\h{.5pt} \phi - (\phi)_{z_0,\, \rho} \h{.5pt}\big|.
\end{align*}Using H\"{o}lder's inequality, we reduce the last estimate to \begin{align*}
    \big|\h{0.5pt}\mathrm{T}(\rho, z_0) \h{0.5pt}\big|  \lesssim_{K_*}  \rho \left(\int_{B_\rho'(x_0) \times \{\h{0.5pt} t_0 \h{0.5pt}\}} \varphi^4 \h{.5pt} \big|\h{.5pt} \phi - (\phi)_{z_0,\, \rho} \h{.5pt}\big|^2 \right)^{\frac{1}{2}} +   \left(\int_{t_0 - \rho^2}^{t_0} \int_{B_\rho'(x_0)} \varphi^2 \h{1pt} \big|\h{.5pt} \phi - (\phi)_{z_0,\, \rho} \h{.5pt}\big|^2 \right)^{\frac{1}{2}}.
\end{align*}
Apply the integration by parts with respect to the $x_3$-variable. The integrals on the right-hand side above can be estimated by
\begin{align*}
    &\int_{B_\rho'(x_0) \times \{\h{0.5pt} t_0 \h{0.5pt}\}} \varphi^4 \h{0.5pt}\big|\h{.5pt} \phi - (\phi)_{z_0,\, \rho} \h{.5pt}\big|^2 \lesssim_{K_*} \rho \h{0.5pt}\int_{B_\rho^{+}(x_0) \times \{\h{0.5pt} t_0 \h{0.5pt} \}} \varphi^2 \h{.5pt} \big|\h{.5pt} \nabla \phi \h{.5pt}\big|^2 + \rho^{-1} \int_{B_\rho^{+}(x_0) \times \{\h{0.5pt} t_0 \h{0.5pt} \}} \varphi^2 \h{1pt}\big|\h{0.5pt}\phi - (\phi)_{z_0,\, \rho} \h{0.5pt}\big|^2, \\[1.5mm]
&\int_{t_0 - \rho^2}^{t_0} \int_{B_\rho'(x_0)} \varphi^2 \big|\h{.5pt} \phi - (\phi)_{z_0,\, \rho} \h{.5pt}\big|^2 \lesssim_{K_*} \rho \int_{P_\rho(z_0)} \big|\h{.5pt} \nabla \phi \h{.5pt}\big|^2 + \rho^{-1}\int_{P_\rho(z_0)}  \big|\h{.5pt} \phi - (\phi)_{z_0,\, \rho} \h{.5pt} \big|^2.
\end{align*}
Therefore, \begin{align*}
    \big|\h{0.5pt}\mathrm{T}(\rho, z_0) \h{0.5pt}\big| \h{3pt} \lesssim_{K_*}    &\h{3pt}\rho^{\frac{3}{2}} \left(\int_{B_\rho^{+}(x_0) \times \{\h{0.5pt} t_0 \h{0.5pt} \}} \varphi^2 \h{.5pt} \big|\h{.5pt} \nabla \phi \h{.5pt}\big|^2\right)^{\frac{1}{2}} + \rho^{\frac{1}{2}} \left(\int_{B_\rho^{+}(x_0) \times \{\h{0.5pt} t_0 \h{0.5pt} \}} \varphi^2 \h{1pt}\big|\h{0.5pt}\phi - (\phi)_{z_0,\, \rho} \h{0.5pt}\big|^2 \right)^{\frac{1}{2}}\\[1mm]
    &\h{3pt}+   \left(\rho \int_{P_\rho(z_0)} \big|\h{.5pt} \nabla \phi \h{.5pt}\big|^2 + \rho^{-1}\int_{P_\rho(z_0)}  \big|\h{.5pt} \phi - (\phi)_{z_0,\, \rho} \h{.5pt} \big|^2\right)^{\frac{1}{2}}.
\end{align*}
By Corollary \ref{aver on cylinder}, \begin{align}\label{est inter}
     \rho \int_{P_\rho(z_0)} \big|\h{.5pt} \nabla \phi \h{.5pt}\big|^2  +  \rho^{-1} \int_{P_\rho(z_0)} \big|\h{.5pt} \phi - (\phi)_{z_0,\, \rho} \h{.5pt} \big|^2 \h{1.5pt}\lesssim_{K_*}\h{1.5pt} \rho^6 \max \left\{\h{0.5pt} \theta_0^{-9}, \h{0.5pt}\frac{F\big(r, z_0\big)}{r^3} \h{0.5pt}\right\} + \rho^6.
\end{align} The last two estimates induces \begin{align*}
\big|\h{0.5pt}\mathrm{T}(\rho, z_0) \h{0.5pt}\big| \h{3pt} \lesssim_{K_*}    &\h{3pt} \sigma \int_{B_\rho^{+}(x_0) \times \{\h{0.5pt} t_0 \h{0.5pt} \}} \varphi^2 \h{.5pt} \big|\h{.5pt} \nabla \phi \h{.5pt}\big|^2 + \sigma^{-1} \h{0.5pt} \rho^{3} \nonumber\\[1mm]
    & + \rho^3 \max \left\{\h{0.5pt} \theta_0^{-9}, \h{0.5pt}\frac{F\big(r, z_0\big)}{r^3} \h{0.5pt}\right\} + \rho^{\frac{1}{2}} \left(\int_{B_\rho^{+}(x_0) \times \{\h{0.5pt} t_0 \h{0.5pt} \}} \varphi^2 \h{1pt}\big|\h{0.5pt}\phi - (\phi)_{z_0,\, \rho} \h{0.5pt}\big|^2 \right)^{\frac{1}{2}}.
\end{align*}
Here, $\sigma > 0$ is a small positive number. Apply this estimate to the right-hand side of \eqref{reduc energy} and take $\sigma$ small enough. The smallness of $\sigma$ depends on the universal constant $K_*$. Then, \begin{align}\label{inter est 1}
    &\int_{B_{\rho}^{+}(x_0) \times \{ \h{0.5pt} t_0 \h{0.5pt}\} } \varphi^2 \left(\h{0.5pt} \big|\h{.5pt} u \h{.5pt}\big|^2 + \big|\h{.5pt} \nabla \phi \h{.5pt}\big|^2 \h{0.5pt} \right) +    \int_{P_{\rho}(z_0)} \varphi^2  \h{0.5pt}  \big|\h{.5pt} \nabla^2 \phi\h{.5pt}\big|^2 \nonumber\\[1mm]
   &\h{60pt}\lesssim_{K_*}   \rho^3 + \rho^3 \max \left\{\h{0.5pt} \theta_0^{-9}, \h{0.5pt}\frac{F\big(r, z_0\big)}{r^3} \h{0.5pt}\right\} + \rho^{\frac{1}{2}} \left(\int_{B_\rho^{+}(x_0) \times \{\h{0.5pt} t_0 \h{0.5pt} \}} \varphi^2 \h{1pt}\big|\h{0.5pt}\phi - (\phi)_{z_0,\, \rho} \h{0.5pt}\big|^2 \right)^{\frac{1}{2}}.
\end{align}
We are left to estimate the last term in \eqref{inter est 1} above. 

Multiply $\varphi^2 \bigl(\phi - (\phi)_{z_0,\, \rho} \bigr)$ on both sides of the third equation in \eqref{sEL} and integrate. It holds
\begin{align*}
    \int_{B_{\rho}^+(x_0) \times \{ \h{0.5pt}t_0 \h{0.5pt} \}} \varphi^2 \big|\h{0.5pt} \phi - (\phi)_{z_0,\, \rho} \h{0.5pt}\big|^2 &=  \int_{P_{\rho}(z_0)} \big|\h{0.5pt} \phi - (\phi)_{z_0,\, \rho} \h{0.5pt}\big|^2 \h{0.5pt}\partial_t \varphi^2 - 2 \varphi^2 \big(\phi - (\phi)_{z_0,\, \rho} \big)\h{0.5pt} u \cdot \nabla \phi \\[1mm]
    &+ \int_{P_{\rho}(z_0)} 2 \varphi^2 \big(\phi - (\phi)_{z_0,\, \rho} \big) \h{0.5pt}\Delta \phi + h^2 \big(\phi - (\phi)_{z_0,\, \rho} \big)\h{0.5pt} \sin 2 \phi.
\end{align*}
By H\"{o}lder inequality, Corollary \ref{aver on cylinder}, and the bounds of $\varphi$,
\begin{align*}
    \int_{B_{\rho}^+(x_0) \times \{ t_0 \}} \varphi^2 \Bigl( \phi - (\phi)_{z_0,\, \rho} \Bigr)^2 &\h{1.5pt}\lesssim_{K_*}\h{1.5pt} \rho^5 + \rho^5 \max \left\{\h{0.5pt} \theta_0^{-9}, \h{0.5pt}\frac{F\big(r, z_0\big)}{r^3} \h{0.5pt}\right\}   \\[1mm]
    &\h{15pt} +  \rho^{\frac{7}{2}} \left( \int_{P_{\rho}(z_0)} \varphi^2 \h{0.5pt}\big|\h{.5pt} \nabla^2 \phi \h{.5pt} \big|^2 \right)^{\frac{1}{2}} \left(\max \left\{\h{0.5pt} \theta_0^{-9}, \h{0.5pt}\frac{F\big(r, z_0\big)}{r^3} \h{0.5pt}\right\}\h{0.5pt}\right)^{\frac{1}{3}}.
\end{align*}
Apply this estimate to the right-hand side of \eqref{inter est 1} and then use Young's inequality. It follows \begin{align*}
    &\int_{B_{\rho}^{+}(x_0) \times \{ \h{0.5pt} t_0 \h{0.5pt}\} } \varphi^2 \left(\h{0.5pt} \big|\h{.5pt} u \h{.5pt}\big|^2 + \big|\h{.5pt} \nabla \phi \h{.5pt}\big|^2 \h{0.5pt} \right) +    \int_{P_{\rho}(z_0)} \varphi^2  \h{0.5pt}  \big|\h{.5pt} \nabla^2 \phi\h{.5pt}\big|^2 \nonumber\\[1mm]
   &\h{60pt}\lesssim_{K_*}   \sigma \int_{P_{\rho}(z_0)} \varphi^2 \h{0.5pt}\big|\h{.5pt} \nabla^2 \phi \h{.5pt} \big|^2 +  \sigma^{-\frac{1}{3}} \h{1pt}\rho^3 +  \sigma^{-\frac{1}{3}} \h{1pt}\rho^3 \max \left\{\h{0.5pt} \theta_0^{-9}, \h{0.5pt}\frac{F\big(r, z_0\big)}{r^3} \h{0.5pt}\right\},
\end{align*}where $\sigma$ is an arbitrary number in $(0, 1)$. The proof for $x_0 \in \mathrm H$ is also obtained by taking $\sigma$ suitably small. The smallness depends on a universal positive number. 
\end{proof}

\subsection{Uniform H\"older continuity of \texorpdfstring{$u$}{u} and the proof of  (\ref{decay of sup u})}\label{hoder of u} In this section, we first give an improved decay estimate for $u$ near $\mathrm H \cup \mathrm P$. Then we obtain a uniform H\"{o}lder estimate for large time $t$, which, by using Arzel\`{a}-Ascoli theorem, leads to the $L^\infty$-convergence of $u$ to $0$ as $t \to \infty$. Note that, in the following arguments, we always take time $t \geq T_\star$ for some large $T_\star$ such that \begin{align}\label{unif bounds of u nab phi}
 \|\h{0.5pt} u \h{0.5pt}\|_{L^\infty(\h{0.5pt}\Omega \times [\h{0.5pt} T_\star, \infty \h{0.5pt}) \h{0.5pt})} + \|\h{0.5pt} \nabla \phi \h{0.5pt}\|_{L^\infty(\h{0.5pt}\Omega \times [\h{0.5pt} T_\star, \infty \h{0.5pt}) \h{0.5pt})}  \leq K.
\end{align}Here, $K > 0$ is a constant.
 
\begin{lem} \label{decay lem u only}There exist a small constant $\theta_0 > 0$ and a constant $\epsilon_0 > 0$ such that if 
\begin{equation*}  
  r^{-2} \int_{P_r(z_0)} \big| \hspace{.5pt} u  \hspace{.5pt}\big|^3 + \left(r^{-2} \int_{P_r(z_0)} \big|\hspace{.5pt} p - [\h{0.5pt}p\h{0.5pt}]_{x_0, r} \hspace{.5pt} \big|^{\frac{3}{2}} \right)^2 \hspace{1.5pt}\leq\hspace{1.5pt} \epsilon_0^3,
\end{equation*}  for some $r \in (0, 1)$ and $z_0 = (x_0, t_0) \in \big(\mathrm {H \cup P}\big) \times \big(T_\star + 10, \infty\big)$,   then  
    \begin{align}
                   \big(\theta_0 r\big)^{-2} \int_{P_{\theta_0 r}(z_0)} \big|\hspace{.5pt} u  \hspace{.5pt}\big|^3 &+ \left( \big(\theta_0 r\big)^{-2}  \int_{P_{\theta_0 r}(z_0)} \big| \hspace{.5pt} p - [\h{0.5pt}p\h{0.5pt}]_{x_0, \theta_0 r} \hspace{.5pt} \big|^{\frac{3}{2}} \right)^2 \nonumber\\[1mm]
            &\leq \h{1.5pt}   \theta_0^{3.5} \max \left\{r^{3.5}, \h{1pt}  r^{-2} \int_{P_r(z_0)} \big| \hspace{.5pt} u  \hspace{.5pt}\big|^3 + \left(r^{-2} \int_{P_r(z_0)} \big|\hspace{.5pt} p - [\h{0.5pt}p\h{0.5pt}]_{x_0, r} \hspace{.5pt} \big|^{\frac{3}{2}} \right)^2 \right\}. 
    \end{align}
Here, $\epsilon_0$ is small enough. $\theta_0$ only depends on $h$, $L_{\mathrm H}$, and $K$ in \eqref{unif bounds of u nab phi}.
\end{lem}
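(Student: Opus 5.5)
We argue by contradiction through a blow-up, modelled on the proof of Lemma \ref{decay lem} but now for the velocity alone. The coupling $\nabla\cdot(\nabla\phi\odot\nabla\phi)$ is treated as a bounded forcing, using \eqref{unif bounds of u nab phi} together with an $L^2$ bound on $\nabla^2\phi$. Suppose the conclusion fails for a $\theta_0$ to be fixed. Then there are $r_i\in(0,1)$ and $z_i\in(\mathrm H\cup\mathrm P)\times(T_\star+10,\infty)$, with (passing to a subsequence) all $x_i\in\mathrm H$ or all $x_i\in\mathrm P$, such that
\[
G(r_i,z_i):=r_i^{-2}\int_{P_{r_i}(z_i)}|u|^3+\Bigl(r_i^{-2}\int_{P_{r_i}(z_i)}|p-[p]_{x_i,r_i}|^{\frac32}\Bigr)^2=\epsilon_i^3\to0,
\]
while $G(\theta_0r_i,z_i)>\theta_0^{3.5}\max\{r_i^{3.5},\epsilon_i^3\}$. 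As in \eqref{est of r_i}, this forces $r_i^{3.5}\lesssim\theta_0^{-5.5}\epsilon_i^3\to0$. We rescale by
\[
u_i=\frac{r_i u}{\epsilon_i},\qquad p_i=\frac{r_i^2(p-[p]_{x_i,r_i})}{\epsilon_i},
\]
both evaluated at $(x_i+r_ix,\,t_i+r_i^2t)$ on $Q_1$. The pair solves
\[
\partial_tu_i+\epsilon_iu_i\cdot\nabla u_i-\Delta u_i+\nabla p_i=f_i,\qquad f_i:=-\frac{r_i^3}{\epsilon_i}\bigl(\nabla\cdot(\nabla\phi\odot\nabla\phi)\bigr)(x_i+r_ix,\,t_i+r_i^2t),
\]
with $u_i=0$ on $B'_1$, the normalization $\int_{Q_1}|u_i|^3+(\int_{Q_1}|p_i|^{3/2})^2=1$, and the reversed inequality at scale $\theta_0$.

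The first key step is to show $f_i\to0$ in $L^{3/2}_tL^{9/8}_x(Q_{1/2})$, and preferably in $L^2(Q_{1/2})$. Write $\nabla\cdot(\nabla\phi\odot\nabla\phi)=\Delta\phi\,\nabla\phi+\tfrac12\nabla|\nabla\phi|^2$, so that $|f_i|\lesssim K r_i^3\epsilon_i^{-1}|\nabla^2\phi|$. For the $L^2$ norm we need a localized Hessian bound. I would obtain $\int_{P_{r}(z)}|\nabla^2\phi|^2\lesssim_K r^3$ from the generalized energy inequality \eqref{energy ineq}, tested with a cutoff at scale $r$, combined with the bounds \eqref{est R for use}, \eqref{2nd inegral}--\eqref{third est requ}, the null-structure estimate \eqref{es 1,null}, and $|u|+|\nabla\phi|\le K$. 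Since $t\ge T_\star$ and the solution is regular there, a direct local $H^2$ estimate would serve equally well. Rescaling then gives
\[
\|f_i\|_{L^2(Q_1)}\lesssim_K\frac{r_i^3}{\epsilon_i}\,r_i^{-5/2}\,r_i^{3/2}=\frac{r_i^2}{\epsilon_i}\lesssim\theta_0^{-c}\,\epsilon_i^{\frac{12}{7}-1}\to0,
\]
using $r_i\lesssim\theta_0^{-11/7}\epsilon_i^{6/7}$.

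Next come energy bounds and compactness. We test the local energy inequality for $u$ (the $u$-part of \eqref{energy ineq}, or directly the equation, since the solution is regular for $t\ge T_\star$) with a cutoff. The nonlinear term is controlled by $\epsilon_i\int|u_i|^3$, the pressure term by the normalization, and the forcing by $\|f_i\|_{L^2}\|u_i\|_{L^2}$. This yields $\sup_t\int_{B_{1/2}^\pm}|u_i|^2+\int_{Q_{1/2}}|\nabla u_i|^2\le K_*$. Aubin--Lions (Corollary 6 of \cite{S1986}) then gives $u_i\to u_*$ strongly in $L^3(Q_{1/2})$. We split $p_i=p_i^{(1)}+p_i^{(2)}$ as in Step 3.III of Lemma \ref{decay lem}: $p_i^{(1)}$ solves the Stokes problem driven by $-\epsilon_iu_i\cdot\nabla u_i+f_i$ with zero data, and by \cite{Solonnikov2003} it satisfies $\|\nabla p_i^{(1)}\|_{\frac98,\frac32}\to0$. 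The remainder $p_i^{(2)}$ is homogeneous Stokes, and Proposition 2 of \cite{Seregin2003} gives $\|\nabla p_i^{(2)}\|_{9,\frac32,Q_{1/4}}\le K_*$. Together these give $\limsup_i\int_{Q_{\theta_0}}|p_i-[p_i]|^{3/2}\lesssim\theta_0^{4}$, and hence the pressure contribution to the rescaled quantity at scale $\theta_0$ is $\lesssim\theta_0^{-4}\theta_0^8=\theta_0^4$.

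Finally, the limit $u_*$ solves the homogeneous Stokes system with $u_*=0$ on $B'_{1/2}$. Lemma 1 of \cite{Seregin2003} gives a universal bound on $u_*$ that is Hölder continuous up to the flat boundary. This is the main obstacle: we need more than the $\frac13$-Hölder exponent used in \eqref{decay_1}, because the target exponent is $3.5>3$. With Hölder exponent $\alpha$ and vanishing on $B'$, we get $\theta_0^{-2}\int_{Q_{\theta_0}}|u_*|^3\lesssim\theta_0^{3+3\alpha}$. I would therefore invoke the boundary regularity of Stokes in the form of a Lipschitz (indeed $C^{1,\alpha}$) estimate near the flat boundary, for instance from Seregin's results with $u_*\in W^{2,1}$ locally and a bootstrap, giving $|u_*|\lesssim|x_3|$ and therefore the bound $\theta_0^{6}$. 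Passing to the limit in the reversed inequality then gives $\theta_0^{3.5}\le K_*(\theta_0^{6}+\theta_0^{4})$, which is false for $\theta_0$ small depending on $h$, $L_{\mathrm H}$ and $K$.
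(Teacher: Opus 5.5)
Up to the last step your argument is the paper's proof. It uses the same blow-up normalization, the local energy bound for $u_i$, Aubin--Lions, and the Solonnikov/\cite{Seregin2003} splitting of $p_i$ giving $\limsup_i\int_{Q_{\theta_0}}|p_i-[p_i]^*_{0,\theta_0}|^{3/2}\lesssim\theta_0^4$. Isolating the coupling as a forcing $f_i$ is only cosmetic: the paper likewise shows $\epsilon_i\|\nabla\cdot(\nabla\phi_i\odot\nabla\phi_i)\|_{2,2,Q_{1/2}}\lesssim_K r_i\|\nabla^2\phi_i\|_{2,2,Q_{1/2}}\to0$.

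There are two harmless slips.
\begin{itemize}
\item The pressure part at scale $\theta_0 r_i$ costs $\theta_0^{-4}$, so the correct bound is $r_i^{3.5}\lesssim\theta_0^{-7.5}\epsilon_i^3$.
\item If you take the Hessian bound from \eqref{energy ineq}, the pressure term does not drop out. You get \eqref{Hession of phii}, i.e.\ $\int_{Q_{1/2}}|\nabla^2\phi_i|^2\lesssim r_i\epsilon_i^{-2}+1$, not a bound depending only on $K$. This still gives $\|f_i\|_{L^2(Q_{1/2})}\lesssim_K r_i+r_i^{3/2}\epsilon_i^{-1}\to0$.
\end{itemize}

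The genuine problem is the final step. You claim the $\frac13$-H\"older bound is not enough, but your own count $\theta_0^{-2}\int_{Q_{\theta_0}}|u_*|^3\lesssim\theta_0^{3+3\alpha}$ gives $\theta_0^{4}$ for $\alpha=\frac13$. That already beats $\theta_0^{3.5}$; any $\alpha>\frac16$ works, which is why the exponent $3.5$ appears and why $u$ ends up in $C^{1/6}$.

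The replacement you propose, a Lipschitz bound $|u_*|\lesssim|x_3|$ up to $B'_{1/2}$, is not supplied by \cite{Seregin2003}. Proposition 2 there improves only the spatial exponent: $\nabla^2u_*$ and $\nabla p_*$ remain merely $L^{3/2}$ in time, and no bootstrap changes that. The claim is also false in general for local solutions of the non-stationary Stokes system near a flat boundary. Take $v=(v_1(x_3,t),0,0)$ and $p=-g(t)x_1$, with $\partial_tv_1-\partial_{33}v_1=g$, $v_1(0,t)=0$, $v_1(\cdot,-1)=0$. Then $v_1(x_3,t)=\int_{-1}^tg(s)\,\mathrm{erf}\bigl(x_3/(2\sqrt{t-s})\bigr)\,ds$ is bounded. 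However, $\partial_3v_1(0,t)=\pi^{-1/2}\int_{-1}^tg(s)(t-s)^{-1/2}\,ds\to\infty$ as $t\to0^-$ for $g(s)=|s|^{-1/2}\in L^{3/2}$. This pair has all the integrability available for $(u_*,p_*)$; this is the phenomenon in the examples of Kang and of Seregin--\v{S}ver\'ak.

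So the step as written relies on an invalid estimate. The fix is to delete it. The $\frac13$-H\"older bound from Lemma 1 of \cite{Seregin2003}, with constant depending on $K$, together with $u_*=0$ on $B'_{1/2}$, gives $\theta_0^{-2}\int_{Q_{\theta_0}}|u_*|^3\lesssim_K\theta_0^4$ exactly as in \eqref{decay_1}. Hence $\theta_0^{3.5}\lesssim_K\theta_0^4$, which is the paper's contradiction.
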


\begin{proof}[\bf Proof] We divide the proof into 4 steps. \vspace{0.4pc}

\noindent \textbf{Step 1. Blow-up sequence.}\vspace{0.4pc}

Similar to Step 1 in the proof of Lemma \ref{decay lem}, we first construct a blow-up sequence. Suppose the conclusion of the current lemma is false. Then for a $\theta_0 \in (0, \frac{1}{4})$ to be determined later, we can find $r_i \in (0, 1)$ and $z_i = (x_i, t_i) \in \big(\mathrm {H \cup P} \big) \times \big(T_\star + 10, \infty\big)$ such that
\begin{equation}\label{small ener for u}
    r_i^{-2} \int_{P_{r_i}(z_i)} \big| \hspace{.5pt} u  \hspace{.5pt}\big|^3 + \left(r_i^{-2} \int_{P_{r_i}(z_i)} \big|\hspace{.5pt} p - [\h{0.5pt}p\h{0.5pt}]_{x_i, r_i} \hspace{.5pt} \big|^{\frac{3}{2}} \right)^2 := \lambda_i^3 \longrightarrow 0 \h{15pt}\text{as $i \to \infty$}.
\end{equation}
Meanwhile, it satisfies
\begin{align} \label{tau_blow_up for u}  \big(\theta_0 r_i\big)^{-2} \int_{P_{\theta_0 r_i}(z_i)} \big|\hspace{.5pt} u  \hspace{.5pt}\big|^3 &+ \left( \big(\theta_0 r_i\big)^{-2}  \int_{P_{\theta_0 r_i}(z_i)} \big| \hspace{.5pt} p - [\h{0.5pt}p\h{0.5pt}]_{x_i, \theta_0 r_i} \hspace{.5pt} \big|^{\frac{3}{2}} \right)^2 > \h{1.5pt}   \theta_0^{3.5} \max \left\{r_i^{3.5}, \h{1pt}  \lambda_i^3 \right\}. 
 \end{align}
\eqref{small ener for u}-\eqref{tau_blow_up for u} infer that \begin{align}\label{est of r_i for u}  r_i^{3.5} \leq 8 \h{1pt}\theta^{- 7.5}_0 \h{.5pt}\lambda_i^3 \longrightarrow 0 \h{15pt}\text{ as $i \to \infty$.} \end{align}

Assuming either $\big\{x_i\big\} \subset  \mathrm H$ or $\big\{x_i\big\} \subset  \mathrm P$, we introduce the blow-up sequence as follows:
\begin{equation}\label{defn blow up for u}
    \big(u_i, \phi_i, p_i\big)(x, t) :=   \left( \frac{r_i u}{\lambda_i}, \h{0.5pt} \frac{\phi}{\lambda_i}, \h{0.5pt} \frac{r_i^2 \big(\h{1pt}p - [\h{0.5pt}p\h{0.5pt}]_{x_i, r_i}\big)}{\lambda_i}\right)\big(x_i + r_i\h{0.5pt} x, t_i + r_i^2 \h{0.5pt} t\big) \h{15pt}\text{for $(x, t) \in Q_1$.}
\end{equation}In light of \eqref{sEL}, $(u_i, \phi_i, p_i)$ in \eqref{defn blow up for u} solves
\begin{equation} \label{rescaled system of u} \left\{ \begin{aligned}
    \partial_t u_i + \lambda_i \h{1pt}u_i \cdot \nabla u_i - \Delta u_i &= - \nabla p_i - \lambda_i \h{1pt}\nabla \cdot \big(\nabla \phi_i \odot \nabla  \phi_i\big), \\[1mm]
    \operatorname{div} u_i &= 0,  
\end{aligned} \right. \h{25pt}\text{on $Q_1$.}\end{equation}Moreover,
\begin{align} \label{rescaled L3 estimate for u} 
    &\mathrm{(1).} \h{5pt}\int_{Q_{1}} \big|\hspace{.5pt} u_i \hspace{.5pt}\big|^3    + \left(\int_{Q_{1}} \big| \hspace{.5pt} p_i \hspace{.5pt} \big|^{\frac{3}{2}} \right)^2 = 1, \nonumber\\[2mm]
    &\mathrm{(2).} \h{6pt}\theta_0^{-2} \int_{Q_{\theta_0}} \big| \hspace{.5pt} u_i \hspace{.5pt} \big|^3  + \left( \theta_0^{-2} \int_{Q_{\theta_0}} \big|\hspace{.5pt} p_i - [\h{0.5pt}p_i\h{0.5pt}]^*_{0, \theta_0} \hspace{.5pt}\big|^{\frac{3}{2}} \right)^2 > \theta_0^{3.5} \max \Big\{ r_i^{3.5} \h{1pt}\lambda_i^{-3}, 1 \Big\}.
  \end{align}From the first equation in \eqref{rescaled L3 estimate for u}, we can assume, after passing to a subsequence, that
\begin{equation*}
    \big(u_i, p_i\big) \rightharpoonup \big(u_*, p_*\big) \quad\quad \text{weakly in } L^3(Q_1) \times L^{\frac{3}{2}}(Q_1).
\end{equation*}

\noindent \textbf{Step 2. Uniform energy estimate and strong $L^3$-convergence of $\big\{u_i\big\}$.}\vspace{0.4pc}

We replace the test function in \eqref{energy of u} with $\varphi_i^2$, where $\varphi_i$ is given in Step 2 of the proof of Lemma \ref{decay lem}. Fix $t \in [ - \frac{1}{4}, 0 \h{0.5pt}]$ and integrate the time variable from $t_i - r_i^2$ to $t_i + r_i^2 t$. It turns out 
\begin{align*}
     \int_{\Omega\times \{\h{0.5pt}t_i + r_i^2 t\h{0.5pt}\}} \varphi_i^2 \h{0.5pt} \big|\h{.5pt} u \h{.5pt}\big|^2 &+ 2 \int_0^{\h{0.5pt}t_i + r_i^2 t\h{0.5pt}} \int_\Omega \varphi_i^2\h{0.5pt} \big|\h{.5pt} \nabla u \h{.5pt}\big|^2 = 2 \int_0^{\h{0.5pt}t_i + r_i^2 t\h{0.5pt}} \int_\Omega \varphi_i^2 \h{1.5pt} \nabla u : \big( \nabla \phi \odot \nabla \phi \big) \\[1.5mm]
    &+ \int_0^{\h{0.5pt}t_i + r_i^2 t\h{0.5pt}} \int_\Omega \left( u \cdot \nabla \varphi_i^2\right) \left( 2 p +  |u|^2 \right) + 2 \left(u \cdot \nabla \phi\right) \nabla \phi \cdot \nabla \varphi_i^2 +  \big|\h{.5pt} u \h{.5pt}\big|^2  \left( \partial_t \varphi_i^2 + \Delta \varphi_i^2\right).  \nonumber
\end{align*}

\noindent Using the boundedness of $u$ and $\nabla \phi$ in \eqref{unif bounds of u nab phi}, Young's inequality, H\"{o}lder's inequality, and  the boundedness of $\varphi_i$ and its derivatives, we conclude that
\begin{align*}
        \int_{\Omega\times \{\h{0.5pt}t_i + r_i^2 t\h{0.5pt}\}} \varphi_i^2 \h{0.5pt} \big|\h{.5pt} u \h{.5pt}\big|^2 &+  \int_0^{\h{0.5pt}t_i + r_i^2 t\h{0.5pt}} \int_\Omega \varphi_i^2\h{0.5pt} \big|\h{.5pt} \nabla u \h{.5pt}\big|^2  \\[1mm]
       &\lesssim_{K} r_i^4  + r_i^{-2}\int_{P_{r_i}(z_i)} \big|\h{.5pt} u \h{.5pt}\big|^2  +   r_i^{-1} \left(\int_{P_{r_i}(z_i)} \big|\h{.5pt} u \h{.5pt}\big|^3 \right)^{\frac{1}{3}} \left( \int_{P_{r_i}(z_i)} \big|\h{1pt} p - [\h{0.5pt}p\h{0.5pt}]_{x_i, r_i} \big|^{\frac{3}{2}} \right)^{\frac{2}{3}}.
\end{align*}

\noindent Apply the change of variables and then take supreme over $t \in [-\frac{1}{4}, 0\h{0.5pt}]$. We arrive at \begin{align*}
     \sup_{t \h{1pt}\in \h{1pt}[- \frac{1}{4}, \h{1pt} 0\h{0.5pt}]}\int_{B_{1/2}^{\pm}\times \{\h{0.5pt}t\h{0.5pt}\}}  \big|\h{.5pt} u_i \h{.5pt}\big|^2 +     \int_{Q_{1/2}}  \big|\h{.5pt} \nabla u_i \h{.5pt}\big|^2 \lesssim_{K}  r_i^3  \h{0.5pt}\lambda_i^{-2}  +    \int_{Q_1} \big|\h{.5pt} u_i \h{.5pt}\big|^2  +     \left(\int_{Q_1} \big|\h{.5pt} u_i \h{.5pt}\big|^3 \right)^{\frac{1}{3}} \left( \int_{Q_1} \big|\h{.5pt} p_i\h{0.5pt} \big|^{\frac{3}{2}} \right)^{\frac{2}{3}}.
\end{align*}
Utilizing \eqref{est of r_i for u} and (1) in \eqref{rescaled L3 estimate for u}, we obtain \begin{align}\label{energy esti for ui}
     \sup_{t \h{1pt}\in \h{1pt}[- \frac{1}{4}, \h{1pt} 0\h{0.5pt}]}\int_{B_{1/2}^{\pm}\times \{\h{0.5pt}t\h{0.5pt}\}}  \big|\h{.5pt} u_i \h{.5pt}\big|^2 +     \int_{Q_{1/2}}  \big|\h{.5pt} \nabla u_i \h{.5pt}\big|^2 \lesssim_{K} 1  \h{20pt}\text{for large $i$.}
\end{align} We then can keep extracting a subsequence, which is still denoted by $\big\{u_i\big\}$, such that
\begin{equation}\label{wk conv of ui}
    u_i \hspace{1.5pt}\rightharpoonup\hspace{1.5pt} u_* \quad\quad \text{weakly in } L_t^{2} \hspace{1pt}H_x^1 \big(Q_{1/2} \big).
\end{equation}

Suppose $\eta$ is a smooth vector field compactly supported in $B^{\pm}_{1/2}$. The bracket $\big<\cdot, \cdot \big>$ is the duality between $W_0^{1, 3}\big(B^{\pm}_{1/2}; \mathbb R^3\big)$ and its dual space. Using \eqref{defn blow up for u}, we have \begin{align*}
    \big<\partial_t u_i, \eta\big> = \lambda_i \int_{B^{\pm}_{1/2}} \big(u_i \odot u_i \big) : \nabla \eta  - \int_{B^{\pm}_{1/2}} \nabla u_i : \nabla \eta +  \int_{B^{\pm}_{1/2}} p_i \h{1pt}\mathrm{div} \eta + \lambda_i \int_{B^{\pm}_{1/2}}  \big(\nabla \phi_i \odot \nabla  \phi_i\big) : \nabla \eta.
\end{align*}In light of the fact that \begin{align}\label{unif est ui and nabla phii}
\|\h{0.5pt} u_i \h{0.5pt}\|_{L^\infty(\h{0.5pt}Q_{1/2} \h{0.5pt})} + \|\h{0.5pt} \nabla \phi_i \h{0.5pt}\|_{L^\infty(\h{0.5pt}Q_{1/2})}  \leq  K \h{0.5pt}r_i \h{0.5pt}\lambda_i^{-1},
\end{align} it holds \begin{align*}
    \big<\partial_t u_i, \eta\big> \lesssim_{K}  r_i^2 \h{1pt}\lambda_i^{-1} \h{1pt} \big\|\h{0.5pt} \nabla \eta \h{0.5pt}\big\|_{L^3(B^{\pm}_{1/2})}  + \big\|\h{0.5pt}\nabla u_i \h{0.5pt}\big\|_{L^{\frac{3}{2}}(B^{\pm}_{1/2})}  \h{1pt}\big\| \h{0.5pt}\nabla \eta \h{0.5pt}\big\|_{L^3(B^{\pm}_{1/2})} +  \big\|\h{0.5pt}p_i \h{0.5pt}\big\|_{L^{\frac{3}{2}}(B^{\pm}_{1/2})} \h{1pt}\big\| \h{0.5pt} \nabla \eta\h{0.5pt}\big\|_{L^3(B^{\pm}_{1/2})}.
\end{align*}Take supreme over all $\eta$ with $ \|\h{0.5pt} \eta \h{0.5pt}\|_{W^{1,3}(B^{\pm}_{1/2})} \leq 1$ and integrate the $t$-variable from $- \frac{1}{4}$ to $0$. It follows \begin{align*}
    \int_{- 1/4}^0\big\| \h{0.5pt}\partial_t u_i \h{0.5pt}\big\|^{\frac{3}{2}}_{W^{-1, \frac{3}{2}}(B^{\pm}_{1/2})} \lesssim_{K}  r_i^3 \h{1pt}\lambda^{- \frac{3}{2}}_i    + \int_{- 1/4}^0\big\|\h{0.5pt}\nabla u_i \h{0.5pt}\big\|^{\frac{3}{2}}_{L^{\frac{3}{2}}(B^{\pm}_{1/2})}  +  \int_{-1/4}^0\big\|\h{0.5pt}p_i \h{0.5pt}\big\|^{\frac{3}{2}}_{L^{\frac{3}{2}}(B^{\pm}_{1/2})} \lesssim_K 1 \h{10pt}\text{for large $i$.}
\end{align*} Here, we also use \eqref{est of r_i for u}, \eqref{energy esti for ui}, and (1) in \eqref{rescaled L3 estimate for u}.

On the other hand, we can obtain from \eqref{energy esti for ui} and Proposition 3.2 in the Chapter 1 of \cite{Dibenedetto1993} that \begin{align*} 
    \| u_i \|_{L^{\frac{10}{3}}(Q_{1/2})}   \hspace{1.5pt}\lesssim_K\hspace{1.5pt} 1 \h{20pt}\text{for all $i$.}
\end{align*}Then, we use the Aubin-Lions compactness lemma (see \cite{S1986}) and get \begin{align}\label{str L3 ui} u_i \longrightarrow u_* \h{10pt}\text{strongly in $L^3(Q_{1/2})$.}
\end{align}

\noindent\textbf{Step 3. Uniform decay estimate of $p_i$.}\vspace{0.4pc}

We first consider an $L^2$-estimate of $\nabla^2 \phi_i$. Recall \eqref{energy ineq}. It turns out \begin{align*}
    &   \int_{P_{r_i}(z_i)} \varphi_i^2 \h{0.5pt} \big|\h{0.5pt}\nabla^2 \phi \h{0.5pt}\big|^2 \lesssim_K  r_i^2 + r_i^{-1} \left(\int_{P_{r_i}(z_i)} \big|\h{.5pt} u \h{.5pt}\big|^3 \right)^{\frac{1}{3}} \left( \int_{P_{r_i}(z_i)} \big|\h{1pt} p - [\h{0.5pt}p\h{0.5pt}]_{x_i, r_i} \big|^{\frac{3}{2}} \right)^{\frac{2}{3}}.
\end{align*}
Here, we use the boundedness of $\varphi_i$ and its derivatives. \eqref{unif bounds of u nab phi} is also used to control the $L^\infty$-norms of $u$ and $\nabla \phi$. Apply the change of variables and (1) in \eqref{rescaled L3 estimate for u}. The last estimate is reduced to \begin{align}\label{Hession of phii}
 \int_{Q_{1/2}} \big|\h{0.5pt}\nabla^2 \phi_i \h{0.5pt}\big|^2 \leq \int_{Q_1} \varphi^2 \h{0.5pt} \big|\h{0.5pt}\nabla^2 \phi_i \h{0.5pt}\big|^2 \lesssim_K  r_i \h{1pt}\lambda_i^{-2}  +  \left(\int_{Q_1} \big|\h{.5pt} u_i \h{.5pt}\big|^3 \right)^{\frac{1}{3}} \left( \int_{Q_1} \big|\h{1pt} p_i \big|^{\frac{3}{2}} \right)^{\frac{2}{3}} \leq r_i \h{1pt}\lambda_i^{-2}  + 1.
\end{align}

Same as Part III of Step 3 in the proof Lemma \ref{decay lem}, we decompose $(u_i, p_i)$ into $$u_i = u_i^{(1)} + u_i^{(2)} \h{15pt}\text{and} \h{15pt} p_i = p_i^{(1)} + p_i^{(2)},$$ where $\big(u_i^{(1)}, p_i^{(1)}\big)$ satisfy the initial boundary value problem:
\begin{equation*} \left\{ \begin{aligned}
    \partial_t u_i^{(1)} - \Delta u_i^{(1)} + \nabla p_i^{(1)} &= - \lambda_i \h{1pt}u_i \cdot \nabla u_i - \lambda_i \h{1pt}\nabla \cdot \big(\nabla \phi_i \odot \nabla  \phi_i\big) \hspace{.5pt} \quad\quad \text{in } Q_{1/2}, \\[1mm]
    \operatorname{div} u_i^{(1)} &= 0 \quad\hspace{160.5pt} \text{in } Q_{1/2}, \\[1mm]
    u_i^{(1)} &= 0 \quad\hspace{160.5pt} \text{on } \mathscr P Q_{1/2}. 
\end{aligned} \right. \end{equation*}Still by Theorem 1.1 in \cite{Solonnikov2003}, 
\begin{equation*} 
    \big\| \h{1pt} u_i^{(1)} \big\|_{W^{2,1}_{\frac{9}{8}, \frac{3}{2}}(Q_{1/2})} + \big\|\h{1pt} \nabla p_i^{(1)} \big\|_{\frac{9}{8}, \frac{3}{2},\, Q_{1/2}} \hspace{1.5pt}\lesssim_{K_*}\hspace{1.5pt}  \lambda_i \h{0.5pt}\big\| \h{1pt} u_i \cdot \nabla  u_i + \nabla \cdot \big(\nabla \phi_i \odot \nabla  \phi_i\big) \h{1pt}\big\|_{\frac{9}{8}, \frac{3}{2}, \,Q_{1/2}}.
\end{equation*} Using \eqref{unif est ui and nabla phii}, \eqref{energy esti for ui}, and \eqref{Hession of phii} to control the right-hand side above, we get \begin{align}\label{est ui1}
\big\| \h{1pt} u_i^{(1)} \big\|_{W^{2,1}_{\frac{9}{8}, \frac{3}{2}}(Q_{1/2})} + \big\|\h{1pt} \nabla p_i^{(1)} \big\|_{\frac{9}{8}, \frac{3}{2},\, Q_{1/2}} &\hspace{1.5pt}\lesssim_{K}\hspace{1.5pt}r_i \h{0.5pt}\big\| \h{1pt} \nabla  u_i \h{0.5pt}\big\|_{2, \,2, \,Q_{1/2}} + r_i \big\|\h{0.5pt}\nabla^2 \phi_i \h{1pt}\big\|_{2, \,2, \,Q_{1/2}} \nonumber\\[1mm]
&\h{1.5pt}\lesssim_K\h{1.5pt} r_i + r_i^{\frac{3}{2}} \h{0.5pt}\lambda_i^{-1} \longrightarrow 0 \h{20pt}\text{as $i \to \infty$.}\end{align}

The pressure $p_i^{(2)}$ can be estimated in the same way as in the proof of Lemma \ref{decay lem}. Using triangle inequality, H\"{o}lder inequality, \eqref{energy esti for ui} and \eqref{est ui1}, we obtain \begin{align*} \big\|\h{1pt} u^{(2)}_i \h{1pt}\big\|_{W^{1, 0}_{\frac{9}{8}, \frac{3}{2}}(Q_{1/2})} \leq \big\|\h{1pt} u^{(1)}_i \h{1pt}\big\|_{W^{1, 0}_{\frac{9}{8}, \frac{3}{2}}(Q_{1/2})} + \big\|\h{1pt} u_i \h{1pt}\big\|_{W^{1, 0}_{2, 2} (Q_{1/2})} \leq K.\end{align*} If we assume $p^{(1)}_i$ has $0$ average over $B^{\pm}_{1/2}$, then  by triangle inequality and Poincar\'{e} inequality, \begin{align*} \big\|\h{1pt} p^{(2)}_i \h{1pt}\big\|_{\frac{9}{8}, \frac{3}{2}, \, Q_{1/2}} \leq \big\|\h{1pt} \nabla p^{(1)}_i \h{1pt}\big\|_{\frac{9}{8}, \frac{3}{2}, \,Q_{1/2}} + \big\|\h{1pt} p_i \h{1pt}\big\|_{\frac{3}{2}, \frac{3}{2}, \,Q_{1/2}} \leq K.\end{align*}
Applying the last two estimates to \eqref{basic est of pI2}, we get $\big\|\h{1pt} \nabla p_i^{(2)} \h{1pt}\big\|_{9, \frac{3}{2}, \, Q_{1/4}}\hspace{1.5pt}\leq\hspace{1.5pt} K$, which together with the estimate of $\nabla p_i^{(1)}$ in \eqref{est ui1} infers
\begin{align*}
    \int_{Q_{\theta_0}} \big|\hspace{.8pt} p_i - [\h{.5pt} p_i \h{.5pt}]^*_{0, \theta_0} \hspace{.5pt}\big|^{\frac{3}{2}} &\hspace{1.5pt}\lesssim_{K_*}\hspace{1.5pt}   \theta_0^{\frac{1}{2}} \int_{- \theta_0^2}^0 \left(\int_{B^{\pm}_{\theta_0}} \big|\hspace{.5pt} \nabla p_i^{(1)} \hspace{.5pt} \big|^{\frac{9}{8}} \right)^{\frac{4}{3}} +  \theta_0^{4} \int_{- \theta_0^2}^0 \left(\int_{B^{\pm}_{\theta_0}} \big|\hspace{.5pt} \nabla p_i^{(2)} \hspace{.5pt}\big|^{9} \right)^{\frac{1}{6}}\\[1mm]
    &\h{1.5pt}\lesssim_K \h{1.5pt}\theta_0^{\frac{1}{2}} \left(r_i + r_i^{\frac{3}{2}} \h{0.5pt}\lambda_i^{-1}\right)^{\frac{3}{2}} + \theta_0^4.
\end{align*}
Here, the Sobolev, H\"{o}lder, and Poincar\'{e} inequality are also used. Hence, 
\begin{equation}\label{limsup pre}
    \limsup_{i\h{1pt}\to\h{1pt}\infty} \int_{Q_{\theta_0}} \big|\hspace{.8pt} p_i - [\h{.5pt} p_i \h{.5pt}]^*_{0, \theta_0} \hspace{.5pt}\big|^{\frac{3}{2}} \lesssim_{K} \h{0.5pt}\theta_0^4.
\end{equation}

\noindent \textbf{Step 4.} Using \eqref{str L3 ui} and \eqref{limsup pre}, we take $i \to \infty$ in (2) of \eqref{rescaled L3 estimate for u} and arrive at  \begin{align}\label{contra for u}
 \theta_0^{3.5} \lesssim_{K}  \h{0.5pt}   \theta_0^4 + \theta_0^{-2} \int_{Q_{\theta_0}} \big| \hspace{.5pt} u_* \hspace{.5pt} \big|^3.
\end{align}Recall the second estimate in \eqref{est ui1}. We can take $i \to \infty$ in \eqref{rescaled system of u}. The limit $(u_*, p_*)$ solves the following linear equations:
\begin{align*}\partial_t u_* - \Delta u_* = - \nabla p_*, \hspace{20pt} \operatorname{div} u_* = 0 \quad\quad \text{in } Q_{1/2} \qquad \text{with } u_* = 0 \text{ on } B'_{1/2} \times (- 1/4, 0). 
\end{align*}
Note that $u_*$ can be estimated in the same way as in \eqref{decay_1} with the constant depending on $K$. Hence, \eqref{contra for u} can be reduced to $\theta_0^{3.5} \lesssim_K  \h{0.5pt}   \theta_0^4$, which is impossible if we take $\theta_0$ suitably small with the smallness depending on $K$.  
\end{proof}

Iteratively applying Lemma \ref{decay lem u only}, we have
\begin{lem} \label{iter Holder}
Fix $r \in (0, 1)$ and $z_0 = (x_0, t_0) \in \big(\mathrm {H \cup P}\big) \times \big(T_\star + 10, \infty\big)$. Assume that
\begin{equation}\label{small assumption for holder}
    H(r, z_0) := \max \left\{r^{3.5}, \h{1pt}  r^{-2} \int_{P_r(z_0)} \big| \hspace{.5pt} u  \hspace{.5pt}\big|^3 + \left(r^{-2} \int_{P_r(z_0)} \big|\hspace{.5pt} p - [\h{0.5pt}p\h{0.5pt}]_{x_0, r} \hspace{.5pt} \big|^{\frac{3}{2}} \right)^2 \right\} \hspace{1.5pt}\leq\hspace{1.5pt} \epsilon_0^3,
\end{equation}where $\epsilon_0$ is given in Lemma \ref{decay lem u only}. Then, \begin{align*}
    &H\big(\theta_0^k \h{0.5pt}r, z_0\big)  \leq  \theta_0^{3.5 \h{0.5pt} k} H(r, z_0) \h{20pt}\text{for any $k \in \mathbb N \cup \{ 0 \}$.}
\end{align*}
\end{lem}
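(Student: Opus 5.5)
The plan is induction on $k$, with Lemma \ref{decay lem u only} supplying the inductive step. For $k = 0$ there is nothing to prove. Assume $H(\theta_0^k r, z_0) \leq \theta_0^{3.5 k} H(r, z_0)$ for some $k \geq 0$. Set $r_k := \theta_0^k r$. Since $\theta_0 < 1$, we have $r_k \in (0, 1)$, and $z_0$ still lies in $(\mathrm{H \cup P}) \times (T_\star + 10, \infty)$; in particular $t_0 > T_\star + 10 > r_k^2$, so the cylinder $P_{r_k}(z_0)$ is admissible. The induction hypothesis and \eqref{small assumption for holder} give $H(r_k, z_0) \leq \theta_0^{3.5k}\epsilon_0^3 \leq \epsilon_0^3$. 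In particular the quantity
$$G(\rho) := \rho^{-2} \int_{P_\rho(z_0)} |u|^3 + \Big(\rho^{-2} \int_{P_\rho(z_0)} \big|p - [\h{0.5pt}p\h{0.5pt}]_{x_0, \rho}\big|^{\frac{3}{2}}\Big)^2$$
satisfies $G(r_k) \leq \epsilon_0^3$, so the hypothesis of Lemma \ref{decay lem u only} holds at radius $r_k$.

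Applying Lemma \ref{decay lem u only} then yields $G(\theta_0 r_k) \leq \theta_0^{3.5}\max\{r_k^{3.5}, G(r_k)\} = \theta_0^{3.5} H(r_k, z_0)$. For the other entry of $H$ we have trivially $(\theta_0 r_k)^{3.5} = \theta_0^{3.5} r_k^{3.5} \leq \theta_0^{3.5} H(r_k, z_0)$. Taking the maximum of the two bounds gives $H(\theta_0^{k+1} r, z_0) \leq \theta_0^{3.5} H(r_k, z_0) \leq \theta_0^{3.5(k+1)} H(r, z_0)$, which closes the induction.

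The only point requiring care is that the smallness hypothesis must be re-verified at each scale. The induction hypothesis itself provides this, because $H$ only decreases along the iteration. Also, the constants $\theta_0$ and $\epsilon_0$ in Lemma \ref{decay lem u only} depend only on $h$, $L_{\mathrm H}$ and $K$, not on $r$ or $z_0$, so the same lemma can be applied uniformly at every step. Beyond this, the argument is routine.
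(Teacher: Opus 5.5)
Your proposal is correct and is the same argument the paper intends: the paper only says the lemma follows by iterating Lemma~\ref{decay lem u only}. Your induction carries out that iteration. At each step you re-verify the smallness hypothesis via $H(\theta_0^k r, z_0)\le H(r,z_0)\le \epsilon_0^3$, and you bound the $r^{3.5}$ entry of $H$ trivially, which closes the induction.
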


A direct corollary of Lemma \ref{iter Holder} is read as follows.
  \begin{prop} \label{Holder regularity of u only}
Assume the same $r$ and $z_0$ as in Lemma \ref{iter Holder}. Then we have  
    \begin{equation*} \begin{aligned}
    \rho^{-5.5} \int_{P_{\rho}(z_0)} \big|\hspace{.5pt} u \hspace{.5pt}\big|^3  \hspace{1.5pt}\lesssim_{K}\hspace{1.5pt}  \frac{H(r, z_0)}{r^{3.5}} \h{25pt}\text{for any $\rho \in (\h{0.5pt}0, r\h{0.5pt}]$.}
\end{aligned} \end{equation*}
\end{prop}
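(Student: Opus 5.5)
The plan is to convert the geometric decay of $H$ along the dyadic-type scales $\theta_0^k r$ from Lemma \ref{iter Holder} into a bound at every scale $\rho \in (0, r]$, exactly as Corollary \ref{aver on cylinder} was obtained from Lemma \ref{iter}. By Lemma \ref{iter Holder}, for every $k \in \mathbb N \cup \{0\}$,
\begin{align*}
    \big(\theta_0^k r\big)^{-2} \int_{P_{\theta_0^k r}(z_0)} \big|\h{0.5pt} u \h{0.5pt}\big|^3 \h{1.5pt}\leq\h{1.5pt} H\big(\theta_0^k r, z_0\big) \h{1.5pt}\leq\h{1.5pt} \theta_0^{3.5 k} H(r, z_0).
\end{align*}
Multiplying by $(\theta_0^k r)^{-3.5}$ gives
\begin{align*}
    \big(\theta_0^k r\big)^{-5.5} \int_{P_{\theta_0^k r}(z_0)} \big|\h{0.5pt} u \h{0.5pt}\big|^3 \leq \frac{H(r, z_0)}{r^{3.5}},
\end{align*}
so the claim holds along the discrete scales with constant $1$.

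For a general $\rho \in (0, r]$, I will choose $k \geq 0$ with $\theta_0^{k+1} r < \rho \leq \theta_0^k r$. Since $P_\rho(z_0) \subset P_{\theta_0^k r}(z_0)$, monotonicity of the integral gives
\begin{align*}
    \rho^{-5.5} \int_{P_\rho(z_0)} |u|^3 \leq \Big(\frac{\theta_0^k r}{\rho}\Big)^{5.5} \big(\theta_0^k r\big)^{-5.5} \int_{P_{\theta_0^k r}(z_0)} |u|^3 \leq \theta_0^{-5.5}\, \frac{H(r, z_0)}{r^{3.5}}.
\end{align*}
Since $\theta_0$ depends only on $h$, $L_{\mathrm H}$ and $K$ by Lemma \ref{decay lem u only}, the factor $\theta_0^{-5.5}$ is absorbed into $\lesssim_K$.

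There is no substantive obstacle. The one point to check is that Lemma \ref{iter Holder} really applies at every step: its iteration needs the smallness $H(\theta_0^k r, z_0) \leq \epsilon_0^3$ to persist, which follows because $H$ decreases geometrically. It also needs $z_0$ to remain in the admissible range $\big(\mathrm{H}\cup\mathrm{P}\big)\times(T_\star+10,\infty)$ with radii in $(0,1)$, and these conditions are unchanged as the radius shrinks. Both are guaranteed by the hypotheses, which are the same as in Lemma \ref{iter Holder}.
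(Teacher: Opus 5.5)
Your argument is correct and is exactly the ``direct corollary'' the paper has in mind: apply Lemma \ref{iter Holder} along the scales $\theta_0^k r$, where dividing by $(\theta_0^k r)^{3.5}$ gives the bound with constant $1$, and then pass to an arbitrary $\rho \in (\theta_0^{k+1} r, \theta_0^k r]$ using $P_\rho(z_0) \subset P_{\theta_0^k r}(z_0)$. This costs only the factor $\theta_0^{-5.5}$, which depends on $h$, $L_{\mathrm H}$, $K$ alone. Your closing check that the smallness persists is harmless but unnecessary, since Lemma \ref{iter Holder} already delivers the decay for every $k$; also, unlike Corollary \ref{aver on cylinder}, no mean-value corrections are needed because only $|u|^3$ appears.
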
  

We now finish the proof of Theorem \ref{partial regularity}.
\begin{proof}[\bf Proof of \eqref{decay of sup u} in Theorem \ref{partial regularity}]
Recall that $z_0 = (x_0, t_0)$. Slightly modifying the proof of Proposition \ref{Holder regularity of u only} (see also \cite{LinLiu1996}), we can obtain a similar estimate as in Proposition \ref{Holder regularity of u only} for $u$ at the interior point $x_0 \in \Omega$. By Lemma 4.3 in \cite{Liebmann}, $u(t, \cdot)$ is uniformly bounded in $C^{\frac{1}{6}}(\overline{\Omega})$ for large $t$. Let $\{t_n\}$ be a sequence diverging to $\infty$ as $n \to \infty$. We can apply Arzel\`{a}-Ascoli theorem to extract a subsequence, which is still denoted by $\{t_n\}$, such that $u(t_n, \cdot)$ converges to some $u_*$ uniformly in $L^\infty(\Omega)$ as $n \to \infty$. In light of \eqref{H1L2limit} in Lemma \ref{limit and unif bound}, the limit $u_*$ must be identically $0$. \eqref{decay of sup u} then follows.
\end{proof}

\section{P-HAN transition along the classical hydrodynamic flow}

We study the P-HAN transition along a classical flow in this section. Our main result is \begin{prop}\label{P-Han along classical}
    Assume that $(u, \phi)$ is a global classical solution of $\mathrm{IBVP}$ on $\big[\hspace{0.5pt}T_0, \infty)$, where $T_0 > 0$ is a positive time. If we keep assuming that \begin{align}
\label{sgn of phi}
0 \leq \phi \leq \pi \hspace{15pt}\text{and}\hspace{15pt} \phi \not\equiv 0 \hspace{20pt}\text{on $\Omega \times \big\{\hspace{0.5pt}T_0\hspace{0.5pt}\big\}$},
\end{align}then the asymptotic limit $(0, \phi_\infty)$ of the solution $(u, \phi)$ can be determined as follows: \begin{align*}
    (0, \phi_\infty) = (0, 0) \hspace{15pt}\text{if $d \leq d_c$;} \hspace{20pt}(0, \phi_\infty) = (0, \phi_*) \hspace{15pt}\text{if $d > d_c$.}
\end{align*}When $d > d_c$, the limit $\phi_*$ is the unique non-negative global minimizer of $E$ in $H_\mathrm{P}^1(\Omega)$. Furthermore,\begin{itemize}
    \item[$\mathrm{(1).}$] If $d = d_c$, $(u, \phi)$ satisfies the algebraic decay as in (1) of Proposition \ref{convergence rate}. \vspace{0.3pc}
    \item[$\mathrm{(2).}$] If $d \neq d_c$, $(u, \phi)$ satisfies the exponential convergence as in (2) of Proposition \ref{convergence rate}. 
\end{itemize}  
\end{prop}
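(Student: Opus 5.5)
We begin from what Section 4 already gives for a classical solution on $[T_0,\infty)$. Lemma \ref{uniq limit} provides a unique asymptotic limit $(0,\phi_\infty)$ with $\phi_\infty$ a solution of \eqref{SG}, and Proposition \ref{convergence rate} converts the \L{}ojasiewicz--Simon exponent $\theta$ of $\phi_\infty$ into a rate. The task is therefore to identify $\phi_\infty$, and then read off $\theta$.

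\textbf{Case $d\le d_c$.} Proposition \ref{threshold of thickness}(1) says $0$ is the only critical point of $E$, so $\phi_\infty\equiv 0$. For $d<d_c$, Proposition \ref{strong stab of gmin} shows that $0$ is strongly stable. Corollary \ref{LS exp} then gives $\theta=\tfrac12$, and Proposition \ref{convergence rate}(2) yields exponential decay. For $d=d_c$ we only have $\theta\in(0,\tfrac12]$, and Proposition \ref{convergence rate}(1) (or (2), which is stronger) yields the algebraic rate.

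\textbf{Case $d>d_c$.} We first show $0\le\phi_\infty\le\pi$. Lemma \ref{max principle}(3) applies with $u$ smooth on $[T_0,T]$ for every $T$, so $0<\phi<\pi$ in $\Omega\times(T_0,\infty)$; passing to the limit gives $0\le\phi_\infty\le\pi$.

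The main point is to exclude $\phi_\infty\equiv 0$, and also any other nonnegative solution that is not $\phi_*$.

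\emph{Excluding $0$.} Let $\psi_1>0$ be the principal eigenfunction in \eqref{Seignprob} (taken one-dimensional by Lemma \ref{dim red}), with $\lambda_1<1$. Set $m(t)=h^2\int_\Omega\phi\psi_1+L_{\mathrm H}\int_{\mathrm H}\phi\psi_1$. Testing the $\phi$-equation against $\psi_1$ and integrating by parts, with the boundary terms cancelling through the Robin conditions, gives
\[
\frac{\mathrm d}{\mathrm dt}\int_\Omega\phi\psi_1=\int_\Omega\phi\,u\cdot\nabla\psi_1+\frac{h^2}{2}\int_\Omega\psi_1\sin2\phi+\frac{L_{\mathrm H}}{2}\int_{\mathrm H}\psi_1\sin2\phi-\lambda_1^2\,m(t).
\]
If $\phi\to0$, then for large $t$ we have $\tfrac12\sin 2\phi\ge(1-\delta)\phi$ with $(1-\delta)>\lambda_1^2$. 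The drift term is bounded by $\|u\|_{L^\infty}\int\phi$, which decays, and should be absorbable. The difficulty is that the boundary term $\int_{\mathrm H}\phi\psi_1$ appears with the wrong weight on the left-hand side. To handle this, I would instead consider the weighted quantity $\int_\Omega\phi\psi_1$ together with $\mathrm{d}/\mathrm{d}t$ of a boundary-free comparison. Alternatively, and more simply, I would compare energies: when $d>d_c$, $E[0]>E[\phi_*]$, but energy only decreases, so this alone does not exclude $0$.

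The cleaner plan is therefore to use the sub-solution $\varepsilon\psi_1$ with a suitable $\varepsilon>0$. Since $\lambda_1<1$, for small $\varepsilon$ the function $\varepsilon\psi_1$ satisfies $-\Delta(\varepsilon\psi_1)\le\tfrac{h^2}{2}\sin(2\varepsilon\psi_1)$ in $\Omega$ and the analogous inequality on H. Because $\phi(T_0+1)>0$ on $\Omega\cup\mathrm H$ by Hopf, we can choose $\varepsilon$ with $\phi(T_0+1)\ge\varepsilon\psi_1$. A comparison principle for the drifted equation, argued as in the proof of Lemma \ref{max principle}, would then give $\phi\ge\varepsilon\psi_1$ for all time, provided the drift term $u\cdot\nabla(\varepsilon\psi_1)$ can be absorbed. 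This absorption is the main obstacle. I would handle it by using the slack $1-\lambda_1^2>0$ together with $\|u(t)\|_{L^\infty}\to0$ (which follows from the $H^2$ bound \eqref{H1H2bdd} and interpolation), restarting the comparison at a late time.

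\emph{Identifying the limit.} Once $\phi_\infty\ge\varepsilon\psi_1\not\equiv0$, $\phi_\infty$ is a nonnegative nontrivial solution with $\phi_\infty\le\pi$. The monotone-iteration argument of Lemma \ref{unique of positive bd solu}, started from $v_0\equiv\tfrac\pi2$ (with the sub-solution $\varepsilon\psi_1$ as a floor), gives uniqueness in $(0,\tfrac\pi2)$. It remains to show $\phi_\infty<\tfrac\pi2$, which I would obtain by applying the truncation and Hopf argument of Lemma \ref{sign of min} to the reflection $\pi-\phi_\infty$. This identifies $\phi_\infty=\phi_*$. Finally, Proposition \ref{strong stab of gmin} and Corollary \ref{LS exp} give $\theta=\tfrac12$, and Proposition \ref{convergence rate}(2) yields the exponential rate.
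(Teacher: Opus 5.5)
Your handling of $d\le d_c$, of $0\le\phi_\infty\le\pi$, of the identification through Lemma~\ref{unique of positive bd solu}, and of the rates is fine. The decisive step for $d>d_c$, excluding $\phi_\infty\equiv0$, fails as proposed.

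For $\varepsilon\psi_1$ to be a subsolution of the drifted equation you need, pointwise,
$\varepsilon\,u\cdot\nabla\psi_1\le\frac{h^2}{2}\sin(2\varepsilon\psi_1)-h^2\lambda_1^2\varepsilon\psi_1\approx h^2(1-\lambda_1^2)\varepsilon\psi_1$.
Near $\mathrm{P}$ the right-hand side vanishes like $d-x_3$, while $|\nabla\psi_1|$ stays bounded below ($\psi_1'(d)\neq0$). So smallness of $\|u(t)\|_{L^\infty}$, even combined with the slack $1-\lambda_1^2$, cannot absorb the drift. You would need something like $|u_3|\le c\,(d-x_3)$ near $\mathrm{P}$ with $c$ small, i.e.\ uniform-in-time pointwise control of derivatives of $u$ up to $\mathrm{P}$, which is not available.

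The same defect, and not the $\mathrm{H}$-term, is what blocks your first attempt. In your identity the boundary contributions combine into $(1-\delta-\lambda_1^2)\,m(t)\ge0$, which has the favorable sign. The problem is $\int_\Omega\phi\,u\cdot\nabla\psi_1$: it is only bounded by $\|u\|_{L^\infty}\int_\Omega\phi$, and $\int_\Omega\phi$ is not controlled by $\int_\Omega\phi\,\psi_1$.

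The missing idea, which is what the paper uses, is to test with $\psi_1^{\alpha}$, where $\alpha=1+\epsilon$ and $\lambda_1^2<(1-\epsilon)/(1+\epsilon)$. Since $\Delta\psi_1^{\alpha}=\alpha\psi_1^{\alpha-2}\big\{(\alpha-1)|\nabla\psi_1|^2-h^2\lambda_1^2\psi_1^2\big\}$, the extra positive term $(\alpha-1)|\nabla\psi_1|^2$ dominates $|u|\,|\nabla\psi_1^{\alpha}|\le\frac{\alpha}{2}\|u\|_{L^\infty}\psi_1^{\alpha-2}\big(|\nabla\psi_1|^2+\psi_1^2\big)$ once $\|u\|_{L^\infty}$ is small. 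The choice of $\epsilon$ keeps the slack both in $\Omega$ and on $\mathrm{H}$. Assuming $\phi\to0$ uniformly, this gives $\frac{\mathrm d}{\mathrm dt}\int_\Omega\phi\,\psi_1^{\alpha}\ge0$ for large $t$. That is incompatible with $\int_\Omega\phi\,\psi_1^{\alpha}$ being positive and tending to $0$. Replacing $\varepsilon\psi_1$ by $\varepsilon\psi_1^{\alpha}$ would equally repair your pointwise comparison.

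Note also that $\|u(t)\|_{L^\infty}\to0$ does not follow from \eqref{H1H2bdd}, which bounds $u$ only in $H^1$. The paper gets it from \eqref{decay of sup u}.

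Your route to $\phi_\infty<\frac{\pi}{2}$ has a second, smaller gap. The truncation step in Lemma~\ref{sign of min} uses that $\phi$ is a global minimizer, which is exactly what is not yet known for $\phi_\infty$. Argue directly at a maximum point $y_*$ with $\phi_\infty(y_*)>\frac{\pi}{2}$, using that $\phi_\infty<\pi$ in $\Omega$ by the strong maximum principle applied to $\pi-\phi_\infty$.

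If $y_*\in\Omega$, then $\Delta\phi_\infty(y_*)=-\frac{h^2}{2}\sin2\phi_\infty(y_*)>0$, which is impossible at a maximum.

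If $y_*\in\mathrm{H}$, then $\Delta\phi_\infty>0$ on a small half ball, and Hopf's lemma gives $-\partial_3\phi_\infty(y_*)>0$. This contradicts $-\partial_3\phi_\infty(y_*)=\frac{L_{\mathrm H}}{2}\sin2\phi_\infty(y_*)\le0$.

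Strict inequality then follows as in the last paragraph of the proof of Lemma~\ref{sign of min}.
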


\begin{proof}[\bf Proof] The proof is divided into three steps. \vspace{0.2pc}

\noindent \textbf{Step 1.} In this step, we prove $\phi_\infty > 0$ on $\Omega \cup \text{H}$ if $d > d_c$.\vspace{0.2pc} 

Notice that $\phi$ satisfies \eqref{sgn of phi}. By (3) in Lemma \ref{max principle}, $0 < \phi < \pi$ on $\Omega \times (T_0, \infty)$. Therefore, $0 \leq \phi_\infty \leq \pi$ on $\Omega$. Since $(0, \phi_\infty)$ is a stationary solution of IBVP, then either $\phi_\infty \equiv 0$ on $\Omega$, or $\phi_\infty > 0$ on $\Omega$. Here we still use (3) in Lemma \ref{max principle}. If $\phi_\infty > 0$ on $\Omega$, then we claim that $\phi_\infty > 0$ on H. Otherwise, there is $x_* \in \text{H}$, so that $\phi_\infty\left(x_*\right) = 0$. It then holds that $\phi_\infty \in \big(0, \frac{\pi}{4}\big)$ on the upper-half ball $B^+_{\rho_0}(x_*)$, for some $\rho_0 > 0$ suitably small. Hence, $- \Delta \phi_\infty \geq 0$ on $B^+_{\rho_0}\left(x_*\right)$. Applying Hopf's lemma induces that $\partial_3 \phi_\infty \left(x_*\right) > 0$. However, by the boundary condition of $\phi_\infty$ on H, it turns out $\partial_3 \phi_\infty \left(x_* \right) = 0$. We get a contradiction. Therefore, if $\phi_\infty > 0$ on $\Omega$, then $\phi_\infty > 0$ on $\Omega \cup \text{H}$.

To complete the first step, we are left to show that  $\phi_\infty \not \equiv 0$ on $\Omega$ if $d > d_c$. According to Lemma \ref{mon of lambda1} and the fact that $\lambda_1(d_c) = 1$, we have $\lambda = \lambda_1(d) < 1$ when $d > d_c$. There is a constant $\epsilon \in (0, 1)$ suitably small, so that \begin{align}\label{parameter ineq}
    \lambda_1^2 \hspace{1pt}< \hspace{1pt} \frac{1 - \epsilon}{1 + \epsilon}.
\end{align}Suppose to the contrary that $\phi_{\infty} \equiv 0$. Then by Proposition \ref{convergence rate} and Morrey's inequality, $\|\hspace{0.5pt}\phi(t) \hspace{0.5pt}\|_{L^\infty}$ converges to $0$ as $t \to \infty$. Therefore, there is a time  $T_1 > T_0$, so that
\begin{align}\label{low bd of sin}
    \sin  2 \hspace{0.5pt}\phi  \hspace{1pt}\geq \hspace{1pt} 2 \left(1 - \epsilon\right) \phi \hspace{1pt}>\hspace{1pt} 0 \hspace{20pt} \text{on } \Omega \times (T_1, \infty).
\end{align}Since $d > d_c$, we have a non-negative and non-trivial eigenfunction, denoted by $\phi_1$, associated with the principal eigenvalue $\mathrm{R}^{3 \mathrm{D}} = \lambda_1^2$ in \eqref{pos of evalue}. $\phi_1$ is strictly positive on $\Omega \cup \text{H}$ and satisfies the boundary value problem \eqref{Seignprob}. Simply denoting by $\alpha$ the constant $1 + \epsilon$, we multiply $\phi_1^{\alpha}$ on the both sides of the equation of $\phi$ in \eqref{sEL}. Through the integration by parts, we obtain
\begin{align*}
    \frac{\mathrm{d}}{\mathrm{d}\hspace{0.3pt}t} \int_{\Omega} \phi \hspace{0.8pt} \phi_1^{\alpha} = \int_{\Omega} \phi \hspace{0.8pt}\Delta \phi_1^{\alpha} + \phi \hspace{0.8pt}u \cdot \nabla  \phi_1^{\alpha} + \frac{h^2}{2}\hspace{0.5pt} \phi_1^{\alpha} \sin 2 \phi  - \int_\text{H}   \alpha \hspace{0.5pt} L_\text{H} \hspace{0.5pt} \lambda_1^2 \hspace{0.5pt} \phi \hspace{0.8pt}\phi_1^{\alpha} - \frac{L_\text{H}}{2} \hspace{0.5pt} \phi_1^{\alpha} \sin 2 \phi.
\end{align*}
Now we claim that for some $T_2 > T_1$, it satisfies
\begin{align}\label{Integral Monotonicity}
    \frac{\mathrm{d}}{\mathrm{d}\hspace{0.3pt}t} \int_{\Omega} \phi \hspace{0.8pt} \phi_1^{\alpha} \hspace{1pt}\geq\hspace{1pt} 0 \hspace{20pt} \text{for all $t > T_2$}.
\end{align}
In fact, according to \eqref{low bd of sin}, we have for all $t > T_1$ that
\begin{align*}
    &\int_{\Omega} \phi \hspace{0.8pt}\Delta \phi_1^{\alpha} + \phi \hspace{0.8pt}u \cdot \nabla  \phi_1^{\alpha} + \frac{h^2}{2}\hspace{0.5pt} \phi_1^{\alpha} \sin 2 \phi  - \int_\text{H}   \alpha \hspace{0.5pt} L_\text{H} \hspace{0.5pt} \lambda_1^2 \hspace{0.5pt} \phi \hspace{0.8pt}\phi_1^{\alpha} - \frac{L_\text{H}}{2} \hspace{0.5pt} \phi_1^{\alpha} \sin 2 \phi \\[2mm]
    & \hspace{20pt}\geq\hspace{1pt} \int_{\Omega} \phi \hspace{1pt}\Big\{ \hspace{1pt} \Delta \phi_1^{\alpha} - \|\hspace{0.5pt} u \hspace{0.5pt}\|_{L^{\infty}} \big|\hspace{0.5pt}\nabla \phi_1^{\alpha}\hspace{0.5pt}\big| + h^2 \left(1 - \epsilon\right) \phi_1^{\alpha}  \hspace{1pt}\Big\} + L_\text{H} \int_\text{H}  \phi \hspace{0.8pt}\phi_1^{\alpha} \hspace{1pt} \Big\{  \left(1 - \epsilon\right) - \alpha \hspace{0.5pt} \lambda_1^2 \hspace{1pt}\Big\}.
\end{align*}In light of \eqref{parameter ineq}, the integral on H on the right-hand side above is non-negative. Thus, \begin{align}\label{low bd of rhd}
\frac{\mathrm{d}}{\mathrm{d}\hspace{0.3pt}t} \int_{\Omega} \phi \hspace{0.8pt} \phi_1^{\alpha} \hspace{1pt}\geq\hspace{1pt}\int_{\Omega} \phi \hspace{1pt}\Big\{ \hspace{1pt} \Delta \phi_1^{\alpha} - \|\hspace{0.5pt} u \hspace{0.5pt}\|_{L^{\infty}} \big|\hspace{0.5pt}\nabla \phi_1^{\alpha}\hspace{0.5pt}\big| + h^2 \left(1 - \epsilon\right) \phi_1^{\alpha}  \hspace{1pt}\Big\}.
\end{align}
Direct computations show that
\begin{align*}
    \nabla \phi_1^{\alpha} = \alpha \hspace{0.5pt} \phi_1^{\alpha - 1} \hspace{0.5pt} \nabla \phi_1 \hspace{15pt}\text{ and } \hspace{15pt} \Delta \phi_1^{\alpha} = \alpha\hspace{0.5pt} \phi_1^{\alpha - 2} \hspace{0.5pt} \Big\{ \hspace{1pt} \left(\alpha - 1\right) \big|\hspace{0.5pt}\nabla \phi_1\hspace{0.2pt}\big|^2 - h^2 \hspace{0.5pt}\lambda_1^2 \hspace{0.8pt}\phi_1^2 \hspace{1pt}\Big\}.
\end{align*}
Applying the Young's inequality, it then follows that
\begin{align}\label{low rhd 2}
    &\int_{\Omega} \phi \hspace{1pt}\Big\{ \hspace{1pt} \Delta \phi_1^{\alpha} - \|\hspace{0.5pt} u \hspace{0.5pt}\|_{L^{\infty}} \big|\hspace{0.5pt}\nabla \phi_1^{\alpha}\hspace{0.5pt}\big| + h^2 \left(1 - \epsilon\right) \phi_1^{\alpha}  \hspace{1pt}\Big\} \nonumber\\[2mm]
    & \hspace{20pt}\geq \hspace{1pt}  \alpha \int_{\Omega} \phi \hspace{0.8pt} \phi_1^{\alpha - 2} \left\{ \hspace{1.5pt} \Big[\hspace{1pt}\alpha - 1 - \frac{1}{2} \hspace{0.5pt}\|\hspace{0.5pt} u \hspace{0.5pt}\|_{L^{\infty}} \Big] \hspace{1.5pt} \big|\hspace{0.5pt}\nabla \phi_1\hspace{0.2pt}\big|^2 + \Big[\hspace{1.5pt}h^2 \left(\frac{1 - \epsilon}{\alpha} - \lambda_1^2\right) - \frac{1}{2} \hspace{0.5pt}\|\hspace{0.5pt} u \hspace{0.5pt}\|_{L^{\infty}} \Big] \hspace{1.5pt} \phi_1^2 \hspace{1.5pt} \right\}.
\end{align}
Now we choose $T_2 > T_1$ suitably large, so that  
\begin{align*}
    \sup_{t \hspace{.5pt}>\hspace{.5pt} T_2}\hspace{0.5pt} \big\|\hspace{0.5pt} u(t) \hspace{0.5pt}\big\|_{L^{\infty}} \hspace{1pt}\leq\hspace{1pt}   \min \left\{ \alpha - 1, \hspace{1pt}h^2 \left(\frac{1 - \epsilon}{\alpha} - \lambda_1^2\right)\hspace{1pt} \right\}.
\end{align*}
Therefore, \eqref{Integral Monotonicity} holds by \eqref{low bd of rhd}, \eqref{low rhd 2} and the last estimate of $u$. \eqref{Integral Monotonicity} further yields that
\begin{align*}
    \int_{\Omega \times \{\hspace{0.5pt} t \hspace{0.5pt}\}} \phi \hspace{0.8pt} \phi_1^{\alpha} \hspace{1pt}\leq\hspace{1pt} \lim_{s \hspace{0.5pt}\to \hspace{0.5pt} \infty} \int_{\Omega \times \{\hspace{0.5pt}s\hspace{0.5pt}\}} \phi \hspace{0.8pt} \phi_1^{\alpha} \hspace{1pt} = \hspace{1pt}0 \hspace{20pt} \text{for all $t > T_2$.}
\end{align*}
Note that $\phi_1$ is strictly positive on $\Omega \cup \text{H}$, and $\phi$ is strictly positive on $\Omega$. The left-hand side above must be strictly positive for all $t > T_2$. This is a contradiction to  the last estimate.\vspace{0.3pc}

\noindent \textbf{Step 2.} In this step, we show that $\phi_\infty < \frac{\pi}{2}$ on $\Omega \cup \text{H}$. \vspace{0.2pc}

We only need to prove \begin{align}\label{upp of phiinfty}
\max_{\overline{\Omega}} \phi_\infty \hspace{1pt}\leq \hspace{1pt}\pi\big/\hspace{0.5pt}2.
\end{align}Once the above estimate of $\phi_\infty$ holds, we can follow the similar arguments as in the proof of Lemma \ref{sign of min}, in particular the last paragraph in the proof there, to show that the inequality in \eqref{upp of phiinfty} is strict. Now we suppose on the contrary that \eqref{upp of phiinfty} fails. Then it satisfies $\phi_\infty(y_*) > \pi \big/ 2$, where $y_* \in \Omega \cup \text{H}$ is a maximum point of $\phi_\infty$. If $y_* \in \Omega$, then we have $\Delta \phi_\infty\left(y_*\right) \leq 0$. But by the equation of $\phi_\infty$ and the fact that $\phi_\infty < \pi$ on $\Omega$, it holds \begin{align*}
     \Delta \phi_\infty\left(y_*\right) = - \frac{h^2}{2} \sin 2 \hspace{0.5pt} \phi_\infty\left(y_*\right) > 0.
\end{align*}The maximum point of $\phi_\infty$ must lie on H. Note that $\phi_\infty(y_*) > \pi \big/ 2$. There is a suitably small radius, denoted by $r_0$, so that $\pi\big/2 < \phi_\infty < \pi$ on  $B^+_{r_0}(y_*)$. It turns out by the equation of $\phi_\infty$ that $\Delta \phi_\infty > 0$ on  $B^+_{r_0}(y_*)$. According to Hopf lemma, it follows that $- \partial_3 \phi_\infty\left(y_*\right) > 0$. However, this is impossible since by the boundary condition of $\phi_\infty$ on H,\begin{align*}
    - \partial_3 \phi_\infty\left(y_*\right) = \frac{L_\text{H}}{2} \sin 2\hspace{0.5pt} \phi_\infty\left(y_*\right) \hspace{1pt}\leq \hspace{1pt} 0.
\end{align*}We therefore prove the assertion in \eqref{upp of phiinfty}.\vspace{0.2pc}

\noindent\textbf{Step 3.} We complete the proof in this step. First, we determine the asymptotic limit $\phi_\infty$. If $d \leq d_c$, then by (1) in Proposition \ref{threshold of thickness}, we have $\phi_\infty \equiv 0$ on $\Omega$. If $d > d_c$, then by the results obtained from Steps 1 and 2 above, we have $\phi_\infty \in (0, \frac{\pi}{2})$ on $\Omega \cup \text{H}$. According to Lemma \ref{sign of min} and the uniqueness result in Lemma \ref{unique of positive bd solu}, $\phi_\infty = \phi_*$, where $\phi_*$ is the unique non-negative global minimizer of $E$ in $H_\text{P}^1(\Omega)$. If $d = d_c$, the algebraic decay rate in (1) of Proposition \ref{P-Han along classical} results from (1) in Proposition \ref{convergence rate}. If $d \neq d_c$, then by Corollary \ref{LS exp}, the \L ojasiewicz-Simon exponent associated with $\phi_\infty$ is equal to $\frac{1}{2}$. Here we use the fact that $\phi_\infty = 0$ if $d < d_c$, while $\phi_\infty = \phi_*$ if $d > d_c$. In either case, $\phi_\infty$ is the global minimizer of the energy $E$ in $H_\text{P}^1(\Omega)$. By (2) in Proposition \ref{convergence rate}, the exponential convergence rate in (2) of Proposition \ref{P-Han along classical} follows. \end{proof}

\section{P-HAN transition along the suitable weak solution}

In this last section, we prove Part (2) of Theorem \ref{thm1.2}. Since the suitable weak solution $(u, \phi)$ is classical after a long time, the proof can be obtained by Proposition \ref{P-Han along classical}, combined with the following two lemmas.  
\begin{lem}If $0 \leq \phi_0 \leq \pi$ and $\phi_0 \not\equiv 0$ on $\Omega$, then $\phi$ satisfies \eqref{sgn of phi} at any large time $T_0$. 
\end{lem}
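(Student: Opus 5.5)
We need to show that for large $T_0$ the suitable weak solution satisfies $0\le \phi\le \pi$ and $\phi(\cdot,T_0)\not\equiv 0$. The plan is to run the approximation scheme from the proof of Proposition \ref{bounded of phi}, applying the maximum principle to the smooth approximants and passing to the limit. For the nontriviality we use a positivity (Harnack-type) lower bound that survives the limit, in the spirit of Lemma 3.1 of \cite{IgnatovaKukavicaRyzhik2016} as announced in the introduction.

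\textbf{Step 1 (bounds $0\le\phi\le\pi$).} Let $\psi^{\delta_k}$ solve \eqref{eq of psi_delta} with data $\phi_0$ and mollified drift $u_{\delta_k}$. Items (1)–(2) of Lemma \ref{max principle} with $m_1=0$, $m_2=1$ give $0\le\psi^{\delta_k}\le\pi$ on $\overline\Omega\times[0,T]$. By \eqref{conver of psi delta}, $\psi^{\delta_k}(\cdot,t)\to\phi(\cdot,t)$ in $L^2(\Omega)$ uniformly in $t\in(0,T)$. Hence $0\le\phi\le\pi$ a.e. for every time, and in particular on $\Omega\times\{T_0\}$ once $\phi$ is regular there (Theorem \ref{partial regularity}).

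\textbf{Step 2 (quantitative positivity of the approximants).} Fix $\xi\ge0$ smooth, compactly supported in $\Omega$, with $\int_\Omega \phi_0\xi>0$; it exists since $\phi_0\not\equiv0$. We aim for a lower bound
\[
\int_\Omega \psi^{\delta_k}(\cdot,t)\,\eta \ge c(t)>0,
\]
uniform in $k$, for a fixed positive weight $\eta$ and every $t\in[0,T]$. Because $0\le\psi^{\delta_k}\le\pi$ and $\sin2\psi\ge -c\psi$ on $[0,\pi]$, each $\psi^{\delta_k}$ is a nonnegative supersolution of a linear equation
\[
\partial_t w - \Delta w + u_{\delta_k}\cdot\nabla w \ge -c\,w,
\]
with an oblique/Robin-type inequality on H of the form $-\partial_3 w\ge -c\,w$ and $w\ge0$ on P. We then want a weak Harnack inequality for nonnegative supersolutions with divergence-free drift $u_{\delta_k}$, whose constants depend only on $\|u_{\delta_k}\|_{L^{10/3}}$ or on $\|u_{\delta_k}\|_{L^\infty_tL^2_x\cap L^2_tH^1_x}$. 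These norms are uniform in $k$, and this is exactly what Lemma 3.1 of \cite{IgnatovaKukavicaRyzhik2016} provides. Chaining it over a finite cover of $[0,T]$ gives $\inf_{K'}\psi^{\delta_k}(\cdot,t)\ge c$ on compact sets $K'\Subset\Omega$, or at least a lower bound in $L^1$, with $c$ independent of $k$. An alternative route, if the Harnack constants are problematic, is to test the equation with $\log(\psi^{\delta_k}+\sigma)$ multiplied by a cutoff, which yields a $k$-uniform bound on $\int \xi\log\psi^{\delta_k}$ and hence positivity.

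\textbf{Step 3 (pass to the limit).} By the $L^2$ convergence in \eqref{conver of psi delta}, the lower bound transfers to $\phi$, so $\int_\Omega\phi(\cdot,t)\eta\ge c(t)>0$ and $\phi(\cdot,t)\not\equiv0$ for all $t\le T$. Since $T$ is arbitrary, take $T$ larger than any given large $T_0$; combined with regularity after $T_\epsilon+10$ (Theorem \ref{partial regularity}), this yields \eqref{sgn of phi} at $T_0$.

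\textbf{Main obstacle.} The main obstacle is Step 2: keeping the Harnack/positivity constants independent of $k$ when the drift is only in the energy class, while handling the nonlinear Robin condition on H. To localize away from H and P, I would first use interior cylinders only. Positivity inside $\Omega$ already suffices for nontriviality, so the boundary conditions never enter the Harnack estimate.
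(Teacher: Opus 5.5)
Your proposal is correct and follows the paper's route. The bounds come from items (1)--(2) of Lemma \ref{max principle} applied to the approximants $\psi^{\delta_k}$ of \eqref{eq of psi_delta}, passed to the limit via \eqref{conver of psi delta}; the non-vanishing is the content of the paper's next lemma, which applies Lemma 3.1 of \cite{IgnatovaKukavicaRyzhik2016} on interior cylinders to the supersolution $e^{h^2 t}\psi^{\delta_k}$ with $k$-independent constants. The one difference is direction: the paper argues backward by contradiction from a time $T_1$ with $\phi(\cdot,T_1)\equiv 0$ down to $\phi_0\equiv 0$, while you chain forward. For that you need a first $k$-uniform lower bound near $t=0$, which comes from testing with $\xi$, since $0\le\psi^{\delta_k}\le\pi$ and $\sup_t\|u_{\delta_k}(t)\|_{L^2}\le\sup_t\|u(t)\|_{L^2}$ bound $\bigl|\frac{\mathrm d}{\mathrm dt}\int_\Omega\psi^{\delta_k}\xi\bigr|$ uniformly in $k$. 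Your $\log(\psi^{\delta_k}+\sigma)$ alternative also works, and avoids the Harnack inequality entirely, provided the cutoff is supported in a ball where $\phi_0\ge c_0>0$, which exists because $\phi_0\in H^2(\Omega)\subset C(\overline{\Omega})$: then $\frac{\mathrm d}{\mathrm dt}\int_\Omega\xi^2\log(\psi^{\delta_k}+\sigma)\ge -C$ uniformly in $k$ and $\sigma$, and the resulting lower bound passes to $\phi$ by Fatou's lemma.
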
 \noindent We omit the proof of this lemma. It can be obtained by an approximation argument, using the approximation sequence $\big\{\psi^{\delta_k}\big\}$ in \eqref{eq of psi_delta}, which satisfies \eqref{conver of psi delta}, and Lemma \ref{max principle}. \vspace{0.2pc}

In the end, we prove a non-vanishing result of $\phi$.
\begin{lem}
    If $\phi_0 \geq 0$ and $\phi_0 \not\equiv 0$ in $\Omega$, then $\phi\left(t, \cdot\right) \not \equiv 0$ in $\Omega$ for large $t$.
\end{lem}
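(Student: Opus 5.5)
The strategy is to argue by contradiction, combining the approximation scheme of Proposition \ref{bounded of phi} with a quantitative lower bound (Harnack-type) for the approximate solutions, as announced in the introduction via Lemma 3.1 of \cite{IgnatovaKukavicaRyzhik2016}. Suppose that $\phi(\cdot, T_*) \equiv 0$ for some large $T_*$. We work with the approximations $\psi^{\delta_k}$ from \eqref{eq of psi_delta}, with initial datum $\phi_0 \geq 0$, $\phi_0 \not\equiv 0$. By Lemma \ref{max principle} (applied with $m_1 = 0$), each $\psi^{\delta_k}$ stays non-negative. By \eqref{conver of psi delta}, $\psi^{\delta_k}(t) \to \phi(t)$ in $L^2(\Omega)$ uniformly for $t \in (0, T]$, since $u_{\delta_k} \to u$ in $L^{10/3}$.

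\textbf{Step 1: a positive lower bound on a fixed set.} We seek a positive lower bound for $\int_{K} \psi^{\delta_k}(\cdot, t)$ on a fixed compact set $K \subset \Omega$ and a fixed time $t = T_*$, uniform in $k$. Write $\sin 2\psi/2 = c(x,t)\,\psi$ with $c$ bounded. Then $\psi^{\delta_k}$ is a non-negative supersolution of a linear drift-diffusion equation
\[
\partial_t \psi - \Delta \psi + u_{\delta_k}\cdot\nabla\psi = h^2 c\, \psi,
\]
with a divergence-free drift. Multiplying by $e^{Mt}$ makes the zeroth-order term non-negative, so $e^{Mt}\psi^{\delta_k}$ is a supersolution of the drift-heat equation in the interior. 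The Harnack inequality of Ignatova--Kukavica--Ryzhik gives lower bounds for non-negative supersolutions with divergence-free drift $b$. The constants depend only on a scale-invariant norm of $b$, namely $L^p_tL^q_x$ with $\frac{2}{p} + \frac{3}{q} \leq 2$ or a comparable class. Since $\|u_{\delta_k}\|_{L^{10/3}} \le \|u\|_{L^{10/3}}$ uniformly in $k$, the constants are uniform. Chaining this Harnack inequality along cylinders from $t$ near $0$, where $\psi^{\delta_k} \approx \phi_0$ has positive mass on some ball, up to $T_*$ on a ball $B \Subset \Omega$ yields $\int_B \psi^{\delta_k}(\cdot, T_*) \ge c_0 > 0$ with $c_0$ independent of $k$.

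\textbf{Step 2: initial positivity and passage to the limit.} At the initial end, $\phi_0 \not\equiv 0$ and $\phi_0\ge 0$ give a ball $B_0$ with $\int_{B_0}\phi_0>0$. Continuity of $\psi^{\delta_k}$ in $L^2$ at $t = 0$, uniform in $k$ thanks to \eqref{L2LinftyL2H1 of psi_delta} and the $H^{-1}$ bound on $\partial_t\psi^{\delta_k}$, gives positive mass on $B_0$ for $t \in [0,\tau]$ with $\tau$ independent of $k$. Passing $k\to\infty$ with \eqref{conver of psi delta} then yields $\int_B \phi(\cdot, T_*) \ge c_0 > 0$, contradicting $\phi(\cdot, T_*) \equiv 0$.

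\textbf{Main obstacle.} The hardest step is verifying that the drift class required by the Ignatova--Kukavica--Ryzhik Harnack inequality is controlled uniformly by what we know of $u$ ($L^\infty_tL^2_x\cap L^2_tH^1_x$, hence $L^{10/3}$). The chaining also needs to work over a long time interval $[0,T_*]$. Here the constants may degrade exponentially in $T_*$, which is harmless since $T_*$ is fixed. If the available drift integrability falls short of the lemma's hypothesis, the first fallback is to apply the weak Harnack only on $[T_\epsilon+10, T_*]$, where $u$ is bounded by Theorem \ref{partial regularity}. On the earlier interval one would instead propagate positivity through the $L^1$-mass identity obtained by testing with a fixed positive eigenfunction, as in Step 1 of Proposition \ref{P-Han along classical}.
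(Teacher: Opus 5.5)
Your proposal is correct and follows essentially the paper's argument. You approximate by $\psi^{\delta_k}$, get nonnegativity from Lemma \ref{max principle}, and multiply by an exponential weight ($e^{h^2t}$ in the paper) to obtain a nonnegative supersolution with divergence-free drift $u_{\delta_k}$. You then apply the Ignatova--Kukavica--Ryzhik weak Harnack inequality with $k$-independent constants on finitely many cylinders, and pass to the limit with \eqref{conver of psi delta}.

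The only difference is direction. You push a uniform lower bound forward from $t=0$ to $T_*$. The paper pushes the smallness of $\inf \psi^{\delta_k}$ near $T_1$ backward, cylinder by cylinder, to conclude $\phi_0\equiv 0$ on arbitrary balls $B_{r_*}(x)$; this is the contrapositive of the same chain. Your eigenfunction fallback is not needed, and it would not work on the early interval anyway: the drift term $\int \phi\, u\cdot\nabla\phi_1^{\alpha}$ is absorbed there only via $\|u\|_{L^\infty}$, which is unavailable before the solution becomes regular.
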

\begin{proof}[\bf Proof]
Suppose $(u, \phi)$ is smooth on $\overline{\Omega}\times (T_0, \infty)$. If it holds $\phi\left(T_1, \cdot\right) \equiv 0 $ on $\Omega$ for some $T_1 > T_0$, then we show in the following that $\phi_0 \equiv 0$ on $\Omega$. Hence, we obtain a contradiction. \vspace{0.2pc}

Recall the approximation $\big\{\psi^{\delta_k}\big\}$ in \eqref{eq of psi_delta}. By the maximum principle shown in Lemma \ref{max principle}, it turns out $\psi^{\delta_k} \geq 0$ in $\Omega \times (0, \infty)$ for all $k$.  Therefore,
\begin{equation} \label{eq of psi^delta}
    \partial_t \big(e^{h^2 t} \psi^{\delta_k} \big) - \Delta \big( e^{h^2 t} \psi^{\delta_k} \big) + u_{\delta_k} \cdot \nabla  \big(e^{h^2 t} \psi^{\delta_k}\big) \hspace{1.5pt}=\hspace{1.5pt} \frac{h^2}{2} \h{1pt}e^{h^2t}\left(\sin 2 \psi^{\delta_k} + 2 \psi^{\delta_k} \right) \hspace{1.5pt}\geq\hspace{1.5pt} 0.
\end{equation}
For any $x \in \Omega$, we fix a  $r_* < 2^{-1}\operatorname{dist}\left(x, \partial \Omega\right)$ such that $ T_1 = 4 \h{0.5pt} m \h{0.5pt} r_*^2$ for some $m \in \mathbb{N}$. It follows from \eqref{conver of psi delta} that $\big\{\psi^{\delta_k}\left(T_1, \cdot \right)\big\}$ converges to $0$ strongly in $L^2(\Omega)$ as $k \to \infty$. Up to a subsequence, which is still denoted by $\big\{\psi^{\delta_k}\left(T_1, \cdot\right)\big\}$, it holds that $\big\{\psi^{\delta_k}\left(T_1, \cdot\right)\big\}$ converges to $0$ almost everywhere in $\Omega$ as $k \to \infty$. Therefore, for any $\epsilon > 0$, we can find a $K = K(\epsilon, T_1) \in \mathbb N$ such that 
\begin{align*}
    \inf \Big\{\h{1pt}e^{h^2 t} \psi^{\delta_k}(y, t)  :  (y, t) \h{1pt}\in\h{1pt} P_{r_*}(x,\h{1pt} T_1) \h{1.5pt} \Big\}\leq \epsilon \h{20pt}\text{for any $k \geq K$.}
\end{align*}

\noindent Applying the Harnack inequality due to Ignatov-Kukavica-Ryzhik (see Lemma 3.1 in \cite{IgnatovaKukavicaRyzhik2016}), we obtain for some small $p_0 > 0$ that
\begin{align*}
    \int_{P_{r_*}(x, \h{1pt}T_1 - 3 \h{0.5pt} r_*^2)} \left|\h{.5pt} e^{h^2 t} \psi^{\delta_k}(z, t) \h{.5pt}\right|^{p_0} \D z \h{1pt} \D t \h{1.5pt}\leq\h{1.5pt} C \h{1pt}\epsilon^{p_0} \h{15pt}\text{for all $k \geq K$.}
\end{align*}

\noindent Here, $p_0$ and $C$ are independent of $k$. In light of \eqref{conver of psi delta}, we now take $k \to \infty$ and $\epsilon \to 0$ successively in the above estimate. It turns out that $\phi = 0$ almost everywhere in $P_{r_*}(x, T_1 - 3\h{0.5pt} r_*^2)$. The trace lemma then yields that $\phi\left(T_1 - 4 \h{0.5pt} r_*^2, \cdot\right) = 0$ almost everywhere in $B_{r_*}(x)$. Repeatedly applying the above arguments by $m - 1$ more times, we get $\phi_0 \equiv 0$ in $B_{r_*}(x)$. Since $x$ is an arbitrary point in $\Omega$, it follows that $\phi_0 \equiv 0$ in $\Omega$. The proof is completed.
\end{proof} 

\bibliographystyle{plain}
\bibliography{references}

\end{document}